\definecolor{orchid}{RGB}{143,40,194}
\definecolor{lava}{RGB}{207,16,32}
\tikzset{anchorbase/.style={baseline={([yshift=-0.5ex]current bounding box.center)}}}
\numberwithin{equation}{section}
\keywords{category $\cO$, coideal algebras, Kazhdan-Lusztig polynomials, categorification, skew Howe duality} 
 \newlength{\baseunit}               
\newtheorem{theorem}{Theorem}[section]
\newtheorem{lemma}[theorem]{Lemma}
\newtheorem{prop}[theorem]{Proposition}
\newtheorem{corollary}[theorem]{Corollary}
\theoremstyle{definition}
\newtheorem{definition}[theorem]{Definition}
\newtheorem{define}[theorem]{Definition}
\newtheorem{remark}[theorem]{Remark}
\newtheorem{ex}[theorem]{Example}
\newcommand{\hint}{{\scalebox{.75}{\halfnote}}}
\newcommand{\dd}{d}
\newcommand{\kk}{k}
\newcommand{\diamondb}{{\scriptstyle \blacklozenge}}
\newcommand{\cA}{{\mathcal A}}
\newcommand{\cB}{{\mathcal B}}
\newcommand{\cC}{{\mathcal C}}
\newcommand{\cE}{{\mathcal E}}
\newcommand{\cF}{{\mathcal F}}
\newcommand{\hcF}{\hat{\mathcal F}}
\newcommand{\cH}{{\mathcal H}}
\newcommand{\cK}{{\mathcal K}}
\newcommand{\cO}{{\mathcal O}}
\newcommand{\cP}{{\mathcal P}}
\newcommand{\cR}{{\mathcal R}}
\newcommand{\cS}{{\mathcal S}}
\newcommand{\cT}{{\mathcal T}}
\newcommand{\cU}{{\mathcal U}}
\newcommand{\La}{\Lambda}
\newcommand{\Ga}{\Gamma}
\newcommand{\bla}{{\mathbf{\la}}}
\newcommand{\bmu}{{\mathbf{\mu}}}
\newcommand{\dht}{\op{ht}_\de}
\newcommand{\pr}{\op{pr}}
\def\bi{\text{\boldmath$i$}}
\newcommand{\ei}{\operatorname{e}(\bi)}
\def\down{{\scriptstyle\vee}}
\def\up{{\scriptstyle\wedge}}
\def\cross{{\scriptstyle\times}}
\newcommand{\Br}{\operatorname{Br}}
\def\down{\vee}
\def\up{\wedge}
\newcommand{\ml}{\mathfrak{l}}
\newcommand{\mg}{\mathfrak{g}}
\newcommand{\mh}{\mathfrak{h}}
\newcommand{\mb}{\mathfrak{b}}
\newcommand{\mn}{\mathfrak{n}}
\newcommand{\pp}{\mathfrak{p}}
\newcommand{\mN}{\mathbb{N}}
\newcommand{\mV}{\mathbb{V}}
\newcommand{\mW}{\mathbb{W}}
\newcommand{\mC}{\mathbb{C}}
\newcommand{\mQ}{\mathbb{Q}}
\newcommand{\mX}{\mathbb{X}}
\newcommand{\mM}{\mathbb{M}}
\newcommand{\mB}{\mathbb{B}}
\newcommand{\mZ}{\mathbb{Z}}
\newcommand{\GL}{\mathfrak{gl}}
\newcommand{\om}{{\omega}}
\newcommand{\updb}{up-down-bitableau}
\newcommand{\updbd}{up-down-$d$-bitableau}
\newcommand{\upd}{up-down-tableau}
\newcommand{\updd}{up-down-$d$-tableau}
\newcommand{\half}{{\nicefrac{1}{2}}}
\newcommand{\vpath}{\text{\scalebox{.45}{\PHpedestrian}}}
\newcommand{\wt}{\mathrm{wt}}
\newcommand{\bd}{{\underline{d}}}
\newcommand{\kw}{\scalebox{1.5}{$\curlywedge$}}
\newcommand{\la}{\lambda}
\newcommand{\HOM}{\operatorname{Hom}}
\newcommand{\END}{\operatorname{End}}
\newcommand{\op}{\operatorname}
\newcommand{\Op}{\mathcal{O}^\pp}
\newcommand{\de}{{\underline\delta}}
\newcommand{\Md  }{M\otimes V^{\otimes d}}
\newcommand{\MdV }{M^\pp(\de)\otimes V^{\otimes d}}
\newcommand{\MdVmtwo}{M(\de)\otimes V^{\otimes {d-2}}}
\newcommand{\Vla  }{M(\la)}
\newcommand{\Mde  }{M^\pp(\de)}
\newcommand{\VW}{\bigdoublevee}
\newcommand{\VWd}{{\bigdoublevee}_d}
\DeclareMathOperator{\Hom}{Hom}   
   \DeclareMathOperator{\Id}{Id}
\begin{document}
\title[Nazarov--Wenzl algebras and skew Howe duality]{Nazarov--Wenzl algebras, coideal subalgebras and categorified skew Howe duality}

\author[M. Ehrig]{Michael Ehrig}
\address{M.E.: School of Mathematics \& Statistics, Carslaw Building, University of Sydney, NSW 2006, Australia}
\email{michael.ehrig@sydney.edu.au}

\author[C. Stroppel]{Catharina Stroppel}
\address{C.S.: Mathematisches Institut, Universit\"at Bonn, Endenicher Allee 60, Room 4.007, 53115 Bonn, Germany}
\email{stroppel@math.uni-bonn.de}

\thanks{M.E. was financed by the DFG Priority program 1388. This material is based on work supported by the National Science Foundation under Grant No. 0932078 000, while the authors were in residence at the MSRI in Berkeley, California.}

\begin{abstract}
We describe how certain cyclotomic Nazarov--Wenzl algebras occur as endomorphism rings of projective modules in a parabolic version of BGG category $\cO$ of type $\mathrm{D}$. Furthermore we study a family of subalgebras of these endomorphism rings which exhibit similar behaviour to the family of Brauer algebras even when they are not semisimple. The translation functors on this parabolic category $\cO$ are studied and proven to yield a categorification of a coideal subalgebra of the general linear Lie algebra. Finally this is put into the context of categorifying skew Howe duality for these subalgebras.
\end{abstract}
\maketitle
\setcounter{tocdepth}{1}
\tableofcontents

\section*{Introduction}
\renewcommand{\thetheorem}{\Alph{theorem}}

In \cite{BKKL} a remarkable connection between the representation theory of Hecke algebras and Lusztig's canonical bases was established by showing that cyclotomic Hecke algebras are isomorphic to cyclotomic quotients of quiver Hecke algebras introduced in \cite{KL}, \cite{KLII}, where also a connection between the representation theory of these algebras and Lusztig's geometric construction of canonical bases was predicted. This prediction was verified, \cite{VV}, and therefore a connection between representations of cyclotomic Hecke algebras and canonical bases was established. As a result cyclotomic Hecke algebras inherit an interesting grading which in type $ \mathrm{A}$ can also be obtained from the graded versions of parabolic category $\cO$'s and hence be described in terms of type $ \mathrm{A}$ Kazhdan-Lusztig polynomials, see \cite{BSIII}, \cite{MathasHuQuiv}. In other types however, Schur--Weyl duality connects the Lie algebra with a centralizer algebra (Brauer algebra) different from the group algebra of any Weyl group. In this paper we investigate relations between parabolic category $\cO^{\pp}(\mathfrak{so}_{2n})$, Brauer algebras and their degenerate affine versions $\VWd=\VWd(\Xi)$, depending on a parameter set $\Xi$, and their cyclotomic quotients, \cite{AMR}. The algebras $\VWd(\Xi)$ were introduced in \cite{Nazarov}, we call them affine {\it VW-algebras}.\footnote{Keeping in mind that (affine) VW can be seen as a degeneration of (affine) BMW, the affine Birman-Murakawi-Wenzl algebras, \cite{DRVcenter}.}
These families are the Brauer algebra analogues of the cyclotomic Hecke algebras, but in contrast to the latter, they are not well understood and so far slightly neglected; mostly due to a lack of a good combinatorial description and geometric realization. The main goal of the paper is to connect these algebras to category $\cO$ and its  Kazhdan-Lusztig combinatorics, and in this way obtain canonical bases of certain representations for coideal subalgebras in quantum groups. 

We start with a type $ \mathrm{D}$ analogue of the Arakawa-Suzuki theorem from \cite{AS}:

\begin{theorem} \label{thm:first}
Let $M$ be a highest weight module in $\cO(\mathfrak{so}_{2n})$. With an appropriate choice of $\Xi$ there is an algebra homomorphism
\abovedisplayskip0.3em
\belowdisplayskip0.3em
\begin{gather*}
\Psi_M\,:\,{\VWd}\,\,\longrightarrow\,\,\END_\mg(M \otimes (\mC^{2n})^{\otimes d})^{\op{opp}}.
\end{gather*}
\end{theorem}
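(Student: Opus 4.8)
\emph{Proof strategy.} The plan is to realise the three families of generators of $\VWd(\Xi)$ by explicit $\mg$-equivariant operators on $\Md$ and to verify the defining Nazarov--Wenzl relations block by block; the parameter set $\Xi=(\omega_a)_{a\ge 0}$ is not prescribed in advance but \emph{read off} from the construction. (One may equally realise $\VWd(\Xi)$ as the endomorphism algebra of a generating object of a diagrammatic ``degenerate affine BMW'' category and exhibit $M\otimes V^{\otimes\bullet}$, $V=\mC^{2n}$, as a module over it; the computations are the same.) Recall that $\VWd(\Xi)$ is presented by the $s_i,e_i$ ($1\le i\le d-1$) and the $y_k$ ($1\le k\le d$), subject to the Brauer relations among the $s_i,e_i$ (with loop parameter $\omega_0$), commutativity of the $y_k$, locality ($y_k$ commutes with $s_i,e_i$ for $k\notin\{i,i+1\}$), the rail relations linking $s_iy_i$, $y_{i+1}s_i$ and $e_i$, the relations $e_i(y_i+y_{i+1})=0=(y_i+y_{i+1})e_i$, and $e_iy_i^ae_i=\omega_ae_i$.

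Write $V=\mC^{2n}$ with its invariant symmetric form $\langle\,,\,\rangle$, fix dual bases $\{v_k\},\{v^k\}$, put $\omega_V=\sum_kv_k\otimes v^k\in V\otimes V$, and index the tensor factors of $\Md$ by $0,1,\dots,d$ with $0$ the copy of $M$. Send $s_i$ to the flip $\check s_i$ of the $i$-th and $(i+1)$-st copies of $V$, and $e_i$ to the Brauer contraction $\check e_i$ (contract these two copies against $\langle\,,\,\rangle$ and re-insert $\omega_V$ there): these are $\mg$-equivariant and already satisfy the Brauer relations by the classical orthogonal Schur--Weyl picture, with $\omega_0=\dim V=2n$. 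Send $y_1$ to a ``fake Casimir'' $\check y_1\in\END_\mg(M\otimes V)$ built from the split Casimir $\Omega_{0,1}$ (coupling $M$ with the first copy of $V$) and a scalar, and propagate to $\check y_2,\dots,\check y_d$ by the recursion $\check y_{k+1}=\check s_k\check y_k\check s_k+(\ast_k)$ forced by the rail relations, where $(\ast_k)$ is the appropriate linear combination of $\check s_k,\check e_k,1$. As the split Casimir commutes with the diagonal $\mg$-action, every $\check y_k$ is $\mg$-equivariant, so $s_i\mapsto\check s_i$, $e_i\mapsto\check e_i$, $y_k\mapsto\check y_k$ does take values in $\END_\mg(\Md)$; the opposite algebra appears because, under such a map, products in $\VWd$ are sent to composites of operators in the reverse order, which is precisely the order in which Brauer diagrams concatenate.

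It remains to verify the other relations and to identify $\Xi$. Commutativity of the $\check y_k$ and the locality relations reduce to the standard split-Casimir identities $[\Omega_{p,q},\Omega_{p,r}+\Omega_{q,r}]=0$ and $[\Omega_{p,q},\Omega_{r,s}]=0$ for disjoint index sets. The rail relations reduce, after conjugation by $\check s_i$, to the identity expressing $\Omega_{i,i+1}$ through $\check s_i,\check e_i,1$ on $V\otimes V$: using the decomposition $V\otimes V=S^2_0V\oplus\Lambda^2V\oplus\mC\omega_V$ into pairwise non-isomorphic simples (so $\END_\mg(V\otimes V)=\operatorname{span}\{1,\check s_i,\check e_i\}$), one compares the Casimir eigenvalue $\tfrac12(c_W-2c_V)$ on each summand with those of $\check s_i-\check e_i$, which also pins the normalisation of $\langle\,,\,\rangle$. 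For $e_i(y_i+y_{i+1})=0$: the image of $\check e_i$ consists of tensors carrying $\omega_V$ in positions $i,i+1$, and for every $j$ the operator $\Omega_{j,i}+\Omega_{j,i+1}$ annihilates such tensors (it couples position $j$ to the $\mg$-invariant $\omega_V$) and has image inside $\ker\check e_i$ (by invariance of the contraction pairing); what survives is a linear combination of $1$ and $\check e_i$ on a single line, whose vanishing fixes the scalars in $\check y_1$. Finally, for $e_iy_i^ae_i$: the image of $\check e_i$ is isomorphic to $M\otimes V^{\otimes(d-2)}$, conjugating $\check y_i^a$ by $\check e_i$ must collapse all dependence on the ambient copies of $V$, and one is left with the action on $M$ of a $\mg$-equivariant operator; since $M$ is a highest weight module $\END_\mg(M)=\mC$, so this is a scalar, which we \emph{define} to be $\omega_a$. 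This produces $\Xi$ with $\omega_0=2n$ and the remaining $\omega_a$ explicit functions of the highest weight of $M$ (equivalently of its central character) --- the ``appropriate choice'' --- and one checks that it meets the admissibility condition making $\VWd(\Xi)$ well-behaved.

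The one genuinely non-formal part is the block of relations tying the $e_i$ to the $y_k$ --- $e_i(y_i+y_{i+1})=0$ and $e_iy_i^ae_i=\omega_ae_i$ --- where the exact form of the fake Casimir $\check y_1$ and all the normalising constants (those entering $(\ast_k)$ and the scaling of $\langle\,,\,\rangle$) must be reconciled simultaneously, so that the contracted operators $\check e_i\,\check y_i^a\,\check e_i$ really reduce to scalar multiples of $\check e_i$ with scalar independent of $i$; this is the delicate interplay between the Casimir and the Brauer contractions. The remaining blocks, the classical orthogonal Schur--Weyl/Brauer picture for $(\SO_{2n},V)$, and the split-Casimir identities are routine; and since $\VWd(\Xi)$ is presented by generators and relations, checking these on $\Md$ yields $\Psi_M$.
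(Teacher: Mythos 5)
Your proposal is essentially the paper's own proof: $s_i,e_i$ act by the flip $\sigma$ and the contraction $\tau$, $y_i=\sum_{0\le k<i}\Omega_{ki}+\tfrac{2n-1}{2}$ (your recursive definition via the rail relation is equivalent to this closed formula), the relation $s_iy_i-y_{i+1}s_i=e_i-1$ comes from $\Omega=\sigma-\tau$ on $V\otimes V$ via its decomposition into three pairwise non-isomorphic simples, the relations $e_i(y_i+y_{i+1})=0=(y_i+y_{i+1})e_i$ follow from invariance of $\sum_k v_k\otimes v_k^*$ and of the bilinear form, and the parameters $w_k:=a_k(M)$ are extracted from $e_1y_1^ke_1$ exactly as in the paper, using $\END_\mg(M)=\mC$ for a highest weight module. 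Two small caveats: the defining relation (VW.4) involves only $e_1y_1^ke_1$, so you neither need nor can in general expect $e_iy_i^ke_i$ for $i>1$, $k\ge 2$ to be an $i$-independent scalar multiple of $e_i$ (these elements typically involve the $y_j$, $j<i$); and admissibility of $\Xi_M$ is not part of this statement and is verified in the paper only for the special case $M=M^\pp(\de)$.
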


In general, this morphism is not surjective and it is hard to fully describe the kernel. We study in detail the case of a certain class of parabolic Verma modules, i.e $M=M^\pp(\la)\in\cO^{\pp}(\mathfrak{so}_{2n})$ for a maximal parabolic subalgebra $\pp$ of type $ \mathrm{A}$ inside type $ \mathrm{D}$. We show that cyclotomic quotients $\VWd(\alpha,\beta)$ of level $2$ occur as endomorphism rings for the special choice of $\la=\delta\omega_0$, i.e. an appropriate multiple of a fundamental weight (the one corresponding to the simple root not attached to $\pp$):

\begin{theorem}[see Theorem~\ref{iso}]\label{thm:second}
If $n \geq 2d$ and $\delta\in\mZ$ then
\abovedisplayskip0.3em
\belowdisplayskip0.3em
\[
\VWd(\alpha,\beta) \,\,\cong\,\, \END_\mg(M^\pp(\underline{\delta}) \otimes (\mC^{2n})^{\otimes d})^{\op{opp}}.
\]
\end{theorem}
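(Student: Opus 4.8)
The plan is to establish the isomorphism in two stages: first produce a surjective algebra homomorphism $\VWd(\alpha,\beta)\to\END_\mg(M^\pp(\de)\otimes(\mC^{2n})^{\otimes d})^{\op{opp}}$, and then show both sides have the same dimension. For the first stage I would start from the homomorphism $\Psi_M$ of Theorem~\ref{thm:first} applied to $M=M^\pp(\de)$. The key point is that when $M$ is a parabolic Verma module for the maximal parabolic of type $A$ inside type $D$ and $\la=\de\om_0$, the polynomial generators $y_1,\dots,y_d$ of $\VWd$ act with eigenvalues controlled by the weight $\de\om_0$, so that a level-$2$ cyclotomic relation $(y_1-\alpha)(y_1-\beta)=0$ is satisfied on the nose for an appropriate choice of $\alpha,\beta$ depending on $\de$ and $n$. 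This is essentially a weight computation: $M^\pp(\de)\otimes\mC^{2n}$ decomposes into at most two generalized eigenspaces for the relevant Casimir-type element, forcing the quadratic relation. Hence $\Psi_M$ factors through $\VWd(\alpha,\beta)$.

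Next I would argue surjectivity of the induced map $\VWd(\alpha,\beta)\to\END_\mg(M^\pp(\de)\otimes(\mC^{2n})^{\otimes d})^{\op{opp}}$. The standard mechanism here is a double-centralizer / Schur-Weyl type argument: because $M^\pp(\de)$ is a parabolic Verma module and $n\ge 2d$ puts us in a ``stable range,'' the module $M^\pp(\de)\otimes(\mC^{2n})^{\otimes d}$ has a filtration by parabolic Verma modules whose multiplicities and $\HOM$-spaces can be computed combinatorially, and one checks that the image of $\VWd(\alpha,\beta)$ already exhausts all $\mg$-endomorphisms. Alternatively, one can invoke that the Brauer-algebra action (the image of the ``non-affine'' part) together with the $y_i$'s generates the endomorphism ring; the $n\ge 2d$ hypothesis guarantees we are away from the non-semisimple walls where the Brauer algebra itself fails to surject, so classical results on $\cO\otimes V^{\otimes d}$ and translation functors give surjectivity on the relevant block.

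Finally I would compare dimensions. On the algebra side, the cyclotomic VW-algebra $\VWd(\alpha,\beta)$ of level $2$ has a known basis (à la \cite{AMR}), indexed by up-down-bitableaux / certain Brauer-type diagrams decorated with dots of bounded degree, giving $\dim\VWd(\alpha,\beta)=2^d(2d-1)!!$ or the appropriate such count. On the category $\cO$ side, $\DIM\END_\mg(M^\pp(\de)\otimes(\mC^{2n})^{\otimes d})$ is computed via $\HOM$-spaces between parabolic Verma modules appearing in the $V^{\otimes d}$-filtration; in the stable range $n\ge 2d$ these $\HOM$-spaces are one-dimensional between comparable Vermas and the combinatorics of weights $\de\om_0+(\text{sum of }d\text{ weights of }\mC^{2n})$ matches exactly the indexing set of the VW-basis. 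Matching these two counts upgrades the surjection to an isomorphism. The main obstacle I expect is the surjectivity step: away from semisimplicity one cannot simply quote classical Schur-Weyl, so one must genuinely use the parabolic-Verma structure and the explicit action of translation functors on $\cO^\pp(\SO_{2n})$ to see that no endomorphisms are missed — this is where the bulk of the work, and the real use of the hypothesis $n\ge 2d$, will lie. A secondary technical point is pinning down the exact values of $\alpha$ and $\beta$ (and the parameter set $\Xi$) so that the cyclotomic relation is the minimal one, i.e.\ that $\Psi_M$ does not factor through a smaller quotient, which is needed for the dimension count to come out equal rather than merely an inequality.
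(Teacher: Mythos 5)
Your first stage (factoring $\Psi_{M(\de)}$ through the level-two cyclotomic quotient via the two-eigenvalue decomposition of $M^\pp(\de)\otimes V$) and your dimension count for the right-hand side (Verma filtrations, multiplicity-one $\HOM$-spaces, matching with up-down-bitableaux) are exactly the paper's Proposition~\ref{prop:eigenvalues}, Lemma~\ref{dimformula}, Lemma~\ref{surjectivity} and Corollary~\ref{tableaux}. The genuine gap is your surjectivity step. Your stated mechanism -- that ``$n\ge 2d$ guarantees we are away from the non-semisimple walls'' so that classical Schur--Weyl/double-centralizer results apply -- is not available: by Theorem~\ref{thm:semisimplicity} the algebra $\END_\mg(\MdV)$ fails to be semisimple whenever $\delta<d-1$, and the hypothesis $n\ge 2d$ is a stability-in-rank condition (it guarantees enough room in the weight lattice for the bitableau combinatorics and for separating basis elements), not a semisimplicity condition; it does nothing to remove the non-semisimple cases. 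Your alternative phrasing, ``one checks that the image already exhausts all $\mg$-endomorphisms,'' is precisely the statement to be proved, and you offer no mechanism for it in the non-semisimple range -- as you yourself concede when you say the bulk of the work lies there.

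The paper resolves this by running the argument in the opposite direction: it never proves surjectivity directly. Instead, Proposition~\ref{prop:injectivity} proves \emph{injectivity} by an explicit computation -- acting with the regular monomials $y_1^{\gamma_1}\cdots y_d^{\gamma_d}By_1^{\eta_1}\cdots y_d^{\eta_d}$ on vectors of the form $m\otimes v_{j_1}\otimes\cdots\otimes v_{j_d}$ ($m$ a highest weight vector of $M^\pp(\de)$) and extracting coefficients of monomials in the $X_{-(\epsilon_i+\epsilon_j)}$'s; this is where $n\ge 2d$ is really used. Surjectivity is then free from the equality of dimensions $2^d(2d-1)!!$ on both sides (Corollary~\ref{dimcycl} and Corollary~\ref{tableaux}). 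Note also that pinning the dimension of $\VWd(\alpha,\beta)$ at exactly $2^d(2d-1)!!$ requires verifying that the parameters $w_a=a_k(M^\pp(\de))$ form an admissible, indeed $\mathbf{u}$-admissible, sequence in the sense of \cite{AMR} (the paper does this by explicit recursion in Lemmas~\ref{cor:recursion}--\ref{lem:uadm}); your closing worry about the cyclotomic relation being ``minimal'' is, by contrast, a non-issue, since once injectivity (or surjectivity) plus equal dimensions is in hand the isomorphism is automatic. So either adopt the paper's injectivity-plus-dimension-count strategy, or supply a genuinely new argument for surjectivity valid for all $\delta$, which your sketch does not contain.
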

We deduce that the $\VWd(\alpha,\beta)$'s inherit a positive Koszul grading from the graded version $\hat{\cO}$ of category $\cO$, hence a geometric interpretation; in terms of first perverse sheaves on isotropic Grassmannians, \cite{ES_diagrams}, and second topological Springer fibres, \cite{ES_springer}, \cite{Arik} via the Khovanov algebra of type $ \mathrm{D}$. This Khovanov algebra also allows us, \cite{ES_Brauer}, to mimic (with some effort) the approach from \cite{BS_walled_Brauer} to construct a graded version of the Brauer algebra $\op{Br}_d(\delta)$ for an arbitrary integral parameter $\delta$. This requires quite involved computations which can be found in \cite{ES_Brauer}. The crucial player hereby is a subalgebra $z_d\VWd(\alpha,\beta)z_d\subset\VWd(\alpha,\beta)$ of which we show that it shares properties with the Brauer algebra:

\begin{theorem}[see Proposition \ref{prop:dimension_brauer} and Theorem \ref{thm:semisimplicity}]
\label{thm:third}\hfill
\begin{enumerate}
\item The algebra $z_d\VWd(\alpha,\beta)z_d$ has dimension $(2d-1)!!$.
\item The algebra $z_d\VWd(\alpha,\beta)z_d$ is semisimple if and only if $\delta\not=0$ and $\delta\geq d-1$ or $\delta=0$ and $d=1,3,5$.
\end{enumerate}
\end{theorem}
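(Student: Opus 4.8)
For part~(1) the plan is to transport the cellular structure of $\VWd(\alpha,\beta)$ through the idempotent $z_d$. By \cite{AMR} the level-two cyclotomic quotient $\VWd(\alpha,\beta)$ is cellular (for the admissible parameters occurring here), with cell datum indexed by pairs of up-down-$d$-bitableaux of equal shape, and $z_d$ is by construction the sum of the diagonal cellular idempotents attached to those shapes which are honest partitions — that is, to the up-down walks that never enter the second component. Hence $z_d\VWd(\alpha,\beta)z_d$ is again cellular, with cell modules $C^\lambda$ indexed by partitions $\lambda$ with $|\lambda|\le d$ and $|\lambda|\equiv d \pmod 2$, and with $\dim_\mC C^\lambda = m^d_\lambda$, the number of up-down tableaux of length $d$ from the empty partition to $\lambda$. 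Consequently
\[
\dim_\mC z_d\VWd(\alpha,\beta)z_d \;=\; \sum_\lambda (m^d_\lambda)^2 \;=\; (2d-1)!!,
\]
the last step being the classical identity counting perfect matchings of $2d$ points. As an independent check one can argue on the category~$\cO$ side: by Theorem~\ref{iso} one has $z_d\VWd(\alpha,\beta)z_d \cong \END_\mg( z_d(\Mde\otimes(\mC^{2n})^{\otimes d}))^{\op{opp}}$, and since the rank of $\VWd(\alpha,\beta)$ is independent of the parameters in the family under consideration it suffices to evaluate this endomorphism ring for one value of $\delta$ at which the ambient block of $\cO^\pp(\mathfrak{so}_{2n})$ is semisimple; there the module is a direct sum $\bigoplus_\lambda M^\pp(\mu_\lambda)^{\oplus m^d_\lambda}$ of pairwise non-isomorphic simple projective parabolic Verma modules, which recovers $\sum_\lambda (m^d_\lambda)^2$. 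The hypothesis $n\ge 2d$ enters here to ensure that the weights reachable from $\underline\delta$ by $d$ tensor steps are combinatorially generic and hence really do biject with up-down tableaux.

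For part~(2), recall that a cellular $\mC$-algebra is semisimple iff the Gram form on each of its cell modules is non-degenerate, so we must decide for which $\delta$ all the forms on the $C^\lambda$ above are non-degenerate. Via Theorem~\ref{iso} this is a question about $\cO^\pp(\mathfrak{so}_{2n})$: putting $N_d := z_d(\Mde\otimes(\mC^{2n})^{\otimes d})$, the algebra $\END_\mg(N_d)^{\op{opp}}$ is semisimple iff $N_d$ is a direct sum of pairwise non-isomorphic simple projective modules, and as $N_d$ carries a parabolic Verma flag this happens iff every $M^\pp(\mu)$ occurring in that flag is simple (and therefore projective). The relevant $\mu$ are precisely the endpoints of the length-$d$ up-down walks based at $\underline\delta = \delta\om_0$, an explicit finite set of weights, so feeding them into the Jantzen simplicity criterion for parabolic Verma modules in type~$D$ turns ``all these $M^\pp(\mu)$ are simple'' into an arithmetic condition on $\delta$. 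Running through the regimes: for $\delta\ne 0$ with $\delta\ge d-1$ all these weights avoid the relevant walls, so $N_d$ is semisimple; for $\delta<0$ the weights near $\underline\delta$ retained by $z_d$ (in particular $M^\pp(\underline\delta)$, or its nearest retained translate when $d$ is odd) are already reducible, so $N_d$ is not semisimple; and for $0<\delta<d-1$ one exhibits two linked reachable weights $\mu<\mu'$, forcing a nonzero homomorphism $M^\pp(\mu)\to M^\pp(\mu')$ and hence non-semisimplicity.

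The case $\delta=0$ is the crux and, I expect, the main obstacle. Here the coarse Jantzen estimate is inconclusive and one must compute the Gram determinants of the cell modules $C^\lambda$ of $z_d\VWd(\alpha,\beta)z_d$ outright — equivalently, decide exactly which $M^\pp(\mu)$ become reducible at the most singular weight $\underline 0$, using the fine block combinatorics of $\cO^\pp(\mathfrak{so}_{2n})$. The answer should be that all the forms are non-degenerate precisely when $d\in\{1,3,5\}$: for those three values it is a finite check, while for $d=2$ and for every $d\ge 6$ (the even and odd cases handled separately) one locates a single cell module carrying a degenerate form. This reproduces Rui's semisimplicity criterion for the Brauer algebra $\op{Br}(0)$, with which $z_d\VWd(\alpha,\beta)z_d$ can be matched cell module by cell module when $\delta=0$; the genuinely different behaviour for negative $\delta$ — where $\op{Br}(\delta)$ itself may still be semisimple — is forced instead by the non-simplicity of $M^\pp(\underline\delta)$ in category~$\cO$. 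The technical heart of the whole argument is this $\delta=0$ Gram-determinant bookkeeping, together with the precise identification, valid for $n\ge 2d$, of the cellular combinatorics of $\VWd(\alpha,\beta)$ with Brauer up-down tableaux.
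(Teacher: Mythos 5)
Part (1) of your proposal is essentially sound, and your ``independent check'' is in fact the paper's own argument: under the standing assumption $n\geq 2d$ the idempotent $z_d$ is identified with the projection onto the blocks whose Verma modules are indexed by bipartitions of the form $(\la,\emptyset)$, so $z_d\VWd(\alpha,\beta)z_d\cong\END_\mg\bigl(z_d(\MdV)\bigr)^{\op{opp}}$, and then Lemma~\ref{dimformula} combined with the bijection of Lemma~\ref{surjectivity} and the count \eqref{cardinality} for $l=1$ gives $\sum_{\la}|\cT^1_d(\la)|^2=(2d-1)!!$. Note that no specialization to a semisimple parameter is needed (the dimension formula holds for any projective module with multiplicity-free Verma flags), and your primary route via ``diagonal cellular idempotents'' is looser than it looks: $z_d$ is defined through generalized eigenvalues of the $y_i$, and matching it with a truncation compatible with the cellular data is precisely the block identification above, so the category $\cO$ input is needed in either version.

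Part (2) contains the genuine gap. First, the semisimplicity criterion is misstated: for the projective module $N_d=z_d(\MdV)$, semisimplicity of $\END_\mg(N_d)$ is equivalent to every Verma module occurring in its Verma flag being \emph{projective}, i.e.\ having $\la+\rho$ dominant (diagrammatically: no pair $\down\,\up$, no two $\up$'s, no $\diamond\up$); it is not equivalent to the summands being simple and pairwise non-isomorphic --- multiplicities are harmless, and a dominant but non-simple Verma still has endomorphism ring $\mC$, so your stronger condition could in principle misclassify semisimple cases. Second, and more seriously, the decisive cases are only announced, not proved: for $d\geq\delta+2$ you say ``one exhibits two linked reachable weights'' without exhibiting them inside the truncated weight set, and for $\delta=0$ you explicitly defer to an expected Gram-determinant computation or to a cell-by-cell matching with $\Br_d(0)$, which is not available here (the paper deliberately avoids claiming such an isomorphism; it is established only with the extra machinery of \cite{ES}). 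The paper settles exactly these points by an elementary walk argument on diagrammatic weights: starting from $\de$ a forbidden configuration needs precisely $\delta+2$ steps to be created, and once created it can be preserved within the truncation by repeatedly swapping a $\down$ with a $\circ$ at a position $<n$ (for $\delta\geq 2$), by a separate argument for $\delta=1$, and for $\delta=0$ by a finite check for $d\leq 6$ together with the observation that for $d\geq 7$ one can toggle $\up\down\leftrightarrow\circ\cross$ while keeping a $\diamond\up$ pair. Without carrying out an argument of this kind (or actually executing the Jantzen/Gram analysis you sketch), the ``only if'' direction and the exceptional list $d=1,3,5$ at $\delta=0$ remain unproved.
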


In \cite{ES_Brauer} it is then proved that this algebra is isomorphic to the Brauer algebra $\op{Br}_d(\delta)$ providing  a conceptual explanation for the fact, \cite{CDVMI} and \cite{CDVMII}, that the decomposition numbers of Brauer algebras are given by type $ \mathrm{D}$ Weyl group combinatorics, and branching rules  relate to the set $\vpath_d(\delta)$ of Verma paths.

Theorems \ref{thm:first} and \ref{thm:second} rely on a good understanding of (a graded version of) taking successive tensor products with the natural representation $\mC^{2n}$ on category $\mathcal{O}^\pp$. Since $\mC^{2n}$ is self-dual and hence $\_\otimes\mC^{2n}$ self-adjoint, we do not obtain an action of a quantum group on our categories, but rather of certain {\it coideal subalgebras} $\cH$ and $\cH^\hint$ of $\cU_q(\mathfrak{gl}_\mZ)$. These are quantum group analogues of the symmetric pair $\mathfrak{gl}_\mN \times \mathfrak{gl}_{-\mN}\subset\mathfrak{gl}_\mZ$. The construction goes back to Noumi, Sugitani, and Dijkhuizen, \cite{Noumi}, \cite{NDS}, who used explicit solutions of the reflection equation to obtain analogues of all classical symmetric pairs; for a thorough study from the quantum group point of view see \cite{Letzter}, \cite{LetzterII}, \cite{Kolb}.

Since the integral weights of $\mathfrak{so}_{2n}$ can be partitioned into integer or half-integer weights in the standard $\epsilon$-basis, the  integral part, $\cO^{\pp}(\mathfrak{so}_{2n})$, of parabolic category $\cO$ can be decomposed into two subcategories $\cO^{\pp}_1(\mathfrak{so}_{2n})$ and $\cO^{\pp}_\hint(\mathfrak{so}_{2n})$ stable under taking tensor products with the natural representation. We obtain (as a special case) two categorifications:

\begin{theorem}[see Theorems~\ref{prop:generators_and_functors} and  Theorem~\ref{thm:main}]\label{thm:fourth}\hfill
\begin{enumerate}
\item The $\cH$-module $\bigwedge^n_q \mathbb{Q}(q)^\mZ$ is categorified by $\hat{\cO}^{\pp}_1(\mathfrak{so}_{2n})$.
\item The $\cH^\hint$-module $\bigwedge^n_q \mathbb{Q}(q)^{\mZ+\half}$ is categorified by $\hat{\cO}^{\pp}_\hint(\mathfrak{so}_{2n})$.
\end{enumerate}
\end{theorem}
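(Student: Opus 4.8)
The plan is to derive both statements from the underlying Proposition~\ref{prop:generators_and_functors}, which I would prove by separately identifying (i) the graded Grothendieck group of $\hat{\cO}^{\mathfrak{p}}_1(\SO_{2n})$, resp.\ $\hat{\cO}^{\mathfrak{p}}_\hint(\SO_{2n})$, with the claimed exterior power as a $\mZ[q,q^{-1}]$-module, and (ii) the graded functor $\_\otimes\mC^{2n}$, decomposed into translation functors, with the generators of the coideal algebra $\cH$, resp.\ $\cH^\hint$. For (i), the combinatorial input is that after the $\rho$-shift a $\mathfrak{p}$-dominant integral weight of $\SO_{2n}$ is a strictly decreasing $n$-tuple of integers, hence an $n$-element subset $S\subseteq\mZ$ (resp.\ $S\subseteq\mZ+\tfrac{1}{2}$ for half-integral weights), which I send to the standard wedge vector $v_S=\bigwedge_{s\in S}v_s$. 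Since parabolic category $\cO$ admits a positive, Koszul graded lift $\hat{\cO}$, this becomes an isomorphism of $\mZ[q,q^{-1}]$-modules carrying the class $[\Mla]$ of a graded parabolic Verma module to $v_S$; the classes of standard, simple, indecomposable projective, or tilting modules then correspond to distinguished bases of the exterior power, which is what makes it a \emph{based} module.

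For (ii), note first that because $\mC^{2n}$ has weights $\pm\epsilon_1,\dots,\pm\epsilon_n$ the functor $\_\otimes\mC^{2n}$ preserves the splitting into $\cO^{\mathfrak{p}}_1$ and $\cO^{\mathfrak{p}}_\hint$, and its graded lift decomposes, by projection onto blocks, into graded translation functors $\Theta_i$ indexed by the orbits $\{i,-i\}$ of the reflection $j\mapsto -j$ on the relevant index set, with the fixed node $0$ forming an orbit by itself in the integral case. I would then compute, using the filtration of $\Mla\otimes\mC^{2n}$ with subquotients $M^\pp(\la\pm\epsilon_k)$ and the graded degrees of these subquotients read off from the explicit model of $\hat{\cO}$ referred to in the paper (its Koszul, resp.\ Khovanov-algebra-of-type-$D$, description), the class $[\Theta_i\,\Mla]$, and match it term by term with the action on $v_S$ of the $i$-th generator $B_i$ of $\cH$, resp.\ $\cH^\hint$ --- the latter being an explicit finite $\mZ[q,q^{-1}]$-combination of wedge vectors obtained from the coideal coproduct. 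This yields $\Theta_i\leftrightarrow B_i$, hence an action of the coideal algebra by functors, and identifies the graded Grothendieck group with $\bigwedge^n\mC^\mZ$, resp.\ $\bigwedge^n\mC^{\mZ+\frac{1}{2}}$.

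It remains to verify that the $\Theta_i$ genuinely satisfy the defining relations of $\cH$. Away from the special node these are $q$-Serre relations among neighbouring $B_i$'s; on the Grothendieck group they reduce to the type $A$ statements that already govern category $\cO$, and functorially they can be produced from the $\VWd$-action of Theorem~\ref{thm:first}, whose defining relations, after projecting to blocks, encode precisely these commutation and braid-type identities. The main obstacle is the special node. In the integral case this is the node $0$ at the fork of the $D_n$ diagram, where $B_0$ obeys a non-Serre, tridiagonal relation: one must check that the graded translation functor $\Theta_0$ associated to the fork realizes exactly this relation and, just as importantly, pin down the relevant form of the quantum symmetric pair together with its structure parameters. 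In the half-integral case $0$ is not an index, and the analogous difficulty sits at the pair of innermost nodes $\pm\tfrac{1}{2}$, swapped by a reflection through a fixed edge rather than a fixed vertex, so that the extra generator crossing the divide satisfies a relation of a different shape which the corresponding translation functor --- built from the innermost weight spaces of $\mC^{2n}$ --- must be shown to obey. The last technical point is to make the grading shifts in $\hat{\cO}$ match the $q$-powers in the $B_i$-action throughout, which is handled using the Kazhdan-Lusztig combinatorics of type $D$.
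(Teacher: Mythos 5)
Your steps (i) and (ii) are essentially the paper's own route to Proposition~\ref{prop:generators_and_functors}: the identification of the graded Grothendieck group with the exterior power is the map $\Gamma$ (resp.\ $\Gamma^\hint$) sending classes of graded parabolic Verma modules to standard wedge vectors indexed by the $\rho$-shifted highest weight, and the matching of generators with block-projected graded translation functors is exactly the comparison of Lemma~\ref{a} and its graded lift Lemma~\ref{lem:gradeda} with the explicit formulas for the action of the $B_i$ (Lemmas~\ref{lem:action_generators} and~\ref{lem:specgen}). The genuine problem is your final step. For the statement as it is meant here --- an identification of $K_0$ with the $\cH$-module under which the generators correspond to the exact functors $\cF_{i,\pm}$, $\cF_{\frac12,\pm}$, $\cF_0$ --- there is nothing left to verify about relations: $\bigwedge^n\mV$ carries its $\cH$-module structure a priori, being the restriction to the coideal $\cH\subset\cU$ of a $\cU$-module, so once each individual functor is shown to induce the same operator on $K_0$ as the corresponding generator, the relations among these operators hold automatically. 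And if you do want relations among the functors themselves (a strictly stronger assertion than Theorem~\ref{thm:fourth}), the mechanism you propose would fail: Theorem~\ref{thm:actionofvw} gives a $\VWd$-action inside $\END_\mg(M\otimes V^{\otimes d})^{\op{opp}}$ for one fixed highest weight module $M$, with parameters depending on $M$ and $n$; its Brauer-type relations among $s_i,e_i,y_j$ are relations of operators on a single object, not natural transformations between composites of the block-projected translation functors on all of $\hat{\cO}^{\mathfrak p}$, and they do not project to the (modified) quantum Serre relations of Proposition~\ref{prop:relations_coideal}. No such functor-level statement is proved in the paper, nor is it needed.

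A smaller point of bookkeeping: you have the two special situations interchanged, and you conflate two different diagrams. For integral $\mathfrak{so}_{2n}$-weights the module is $\bigwedge^n\mC^{\mZ}$, the index set $\mZ$ has the fixed point $0$ under $j\mapsto -j$, and the coideal $\cH$ has \emph{two} special generators $B_{\pm\frac12}$ (Definition~\ref{def:modified_generators}), whose modified Serre relations carry Cartan-valued correction terms; for half-integral weights the index set $\mZ+\frac12$ has a fixed edge and the single special generator is $B_0$, whose modified Serre relation has a lower-order correction with constant coefficient. Moreover these generators are not the fork node $\alpha_0$ of the $D_n$ diagram: the coideal lives on the $\mathfrak{gl}_{\mZ}$-side, and the type-$D$ fork enters only through the behaviour of the corresponding functors $\cF_{\frac12,\pm}$ and $\cF_0$ at the symbol $\diamond$ and the parity of the blocks, which is precisely what parts (3) and (4) of Lemma~\ref{a} record.
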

The classes of Verma modules correspond hereby to the standard tensor product basis. The involved categories have a contravariant duality, hence the categorification equips the modules on the left with a bar-involution.  Mimicking Lusztig's approach for quantum groups, we can define canonical bases on the above modules and show that the classes of simple modules correspond to the dual canonical basis (in the sense of \cite[Section 2]{BSIII}). Although we formulated it here only in the $q=1$ setting, a large part of the paper will deal with the quantum version. 

\begin{theorem}[see Theorem~\ref{prop:bar_exists} and more generally Theorem~\ref{thm:bar_unique}] \label{thm:fourth2}
There \hfill\\exists a duality $\textbf{d}$ on $\hat{\cO}_1^\pp(n)$ respectively on  $\hat{\cO}_\hint^\pp(n)$ inducing a compatible bar-involution on $\bigwedge^n_q \mQ(q)^\mZ$ respectively on  $\bigwedge^n_q \mQ(q)^{\mZ+\half}$. The classes of simple modules correspond via Theorem~\ref{thm:third} to the associated dual canonical basis. 
\end{theorem}

Theorem~\ref{thm:fourth2}  gives a new instance of based categorifications in the context of category $\cO$, but now connecting canonical bases of Hecke algebras with canonical bases of quantum symmetric pairs instead of quantum groups as for instance in \cite{BSII}, \cite{FKS}, \cite{Sartori}, \cite{CTP}, see \cite{Mazorchuk} for an overview. The base change matrix is here given by parabolic type $ \mathrm{D}$ Kazhdan-Lusztig polynomials, see \cite{LS} for explicit formulas, in the Grassmannian case corresponding to $\pp$. Independently, and maybe motivated by our results, Balagovic and Kolb introduced an analogue of Lusztig's bar involution in much more generality for the quantum symmetric pairs themselves (instead of focusing on based representations), see \cite{BalKolb1} establishing a very elegant general theory. Up to an obvious renormalisation, our special case of bar involution for the type $\op{(AIII)}$ coideal agrees with theirs, and was also studied in detail in \cite{BW}. In neither of these two approaches a categorification is given, and we expect that generalizations to other types require a substantially new approach.

In the final part of the paper we investigate generalizations of these modules from the viewpoint of skew Howe duality, \cite{Howe}, and its categorification. For this we consider more general parabolic category $\cO$'s and their block decompositions for Levi subalgebras isomorphic to products of $\mathfrak{gl}_{k}$'s. Denote the sum of these blocks by $\bigoplus_\Gamma \cO_{\Gamma}(n)$. Considering analogous projective functors in this setup we first categorify the two actions of $\mathfrak{gl}_m \times \mathfrak{gl}_m$ and $\mathfrak{gl}_r \times \mathfrak{gl}_r$ on the vector space $\bigwedge(n,m,r)=\bigwedge^n (\mC^m \otimes \mC^2 \otimes \mC^r)$ separately. Using gradings we in fact categorify a quantized version of this involving coideal subalgebras. Then we finally show that the two actions are graded derived equivalent via Koszul duality. Under this identification given by Koszul duality we can finally realize {\it both} commuting actions on the {\it same} category. The projective functors turn into derived Zuckerman functors under this identification. As far as we know this is the first instance,  where both commuting actions involved in a skew Howe duality were categorified simultaneously in a common framework. 

\begin{theorem}[see Theorems~\ref{prop:catskewclass} and ~\ref{thm:skewKoszul}]\label{thm:fifth}
The bimodule $\bigwedge(n,m,r)$ is categorified by $\bigoplus_\kappa \cO_{\kappa}(n)$. The graded version of this category categorifies the quantization. Furthermore, the two categorifications for the left and right action are then Koszul dual to each other.
\end{theorem}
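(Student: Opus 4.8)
The plan is to establish Proposition~\ref{prop:catskewclass} and Theorem~\ref{thm:skewKoszul} in turn: the first by exhibiting the two commuting actions explicitly, the second by Koszul duality. I would begin by fixing a combinatorial dictionary between the blocks $\Gamma$, the parabolic Verma (and simple) objects of $\bigoplus_\Gamma\cO_\Gamma(n)$, and a weight basis of $\bigwedge(n,m,r)=\bigwedge^n(\mC^m\otimes\mC^2\otimes\mC^r)$. Thinking of a basis vector as a configuration of $n$ fermions on the $m\times 2\times r$ lattice, the $\mC^2$-direction records the splitting into the two ``halves'' responsible for the coideal (self-dual natural module) structure, the $\mC^r$-direction records the shape of the Levi $\prod\mathfrak{gl}_k$, and the $\mC^m$-direction records the rank/weight data of the $\mathfrak{gl}_k$-factors, so that the blocks $\Gamma$ are exactly the joint weight spaces for one of the two actions. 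Under this dictionary the generators of a coideal subalgebra of $\cU_q(\mathfrak{gl}_{2m})$ (with classical fixed-point limit $\mathfrak{gl}_m\times\mathfrak{gl}_m$) move a single fermion one step in the $\mC^m$-direction, and these are categorified, exactly as in the $r=1$ special case of Theorem~\ref{thm:fourth}, by the summands of ``tensor with the natural $\mathfrak{so}_{2n}$-module, then project onto a block'', i.e.\ by projective functors between the $\cO_\Gamma(n)$. The coideal relations --- the quantum Serre relations together with the distinguished relation at the ``wall'' --- are verified as short exact sequences of such functors on objects with (co)standard filtration, using the $\mathfrak{sl}_2$-categorification machinery already employed for Theorem~\ref{thm:fourth}; classes of parabolic Vermas give the standard basis.

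For the second, commuting action of (a coideal subalgebra of) $\cU_q(\mathfrak{gl}_{2r})$, with classical limit $\mathfrak{gl}_r\times\mathfrak{gl}_r$, I would use the \emph{other} natural family of functors on $\bigoplus_\Gamma\cO_\Gamma(n)$: the derived Zuckerman functors that enlarge or shrink the Levi in the $\mC^r$-direction, together with their adjoints and the inclusions of singular blocks. On Grothendieck groups these again move one fermion by one step, but now in the $\mC^r$-direction; being genuinely derived, they shift homological degree, so the clean module structure appears on $K_0$ (equivalently on the bounded derived category). That the two actions commute on $K_0$ is precisely skew Howe duality for $\bigwedge^n(U\otimes W)$ with $U=\mC^m\otimes\mC^2$ and $W=\mC^r$, restricted to the two coideal subalgebras; this one checks combinatorially on the fermionic basis. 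Together the two displays yield the bimodule $\bigwedge(n,m,r)$ and hence Proposition~\ref{prop:catskewclass}.

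The content of the ``Furthermore'' is that these two a priori different categorifications live, after Koszul duality, on the same category. Here I would invoke Koszulity of the graded blocks $\hat\cO_\Gamma(n)$ --- available for parabolic category $\cO$ of type $D$ via the Khovanov algebra of type $D$ as in \cite{ES_diagrams}, or by Beilinson--Ginzburg--Soergel --- and identify the Koszul dual category. The key structural fact is that Koszul duality on graded parabolic category $\cO$ interchanges the ``parabolic'' and ``singular'' directions: it sends $\bigoplus_\Gamma\hat\cO_\Gamma(n)$ to another sum of blocks of the same type with the roles of $m$ and $r$ exchanged, via an equivalence $K: D^b(\bigoplus_\Gamma\hat\cO_\Gamma(n)) \iso D^b(\bigoplus_{\Gamma'}\hat\cO_{\Gamma'}(n))$. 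Under $K$ the graded projective (translation) functors of the first paragraph correspond to graded shifts of the derived Zuckerman functors of the second, and conversely --- the Koszul-dual-functors principle of Mazorchuk--Ovsienko--Stroppel, which I would check survives the passage to coideal subalgebras because all functors in play are self-adjoint (the natural module being self-dual). Transporting the $\mathfrak{gl}_r\times\mathfrak{gl}_r$-action through $K$ therefore realizes it on the very category carrying the $\mathfrak{gl}_m\times\mathfrak{gl}_m$-action, now through Zuckerman functors; passing to Grothendieck groups recovers $\bigwedge(n,m,r)$ as a bimodule, with Verma classes the fermionic basis and simple classes a canonical basis, as in Theorem~\ref{thm:fourth}.

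The main obstacle I anticipate is the precise bookkeeping in this last step: one must pin down the bijection $\Gamma\leftrightarrow\Gamma'$ between blocks of $\hat\cO(n)$ and its Koszul dual and then track all grading and homological shifts so that the transported generators satisfy the coideal relations \emph{on the nose}, with the correct powers of $q$, not merely up to isomorphism and shift. In the full-quantum-group, type-$A$ situation this is classical, but the coideal (type $D$, self-dual natural module) structure alters the shifts at the boundary generators near the ``wall'', and confirming that Koszul duality still intertwines the two families there --- in particular that the derived Zuckerman functors attached to the doubled space $\mC^2\otimes\mC^r$ are Koszul dual to the self-adjoint translation functors --- is the delicate point. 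A secondary subtlety is the truncation imposed by the rank $n$ of $\mathfrak{so}_{2n}$: one works with $n$ large relative to the degrees involved, or sets up a stabilisation, so that the $\bigwedge^n$-boundary does not interfere with the action of the functors.
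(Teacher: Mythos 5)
Your outline follows essentially the paper's route, but with the logical order inverted in one place, and it is worth recording the difference. The paper categorifies \emph{both} actions symmetrically by translation functors, on two a priori different families of blocks: $\bigoplus_{\Gamma\in{\rm Bl}(n,r,m)}\cO_\Gamma(n)$ for the $\mathfrak{gl}_m\times\mathfrak{gl}_m$-action and $\bigoplus_{\Gamma\in{\rm Bl}(n,m,r)}\cO_\Gamma(n)$ for the $\mathfrak{gl}_r\times\mathfrak{gl}_r$-action (Proposition~\ref{prop:catskewclass} is an elementary check on classes of parabolic Verma modules, no $\mathfrak{sl}_2$-categorification machinery is needed for the $K_0$-level statement); the ``Koszul dual'' clause is then exactly Theorem~\ref{thm:skewKoszul}, proved block by block from Backelin's parabolic--singular duality $\cO^I_J\leftrightarrow\cO^J_{-w_0(I)}$ together with the type-$D_n$ analysis of $-w_0$ (identity for $n$ even, the diagram automorphism for $n$ odd), which yields the transposed block triple $\Gamma^\vee=(\underline{\mu},\underline{k},\overline{n}\epsilon)$ including the parity swap. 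Only afterwards is the second action realized on the \emph{same} category, and the paper does not invoke a general translation--Zuckerman Koszul-duality principle: it verifies directly (Proposition~\ref{prop:ZuckeronK}) that the transported functors are the shifted compositions of inclusion and derived Zuckerman functors, by computing their effect on graded parabolic Vermas via Enright--Shelton and adjunction. Your plan instead builds the Zuckerman action directly and appeals to the Koszul-dual-functors principle; that is a legitimate alternative, but the ``delicate bookkeeping'' you defer — the bijection $\Gamma\leftrightarrow\Gamma^\vee$ with its parity behaviour, and the homological/grading shifts $[1-k_i]\langle 1-k_i\rangle$ at each Zuckerman step — is precisely where the paper's proof does its concrete work, so it must be supplied rather than flagged. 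Note also that your worry about getting the $q$-powers right at the boundary generator only concerns the graded/quantum refinement (Theorems~\ref{thm:main} and~\ref{Koszul}); the statement as cited (Proposition~\ref{prop:catskewclass} plus Theorem~\ref{thm:skewKoszul}) is at the level of $\mQ$-linear $K_0$ and of block equivalences, where those shifts do not enter.
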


This skew Howe duality can be squeezed in between two type $ \mathrm{A}$ skew Howe dualities as follows. For an integer $m$ let $V_{m}$ be the natural $\mathfrak{gl}_m$-module. Fix integers $n,m,r$ and consider the $\mathfrak{gl}_m\times\mathfrak{gl}_r$-module $\bigwedge^n (V_{m} \otimes V_{r})$. By skew Howe duality, \cite{Howe}, the weight spaces of the $\mathfrak{gl}_m$-action are representations of $\mathfrak{gl}_r$ and vice versa; and the decomposition is the following
\abovedisplayskip0.3em
\belowdisplayskip0.3em
\begin{gather*}
\bigoplus_{\alpha}\bigwedge^{\alpha_1} V_{m}\otimes\cdots\otimes \bigwedge^{\alpha_r}V_m
\,\, \cong \,\, \bigwedge^n (V_{m} \otimes V_{r})\,\,
\cong \,\, \underset{\beta}\bigoplus\bigwedge^{\beta_1} V_r\otimes\cdots\otimes\bigwedge^{\beta_m}V_r
\end{gather*}
where the first isomorphism is as $\mathfrak{gl}_m$-module and the second as $\mathfrak{gl}_r$-module. The sums run over all compositions (possibly with zero parts)  of $n$ with $r$ respectively $m$ parts. Now consider the restriction $V_{2m}=V_m\oplus V_m$ to $\mathfrak{gl}_m$, where $\mathfrak{gl}_m$ is embedded diagonally into $\mathfrak{gl}_m \times \mathfrak{gl}_m\subset\mathfrak{gl}_{2m}$. Theorem~\ref{thm:fifth} gives a categorification of the middle piece of the diagram
\abovedisplayskip0.3em
\belowdisplayskip0.3em
\begin{eqnarray}
\label{introbigdiagram}
\mathfrak{gl}_{2m}\quad\quad{\curvearrowright}&\displaystyle{\bigoplus_{\alpha}} \bigwedge^{\alpha_1} V_{2m}\otimes \bigwedge^{\alpha_2} V_{2m}\otimes\cdots \otimes\bigwedge^{\alpha_r}V_{2m} &{\curvearrowleft}\quad \quad\mathfrak{gl}_r\nonumber\\
\cup\quad\quad\quad&&\quad\quad\quad\cap\quad\nonumber\\
\mathfrak{gl}_m\times \mathfrak{gl}_m\quad{\curvearrowright}&\bigwedge(n,m,r)&{\curvearrowleft}\quad \mathfrak{gl}_r\times \mathfrak{gl}_r\nonumber\\
\cup\quad\quad\quad&&\quad\quad\quad\cap\quad\\
\mathfrak{gl}_m\quad\quad{\curvearrowright}&\displaystyle{\bigoplus_{\gamma}} \bigwedge^{\gamma_1} V_{m}\otimes \bigwedge^{\gamma_2} V_{m}\otimes\cdots \otimes\bigwedge^{\gamma_{2r}}V_{m}&{\curvearrowleft}\quad \quad\mathfrak{gl}_{2r}\nonumber
\end{eqnarray}
The top and bottom parts can be quantized, \cite{LZZ}.  They were already categorified using parabolic-singular category $\cO$'s in \cite{MS} in a way which also fits into the framework of \cite{LQR},  \cite{MW}. In our categorification of the middle part the commuting actions of the two Lie algebras, viewed as specializations of the coideal subalgebras from above, are again given by translation and derived Zuckerman functors respectively but now on category $\cO$ for Lie algebras of type $\mathrm{D}$. Since this construction has a graded lift using $\hat{\cO}$ there is a quantized version of skew Howe duality with the commuting actions of the coideal subalgebras:

\begin{theorem}[see Theorem~\ref{Koszul}]
\label{Koszulintro}
 Fix the equivalence 
\abovedisplayskip0.3em
\belowdisplayskip0.3em
\begin{gather*}
\mathcal{K}:\bigoplus_{\underline{k} \in C(n,r)}
\hspace{-5pt}D^b\left(\hat{\cO}^{\mathfrak{q}_{\underline{k}}}_{\leq m}(n)\right)
\,\,\,\xrightarrow{\phantom{x}\textstyle \sim\phantom{x}} \bigoplus_{\underline{k} \in C(n,m)}
\hspace{-5pt}D^b\left(\hat{\cO}^{\mathfrak{q}_{\underline{k}}}_{\leq r}(n)\right).
\end{gather*}
given by Koszul duality. Then the coideal algebra actions from Theorem~\ref{thm:main} on the two sides turn into commuting actions on the same space, one given by Theorem~\ref{thm:main}, the other by Proposition~\ref{prop:ZuckeronK}.
\end{theorem}

As in the type $\mathrm{A}$ case, skew Howe duality contains in fact some Schur--Weyl duality version (see Theorem~\ref{prop:Schur_Weyl})  pairing the action of the coideal subalgebra with the action of the Hecke algebra corresponding to the Weyl groups of type $\mathrm{B}$ (and also $\mathrm{D}$). This in fact explains the occurrence of the Kazdhan-Lusztig combinatorics and was the starting point of our construction. 

On the uncategorified level this duality can also be found for instance in \cite{BW}, \cite{Watanabe} and implicitly in \cite{Li}. It is the coideal version of the well-known duality for wreath products, \cite{MSSchurWeyl}. We present a categorified version:

\begin{theorem}[see Theorem~\ref{prop:Schur_Weyl}]
The commuting actions of $\cH^\hint$ and $\mathbb{H}_n(\mathrm{B})$ on ${\mV_m^\hint}^{\otimes n}$ are each others centralizers.
\end{theorem}

It is related to for instance \cite{Li} or \cite{Li2} by using instead of convolution algebras of functions on partial flag varieties over finite fields the corresponding (graded version of) categories of perverse sheaves on partial flag varieties defined over the complex numbers, that is the uncategorified version can be obtained by the usual passage from sheaves to functions. 

Combinatorially there is only a small difference between the type $\mathrm{B}$ and type $\mathrm{D}$ version, see \cite[9.7]{ES_diagrams} and \cite{Li} versus \cite{Li2}  for a more precise statements. We prefer here to work with type $\mathrm{D}$, since it is simply laced and in particular its own Langlands dual, a crucial fact for the Koszul self-duality statement in Theorem~\ref{Koszulintro}.  The corresponding Hecke algebra of type $\mathrm{D}$ is realized as a subalgebra of a certain specialisation $\mathbb{H}_{1,q}$ of the 2-parameter Hecke algebra of type $\mathrm{B}$, see Section~\ref{sectionSW} for more details.

Our categorification is in reality an action of a certain 2-category, where $1$-morphisms are graded translation functors and $2$-morphisms are natural transformations. By construction, these natural transformations are given by the affine VW-algebras. These algebras do however not see explicitly the grading, and can also not single out the natural transformations of projective functors passing between two fixed integral blocks of the  underlying category $\cO$. This phenomenon is analogous to the role of the degenerate affine Hecke algebra in the type A situation, and very much in contrast to the KLR algebra which carries exactly this extra information.  For the special case of the coideal $\cH$ from Section~\ref{section:coideal}, a definition of the KLR analogue and the attached graded 2-category was recently given in \cite{BSWW}.

\subsection*{Acknowledgements} Most of the research was done when both authors were visiting University of Chicago in 2012, we acknowledge the excellent working conditions. We thank the organizers of the workshops "Gradings and Decomposition numbers" and  "Representation theory and symplectic algebraic geometry'' for the possibility to present our results. We thank Pavel Etingof, Stefan Kolb, Gail Letzter, Andrei Okounkov, Dmitri Panyushev, Vera Serganova, Daniel Tubbenhauer, Ben Webster for discussions and explanations.
\section{Basics}
\label{section:lie_basics}
\subsection{Lie theory}
\renewcommand{\thetheorem}{\arabic{section}.\arabic{theorem}}
Throughout this paper we denote by $\mathfrak{g}$ the Lie algebra $\mathfrak{so}_{2n}(\mathbb{C})$, by $U(\mathfrak{g})$ its enveloping algebra, and by $Z(U(\mathfrak{g}))$ the center of $U(\mathfrak{g})$. In Section \ref{section:VW} we will fix a specific presentation for $\mathfrak{so}_{2n}$.

Fix a basis $\{\epsilon_i \mid 1 \leq i \leq n\}$ of the dual of the Cartan $\mathfrak{h}^*$ with simple roots $\alpha_i=\epsilon_{i+1}-\epsilon_{i}$, for $1 \leq i \leq n-1$, and $\alpha_0=\epsilon_1 + \epsilon_2$ and the set of all roots given by $\mathrm{R}(\mathfrak{so}_{2n}) = \{ \pm \epsilon_i \pm \epsilon_j \mid i \neq j \}$.

We denote by $\cO$ the \emph{BGG-category} $\cO$ of $\mathfrak{g}$, \cite{Hbook}. For the set of simple roots $\alpha_1,\ldots,\alpha_{n-1}$ we denote the associated parabolic subalgebra of $\mathfrak{g}$ by $\pp$ and by $\cO^{\pp}$ the subcategory of $\cO$ of all $\pp$-finite modules, the \emph{parabolic category} of $\cO$.

A weight $\lambda = (\lambda_1,\ldots,\lambda_n) \in \mh^*$ of $\mathfrak{so}_{2n}$ is \emph{integral} if it is in the $\mZ$-span or the $(\mZ+\half)$-span of the $\epsilon_i$'s. In the former case we say the weight is \emph{supported on the integers}, in the latter that it is \emph{supported on the half-integers}.

We will focus on modules in $\cO$ that have integral weights only. These form a direct summand of $\cO$, the sum of \emph{integral blocks} of $\cO$, which we denote by $\cO(n)$. Similarly we denote by $\cO^\pp(n)$ the subcategory of $\cO^\pp$ having only modules with integral weights. The combinatorics of $\cO^\pp(n)$ were studied in \cite{ES_diagrams} and we recall some of the notations used therein below.

For $\la\in\mh^*$ let $M^\pp(\la)$ denote the \emph{parabolic Verma module} of highest weight $\la$, i.e. the maximal locally $\pp$-finite quotient of the ordinary Verma module $M(\lambda)$. Explicitly, $M^\pp(\la)=0$ if $\la$ is not $\pp$-dominant, i.e. dominant with respect to the sub root system generated by $\alpha_1,\ldots,\alpha_{n-1}$ and otherwise
\abovedisplayskip0.3em
\belowdisplayskip0.3em
\begin{gather}
\label{parabolic-Verma}
M^\pp(\la)\,=\,U(\mg)\otimes_{U(\pp)}E(\la)
\end{gather}
where $E(\la)$ is the finite dimensional $\mathfrak{l}$-module of highest weight $\la$ for the Levi subalgebra $\mathfrak{l}$ of $\pp$ inflated trivially to a $\pp$-module. If no confusion can arise, we call parabolic Verma modules simply {\it Verma modules}.

The set of $\pp$\emph{-dominant (integral) weights} will be denoted by
\begin{align}
\label{DefLa}
\begin{aligned}
\La&\,=\,\left\lbrace \la\in\mh^*\text{ integral}\mid\la=\sum_{i=1}^n\la_i\epsilon_i \text{ where } \la_1\leq \la_2\leq\cdots\leq \lambda_n \right\rbrace \\
&\,=\, \left\lbrace \la\in\mh^*\text{ integral}\mid\la+\rho=\sum_{i=1}^n\la_i'\epsilon_i \text{ where } \la_1'< \la_2'<\cdots< \lambda_n'\right\rbrace
\end{aligned}
\end{align}
where $\rho$ denotes the half-sum of positive roots, i.e. $\rho=(0,1,2,\dots, n-1)$ in the chosen basis. It decomposes into a disjoint union $\La^1 \cup \La^\hint$, the weights supported on the integers, denoted $\La^1$, and those supported on half-integers, denoted $\La^\hint$. The decomposition into generalized common eigenspaces for $Z(U(\mathfrak{g}))$ induces 
\abovedisplayskip0.3em
\belowdisplayskip0.3em
\[
\cO(n)\,=\,\bigoplus_\chi\cO_\chi(n)
\]
a decomposition of $\cO(n)$ indexed (via the Harish-Chandra isomorphism) by orbits of integral weights under the dot action $w\cdot\la=w(\la+\rho)-\rho$ of the Weyl group $W_n$ of $\mg$. The decomposition of $\cO(n)$ induces a decomposition of the subcategory $\Op(n)$. We denote by $\Op_\la(n)$ the summand containing $M^\pp(\la)$, i.e. the summand corresponding combinatorially to the (ordinary) orbit through $\lambda + \rho$. Note that the ordinary action and the dot-action of $W_n$ on integral weights preserve the lattices of weights supported on integers resp. on half-integers. Hence, we have a decomposition
\abovedisplayskip0.3em
\belowdisplayskip0.3em
\begin{gather}
\label{evenodd}
\Op(n)=\Op_1(n)\oplus\Op_\hint(n),
\end{gather}
where $\Op_1(n)$, respectively $\Op_\hint(n)$, is the direct sum of all $\Op_\la(n)$, such that $\la$ is supported on the integers, respectively half-integers.
We will equip the Grothendieck groups of $\Op_1(n)$ and $\Op_\hint(n)$ with the structure of a representation of a quantum symmetric pair,  see Section \ref{section:coideal}, induced by the action of translation functors.

Given an abelian category $\cA$, we denote by $K_0(\cA)$ the scalar extension by  $\mQ$ of the Grothendieck group of $\cA$, and call it by abuse of language just {\it Grothendieck group}. For an object $M\in\cA$ let $[M]\in K_0(\cA)$ its class, and for an exact functor $F$ let $[F]$ denote the induced $\mQ$-linear map between the Grothendieck groups. If  $\cA$ is additionally graded, its Grothendieck group is naturally a $\mZ[q,q^{-1}]$-module where $q$ corresponds to the grading shift $\langle 1\rangle$ up by $1$ and $K_0(\cA)$ is  the  $\mQ(q)$-module obtained by scalar extension.

\subsection{Diagrammatics} \label{section:diagrammatics}
To make the action on the Grothendieck groups explicit we use the combinatorics
from \cite[Section 2.2]{ES_diagrams} to identify $\La$ with diagrammatic weights, allowing us to describe the blocks of category $\cO^\pp(n)$ combinatorially. Since we want to distinguish the combinatorics for the two subcategories in \eqref{evenodd} we slightly modify the indexing sets for diagrammatic weights from \cite{ES_diagrams}.

\begin{definition}
We denote by $\mX_n$ the set of sequences $a = (a_i)_{i \in \mZ_{\geq 0}}$ such that $a_i \in \left\{ \up, \down, \times, \circ, \diamondb \right\}$, $a_i \neq \diamondb$ for $i \neq 0$, $a_0 \in \{\circ,\diamondb\}$, and
$$ \#\{ a_i \mid a_i \in \{\up,\down,\diamondb\}\} + 2 \#\{ a_i \mid a_i = \times\} = n.$$
Elements of $\mX_n$ are called \emph{diagrammatic weights supported on the integers}.
\end{definition}

\begin{remark}
The translation from a diagrammatic weight $a \in \mX$ to a diagrammatic weight $\widetilde{a}$ in the sense of \cite{ES_diagrams} is done by setting
$\widetilde{a}_i=a_{i-1}$, except for $i=1$ and $a_0=\diamondb$ where we choose $\widetilde{a}_1 \in \{\up,\down\}$ such that the total number of $\down$'s is even. In the language of \cite{ES_diagrams} we have thus always fixed the even parity for these blocks where  $a_0=\diamondb$.
\end{remark}

Similarly we have a set-up for half-integers.

\begin{definition}
We denote by $\mX^\hint_n$ the set of sequences $a = (a_i)_{i \in {\mZ_{\geq 0}+\half}}$ such that $a_i \in \{ \up, \down, \times, \circ\}$ and $ \#\{ a_i \mid a_i \in \{\up,\down\}\} + 2 \#\{ a_i \mid a_i = \times\} = n.$ We call these elements \emph{diagrammatic weights supported on the half-integers}.
\end{definition}

\begin{remark}
Translating a diagrammatic weight $a \in \mX^\hint$ to a diagrammatic weight $\widetilde{a}$ in the sense of \cite{ES_diagrams} is done by putting
$\widetilde{a}_i=a_{i-\half}$. The shift in the index is just convenient for later use.
\end{remark}

Obviously a diagrammatic weight $a$ is uniquely determined by the sets
$$\mathrm{P}_?(a) = \{ i \mid a_i = ? \},$$
for $? \in \{\up, \down, \times, \circ, \diamondb \}$. For $\lambda = (\lambda_1,\ldots,\lambda_n)\in \Lambda$, let $\lambda'=(\lambda_1',\ldots,\lambda_n')=\lambda + \rho$ and denote by $a_\lambda$ the diagrammatic weight defined by
\abovedisplayskip0.3em
\belowdisplayskip0.3em
\begin{gather*}
\begin{aligned}
\mathrm{P}_\down(\lambda) =&\, \{ \lambda_i' \mid \lambda_i' > 0 \text{ and } -\lambda_i' \text{ does not appear in } \lambda'\}, \\
\mathrm{P}_\up(\lambda) =&\, \{ -\lambda_i' \mid \lambda_i' < 0 \text{ and } -\lambda_i' \text{ does not appear in } \lambda'\},\\
\mathrm{P}_\times(\lambda) =&\, \{ \lambda_i' \mid \lambda_i' > 0 \text{ and } -\lambda_i' \text{ appears in } \lambda'\},\\
\mathrm{P}_\diamondb(\lambda) =&\, \{ \lambda_i' \mid \lambda_i' = 0\},\\
\mathrm{P}_\circ(\lambda) =&\, \begin{cases}
\mZ_{\geq 0} \setminus (\mathrm{P}_\down \cup \mathrm{P}_\up \cup \mathrm{P}_\times \cup \mathrm{P}_\diamondb) & \text{ if } \lambda \text{ is supported on integers,}\\
(\mZ_{\geq 0} + \half) \setminus (\mathrm{P}_\down \cup \mathrm{P}_\up \cup \mathrm{P}_\times) & \text{ if } \lambda \text{ is supported on half-integers.}
\end{cases}
\end{aligned}
\end{gather*}
The assignment $\lambda \mapsto a_\lambda$ defines a bijection $\Lambda\cong\mX_n \cup \mX_n^\hint$
between $\pp$-dominant weights and diagrammatic weights. In the following we will not distinguish between a weight and a diagrammatic weight and denote both by $\lambda$. We use the following identifications
\abovedisplayskip0.3em
\belowdisplayskip0.3em
\begin{gather} \label{eq:identify}
K_0\left({\cO}^{\pp}_1(n)\right) \cong \left\langle \mX_n \right\rangle_{\mQ} \quad \text{and} \quad K_0\left({\cO}^{\pp}_\hint(n)\right) \cong \left\langle \mX_n^\hint \right\rangle_{\mQ},
\end{gather}
between the $\mQ$-vector spaces on basis $\mX_n$ resp. $\mX_n^\hint$ and the Grothendieck group scalar extended to $\mQ$ by identifying the class of a parabolic Verma module of highest weight $\la$ with the weight $\lambda$.

Our weight dictionary induces an action of the Weyl group $W_n$ on diagrammatic weights corresponding to the dot-action on weights for $\mg$. Two diagrammatic weights are in the same orbit (and thus the corresponding Verma modules have the same central character) if and only if one is obtained from the other by a finite sequence of changes of the following form: swapping a $\down$ with an $\up$ or replacing two $\down$'s with two $\up$'s or two $\up$'s with two $\down$'s (keeping all $\cross$'s and $\circ$'s untouched), respectively interchanging an $\up$ and a $\down$ at position $1$ if a
diamond $\diamondb$ is present, see \cite{ES_diagrams}. Orbits of diagrammatic weights, called \emph{diagrammatic blocks} are given by fixing the positions of the $\times$'s and $\circ$'s and the parity of $\#\down+\#\cross$ of any of its weights. They are indexed by block diagrams, see \cite[Section 2.2]{ES_diagrams}, using the symbol $\bullet$ in place of all $\up$, $\down$ and $\diamondb$ as well as the afore mentioned parity. Moving $\down$'s to the left or turning two $\up$'s into two $\down$'s makes the weight bigger (with respect to the standard ordering on weights) as well as changing an $\up$ at position $1$ into a $\down$ if a diamond $\diamondb$ is present.

Note that diagrammatic blocks correspond precisely to blocks of $\Op(n)$ by sending $\Ga$ to the summand $\Op_\Ga(n)$ containing all Verma modules with highest weight in $\Ga$ which is indeed a block of the category.

\section{Affine VW-algebras}
\label{section:VW}

The main purpose of this section is a generalization of the Arakawa-Suzuki action, \cite{AS}, to the Lie algebra of type $\mathrm{D}_n$. The replacement of the degenerate affine Hecke algebra is the following affine VW-algebra $\VWd(\Xi)$.

\begin{define}
\label{def:VW}
Let $d \in \mN$ and fix a set $\Xi =(w_k)_{k \geq 0}$ of complex parameters. Then the \emph{affine Nazarov--Wenzl algebra} $\VWd=\VWd(\Xi)$, short \emph{affine $VW$-algebra}, is generated by $s_i, e_i, y_j \quad 1 \leq i \leq d-1, 1 \leq i \leq d, k \in \mN$,
subject to the following relations (for $1
\leq a,b \leq d-1$, $1 \leq c < d-1$, and $1 \leq i,j \leq d$):
\begin{enumerate}[(VW.1)]
\item $s_a^2 = 1$ \label{1}
\item 	
\begin{enumerate} 	
\item $s_as_b = s_bs_a$ for $\mid a-b \mid > 1$ 	 \label{2a}
\item $s_c s_{c+1} s_c = s_{c+1} s_c s_{c+1}$ 	 \label{2b}
    \item $s_ay_i = y_is_a$ for $i \not\in \{a,a+1\}$ \label{2c}
    \end{enumerate}
\item $e_a^2 = w_0 e_a$ \label{3}
\item $e_1y_1^ke_1 = w_ke_1$ for $k \in \mN$ \label{4}
\item 	
\begin{enumerate}
\item $s_ae_b = e_bs_a$ and $e_ae_b = e_be_a$ for $\mid a-b \mid > 1$ \label{5a}	 
\item $e_ay_i = y_ie_a$ for $i \not\in \{a,a+1\}$ \label{5b}	
\item $y_iy_j = y_jy_i$ \label{5c}	
\end{enumerate}
\item
\begin{enumerate} 	
\item $e_as_a = e_a = s_ae_a$ 	\label{6a}
\item $s_ce_{c+1}e_c = s_{c+1}e_c$ and
    $e_ce_{c+1}s_c = e_cs_{c+1}$ \label{6b}	
\item $e_{c+1}e_cs_{c+1} = e_{c+1}s_c$ and $s_{c+1}e_ce_{c+1} = s_ce_{c+1}$ \label{6c}	
\item $e_{c+1}e_ce_{c+1} =
    e_{c+1}$ and $e_ce_{c+1}e_c = e_c$ 	\label{6d}
\end{enumerate}
\item $s_ay_a - y_{a+1}s_a = e_a - 1$ and $y_as_a - s_ay_{a+1} = e_a - 1$ \label{7}
\item 	
    \begin{enumerate} 	\item $e_a(y_a+y_{a+1}) = 0$ \label{8a}	
    \item $(y_a+y_{a+1})e_a = 0$ \label{8b}	
\end{enumerate}
\end{enumerate}
\end{define}

\begin{remark} \label{Wop}
Relations \eqref{6b}, \eqref{6c}, \eqref{6d}, \eqref{7} come in pairs and it is in fact sufficient
to either require the first set of relations or the second, the other is then satisfied automatically.
All relations are symmetric or come in
symmetric pairs, thus we have a canonical isomorphism $\VWd(\Xi)\cong \VWd(\Xi)^{\op{opp}}$.
\end{remark}

\subsection{Centralisers}

Fix an integer $n\geq 4$ and set $N=2n$. Let $\mathbb{I}^+=\{1,\ldots,n\}$, $\mathbb{I}^-=-\mathbb{I}^+$, and $\mathbb{I}=\mathbb{I}^+ \cup \mathbb{I}^-$. We denote by $V$ the vector space with basis $\{v_i \mid i \in \mathbb{I}\}$ and by $\GL(\mathbb{I})$ its corresponding Lie algebra of endomorphisms, viewed as the matrices with respect to the chosen basis. Let $\mathbf{J}$ be the matrix such that $\mathbf{J}_{kl} = \delta_{k,-l}$ for $k,l \in \mathbb{I}$ with respect to the chosen basis. If we order columns and rows decreasing from top to bottom and left to right this is the matrix with ones on the anti-diagonal and zeros elsewhere.

\begin{definition} \label{matrixJ}
The Lie algebra $\mg=\mathfrak{so}_{2n}$ is the Lie subalgebra of $\GL(V)$ of all matrices $A$ satisfying $\mathbf{J}A+A^t\mathbf{J} = 0$; that is all matrices which  are skew-symmetric with respect to the anti-diagonal, $A_{i,j} = - A_{-j,-i}$. In terms of the bilinear form $\left\langle -,- \right\rangle$ on $V$ defined by $\mathbf{J}$ we thus have $\left\langle Xv,w \right\rangle + \left\langle v,Xw \right\rangle = 0$.
\end{definition}

Fix the Cartan subalgebra $\mh\subset\mg$ given by all diagonal matrices and a basis $\{\epsilon_i \mid i\in \mathbb{I}^+\}$ for $\mh^*$ such that the weight of $v_i$ is $\epsilon_i$ if $i\in \mathbb{I}^+$ and the weight of $v_i$ is $-\epsilon_{-i}$ if $i \in \mathbb{I}^-$. For every $\alpha \in \mathrm{R}(\mathfrak{so}_{2n})$ fix a root vector $X_\alpha$ of weight $\alpha$ and for $i \in \mathbb{I}^+$ let $X_i$ be the element in $\mh$ dual to $\epsilon_i$. 

Then $\{X_\gamma \mid \gamma \in \mathbb{B}(\mathfrak{so}_{2n})\}$ with $\mathbb{B}(\mathfrak{so}_{2n}) = \mathrm{R}(\mathfrak{so}_{2n}) \cup \mathbb{I}^+$ forms a basis of $\mathfrak{so}_{2n}$. We set $\mn^+ = \left\langle X_{\epsilon_i \pm \epsilon_j} \mid i > j \right\rangle$, and $\mn^- = \left\langle X_{-(\epsilon_i \pm \epsilon_j)} \mid i > j \right\rangle$ and fix the Borel subalgebra $\mb= \mn^+ \oplus \mh$. In this notation the natural representation $V$ is the irreducible representation $L(\epsilon_n)$ with highest weight $\epsilon_n$, the fundamental weight corresponding to $\alpha_{n-1} = \epsilon_n - \epsilon_{n-1}$. Furthermore, the $\{ X_{\pm(\epsilon_i-\epsilon_j)} \mid i> j\}$ together with $\mathfrak{h}$ form a Levi subalgebra $\ml$ isomorphic to $\mathfrak{gl}_n$ with corresponding standard parabolic subalgebra $\pp=\ml+\mn^+$ from Section~\ref{section:lie_basics}.

For $i \in \mathbb{I}$ denote by $v_i^* = v_{-i}$ the basis element dual to $v_i$ with respect to $\left\langle -,- \right\rangle$ and for $X_\gamma$ denote by $X_\gamma^*$ the element dual to $X_\gamma$ with respect to the Killing form of $\mathfrak{so}_{2n}$.

\begin{definition}
Let $M$ be a $\mg$-module. For $d\geq 0$ consider $\Md$. The linear endomorphisms $\tau, \sigma: V \otimes V \longrightarrow V \otimes V$ defined as\\[-0.3cm]
\noindent\begin{tabularx}{0.9\textwidth}{XXX}
\begin{equation}\hspace{-7cm}\label{Deftau}
\tau :\,\, v \otimes w \,\mapsto\, \left\langle v,w \right\rangle \sum_{i \in \mathbb{I}} v_i \otimes v_i^* \hspace*{-1cm}
\end{equation} & &
\begin{equation}\hspace{-9cm}\label{Defs}
\sigma :\,\, v \otimes w \,\mapsto\, w \otimes v \hspace*{-1cm}
\end{equation}
\end{tabularx}\\[0.1cm]
induce the following endomorphisms $s_i$, $e_i$ of $M \otimes V^{\otimes d}$ for $1\leq i\leq d-1$
\abovedisplayskip0.3em
\belowdisplayskip0.3em
\begin{gather*}
e_i \,=\, \Id \otimes \Id^{\otimes (i-1)} \otimes \tau \otimes \Id^{\otimes (d-i-3)} \quad \text{and} \quad
s_i \,=\, \Id \otimes \Id^{\otimes (i-1)} \otimes \sigma \otimes \Id^{\otimes (d-i-3)}.
\end{gather*}
By definition of the comultiplication it is obvious that $s_i$ is a $\mg$-homomorphism and using the compatibility of $\mg$ and the bilinear form it immediately follows that $e_i$ is as well (see also Remark \ref{omega_sigma_tau}), hence both are in $\END_\mg \left(M \otimes V^{\otimes d}\right)$.
\end{definition}

\begin{definition}
The \emph{pseudo Casimir element} in $\cU(\mg)\otimes \cU(\mg)$ is defined as
\abovedisplayskip0.3em
\belowdisplayskip0.3em
\begin{gather}
\label{DefOmega}
\Omega \,=\, \sum_{\gamma \in \mathbb{B}(\mathfrak{so}_{2n})} X_\gamma \otimes X_\gamma^*.
\end{gather}
It is connected to the ordinary \emph{Casimir element} $C = \sum_{\gamma \in \mathbb{B}(\mathfrak{so}_{2n})} X_\gamma X_\gamma^*$ via
\abovedisplayskip0.3em
\belowdisplayskip0.3em
\[
\Omega = \frac{1}{2}(\Delta(C)- C \otimes 1 - 1 \otimes C),
\]
where $\Delta$ denotes the comultiplication of $\mg$. For $0\leq i<j\leq d$ we define
\abovedisplayskip0.3em
\belowdisplayskip0.3em
\begin{gather}
\label{DefOmegaij}
\Omega_{ij} \,=\, \sum_{\gamma \in B_n} 1 \otimes \ldots \otimes X_{\gamma} \otimes 1 \otimes \ldots \otimes 1 \otimes X_{\gamma}^* \otimes 1 \otimes \ldots \otimes 1,
\end{gather}
where $X_{\gamma}$ is at position $i$ and $X_{\gamma}^*$ is at position $j$. Multiplication with $\Omega_{ij}$ defines an element $\Omega_{ij}\in\END_\mg(M \otimes V^{\otimes d})$ and we finally set for $1 \leq i \leq d$
\abovedisplayskip0.3em
\belowdisplayskip0.3em
\begin{gather}
\label{Defy}
y_i\,=\,\sum_{0 \leq k < i} \Omega_{ki} + \left( \frac{2n-1}{2}\right) \Id ,
\end{gather}
By definition of $\Omega$ it is clear that $y_i$ is a $\mg$-endomorphism on $M \otimes V^{\otimes d}$.
\end{definition}

Recall that a \emph{highest weight module} for $\mg$ is a $\mg$-module $M$ which is generated by a non-zero vector $m\in M$ satisfying $\mn^+ m=0$ and $\mh m\subseteq\mC m$. We say that $M$ is highest weight for short. Note that it satisfies $\END_\mg(M)=\mC$. Denote by $c_{\lambda}$ the value by which $C$ acts on a module of highest weight $\lambda$. 

\begin{remark} \label{omega_sigma_tau}
The representation $V \otimes V$ decomposes as a $\mg$-module into the irreducible representations $L(0)$, $L(2\epsilon_n)$, and $L(\epsilon_n+\epsilon_{n-1})$. A small computation shows that on $V \otimes V$ the following equation holds
\abovedisplayskip0.3em
\belowdisplayskip0.3em
\[
\Omega = - {\rm pr}_{L(0)} ( c_{\epsilon_n} {\rm id} + \sigma ) + \sigma.
\]

Note that $\tau$ from \eqref{Deftau} is a quasi-projection from $V \otimes V$ onto the copy of the trivial representation $L(0)$ inside $V \otimes V$.
A quick computation using the explicit form of $\Omega$ given above shows that multiplication with $\Omega$ on $V \otimes V$ is equal to the morphism
$\sigma - \tau$.
\end{remark}

Associated with $\la\in\mh^*$ and the corresponding $1$-dimensional module $\mC_\la$ we have the (ordinary) Verma module $\Vla=U(\mg)\otimes_{U(\mb)}\mC_\la$ of highest weight $\la$ and its irreducible quotient $L(\la)$. For any highest weight module $M$ we have:

\begin{lemma} \label{relation4} There exist complex numbers $(a_l(M))_{l \geq 0}$ such that $e_1 y_1^l e_1 = a_l(M)e_1$ for $l \geq 0$ in $\END_\mg(M \otimes V^{\otimes d})$.
\end{lemma}
\begin{proof}

The case $k=0$ follows directly from the definitions. Recalling Remark~\ref{omega_sigma_tau}, we first let $d=2$ and consider the composition
\abovedisplayskip0.3em
\belowdisplayskip0.3em
$$f: M=M \otimes L(0) \longrightarrow M \otimes V \otimes V \stackrel{y_1^k}\longrightarrow M \otimes V \otimes V \stackrel{e_1}\longrightarrow M \otimes
L(0)=M,$$
where the first map is the canonical inclusion. Now $f$ is an endomorphism of $M$, hence must be a multiple, say $a_k(M)$, of the identity. By pre-composing with $e_1$ we obtain $e_1 y_1^k e_1 = a_k(M)e_1.$
This identity also holds for $d>2$ since we just act by identities on the following tensor factors.
\end{proof}

\begin{theorem} \label{thm:actionofvw}
Let $M$ be highest weight and $\Xi_M=\{a_k(M)\mid k\geq 0\}$ as in Lemma \ref{relation4}. Then there is a well-defined right action of $\VWd(2n)$ on $M \otimes V^{\otimes d}$ defined by $w.s_i=s_i(w)$, $w.e_i=e_i(w)$, $w.y_j=y_j(w)$, for $w\in M \otimes V^{\otimes d}$, $1 \leq i \leq r-1$, $1 \leq j \leq r$ and $k\in\mZ$. In particular, we obtain an algebra homomorphism
\abovedisplayskip0.3em
\belowdisplayskip0.3em
\begin{gather}
\label{Thmaction}
\Psi_M=\Psi_M^{d,n}\,:\,{\VWd}(\Xi_M)\,\,\longrightarrow\,\,\END_\mg(M \otimes V^{\otimes d})^{\op{opp}}.
\end{gather}
\end{theorem}
\begin{proof}
We need to show that the assignment respects the relations of the affine $\VW$-algebra. This will be done in separate Lemmas in the Appendix. Relation \eqref{1} is
obvious, as are relations \eqref{2a} and \eqref{2b}, while \eqref{2c} follows from Lemma~\ref{relation2c}. Relation \eqref{3} is obvious as well and \eqref{4} follows from
Lemma \ref{relation4}. The relations \eqref{6a}-\eqref{6d} follow from Lemma \ref{relation6a6b6c6d}, relation \eqref{5a} is trivial, while \eqref{5b}) follows from Lemma~\ref{relation5b} and \eqref{5c} from Lemma~\ref{relation5c}. Finally relation \eqref{7} follows from Lemma \ref{relation7} and relations \eqref{8a}-\eqref{8b} from Lemma~\ref{relation8a8b}.
\end{proof}

\begin{remark}
As seen in the proof of Theorem \ref{thm:actionofvw} the parameters $\Xi_M$ of $\VWd(\Xi_M)$
depend on $n$ and the highest weight of $M$. This is a substantial difference to the type $ \mathrm{A}$ situation. There, the degenerate affine Hecke algebra acts on the endofunctor $\_ \otimes V^{\otimes r}$ of $\cO(\GL_n)$ for any $n$, with $V$ being the natural representation of $\GL_n$. This property plays an important role in the context of categorification of modules over quantum groups, \cite{Rouquier}, \cite{BK1}, \cite{BSIII}. 
To achieve a similar situation one needs to enlarge the algebra $\VWd(\Xi_M)$ to include the $w_k$ for $k \geq 0$ as central generators. Similar actions are described in \cite[2.2]{DRV1} and for Birman-Murakawi-Wenzl algebras in \cite{OR}.
\end{remark}

\begin{remark}
The action defined here can also be modified to give an action of $\VWd(\Xi_M)$ for a highest weight module $M$ for a Lie algebra of types $\mathrm{B}$ or $\mathrm{C}$ and their respective defining representation as $V$.
\end{remark}

\subsection{Cyclotomic quotients and admissibility}
Recall from \cite[Definition 2.10]{AMR} that the parameters $w_a$, $a\geq 0$ are {\it admissible} if they satisfy the following
\abovedisplayskip0.3em
\belowdisplayskip0.3em
\begin{gather}
\label{admissibility}
w_{2a+1}\,+\,\tfrac{1}{2} w_{2a}\,-\,\tfrac{1}{2}\sum_{b=1}^{2a}(-1)^{b-1}
w_{b-1} w_{2a-b+1}\,=\,0.
\end{gather}
We will call this recursion relation the \emph{admissibility conditions}.

\begin{definition} Given $\mathbf{u}=(u_1, u_2,\ldots, u_l)\in \mathbb{C}^l$ we denote 
\abovedisplayskip0.3em
\belowdisplayskip0.3em
\[
\VWd(\Xi,{\bf u}) = {\VWd(\Xi)}\Big/_{\prod_{i=1}^l(y_1-u_i)}
\]
and call it the \emph{cyclotomic VW-algebra of level $l$} with parameters $\bf{u}$.
\end{definition}

\begin{ex}\label{exBrauer}
If $M=L(0)$ is the trivial representation, then the action from Theorem \ref{thm:actionofvw} factors through the cyclotomic quotient 
\abovedisplayskip0.3em
\belowdisplayskip0.3em
\[
\Br_d(N)\,=\,\VWd(\Xi_{L(0)})\Big/_{\left(y_1-\frac{(N-1)}{2}\right)},
\]
with the notation already indicating that this is the usual Brauer algebra $\Br_d(N)$ introduced in \cite{Brauer}, see \cite[(2.2)]{Nazarov}. In this case $w_a=N\left({\scriptstyle \frac{N-1}{2}}\right)^a$ for $a\geq 0$ and they form an admissible sequence.
\end{ex}

More generally, for $\delta\in\mathbb{C}$, the cyclotomic quotient 
\abovedisplayskip0.3em
\belowdisplayskip0.3em
\[
\Br_d(\delta)\,=\,\VWd(\Xi)\Big/_{\left(y_1-\frac{(\delta-1)}{2}\right)} \qquad (\text{for } \Xi = \left( \delta\left(\scriptstyle \frac{\delta-1}{2}\right)^a \right)_{a\geq 0})
\] 
is the {\it Brauer algebra} of rank $d$ with parameter $\delta$, see e.g. \cite{GW}, \cite{KX}. It has a description as a diagram algebra
with basis $\mathbb{B}(\Br_d)$ consisting of the {\it Brauer diagrams} which display partitions of $2d$ ordered elements into precisely $d$ subsets of order $2$ (see e.g. \eqref{basiscycdia} ignoring the decorations), with a diagrammatic multiplication rule involving $\delta$. It obviously has dimension $(2d-1)!!=1\cdot3\cdot \ldots \cdot (2d-1)$. As an algebra it is generated by the following Brauer diagrams for $1\leq i\leq d-1$. 
\begin{gather} \label{Brgenerators}
\begin{gathered}
\begin{tikzpicture}[anchorbase,scale=0.7,thick,>=angle 90]
\node at (0,.5) {$s_i=$};
\draw (.6,0) -- +(0,1);
\draw [dotted] (1,.5) -- +(1,0);
\draw (2.4,0) -- +(0,1);
\draw (3,0) node[below] {\tiny i} -- +(.6,1);
\draw (3.6,0) node[below] {\tiny i+1} -- +(-.6,1);
\draw (4.2,0) -- +(0,1);
\draw [dotted] (4.6,.5) -- +(1,0);
\draw (6.2,0) -- +(0,1);
\begin{scope}[xshift=8cm]
\node at (0,.5) {$e_i=$};
\draw (.6,0) -- +(0,1);
\draw [dotted] (1,.5) -- +(1,0);
\draw (2.4,0) -- +(0,1);
\draw (3,0) node[below] {\tiny i} to [out=90,in=-180] +(.3,.3) to [out=0,in=90] +(.3,-.3) node[below] {\tiny i+1};
\draw (3,1) to [out=-90,in=-180] +(.3,-.3) to [out=0,in=-90] +(.3,.3);
\draw (4.2,0) -- +(0,1);
\draw [dotted] (4.6,.5) -- +(1,0);
\draw (6.2,0) -- +(0,1);
\end{scope}
\end{tikzpicture}
\end{gathered}
\end{gather}
In fact, any Brauer diagram is a monomial in these generators.

\begin{remark} \label{semisimplicity}
In case $M=L(0)$, $N\geq r$, Theorem~\ref{thm:actionofvw} turns then into the classical faitful action of the Brauer algebra on tensor space, see e.g. \cite{GW}, in particular $\Br_d(N)$ is semisimple.  In fact, $\Br_d(\delta)$ is generically semisimple, \cite{Wenzl}; for $\delta\geq0$ it is semisimple in precisely the following cases, see \cite{Brown} or \cite{Rui}, \cite{AST}:
$\delta\not=0, \text{ and } \delta\geq d-1$ or $\delta=0,\text{ and }d=1,3,5.$
\end{remark}

Admissibility ensures the existence of a nice basis of $\VWd$ as follows: For each Brauer diagram $b\in \mathbb{B}(\Br_d)$ we fix a monomial presentation in \eqref{Brgenerators} and consider the corresponding expression $b$ in the affine VW-algebra. Then given $\gamma,\eta\in\mathbb{Z}_{\geq 0}^r$ and $b \in \mathbb{B}(\Br_d)$ we have the element  $y_1^{\gamma_1}y_2^{\gamma_2}\cdots y_d^{\gamma_d}b y_1^{\eta_1}y_2^{\eta_2}\cdots y_d^{\eta_d}\in\VWd$. A monomial of this form (in the generators of $\VWd$) is \emph{regular} if $\gamma_i\not=0$ implies that $i$ is the left endpoint of a horizontal arc in $b$, and $\eta_i=0$ if  $i^*$ is the left endpoint of a horizontal arc in $b$, see \eqref{basiscycdia}. These monomials form a basis for $\VWd$ by \cite[Theorem 4.6]{Nazarov} in case the parameters are admissible. Under some  additional admissibility condition, \cite[Theorem A, Prop. 2.15]{AMR}, cyclotomic quotients inherit a basis:
\begin{prop}
\label{VWbasis}
If the  $w_a$, $a\geq 0$ are ${\bf u}$-admissible then $\VWd(\Xi,{\bf u})$ has dimension $l^d(2d-1)!!$. The regular monomials with $0\leq \gamma_i,\eta_i< l$ for $1\leq i\leq r$ form a basis $\mathbb{B}(\VWd(\Xi,{\bf u}))$.
\end{prop}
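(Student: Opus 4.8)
## Proof strategy for Proposition~\ref{VWbasis}

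The plan is to deduce the cyclotomic statement from the affine one (the basis of regular monomials for $\VWd(\Xi)$ recalled from \cite[Theorem 4.6]{Nazarov}) together with the admissibility analysis of \cite{AMR}. First I would recall the precise setup of \cite[Theorem A, Prop.~2.15]{AMR}: the notion of $\mathbf{u}$-admissibility, which supplements \eqref{admissibility} with finitely many further polynomial constraints on the $w_a$ (expressed in terms of the elementary symmetric functions of $\mathbf u$), guarantees that the relation $\prod_{i=1}^l(y_1-u_i)=0$ is compatible with the $e_a$-relations, so that the two-sided ideal it generates in $\VWd(\Xi)$ is ``small''. Concretely, admissibility ensures that for any $k$ the element $e_1 y_1^k e_1$, which by relation~\eqref{4} equals $w_k e_1$, is consistent with reducing $y_1$ modulo its level-$l$ polynomial; this is exactly what forces the $w_a$ for $a\ge l$ to be determined, and it is the content that makes the quotient flat of the expected rank.

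The main step is then a spanning-plus-independence argument for the regular monomials with bounded exponents. For spanning: starting from the affine basis $\{y_1^{\gamma_1}\cdots y_d^{\gamma_d}\,B\,y_1^{\eta_1}\cdots y_d^{\eta_d}\}$ of $\VWd(\Xi)$, I would show that in the quotient every such monomial can be rewritten as a $\mC$-linear combination of those with all $0\le\gamma_i,\eta_i<l$. For the exponent on a strand that is a left endpoint of a vertical strand or a through-strand one uses the cyclotomic relation for $y_1$ together with the $s_a$-relations \eqref{7} (which let one ``slide'' a high power of $y_1$ along the permutation part, at the cost of lower-order terms supported on monomials with strictly fewer crossings, hence an induction on a suitable length function on $b$); for the exponents attached to endpoints of horizontal arcs one uses relations \eqref{8a}, \eqref{8b} and \eqref{4} to express $y$-powers adjacent to an $e_a$ in terms of the $w_k$'s, again modulo lower terms. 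This reduction shows $\dim\VWd(\mathbf u)\le l^d(2d-1)!!$, since the number of regular monomials with exponents in $\{0,\dots,l-1\}$ on each admissible slot is exactly $l^d(2d-1)!!$ (the Brauer diagrams contribute $(2d-1)!!$, and each of the $2d$ strands carries one free exponent range of size $l$, the horizontal-arc constraints matching up the count correctly). For the reverse inequality I would invoke \cite[Theorem A]{AMR} directly: there it is proved that under $\mathbf u$-admissibility $\VWd(\mathbf u)$ is free of rank $l^d(2d-1)!!$ (e.g.\ via the cyclotomic Brauer algebra / a Jucys--Murphy type cellular structure), so the spanning set above, having the right cardinality, must be a basis.

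I expect the genuine obstacle to be the spanning reduction, specifically controlling the ``lower-order terms'' uniformly: one must set up the right partial order on monomials (simultaneously tracking the number of crossings/horizontal arcs in $b$ and the total $y$-degree) so that each rewriting step is strictly decreasing, and one must check that the relations \eqref{7}, \eqref{8a}--\eqref{8b}, \eqref{4} never reintroduce an out-of-range exponent on a slot already normalized. This is the standard but delicate ``straightening'' argument for affine/cyclotomic diagram algebras; since it is carried out in full generality in \cite{Nazarov} and \cite{AMR}, in the write-up I would cite \cite[Theorem~4.6]{Nazarov} for the affine basis and \cite[Theorem~A, Prop.~2.15]{AMR} for the cyclotomic specialization and the dimension count, and restrict the argument here to indicating why $\mathbf u$-admissibility of the $w_a$ is exactly the hypothesis needed to invoke those results.
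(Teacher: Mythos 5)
Your proposal is correct and, in the end, takes the same route as the paper: Proposition~\ref{VWbasis} is not proved in the text but quoted directly from \cite[Theorem A, Prop.~2.15]{AMR}, building on the affine basis of \cite[Theorem 4.6]{Nazarov}, which is exactly what you invoke for the dimension count and the basis statement. Your additional straightening/spanning sketch just outlines the argument carried out in those references rather than offering a genuinely different proof.
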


For the definition of $\bf{u}$-admissibility see \cite[Def. 3.6]{AMR}. We only need the special example from \cite[Lemma 3.5]{AMR}:
\begin{ex}
\label{uadmissibility}
Assume the entries of ${\bf u}$ are pairwise distinct and non-zero. Then the $w_a=\sum_{i=1}^l(2u_i-(-1)^l)u_i^a\prod_{1\leq j\not=i\leq l} \frac{u_i+u_j}{u_i-u_j}$ for $a\geq 0$ form a ${\bf u}$-admissible sequence.
\end{ex}

\subsection{A special case}
In the following we study Theorem \ref{thm:actionofvw} for special choices for  $M$, namely certain parabolic Verma modules. Recall our choice of parabolic $\pp$ from Section \ref{section:lie_basics}. To $\delta\in\mZ$ we associate the weight
\abovedisplayskip0.3em
\belowdisplayskip0.3em
\[
\de\,=\,\delta\om_0\,=\,\nicefrac{\delta}{2}\sum_{i=1}^n{\epsilon_i},
\]
with $\om_0$ the fundamental weight of $\mg$ corresponding to $\alpha_0$. In particular, $\de\in\La$ and we have the Verma module $\Mde$.

\begin{prop}\label{Verma}
For $\la \in \La$,
$M^\pp(\la) \otimes V$ has a filtration with sections isomorphic to $M^\pp(\la \pm \epsilon_j)$ for all $j \in \mathbb{I}^+$ such that $\la \pm \epsilon_j \in \La$ and each of these Verma modules appearing exactly once.
\end{prop}

\begin{proof}
This is a standard consequence of  \eqref{parabolic-Verma} and the tensor identity,
see \cite[Theorem 3.6]{Hbook}, using that $V$ has weights $\pm\epsilon_j$ for $j\in \mathbb{I}^+$ with multiplicity $1$.
\end{proof}

Applying Proposition \ref{Verma} iteratively, we obtain a bijection between Verma modules $M^\pp(\mu)$ appearing as subquotients in a Verma filtration of $\MdV$ and {\it $d$-admissible weight sequences} or {\it Verma paths} ending at $\mu$ where the latter is defined as follows: For fixed $d\geq 1$ and $\delta\geq 0$ a weight $\mu\in\La$ is called \emph{$d$-admissible for $\delta$} if there is a sequence
\abovedisplayskip0.3em
\belowdisplayskip0.3em
\begin{gather}
\label{Vermaseq}
\de\,=\,\bla^1 \rightarrow \bla^2 \rightarrow \cdots \rightarrow \bla^d \,=\,\mu
\end{gather}
of length $d$, starting at $\de$ and ending at $\mu$, of weights in $\La$ such that $\la^{i+1}$ differs from $\la^{i}$ by adding precisely one weight of $V$, i.e.  $\la^{i+1} = \la^i \pm \epsilon_j$ for some $j \in \mathbb{I}^+$. For instance, there are eight $2$-admissible weight sequences for $\de$.
\abovedisplayskip0.3em
\belowdisplayskip0.3em
\[
\begin{array}{ll}
\de \rightarrow \de - \epsilon_1 \rightarrow \de - 2 \epsilon_1, & \qquad \de \rightarrow \de - \epsilon_1 \rightarrow \de - \epsilon_1 - \epsilon_2, \\
\de \rightarrow \de - \epsilon_1 \rightarrow \de, & \qquad
\de \rightarrow \de - \epsilon_1 \rightarrow \de - \epsilon_1 + \epsilon_n, \\
\de \rightarrow \de + \epsilon_n \rightarrow \de + 2 \epsilon_n, & \qquad \de \rightarrow \de + \epsilon_n \rightarrow \de + \epsilon_n + \epsilon_{n-1}, \\
\de \rightarrow \de + \epsilon_n \rightarrow \de, & \qquad \de \rightarrow \de + \epsilon_n \rightarrow \de + \epsilon_n - \epsilon_1.
\end{array}
\]
Let $\vpath_d(\delta)$ (resp,  $\vpath_d(\delta)(\mu)$ ) be the set of all such Verma paths (ending at $\mu$).

\begin{prop}
\label{prop:eigenvalues}
As $\mathfrak{g}$-modules $M(\de)\otimes V\cong M(\de-\epsilon_1)\oplus M(\de+\epsilon_n)$ and this is an eigenspace decomposition for the action of $y_1$. The eigenvalues are $\alpha=\half(1-\delta)$ and $\beta=\half(\delta+N-1)$.
\end{prop}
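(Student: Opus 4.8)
The statement has three parts: the $\mathfrak{g}$-module decomposition $M(\de)\otimes V\cong M(\de-\epsilon_1)\oplus M(\de+\epsilon_n)$, the fact that this is a $y_1$-eigenspace decomposition, and the computation of the two eigenvalues $\alpha,\beta$. For the first part I would invoke Lemma~\ref{Verma} with $d=1$: $M(\de)\otimes V$ has a Verma filtration with sections $M(\de\pm\epsilon_j)$ for those $j\in I^+$ with $\de\pm\epsilon_j\in\La$. Since $\de=\frac{\delta}{2}\sum_i\epsilon_i$ has all coordinates equal, adding $+\epsilon_j$ to any coordinate $j<n$ or $-\epsilon_j$ to any coordinate $j>1$ destroys the weakly-increasing condition in \eqref{DefLa}, so the only surviving sections are $M(\de-\epsilon_1)$ and $M(\de+\epsilon_n)$; this gives a short exact sequence. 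To see the sequence splits, note the two highest weights $\de-\epsilon_1$ and $\de+\epsilon_n$ lie in different $W_n\cdot$-orbits (they have different central characters: e.g. compute $c_{\de-\epsilon_1}$ versus $c_{\de+\epsilon_n}$ using the standard formula $c_\mu=\langle\mu+\rho,\mu+\rho\rangle-\langle\rho,\rho\rangle$, or just observe that $\de-\epsilon_1+\rho$ and $\de+\epsilon_n+\rho$ are not $W_n$-conjugate since their multisets of absolute values differ), hence $M(\de)\otimes V$ decomposes as a direct sum over generalized central characters and each summand here is a single Verma.

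Once we have $M(\de)\otimes V = M(\de-\epsilon_1)\oplus M(\de+\epsilon_n)$ as $\mathfrak g$-modules, the second part is essentially formal: $y_1\in\END_\mg(M(\de)\otimes V)$ by construction, and $\END_\mg(M(\de\pm\epsilon_j))=\mC$ since parabolic Verma modules with those highest weights are indecomposable (indeed have simple head), while $\HOM_\mg(M(\de-\epsilon_1),M(\de+\epsilon_n))=0$ and vice versa because the highest weights are not linked. Therefore $y_1$ acts as a scalar on each summand, i.e. the decomposition is an eigenspace decomposition for $y_1$. Alternatively, and more explicitly, one can use that for $d=1$ we have $y_1=\Omega_{01}+\frac{2n-1}{2}\Id$ where $\Omega_{01}$ is multiplication by the pseudo-Casimir $\Omega=\frac12(\Delta(C)-C\otimes 1-1\otimes C)$ acting on $M(\de)\otimes V$; on the summand with highest weight $\mu=\de\pm\epsilon_j$ the element $\Delta(C)$ acts by $c_\mu$, $C\otimes 1$ acts by $c_\de$, and $1\otimes C$ acts by $c_{\epsilon_n}$ (the Casimir eigenvalue on $V=L(\epsilon_n)$), so $y_1$ acts by the scalar $\tfrac12(c_\mu-c_\de-c_{\epsilon_n})+\tfrac{2n-1}{2}$.

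For the third part I would just plug in. Using $c_\mu=\langle\mu+\rho,\mu+\rho\rangle-\langle\rho,\rho\rangle$ with the standard form $\langle\epsilon_i,\epsilon_j\rangle=\delta_{ij}$ and $\rho=(0,1,\dots,n-1)$, compute $c_\de$, $c_{\epsilon_n}=\langle\epsilon_n+\rho,\epsilon_n+\rho\rangle-\langle\rho,\rho\rangle$, and $c_{\de-\epsilon_1}$, $c_{\de+\epsilon_n}$; the differences collapse to a linear expression in $\delta$. The bookkeeping should yield $y_1\big|_{M(\de-\epsilon_1)}=\alpha=\frac12(1-\delta)$ and $y_1\big|_{M(\de+\epsilon_n)}=\beta=\frac12(\delta+N-1)$ with $N=2n$; a sanity check is $\alpha+\beta=\frac{N}{2}=n$ and a second check against Remark~\ref{rkBrauer} in a degenerate case.

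**Main obstacle.** The conceptual content is entirely in the splitting of $M(\de)\otimes V$, i.e. verifying that $\de-\epsilon_1$ and $\de+\epsilon_n$ have distinct central characters (are not $W_n\cdot$-linked) so that the filtration from Lemma~\ref{Verma} splits; everything after that is routine Casimir-eigenvalue arithmetic. I expect the linkage check to be short but it is the one place where the special shape of $\de$ (all coordinates equal) is really used, both to cut the filtration down to two terms and to separate those two terms by central character.
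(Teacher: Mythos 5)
Your proposal is correct and follows essentially the same route as the paper: Lemma~\ref{Verma} gives the length-two Verma flag, the splitting comes from the two highest weights having different central characters, and the eigenvalues are extracted exactly as in your "explicit" variant, writing $y_1=\Omega_{0,1}+\tfrac{2n-1}{2}\Id$ and evaluating $\tfrac12(c_{\de+\nu}-c_{\de}-c_{\epsilon_n})$ for $\nu=-\epsilon_1,\epsilon_n$ via $c_\la=\langle\la,\la+2\rho\rangle$. The only difference is cosmetic (your additional formal argument that $y_1$ is scalar on each summand is subsumed by the Casimir computation the paper performs anyway).
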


\begin{proof}
By Proposition~\ref{Verma}, $M^\pp(\de)\otimes V$ has a Verma flag of length two with the asserted Verma modules appearing. The filtration obviously splits since they have different central character. The Casimir $C$ acts on a highest weight module with highest weight $\la$ by $c_\la=\langle \la,\la+2\rho\rangle$, see e.g. \cite[Lemma 8.5.3]{Musson} and on the tensor product $M^\pp(\de)\otimes V$ as $\Delta(C)=C\otimes 1+1\otimes C+2\Omega_{0,1}$. Hence $y_1-\frac{N-1}{2}$ acts on the summands $M^\pp(\de+\nu)$ of $M^\pp(\de)\otimes V$ by
\abovedisplayskip0.3em
\belowdisplayskip0.3em
\begin{gather*}
\half\left(\langle \delta+\nu,\delta+\nu+2\rho\rangle-
\langle\delta,\delta+2\rho\rangle-\langle\epsilon_n,\epsilon_n+2\rho\rangle\right)
=
\begin{cases}
-\nicefrac{\delta}{2}-(n-1)&\text{if $\nu=-\epsilon_1$}\\
\phantom{-}\nicefrac{\delta}{2}&\text{if $\nu=\epsilon_n$}
\end{cases}
\end{gather*}
The statement follows now from the definition of $\alpha$ and $\beta$.
\end{proof}

\begin{definition}
\label{defalphabeta}
From now on set $\alpha=\half(1-\delta)$ and $\beta=\half(\delta+N-1)$ and abbreviate $\VWd=\VWd(\Xi_{M^\pp(\de)})$ and $\VWd(\alpha,\beta)=\VWd(\Xi_{M^\pp(\de)};\alpha,\beta)$ in the notation  from Theorem~\ref{thm:actionofvw}.
\end{definition}

\begin{corollary}
For $M=M^\pp(\de)$, the action \eqref{Thmaction} factors through the cyclotomic quotient with parameters $(\alpha,\beta)$ inducing an algebra homomorphism
\abovedisplayskip0.3em
\belowdisplayskip0.3em
\begin{gather*}
\Psi_{M^\pp(\de)}^{d,n}\,:\,\VWd(\alpha,\beta)\,\,\longrightarrow\,\,\END_\mg(\MdV)^{\op{opp}}.
\end{gather*}
\end{corollary}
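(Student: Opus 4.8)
The plan is to show that the action of $\VWd(\Xi_{M(\de)})$ on $\MdV$ from Theorem~\ref{thm:actionofvw} kills the element $\prod_{i}(y_1-u_i)$ for $\mathbf{u}=(\alpha,\beta)$, i.e. that $(y_1-\alpha)(y_1-\beta)$ acts as zero on $\MdV$. Once this is established, the universal property of the quotient $\VWd(\Xi;\mathbf{u})=\VWd(\Xi)/\big((y_1-\alpha)(y_1-\beta)\big)$ immediately yields the factorization of $\Psi_{M(\de)}^{d,n}$ through $\VWd(\Xi_{M(\de)};\alpha,\beta)$, producing the claimed algebra homomorphism. So the corollary reduces entirely to a statement about the single operator $y_1$.

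First I would recall that $y_1=\Omega_{0,1}+\frac{N-1}{2}\Id$ acts only on the first two tensor factors $M^\pp(\de)\otimes V$ (with identities on the remaining $V^{\otimes(d-1)}$ factors), so it suffices to check the identity on $M^\pp(\de)\otimes V$ itself. By Proposition~\ref{prop:eigenvalues} we have the $\mg$-module decomposition $M^\pp(\de)\otimes V\cong M^\pp(\de-\epsilon_1)\oplus M^\pp(\de+\epsilon_n)$, and this is precisely the eigenspace decomposition for $y_1$, with eigenvalue $\alpha=\tfrac12(1-\delta)$ on the first summand and $\beta=\tfrac12(\delta+N-1)$ on the second. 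Hence $y_1$ satisfies its characteristic polynomial $(y_1-\alpha)(y_1-\beta)=0$ on $M^\pp(\de)\otimes V$, and therefore on $\MdV$ after tensoring with identities. This is the key input and it is already done; the remaining content is purely formal.

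Concretely, I would phrase the argument as follows: the relation $(y_1-\alpha)(y_1-\beta)=0$ in $\END_\mg(\MdV)^{\op{opp}}$ means the two-sided ideal of $\VWd(\Xi_{M(\de)})$ generated by $(y_1-\alpha)(y_1-\beta)$ lies in the kernel of $\Psi_{M(\de)}^{d,n}$ (using that $\Psi$ is an algebra homomorphism so the kernel is a two-sided ideal, and that $\Psi(y_1)$ is the operator $y_1$ on $\MdV$). Since $\VWd(\Xi;\alpha,\beta)$ is by definition exactly this quotient, $\Psi_{M(\de)}^{d,n}$ descends to the asserted homomorphism $\VWd(\Xi_{M(\de)};\alpha,\beta)\to\END_\mg(\MdV)^{\op{opp}}$.

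There is no serious obstacle here — the corollary is essentially a bookkeeping consequence of Proposition~\ref{prop:eigenvalues} together with the existence of the $\VW$-action. The only point requiring a sentence of care is making sure the two-sided ideal generated by $(y_1-\alpha)(y_1-\beta)$ is indeed what defines the cyclotomic quotient $\VWd(\mathbf u)$ (it is, by the displayed definition $\VWd(\Xi,\mathbf u)=\VWd(\Xi)/\prod_i(y_1-u_i)$, where the quotient is understood as quotient by the two-sided ideal generated by that element), and that the parameters $\Xi_{M(\de)}=\{a_k(M^\pp(\de))\mid k\geq 0\}$ are simply carried along unchanged — the cyclotomic relation is imposed on top of them. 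No admissibility input (Proposition~\ref{VWbasis}) is needed for the mere existence of this homomorphism; that will only matter later when one wants to compare dimensions and prove the isomorphism of Theorem~\ref{thm:second}.
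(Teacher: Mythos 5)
Your argument is correct and is exactly the one the paper intends: the corollary is stated without proof as an immediate consequence of Proposition~\ref{prop:eigenvalues}, since the eigenspace decomposition gives $(y_1-\alpha)(y_1-\beta)=0$ on $\MdV$ and the cyclotomic quotient is by definition the quotient by the two-sided ideal generated by that element. Your extra remarks (reduction to the first two tensor factors, no admissibility needed at this stage) are accurate bookkeeping and consistent with how the paper later uses the quadratic relation in Lemma~\ref{cor:recursion}.
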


\begin{lemma}
\label{cor:recursion}
The elements $w_a$ of $\Xi_{M^\pp(\de)}$, $a\geq 0$, satisfy the recursion formula
$w_0=N$, $w_1=N\left({\scriptstyle \frac{N-1}{2}}\right)$, and for $a\geq 2$
\begin{gather}
\label{recursion}
w_a\,=\,(\alpha+\beta)w_{a-1}\,-\,\alpha\beta w_{a-2}.
\end{gather}
\end{lemma}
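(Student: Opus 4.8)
The plan is to compute the $w_a$ directly as the eigenvalues governing $e_1 y_1^k e_1$ and to exploit the fact that, after factoring through the cyclotomic quotient $\VWd(\alpha,\beta)$, the element $y_1$ satisfies its minimal polynomial $(y_1-\alpha)(y_1-\beta)=0$. First I would recall that by Lemma~\ref{relation4} (together with the Corollary following Proposition~\ref{prop:eigenvalues}) we have $e_1 y_1^a e_1 = w_a e_1$ in $\END_\mg(\MdV)^{\op{opp}}$, and that the morphism $\Psi^{d,n}_{M(\de)}$ factors through $\VWd(\alpha,\beta)$, so the defining relation $\prod_{i}(y_1-u_i)=(y_1-\alpha)(y_1-\beta)$ holds there. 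Applying $e_1(-)e_1$ to the identity $(y_1-\alpha)(y_1-\beta)=0$, i.e. to $y_1^2=(\alpha+\beta)y_1-\alpha\beta$, and then multiplying on both sides by $y_1^{a-2}$ for $a\geq 2$, gives $e_1 y_1^a e_1 = (\alpha+\beta)e_1 y_1^{a-1}e_1 - \alpha\beta\, e_1 y_1^{a-2}e_1$; reading off coefficients of $e_1$ (which is nonzero, so can be cancelled, or one simply invokes uniqueness of the scalar $a_k(M)$) yields exactly \eqref{recursion}. Here one must be slightly careful that the relation $y_1^2 = (\alpha+\beta)y_1 - \alpha\beta$ is understood inside the quotient $\VWd(\alpha,\beta)$, which is legitimate since $w_a$ for the purposes of this lemma means $a_a(M(\de))$, an invariant of the module.

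For the base cases I would argue as follows. The value $w_0 = N$ is immediate from Lemma~\ref{relation4}, which states $a_0(M)=N=2n$. For $w_1$ I would use Proposition~\ref{prop:eigenvalues}: there $y_1$ acts on $M(\de)\otimes V \cong M(\de-\epsilon_1)\oplus M(\de+\epsilon_n)$ with eigenvalues $\alpha$ and $\beta$, so $y_1^2$ acts with eigenvalues $\alpha^2,\beta^2$ and $(\alpha+\beta)y_1-\alpha\beta$ acts with eigenvalues $\alpha^2,\beta^2$ as well — consistent — while the scalar $w_1$ can be read off by noting that $e_1$ projects (up to the quasi-projection constant) onto the trivial summand and that $\tau$, hence $e_1$, interacts with $y_1$ through these eigenvalues; alternatively one computes $w_1 = \alpha\beta\cdot(\text{something})$ directly, or most cleanly: since $e_1 y_1 e_1 = w_1 e_1$ and the $d=2$ computation of Lemma~\ref{relation4} realizes $e_1 y_1 e_1$ as the scalar by which $y_1$ acts on the trivial isotypic component $M(\de)\otimes L(0)\cong M(\de)$ of $M(\de)\otimes V\otimes V$, one gets $w_1$ from the Casimir eigenvalue computation already carried out in Proposition~\ref{prop:eigenvalues}. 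The clean statement $w_1 = N\tfrac{N-1}{2}$ should follow from $\alpha+\beta = \tfrac{N}{2}$ and $\alpha\beta$, since one checks the recursion would force $w_1 = (\alpha+\beta)w_0 - \alpha\beta w_{-1}$ style identity is not available at $a=1$, so $w_1$ genuinely needs independent verification, and the natural route is the degenerate affine Hecke / Brauer specialization in Remark~\ref{rkBrauer} noting that $y_1 - \tfrac{N-1}{2}$ is the shift relating $\VWd$ to the Brauer presentation, giving $e_1 y_1 e_1$ the value $N\cdot\tfrac{N-1}{2}$.

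The main obstacle I anticipate is bookkeeping the normalization: $y_1$ as defined in \eqref{Defy} carries the shift $\tfrac{2n-1}{2}\Id = \tfrac{N-1}{2}\Id$, so one must track whether the recursion is being applied to $y_1$ or to $y_1 - \tfrac{N-1}{2}$, and confirm that $\alpha,\beta$ as fixed in Definition~\ref{defalphabeta} are indeed the eigenvalues of the un-shifted $y_1$ (Proposition~\ref{prop:eigenvalues} says exactly this), so that $(y_1-\alpha)(y_1-\beta)=0$ is the correct relation in $\VWd(\alpha,\beta)$ and no further shift enters the recursion. Once that is pinned down, the recursion step is a one-line multiplication and the only real content is the two base values $w_0=N$, $w_1 = N\tfrac{N-1}{2}$, which match the Brauer specialization $w_a = N(\tfrac{N-1}{2})^a$ at $a=0,1$ as expected since the cyclotomic parameters $(\alpha,\beta)$ deform that sequence.
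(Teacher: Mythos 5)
Your handling of $w_0$ and of the recursion step is essentially the paper's own argument: $w_0=N$ comes from $e_1^2=Ne_1$, and for $a\geq 2$ one sandwiches the quadratic relation $y_1^2=(\alpha+\beta)y_1-\alpha\beta$, which holds as operators on $M^\pp(\de)\otimes V^{\otimes d}$ by Proposition~\ref{prop:eigenvalues}, between two copies of $e_1$ and uses $e_1y_1^ke_1=w_ke_1$ with $e_1\neq 0$. That part is fine and not circular.

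The genuine gap is the base case $w_1=N\tfrac{N-1}{2}$: none of the routes you sketch establishes it. First, $e_1y_1e_1$ is \emph{not} ``the scalar by which $y_1$ acts on the trivial isotypic component'' $M\otimes L(0)\subset M\otimes V\otimes V$ -- the operator $y_1=\Omega_{0,1}+\tfrac{N-1}{2}$ does not preserve that component, and what actually appears is the compression $e_1\circ y_1\circ\iota$. Writing $\iota=\iota_\alpha+\iota_\beta$ along the two $y_1$-eigenspaces $M^\pp(\de-\epsilon_1)\otimes V$ and $M^\pp(\de+\epsilon_n)\otimes V$ gives $w_1=\alpha c_\alpha+\beta c_\beta$ with $c_\alpha+c_\beta=N$, so Proposition~\ref{prop:eigenvalues} alone cannot determine $w_1$; the missing coefficients are exactly the constants $A,B$ that are only solved for later in Lemma~\ref{lem:omegan}, \emph{using} both initial values. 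Second, Remark~\ref{rkBrauer} concerns $M=L(0)$ only; for $M=M^\pp(\de)$ the element $y_1$ does not act by $\tfrac{N-1}{2}$, so invoking the Brauer specialization amounts to assuming that $w_1$ is independent of the module, which is precisely what has to be proved. What is needed -- and what the paper does -- is a direct computation of $e_1y_1e_1$ from \eqref{Defy} and \eqref{DefOmega}: the term $e_1\Omega_{0,1}e_1$ vanishes (after the inner $e_1$ one is left with $\sum_{k\in I}\langle X_\gamma^{*}v_k,v_k^{*}\rangle\,X_\gamma m$, i.e.\ a trace of an element of $\mathfrak{so}_{2n}$ on $V$, which is zero), leaving only the shift term $\tfrac{N-1}{2}\,e_1^2=N\tfrac{N-1}{2}\,e_1$. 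Adding this short computation closes the gap; the rest of your argument stands as written.
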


\begin{proof}
By definition we have $e_1y_1^0e_1=e_1^2=Ne_1$, hence $w_0=N$. On the other hand for any $j\in \mathbb{I}^+$ we have
\abovedisplayskip0.3em
\belowdisplayskip0.3em
\begin{gather*}
e_1y_1e_1(m\otimes v_j\otimes v_j^*) \,\,=\,\, \sum_{k \in \mathbb{I}} e_1y_1(m\otimes v_k\otimes v_{k}^*)
\end{gather*}
Recalling \eqref{DefOmega}, \eqref{Defy} this is equal to
\begin{gather*}
\begin{aligned}
&e_1 \sum_{k \in \mathbb{I}} \left( \sum_{i \in \mathbb{I}^+} X_i^* m\otimes X_i v_k \otimes v_{k}^* +\hspace{-12pt}\sum_{\alpha \in \mathrm{R}(\mathfrak{so}_{2n})} \hspace{-12pt} X_\alpha m \otimes X_\alpha^* v_k \otimes v_k^* \right) + \tfrac{N(N-1)}{2} e_1 m\otimes v_j\otimes v_{j}^* \\
=\,\,&e_1 \sum_{i \in \mathbb{I}^+} \left( \sum_{k \in \mathbb{I}^+} X_i^* m \otimes \epsilon_k(X_i) v_k \otimes v_{k}^* - \sum_{k \in \mathbb{I}^-} X_i^* m \otimes \epsilon_k(X_i) v_k \otimes v_{k}^*\right)\\
&+ {\frac{N(N-1)}{2}} e_1 m\otimes v_j\otimes v_{j}^*.
\end{aligned}
\end{gather*}
We obtain $w_1=N\left(\scriptstyle \frac{N-1}{2}\right)$. Finally, $y_1^2=(\alpha+\beta)y_1-\alpha\beta$ by Proposition~\ref{prop:eigenvalues}, and hence $e_1y_1^ne_1=(\alpha+\beta)e_1y_1^{n-1}e_1-\alpha\beta e_1y_1^{n-2}e_1$.
\end{proof}

\begin{lemma}
\label{lem:omegan}
The $w_a$ from Lemma \ref{cor:recursion} are explicitly given as
\abovedisplayskip0.3em
\belowdisplayskip0.3em
\begin{gather}
\label{explicit-formula}
w_a
\,=\,N\sum_{k=0}^{a}\alpha^{a-k}\left(\nicefrac{N}{2}-\alpha\right)^k
-\nicefrac{N}{2}\sum_{k=0}^{a-1}\alpha^{a-1-k}\left(\nicefrac{N}{2}-\alpha\right)^k
\end{gather}
\end{lemma}
\begin{proof}
For $a=0, 1$ the claim follows immediately. Note that the recursion formula \eqref{recursion} has the general solution $w_a=A\alpha^a+B\beta^a$ with boundary conditions $A+B=N$ and $A\alpha+B\beta=\nicefrac{N}{2}(N-1)$, see e.g. \cite[Theorem 33.10]{Lidl-Pilz}.
Hence
\abovedisplayskip0.3em
\belowdisplayskip0.3em
\begin{gather*}
A\,=\,{\scriptstyle \frac{1}{\alpha-\beta}}\left(\nicefrac{N}{2}(N-1)-N\beta\right)\,=\,{\scriptstyle\frac{1}{\alpha-\beta}}\left(N\alpha-\nicefrac{N}{2}\right)
\end{gather*}
and therefore
\abovedisplayskip0.3em
\belowdisplayskip0.3em
\begin{gather}
\label{formulaw}
w_a\,=\,N(\alpha-\half)\frac{\alpha^a-\beta^a}{\alpha-\beta}+N\beta^a\,=\,
N(\alpha-\half)\sum_{k=0}^{a-1}\alpha^{a-1-k}\beta^{k}+N\beta^a.
\end{gather}
The lemma follows then by substituting $\beta=\nicefrac{N}{2}-\alpha$.
\end{proof}

For convenience we give a direct proof of the following result (which could alternatively be deduced from Lemma \ref{lem:uadm} using \cite[Corollary 3.9]{AMR}).
\begin{prop}
The $w_a$ from \eqref{explicit-formula} are admissible, i.e. satisfy \eqref{admissibility}.
\end{prop}

\begin{proof}
Set $Q(a)=N\sum_{k=0}^{a}\alpha^{a-k}\left(\nicefrac{N}{2}-\alpha\right)^k$. Then $w_a=Q(a)-\half Q(a-1)$ and the admissibility condition \eqref{admissibility} is for $m=2a+1$ equivalent to
\abovedisplayskip0.3em
\belowdisplayskip0.3em
\begin{gather}
\label{Yformel}
\begin{aligned}
0\,=&\,\, 2Q(m)-\half Q(m-2)+R\\
&-\sum_{b=1}^{m-1}(-1)^{b-1}\left(
Q(b-1)Q(m-b)+\nicefrac{1}{4}Q(b-2)Q(m-b-1)\right)
\end{aligned}
\end{gather}
where $R=\sum_{b=1}^{m-1}(-1)^{b-1}(Q(b-2)Q(m-b)+Q(b-1)Q(m-b-1))=0$, since $Q(-1)=0$ by definition and then
\abovedisplayskip0.3em
\belowdisplayskip0.3em
\begin{gather*}
\begin{aligned}
R\,=&\,\sum_{b=0}^{m-1}(-1)^{b}Q(b-1)Q(m-b-1)+\sum_{b=1}^{m}(-1)^{b-1}Q(b-1)Q(m-b-1)\\
=&\,(-1)^{m-1}Q(m-1)Q(-1)+Q(-1)Q(m-1)=0.
\end{aligned}
\end{gather*}

By the right hand side of \eqref{Yformel}, it is enough to show that $S(t)=2Q(t)-\sum_{b=1}^t (-1)^{b-1}Q(b-1)Q(t-b)=0$ for $t=m, m-1$. For this we consider $S(t)$ as a polynomial in $N$ and show that all the coefficients vanish. First note that
\abovedisplayskip0.3em
\belowdisplayskip0.3em
\begin{gather*}
2Q(t)=2N\sum_{k=0}^t\alpha^{t-k}\sum_{r=0}^k\binom{k}{r}\nicefrac{1}{2^r}N^r(-\alpha)^{k-r}
=\sum_{k=0}^t\sum_{r=0}^k\binom{k}{r}\nicefrac{1}{2^{r-1}}(-1)^{k-r}\alpha^{t-r}N^{r+1}.
\end{gather*}
Hence the coefficient in front of $N^{s+1}$ equals
\abovedisplayskip0.3em
\belowdisplayskip0.3em
\begin{gather}
\label{c}
c_{s+1}\,=\,\nicefrac{1}{2^{s-1}}(-1)^s\alpha^{t-s}\sum_{k=0}^t(-1)^{k}\binom{k}{s}.
\end{gather}
On the other hand
\abovedisplayskip0.3em
\belowdisplayskip0.3em
\begin{gather*}
\sum_{b=1}^t (-1)^{b-1}Q(b-1)Q(t-b)
\,=\, N^2\sum_{b=1}^t (-1)^{b-1}\sum_{r=0}^{b-1}\sum_{j=0}^{t-b}\alpha^{b-1-r+t-b-j}
\left(\nicefrac{N}{2}-\alpha\right)^{r+j},
\end{gather*}
hence the coefficient in front of $N^{s+1}$ equals
\abovedisplayskip0.3em
\belowdisplayskip0.3em
\begin{gather}
\label{d}
d_{s+1}\,=\,\nicefrac{1}{2^{s-1}}(-1)^s\alpha^{t-s}\sum_{b=1}^t\sum_{r=0}^{b-1}\sum_{j=0}^{t-b}
(-1)^{b+r+j}\binom{r+j}{s-1}.
\end{gather}
Clearly $c_0=0=d_0$ and then $c_{s+1}=d_{s+1}$ for all $s\geq 0$ by Lemma \ref{lem:stupid} below.
\end{proof}

\begin{lemma}
\label{lem:stupid}
Let $m\geq 0$ be odd and $s\geq 1$. Then
\abovedisplayskip0.3em
\belowdisplayskip0.3em
\begin{gather}
\label{eq:binomial}
\sum_{k=0}^m (-1)^{k}\binom{k}{s}\,\,=\,\,\sum_{b=1}^m\sum_{r=0}^{b-1}
\sum_{j=0}^{m-b}(-1)^{b+r+j}\binom{r+j}{s-1}.
\end{gather}
\end{lemma}

\begin{proof}
For $m=1$ the statement is clear. Let $L(m)$ and $R(m)$ be the left and right side of \eqref{eq:binomial}. We assume $L(m)=R(m)$ and want to deduce $L(m+2)=R(m+2)$ for which it is enough to show $R(m+2)-R(m)=L(m+2)-L(m)$. Since $m$ is odd, the latter is equivalent to verifying
\begin{gather*}
\binom{m+1}{s}-\binom{m+2}{s}\,\,=\,\,R(m+2)-R(m)
\end{gather*}
Now by definition $R(m+2)-R(m)$ equals
\begin{align*}
&\sum_{r=0}^{m+1}\sum_{j=0}^0(-1)^{m+r+j}\binom{r+j}{s-1}
+\sum_{r=0}^m\sum_{j=0}^1(-1)^{m+r+j+1}\binom{r+j}{s-1}\\
&+\sum_{b=1}^m\sum_{r=0}^{b-1}(-1)^{m+r+1}\binom{r+m+1-b}{s-1}
+\sum_{b=1}^m\sum_{r=0}^{b-1}(-1)^{m+r}\binom{r+m+2-b}{s-1}\\
=&(-1)^{2m+1}\binom{m+1}{s-1}+\sum_{r=0}^{m}(-1)^{m+r+2}\binom{r+1}{s-1}
+\sum_{r=0}^{m-1}(-1)^{m+r+1}\binom{r+1}{s-1}\\
&+\sum_{r=0}^0(-1)^m\binom{m+1}{s-1}+\sum_{b=1}^{m-1}(-1)^{m+b}\binom{m+1}{s-1}\\
=&-\binom{m+1}{s-1}+\binom{m+1}{s-1}-\binom{m+1}{s-1}-
\binom{m+1}{s-1}
=\binom{m+1}{s}-\binom{m+2}{s}.
\end{align*}
\end{proof}

\begin{lemma}
\label{lem:uadm}
The sequence $w_a$, $a\geq 0$ from Lemma \ref{lem:omegan} is $(\alpha,\beta)$-admissible.
\end{lemma}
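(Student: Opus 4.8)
The plan is to verify the $\mathbf{u}$-admissibility condition directly for $\mathbf{u}=(\alpha,\beta)$ by comparing the explicit sequence $w_a$ from Lemma~\ref{lem:omegan} with the closed formula given in Example~\ref{uadmissibility}. Concretely, Example~\ref{uadmissibility} tells us that for pairwise distinct non-zero $u_1,\ldots,u_l$ the sequence
\begin{eqnarray*}
\widetilde{w}_a=\sum_{i=1}^l\Bigl(2u_i-(-1)^l\Bigr)u_i^a\prod_{1\leq j\neq i\leq l}\frac{u_i+u_j}{u_i-u_j}
\end{eqnarray*}
is $\mathbf{u}$-admissible. So the first step is to specialize $l=2$, $u_1=\alpha$, $u_2=\beta$, which gives
\begin{eqnarray*}
\widetilde{w}_a=(2\alpha-1)\alpha^a\frac{\alpha+\beta}{\alpha-\beta}+(2\beta-1)\beta^a\frac{\alpha+\beta}{\beta-\alpha}
=\frac{\alpha+\beta}{\alpha-\beta}\Bigl((2\alpha-1)\alpha^a-(2\beta-1)\beta^a\Bigr).
\end{eqnarray*}
Then I would check that this agrees with the formula \eqref{formulaw} from the proof of Lemma~\ref{lem:omegan}, namely $w_a=N(\alpha-\tfrac12)\tfrac{\alpha^a-\beta^a}{\alpha-\beta}+N\beta^a$. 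Using $\alpha+\beta=\tfrac{N}{2}+\tfrac{N}{2}=N$ (which follows from Definition~\ref{defalphabeta}, since $\alpha+\beta=\tfrac12(1-\delta)+\tfrac12(\delta+N-1)=\tfrac N2$... actually $=\tfrac{N}{2}$, so I must be careful and recompute: $\alpha+\beta=\tfrac12(1-\delta+\delta+N-1)=\tfrac N2$), one rewrites $\widetilde w_a$ and matches it termwise against $w_a$.

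The remaining concern is a bookkeeping one: the precondition in Example~\ref{uadmissibility} requires $\alpha,\beta$ to be distinct and non-zero, which need not hold for all integers $\delta$ (e.g. $\alpha=0$ when $\delta=1$, or $\alpha=\beta$ when $\delta=\tfrac{1-N}{2}$, though the latter is not an integer for $N$ even so is harmless). To handle this cleanly I would argue by a polynomial-identity/density argument: the $\mathbf{u}$-admissibility condition (as spelled out in \cite[Def. 3.6]{AMR}) is a family of polynomial equations in the entries of $\mathbf{u}$ and the $w_a$; having verified these equations hold as rational-function identities in $\alpha,\beta$ on the Zariski-dense open locus where $\alpha\neq\beta$, $\alpha\beta\neq0$, they hold identically, hence for every integral $\delta$ as well. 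Since the $w_a$ are themselves given by the explicit polynomial formula of Lemma~\ref{lem:omegan} and $\alpha,\beta$ by Definition~\ref{defalphabeta}, this is just the observation that an identity of polynomials valid on a dense set is an identity.

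The main (and really only) obstacle is the precise form of $\mathbf{u}$-admissibility in \cite[Def. 3.6]{AMR}, which the excerpt does not reproduce; one must check that it is genuinely polynomial in the data so that the density argument applies, and that Example~\ref{uadmissibility} is being invoked with the correct normalization conventions (sign of $(-1)^l$, ordering of parameters). Assuming these match \cite{AMR} as cited, the proof reduces to the elementary algebraic identity $w_a=\widetilde w_a$ established above. I would therefore write: \emph{By Example~\ref{uadmissibility} with $l=2$, $(u_1,u_2)=(\alpha,\beta)$, the sequence $\widetilde w_a=\tfrac{\alpha+\beta}{\alpha-\beta}\bigl((2\alpha-1)\alpha^a-(2\beta-1)\beta^a\bigr)$ is $(\alpha,\beta)$-admissible whenever $\alpha\neq\beta$ and $\alpha\beta\neq 0$; comparing with \eqref{formulaw} and using $\alpha+\beta=\tfrac N2$ shows $\widetilde w_a=w_a$ as rational functions of $\alpha$, hence the equality of the (polynomial) sequences and of all admissibility relations persists for every integral $\delta$.}
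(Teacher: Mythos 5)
Your proposal is correct and follows essentially the same route as the paper: substitute $N=2(\alpha+\beta)$ into \eqref{formulaw} and observe that the resulting expression $\frac{\alpha+\beta}{\alpha-\beta}\bigl(2\alpha^{a+1}-2\beta^{a+1}-\alpha^a+\beta^a\bigr)$ coincides with the sequence of Example~\ref{uadmissibility} for $l=2$ and ${\bf u}=(\alpha,\beta)$. Your supplementary density argument goes beyond the paper (which silently ignores the degenerate parameters) and is a reasonable patch, but note that your side remark is off: $\alpha=\beta$ occurs at the integer $\delta=1-n$ (not at a non-integer), and $\alpha=0$ at $\delta=1$, so these loci are genuinely reachable and are covered only by your polynomial-identity argument, contingent on the relations of \cite[Def.~3.6]{AMR} being polynomial in the data.
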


\begin{proof}
Substituting $N$ in \eqref{formulaw} we obtain $w_a\,\,=\,\,{\scriptstyle \frac{\alpha+\beta}{\alpha-\beta}}(2\alpha^{a+1}-2\beta^{a+1}-\alpha^a+\beta^a)$ which is easy to see to agree with the formula for $w_a$ in Example~\ref{uadmissibility}.
\end{proof}

\begin{corollary}
\label{dimcycl}
The level $l=2$ cyclotomic quotients $\VWd(\alpha,\beta)$ are of dimension $2^d(2d-1)!!$ with basis $\mathbb{B}(\VWd(\alpha,\beta))$ given by the regular monomials from Proposition \ref{VWbasis}.
\end{corollary}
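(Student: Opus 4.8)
The plan is to deduce the corollary directly from the general basis theorem for cyclotomic VW-algebras, Proposition~\ref{VWbasis}, applied at level $l=2$. The first step is to recall that, by Definition~\ref{defalphabeta} together with the Corollary following Proposition~\ref{prop:eigenvalues}, the algebra $\VWd(\alpha,\beta)$ is by definition the cyclotomic quotient $\VWd(\Xi_{M(\de)};\mathbf{u})$ at the two-element tuple $\mathbf{u}=(\alpha,\beta)\in\mC^2$; in particular its level is $l=2$, and the underlying parameter sequence is the sequence $\Xi_{M(\de)}=(w_a)_{a\geq 0}$ worked out explicitly in Lemma~\ref{lem:omegan}.

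The second step is to invoke Lemma~\ref{lem:uadm}, which asserts precisely that this sequence $(w_a)_{a\geq 0}$ is $\mathbf{u}$-admissible for $\mathbf{u}=(\alpha,\beta)$. With $\mathbf{u}$-admissibility in hand the hypothesis of Proposition~\ref{VWbasis} is met with $l=2$, and the proposition immediately yields both assertions: the dimension equals $l^d(2d-1)!!=2^d(2d-1)!!$, and the regular monomials $y_1^{\gamma_1}\cdots y_d^{\gamma_d}\,B\,y_1^{\eta_1}\cdots y_d^{\eta_d}$ with $0\leq\gamma_i,\eta_i<2$ form a basis, which is exactly $\mathbb{B}_d=\mathbb{B}(\VWd(\alpha,\beta))$.

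In short, the corollary is a bookkeeping combination of earlier results: the genuinely substantive inputs --- ordinary admissibility of $(w_a)$ (resting on the combinatorial identity of Lemma~\ref{lem:stupid}) and $\mathbf{u}$-admissibility (Lemma~\ref{lem:uadm}) --- have already been established, while the cyclotomic basis statement is supplied by \cite{AMR} as recalled in Proposition~\ref{VWbasis}. Consequently there is no real obstacle here; the only points that need a moment's attention are purely notational: matching the parameter set $\Xi_{M(\de)}$ appearing in Definition~\ref{defalphabeta} with the sequence to which Lemma~\ref{lem:uadm} applies, and observing that the level of $\VWd(\alpha,\beta)$ equals the number of components of $\mathbf{u}$, namely $2$, so that $l^d(2d-1)!!$ specializes to $2^d(2d-1)!!$.
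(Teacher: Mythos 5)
Your argument is correct and is exactly the deduction the paper intends: the corollary is stated without proof precisely because it follows immediately by combining Lemma~\ref{lem:uadm} ($\mathbf{u}$-admissibility of the sequence $w_a$ for $\mathbf{u}=(\alpha,\beta)$) with Proposition~\ref{VWbasis} at level $l=2$. Your bookkeeping of the parameters and the level matches the paper's setup, so nothing further is needed.
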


\section{Isomorphism theorem and special projective functors}
\subsection{The Isomorphism Theorem}
\label{section:isotheorem_cyclotomic}
\begin{theorem}[The Isomorphism Theorem]
\label{iso}
If $n \geq 2d$ and $\delta\in\mZ$ then the map $\Psi_{M^\pp(\de)}^{d,n}$ from Theorem~\ref{thm:actionofvw} induces an isomorphism of algebras
\abovedisplayskip0.3em
\belowdisplayskip0.3em
\begin{gather*}
\Psi(\de)\,:\,\VWd(\alpha,\beta) \,\,\longrightarrow \,\, \END_\mg(\MdV)^{\op{opp}}.
\end{gather*}
\end{theorem}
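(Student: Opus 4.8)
To prove the Isomorphism Theorem I would establish that $\Psi(\de)$ is surjective and that source and target have the same (finite) dimension, so a surjective linear map between spaces of equal dimension is automatically an isomorphism. By Corollary~\ref{dimcycl} the source $\VWd(\alpha,\beta)$ has dimension $2^d(2d-1)!!$, so the two main tasks are: (i) compute $\dim_\mC\END_\mg(\MdV)^{\op{opp}} = 2^d(2d-1)!!$ under the hypothesis $n\geq 2d$, and (ii) prove surjectivity of $\Psi(\de)$.

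\textbf{Dimension count on the category $\cO$ side.} By iterating Lemma~\ref{Verma}, $\MdV$ has a filtration by parabolic Verma modules $M^\pp(\mu)$ indexed by $d$-admissible weight sequences for $\delta$, each appearing with a multiplicity equal to the number of such paths ending at $\mu$. Since $\HOM_\mg(M^\pp(\mu),M^\pp(\nu))$ vanishes unless $\mu\leq\nu$ and is one-dimensional when $\mu=\nu$ (and the relevant higher $\Ext$-groups between Verma modules with the filtration multiplicities bookkept correctly), a standard argument with Verma filtrations gives
$$\dim\END_\mg(\MdV) = \sum_{\mu} (\text{number of $d$-admissible sequences } \de\to\cdots\to\mu)^2 \le \#\{d\text{-admissible sequences of length } d\}.$$
The point of the hypothesis $n\geq 2d$ is that it is large enough that no two distinct admissible sequences collide or get truncated: each of the $2d$ "moves" $\la^i\to\la^i\pm\epsilon_j$ available at a generic step stays inside $\La$, so the count of length-$d$ admissible sequences is exactly $1\cdot 2\cdot 3\cdots(2d-1) = (2d-1)!!$ times $2^d$, matching Corollary~\ref{dimcycl}. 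Actually I would be careful here: the cleanest route is to show the filtration is \emph{multiplicity-free in a suitable ordered sense} and that $\HOM$ between the successive Verma subquotients is governed by a combinatorial matching, so that $\dim\END = \dim\VWd(\alpha,\beta)$; equivalently, use that $\Psi(\de)$ is injective on a spanning set of regular monomials by tracking how each monomial acts on the Verma filtration, which simultaneously handles injectivity and pins down the dimension.

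\textbf{Surjectivity.} This is the main obstacle. The image of $\Psi(\de)$ contains all the $s_i$, $e_i$, $y_j$; I must show these generate all $\mg$-endomorphisms of $\MdV$. The strategy: the $y_j$'s act on the Verma flag by the pseudo-Casimir eigenvalues, which by the $n\geq 2d$ hypothesis are \emph{pairwise distinct on the distinct Verma subquotients} — the eigenvalues of $(y_1,\dots,y_d)$ separate the admissible sequences. Hence the subalgebra generated by the $y_j$'s already contains a full system of orthogonal idempotents projecting onto the (generalized) isotypic pieces corresponding to each $\mu$, and more finely onto each path. Then one shows the $s_i,e_i$ provide enough "off-diagonal" morphisms $M^\pp(\mu)\to M^\pp(\mu')$ to realize every nonzero $\HOM$-space between Verma subquotients: concretely, for adjacent paths differing in one step, a single $s_i$ or $e_i$ (suitably projected by $y$-idempotents) gives the generator of the relevant one-dimensional $\HOM$-space, and general morphisms are obtained by composition along chains of such elementary moves. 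This is where the structure of $\cO^\pp$ in type $D$ and the explicit combinatorics from \cite{ES_diagrams} enter, guaranteeing that the quiver of the block is generated in this way. I expect the bookkeeping of which $\HOM$-spaces between Verma subquotients are nonzero, and checking that the $e_i,s_i$ hit generators of all of them, to be the technical heart; the $n\geq 2d$ assumption is what makes the eigenvalue separation work and keeps the admissible-sequence combinatorics free of degeneracies.

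Finally, assembling (i) and (ii): $\Psi(\de)$ is a surjection $\VWd(\alpha,\beta)\twoheadrightarrow\END_\mg(\MdV)^{\op{opp}}$ between $\mC$-algebras of the same finite dimension $2^d(2d-1)!!$, hence an isomorphism.
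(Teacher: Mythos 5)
Your overall architecture (a dimension equality plus a one-sided property forces an isomorphism) is the same as the paper's, but you have chosen to prove the hard side directly, and that is where the gap lies. The paper proves \emph{injectivity} by a concrete computation (Proposition~\ref{prop:injectivity}): the regular monomials $y^\gamma B y^\eta$ of Corollary~\ref{dimcycl} are evaluated on vectors $m\otimes v_{-1}\otimes\cdots\otimes v_{-d}$ with $m$ the highest weight vector of $\Mde$, and the special choice $\de=\delta\om_0$ (which kills the root vectors $X_{-(\epsilon_i+\epsilon_j)}$ on $m$) together with $n\geq 2d$ lets one read off each coefficient $c_\gamma$ from a distinct monomial in the $X_{-(\epsilon_i+\epsilon_j)}$'s; surjectivity then comes for free from the dimension count. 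Your proposal instead asserts surjectivity: that the image of the $s_i,e_i,y_j$ contains a complete system of idempotents (via separation of $y$-eigenvalues along Verma paths) and then generators of every nonzero $\HOM$-space between Verma subquotients via ``elementary moves''. Neither claim is established: eigenvalue separation of distinct paths is not obvious (and is not what the paper uses), and the statement that projected $s_i$'s and $e_i$'s hit generators of \emph{all} one-dimensional $\HOM$-spaces between the Verma layers is essentially the content of the theorem itself -- you acknowledge it as ``the technical heart'' but give no argument, and I do not see how to supply one without the kind of block-by-block analysis (Lemma~\ref{a}, Lemma~\ref{singO}) that the paper's route is designed to avoid at this stage.

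The dimension count also needs repair. The identity $\op{dim}\END_\mg(P)=\sum_\nu(\nu_P)^2$ is not a formal consequence of $\HOM$-vanishing between Vermas: in the paper it requires (a) that $\MdV$ is \emph{projective} (true for $\delta\geq 0$ since $\Mde$ is then projective; the case $\delta<0$ is handled separately by Ringel self-duality in Corollary~\ref{tableaux}), (b) BGG reciprocity, and (c) the multiplicity-one property $[M^\pp(\nu):L(\la)]\leq 1$, which comes from the type $(D_n,A_{n-1})$ Kazhdan--Lusztig combinatorics (\cite{LS}, Lemma~\ref{singO}). Moreover the number you need is not ``the number of $d$-admissible sequences'' (and $1\cdot2\cdots(2d-1)$ is $(2d-1)!$, not $(2d-1)!!$); it is the sum over endpoints $\mu$ of the \emph{squares} of the path counts, which is matched to $2^d(2d-1)!!$ via the bijection of Verma paths with up-down bitableaux (Lemma~\ref{surjectivity}, where $n\geq 2d$ ensures the two partitions of the bipartition never interfere) and the identity $\sum_\la|\cT^2_d(\la)|^2=2^d(2d-1)!!$ from \cite{AMR}. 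With the dimension count repaired along these lines, the cleanest completion of your argument is exactly the paper's: replace the unproved surjectivity step by the explicit injectivity computation on the highest weight vector.
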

\begin{proof}
By definition it is an algebra homomorphism. It is injective by Proposition~\ref{prop:injectivity} below and surjective since the dimensions agree, Corollary~\ref{tableaux}.
\end{proof}

\begin{prop}
\label{prop:injectivity}
If $n \geq 2d$ and $\delta\in\mZ$ then $\Psi_{M^\pp(\de)}^{d,n}$ from Theorem~\ref{thm:actionofvw} induces an injective map of algebras
\abovedisplayskip0.3em
\belowdisplayskip0.3em
\begin{gather*}
\Psi(\de)\,:\,\VWd(\alpha,\beta) \,\, \longrightarrow \,\, \END_\mg(\MdV)^{\op{opp}}.
\end{gather*}
\end{prop}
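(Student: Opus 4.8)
The plan is to prove injectivity of $\Psi(\de)\colon\VWd(\alpha,\beta)\to\END_\mg(\MdV)^{\op{opp}}$ by a standard dimension-free argument: exhibit the image of the basis $\mathbb{B}_d$ from Corollary~\ref{dimcycl} as a linearly independent subset of $\END_\mg(\MdV)$. Since $\dim\VWd(\alpha,\beta)=2^d(2d-1)!!$ is finite, it suffices to show $\Psi(\de)$ is injective on a spanning set, i.e.\ that $\{\Psi(\de)(X)\mid X\in\mathbb{B}_d\}$ are linearly independent endomorphisms.

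The key tool will be the Verma flag of $\MdV$ coming from iterated application of Lemma~\ref{Verma}: its sections are the $M^\pp(\mu)$ indexed by $d$-admissible weight sequences \eqref{Vermaseq} starting at $\de$, each appearing with multiplicity equal to the number of such sequences ending at $\mu$. Because $n\geq 2d$, these sequences never ``collide'' in a degenerate way — the relevant weights $\de+\sum\pm\epsilon_{j_i}$ are far enough from the walls that the combinatorics of sequences matches the combinatorics of the cyclotomic VW-basis (this is the role of the hypothesis $n\geq 2d$, and it will reappear when counting in Corollary~\ref{tableaux}). First I would analyze how the generators $s_i,e_i,y_j$ act on the associated graded of the Verma flag: the $y_j$'s act diagonally with eigenvalues read off from Proposition~\ref{prop:eigenvalues} and the Casimir computation (distinct eigenvalues $\alpha,\beta$ at each step separate the two ``directions'' of the admissible sequence), while $s_i$ and $e_i$ permute/pair up the flag sections in the evident combinatorial way, mimicking the Brauer diagram calculus. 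From this one extracts that the $\Psi(\de)(X)$ for $X\in\mathbb{B}_d$ have ``upper triangular'' leading terms with respect to a suitable partial order on admissible sequences (or on the associated up-down-bitableaux), hence are linearly independent.

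Concretely, I would set up a filtration of $\MdV$ refining the Verma flag, index the sections by pairs of admissible sequences (a ``source'' and ``target'' sequence), and show that a regular monomial $y^\gamma B y^\eta$ acts, modulo lower terms, as a rank-one-type map sending the section indexed by one sequence to that indexed by another, scaled by a nonzero product of the eigenvalues $\alpha,\beta$ (nonzero because $\delta\in\mZ$ forces $\alpha\ne 0$ unless $\delta=1$, and one checks the relevant products are genuinely nonzero, or more robustly argues that the distinct-eigenvalue property already gives a triangular decomposition independent of which of $\alpha,\beta$ vanishes). A linear dependence $\sum_X c_X\Psi(\de)(X)=0$ then yields, by looking at a maximal section in the support, $c_X=0$ for the corresponding $X$, and one descends. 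I expect the main obstacle to be bookkeeping: precisely matching the regularity condition on monomials (Definition~\ref{def:VW} conditions on left endpoints of horizontal arcs) with the combinatorics of admissible weight sequences, and checking that no cancellation occurs among the leading terms — i.e.\ that the eigenvalue scalars and the diagrammatic matching are compatible enough to give honest triangularity rather than merely block-triangularity with possibly singular diagonal blocks. The hypothesis $n\geq 2d$ is what makes this matching clean, and verifying it does its job everywhere is the delicate point.
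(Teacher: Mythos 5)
Your reduction --- it suffices to show that the images of the regular monomials in $\mathbb{B}_d$ from Corollary~\ref{dimcycl} are linearly independent --- is exactly the paper's first step. But the mechanism you propose for that independence is where the argument has a genuine gap. You want each regular monomial $y^\gamma B y^\eta$ to act, on the associated graded of a Verma flag of $\MdV$, as a map between flag sections ``scaled by a nonzero product of the eigenvalues $\alpha,\beta$'', and then conclude by triangularity. Two problems: (i) the relevant eigenvalues of $y_k$ on the higher sections are not $\alpha,\beta$ themselves but the shifted contents $c_\delta(b)$ of the boxes added or removed along the admissible sequence, and these can vanish or coincide for small $\delta$ (already $\alpha=\tfrac{1}{2}(1-\delta)=0$ for $\delta=1$), so the claimed nonvanishing of the diagonal scalars is false in general; your fallback remark that ``the distinct-eigenvalue property already gives a triangular decomposition'' is not an argument, and you yourself flag the no-cancellation step as unresolved. (ii) A chosen Verma flag is not preserved by the $s_i$'s and $e_i$'s, and identifying the ``leading term'' of $\Psi(\de)(y^\gamma B y^\eta)$ with respect to your order --- in particular showing it is nonzero and that the matching between regular monomials and pairs of admissible sequences is a bijection at leading order --- is precisely the content of the proposition; nothing in the sketch pins it down. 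As written, the decisive step is missing.

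The paper avoids all eigenvalue bookkeeping by a direct evaluation argument: writing $y_i=\Omega_{0,i}+(\text{Brauer terms})$ and using that $E(\de)$ is one-dimensional, one computes that on vectors $m\otimes v_{j_1}\otimes\cdots\otimes v_{j_d}$ (with $m$ the highest weight vector of $\Mde$) the operator $\Omega_{0,i}$ applied to a factor $v_{-a}$ produces terms $X_{-(\epsilon_b+\epsilon_a)}m\otimes\cdots\otimes v_b\otimes\cdots$ whose coefficients are nonzero independently of $\delta$. Labelling the endpoints of each decorated Brauer diagram by numbers in $\{\pm 1,\ldots,\pm 2d\}$ (this is exactly where $n\geq 2d$ enters: one needs $2d$ distinct indices in $I^+$), one reads off that in a putative relation $\sum_\gamma c_\gamma\Psi(\de)(\gamma)=0$ the coefficient of one specific PBW-basis vector $X_{-(\epsilon_{a_1}+\epsilon_{b_1})}\cdots X_{-(\epsilon_{a_k}+\epsilon_{b_k})}m\otimes v_{i_1}\otimes\cdots\otimes v_{i_d}$ equals a single $c_\gamma$, and an induction on the number of undecorated arcs kills all coefficients. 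To salvage your route you would have to replace ``nonzero product of $\alpha,\beta$'' by this kind of explicit, $\delta$-independent nonvanishing; the flag/eigenvalue picture as proposed does not deliver it.
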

\begin{proof}
By Corollary \ref{dimcycl} it is enough to show that the regular monomials $y_1^{\gamma_1}y_2^{\gamma_2}\cdots y_d^{\gamma_d}b y_1^{\eta_1}y_2^{\eta_2}\cdots y_d^{\eta_d}$ for $b\in\mathbb{B}(\Br_d)$ and $0\leq \gamma_i,\eta_i<2$ are mapped to linearly independent  morphisms. It might help to see basis elements diagrammatically by drawing the Brauer diagram with small decorations indicating the $y_i$'s, e.g. 
\begin{gather}
\label{basiscycdia}
\begin{tikzpicture}[anchorbase,thick,>=angle 90]
\begin{scope}[xshift=8cm]
\draw (0,0) -- +(0,1);\fill (-.1,.75) rectangle +(.2,.1);
\draw (.6,0) -- +(.6,1);
\draw (1.2,0) -- +(-.6,1);
\draw (1.8,0) to [out=90,in=-180] +(.9,.5) to [out=0,in=90] +(.9,-.5);\fill (1.72,.1) rectangle +(.2,.1);
\draw (1.8,1) to [out=-90,in=-180] +(.6,-.4) to [out=0,in=-90] +(.6,.4);
\draw (2.4,0) -- +(0,1);
\draw (3,0) -- +(.6,1);
\draw (4.2,0) -- +(0,1);
\draw (4.8,0) to [out=90,in=-180] +(.3,.3) to [out=0,in=90] +(.3,-.3);
\draw (4.8,1) to [out=-90,in=-180] +(.3,-.3) to [out=0,in=-90] +(.3,.3);\fill (5.3,.85) rectangle +(.2,.1);
\draw (6,0) -- +(0,1);\fill (5.9,.75) rectangle +(.2,.1);
\end{scope}
\end{tikzpicture}
\end{gather}
stands for $y_4by_1y_{10}y_{11}\in\mathbb{B}(\VWd(\alpha,\beta))$ where $b$ corresponds to the Brauer diagram in $\mathbb{B}(\Br_d)$  without the decorations. Sliding a decoration through an arc produces a linear combination of basis vectors according to Definition~\ref{def:VW}, (VW.7), (VW.8).

By Remark \ref{omega_sigma_tau} and \eqref{DefOmega} we have $y_{i}=\Omega_{0,i}+x$, where $x$ is some Brauer algebra element.
Let now $m\in\Mde$ be a highest weight vector. Then $X_\gamma m=0$ for $X_\gamma \in \mathfrak{n}^+$ and furthermore $X_{-(\epsilon_i + \epsilon_j)}m=0$ for all $i>j$ because of our specific choice of $\de$. (Note that $E(\de)$ is one dimensional in the description of $\Mde$ from \eqref{parabolic-Verma}.) Applying \eqref{DefOmega} we obtain for any $a\in \mathbb{I}^+$
\begin{gather*}
\Omega_{0,1}m\otimes v_a\,=\,{\scriptstyle \frac{\delta}{2}}m\otimes v_a, \quad\Omega_{0,1}m\otimes v_{-a}\,=\,{\scriptstyle \frac{\delta}{2}}m\otimes v_a+\sum_{i\not=a}a_i X_{-(\epsilon_i+\epsilon_j)}m\otimes v_i
\end{gather*}
for some $a_i \in \mC^*$. The $pm\otimes v_{i_1}\otimes v_{i_2}\otimes\cdots \otimes v_{i_{d-1}}\otimes v_{i_d}$, where $p$ runs through all monomials in the variables $X_{-(\epsilon_i+\epsilon_j)}$ for $i,j\in \mathbb{I}^+$ with $i>j$ form a basis $\mathbb{B}(\MdV)$ of $\MdV$. (Note that the $X_{-(\epsilon_i+\epsilon_j)}$'s commute due to the structure of the root system).
In particular, $y_r m\otimes v_{a}$ equals
\abovedisplayskip0.45em
\belowdisplayskip0.45em
\begin{gather}
\label{yaction}
\begin{cases}
\sum_{i\not=i_r} a_{i_r} X_{-(\epsilon_i+\epsilon_{i_r})}v_{i_1}\otimes\cdots\otimes v_{i_{r-1}}\otimes v_i\otimes v_{i_{r+1}}\otimes v_{i_d}
+(\dagger)&\text{if $-a\in \mathbb{I}^+$},\\
(\dagger)&\text{if $\phantom{-}a\in \mathbb{I}^+$},
\end{cases}
\end{gather}
where $(\dagger)$ stands for some linear combination of basis vectors where the above mentioned monomials have degree zero.

Now consider a standard basis vector $\gamma\in\mathbb{B}(\VWd(\alpha,\beta))$. Take its diagram \eqref{basiscycdia} and label the vertices with numbers from $\{i,-i\mid 1\leq i\leq 2d\}$ as follows: Label all vertices at the top from left to right by $d+1$ to $2d$ and at the bottom from left to right by $-1$ to $-d$. In case there is a vertical arc without decoration connecting two vertices replace the label at the top endpoint with the label at the bottom endpoint.
If there is a horizontal arc without decoration change the label at the right endpoint to the negative of the left endpoint. In the resulting diagram the labels, say $a$ and $b$, at the endpoints of arcs satisfy the following: $|a|\not=|b|$ and negative at the bottom and positive at the top if the arc is decorated; $a=b<0$ if the arc is undecorated and vertical; and $a=-b$ if the arc is undecorated and horizontal. Moreover, no number appears more than twice. In the example \eqref{basiscycdia} we get
$12,-3,-2,15,-5,-15,-6,-8,20,21,22$ at the top and $-1,-2,-3,-4,-5,-6,-7,-8,-9,9,-11$ at the bottom.

Let now $S=\sum_{\gamma\in\mathbb{B}(\VWd(\alpha,\beta))}r_\gamma \Psi(\de)(\gamma)=0$ with $r_\gamma\in\mC$. Pick $\gamma\in \mathbb{B}(\VWd(\alpha,\beta))$ corresponding to a diagram with all arcs decorated.
and let $(-a_i, b_i)$, $1\leq i\leq d$, $a_i,b_i>0$ be the pairs of labels attached to each arc.
Then, by \eqref{yaction} the coefficient of $X_{-(\epsilon_{a_1}+\epsilon_{b_1})}\cdots X_{-(\epsilon_{a_d}+\epsilon_{b_d})} m\otimes v_{1}\otimes\cdots \otimes v_{d}$ when expressing $S m\otimes v_{-1}\otimes\cdots \otimes v_{-d}$ in our basis is precisely $r_\gamma$, hence $r_\gamma=0$. Repeating this argument gives $c_\gamma=0$ for all diagrams with all strands decorated. Next pick $\gamma^\prime\in \mathbb{B}(\VWd(\alpha,\beta))$ which corresponds to a diagram with all arcs except one decorated and let $j_1,\ldots j_d$ and $j_1',\ldots j_d'$ be the associated labels at the bottom respectively top of the diagram read from left to right. Let $(-a_i, b_i)$, $1\leq i\leq d-1$, $a_i,b_i>0$ be the pairs of labels attached to each arc. By \eqref{yaction}, the coefficient of $X_{-(\epsilon_{a_1}+\epsilon_{b_1})}\cdots X_{-(\epsilon_{a_{d-1}}+\epsilon_{b_{d-1}})} m\otimes v_{i_1}\otimes\cdots v_{i_d}$  when expressing $Sm\otimes v_{j_1}\otimes\cdots v_{j_d}$ in our basis is then precisely $r_{\gamma^\prime}$. Hence $r_{\gamma^\prime}=0$ for all diagrams with only one undecorated arc. Proceeding like this gives finally $r_\gamma=0$ for all $\gamma$. Hence linear independence follows.
\end{proof}

\subsection{Bitableaux and Verma paths}
Based on \cite{AMR} we introduce a labelling set for a basis in the cyclotomic quotients and connect it with the counting of Verma modules from Proposition~\ref{Verma} via the diagrammatic weights.

A \emph{partition} of $d$ is a sequence $\la=(\la_1,\la_2,\ldots)$ of weakly decreasing non-negative integers with size $|\la|=\sum_{i\geq 1}{\la_i}=d$. We identify a partition with its Young diagram containing $\la_i$ boxes in the $i$-th row and call its top left vertex the origin. The set of all partitions will be denoted by $\cP^1$, respectively $\cP^1_d$ if we fix the size to be $d$. A \emph{bipartition} of $d$ is an ordered pair $\bla=(\la^{(1)},\la^{(2)})$ of partitions with size $|\lambda|=|(\la^{(1)},\la^{(2)})|=|\la^{(1)}|+|\la^{(2)}|=d$. We denote the set of bipartitions by $\cP^2$, respectively $\cP^2_d$ if we fix the size to be $d$.

An \emph{\updb} of length $d$ (or \emph{\updbd}) is a sequence $\left(\bla(0)=(\emptyset,\emptyset), \bla(1),\cdots, \bla(d) \right)$ of bipartitions such that consecutive elements, $\la(i+1)$ and $\la(i)$, differ by removing or adding a single box. We denote by 
\[
\abovedisplayskip0.45em
\belowdisplayskip0.45em
\cT^2\supset \cT^2_d\supset\cT^2_d(\bla)\quad\text{and}\quad\cT^1\supset \cT^1_d\supset\cT^1_d(\bla)
\]
the set of all \updb x, respectively those with a fixed length and those with fixed length and final bitableaux $\lambda$ and similarly for tableaux. From \cite{AMR} it follows in particular that for $l=1,2$
\abovedisplayskip0.3em
\belowdisplayskip0.3em
\begin{gather}
\label{cardinality}
\sum_\la|\cT^l_d(\la)|^2 \,\,=\,\, l^d(2d-1)!!.
\end{gather}
Recall the diagrammatic weights from Section \ref{section:diagrammatics}. We will abuse notation and denote the diagrammatic weight $a_\lambda$ by $\lambda$ as well for $\lambda \in \La$.  The diagrammatic weight $\de$ is then (The first and third are in $\mX_n$, the second in $\mX_n^\hint$).
\abovedisplayskip0.3em
\belowdisplayskip0.3em
\begin{gather*}
\begin{cases}
\begin{picture}(-60,33)
\put(2,20){\line(1,0){223}}
\put(10.7,17.3){$\circ$}
\put(10.7,5.3){$0$}
\put(33.7,17.3){$\circ$}
\put(56.7,17.3){$\circ$}
\put(79.7,17.3){$\circ$}
\put(74.7,5.3){$\frac{\delta-2}{2}$}
\put(101.3,20.1){$\scriptstyle\down$}
\put(101.3,5.3){$\frac{\delta}{2}$}
\put(124.3,20.1){$\scriptstyle\down$}
\put(147.3,20.1){$\scriptstyle\down$}
\put(170.3,20.1){$\scriptstyle\down$}
\put(193.3,20.1){$\scriptstyle\down$}
\put(183.3,5.3){$\frac{\delta+N-2}{2}$}
\put(216.3,17.3){$\circ$}
\put(233,17.5){$\cdots\quad$ if $\delta>0$ even,}
\end{picture}
\phantom{xxxxxxxxxxxxxxxxxxxxxxxxxxxxxxxxxxxxxxxxxxxxxxxxxxxxxx}\\
\begin{picture}(-60,33)
\put(2,20){\line(1,0){223}}
\put(10.7,17.3){$\circ$}
\put(10.7,5.3){$\frac{1}{2}$}
\put(33.7,17.3){$\circ$}
\put(56.7,17.3){$\circ$}
\put(79.7,17.3){$\circ$}
\put(74.7,5.3){$\frac{\delta-2}{2}$}
\put(101.3,20.1){$\scriptstyle\down$}
\put(101.3,5.3){$\frac{\delta}{2}$}
\put(124.3,20.1){$\scriptstyle\down$}
\put(147.3,20.1){$\scriptstyle\down$}
\put(170.3,20.1){$\scriptstyle\down$}
\put(193.3,20.1){$\scriptstyle\down$}
\put(183.3,5.3){$\frac{\delta+N-2}{2}$}
\put(216.3,17.3){$\circ$}
\put(233,17.5){$\cdots\quad$ if $\delta>0$ odd,}
\end{picture}\\
\begin{picture}(-60,33)
\put(2,20){\line(1,0){223}}
\put(10.7,17.3){$\diamondb$}
\put(10.7,5.3){$0$}
\put(33.7,20.1){$\scriptstyle\down$}
\put(56.7,20.1){$\scriptstyle\down$}
\put(79.7,20.1){$\scriptstyle\down$}
\put(101.3,20.1){$\scriptstyle\down$}
\put(124.3,20.1){$\scriptstyle\down$}
\put(147.3,20.1){$\scriptstyle\down$}
\put(170.3,20.1){$\scriptstyle\down$}
\put(193.3,20.1){$\scriptstyle\down$}
\put(193.3,5.3){$n$}
\put(216.3,17.3){$\circ$}
\put(233,17.5){$\cdots\quad$ if $\delta=0$.}
\end{picture}
\end{cases}
\end{gather*}
In light of Corollary~\ref{tableaux} we stick here to the case $\delta\geq 0$. 

Suppose we are given a $d$-admissible weight sequence as in \eqref{Vermaseq}
\abovedisplayskip0.3em
\belowdisplayskip0.3em
\begin{gather}
\label{wseq}
  \de\,=\,\lambda^1 \rightarrow \lambda^2 \rightarrow \ldots \rightarrow \lambda^d.
\end{gather}
We are going to represent this diagrammatically by what we call a Verma path of length $d$ (depending on the parameter $\delta\in\mZ_{\geq 0}$), similar to the diagrams in \cite{BSIII}, \cite{BS_walled_Brauer}. Note that, since the weights $\la^i$ and $\la^{i+1}$ differ only by some $\pm \epsilon_j$ and let $r_i=|\lambda^i_j|$ and $s_i=|\lambda^i_j \pm 1|$, the associated weight diagrams differ precisely at the coordinates $r_i$ and $s_i$. 
\begin{definition} The {\it Verma path} associated with a weight sequence \eqref{wseq} is given as follows: 
first draw the corresponding sequence of weight diagrams from bottom to top. 
\abovedisplayskip0.45em
\belowdisplayskip0.3em
\begin{gather}
\label{fig:Vermapath}
\begin{gathered}
\begin{tikzpicture}[scale=.67,>=angle 90,c/.style={insert path={circle[radius=2pt]}},d/.style={fill=black,shape=diamond,text height=-10pt,draw}]
\def\dist{2.2cm}
\draw (0,0) +(-3pt,3pt) -- +(0,0pt) -- +(3pt,3pt);\draw (1,0) [c];
\draw (0,0) to [in=-90,out=90] +(1,1);
\draw +(0,1) [c];\draw (1,1) +(-3pt,3pt) -- +(0,0) -- +(3pt,3pt);
\begin{scope}[xshift=\dist]
\draw (0,0) +(-3pt,-3pt) -- +(0,0pt) -- +(3pt,-3pt);\draw (1,0) [c];
\draw (0,0) to [in=-90,out=90] +(1,1);
\draw +(0,1) [c];\draw (1,1) +(-3pt,-3pt) -- +(0,0) -- +(3pt,-3pt);
\end{scope}
\begin{scope}[xshift=2*\dist]
\draw (0,0) [c];\draw (1,0) +(-3pt,3pt) -- +(0,0) -- +(3pt,3pt);
\draw (1,0) to [in=-90,out=90] +(-1,1);
\draw (0,1) +(-3pt,3pt) -- +(0,0) -- +(3pt,3pt);\draw +(1,1) [c];
\end{scope}
\begin{scope}[xshift=3*\dist]
\draw (0,0) [c];\draw (1,0) +(-3pt,-3pt) -- +(0,0) -- +(3pt,-3pt);
\draw (1,0) to [in=-90,out=90] +(-1,1);
\draw (0,1) +(-3pt,-3pt) -- +(0,0) -- +(3pt,-3pt);\draw +(1,1) [c];
\end{scope}
\begin{scope}[xshift=4*\dist]
\draw (0,0) +(-3pt,-3pt) -- +(3pt,3pt) +(-3pt,3pt) -- +(3pt,-3pt);
\draw (1,0) +(-3pt,3pt) -- +(0,0) -- +(3pt,3pt);
\draw (1,0) to [in=-90,out=90] +(-1,1);
\draw (0,1) +(-3pt,3pt) -- +(0,0) -- +(3pt,3pt);
\draw (1,1) +(-3pt,-3pt) -- +(3pt,3pt) +(-3pt,3pt) -- +(3pt,-3pt);
\end{scope}
\begin{scope}[xshift=5*\dist]
\draw (0,0) +(-3pt,-3pt) -- +(3pt,3pt) +(-3pt,3pt) -- +(3pt,-3pt);\draw (1,0) +(-3pt,-3pt) -- +(0,0) -- +(3pt,-3pt);
\draw (1,0) to [in=-90,out=90] +(-1,1);
\draw (0,1) +(-3pt,-3pt) -- +(0,0) -- +(3pt,-3pt);\draw (1,1) +(-3pt,-3pt) -- +(3pt,3pt) +(-3pt,3pt) -- +(3pt,-3pt);
\end{scope}
\begin{scope}[xshift=6*\dist]
\draw (0,0) +(-3pt,3pt) -- +(0,0pt) -- +(3pt,3pt);
\draw (1,0) +(-3pt,-3pt) -- +(3pt,3pt) +(-3pt,3pt) -- +(3pt,-3pt);
\draw (0,0) to [in=-90,out=90] +(1,1);
\draw (0,1) +(-3pt,-3pt) -- +(3pt,3pt) +(-3pt,3pt) -- +(3pt,-3pt);\draw (1,1) +(-3pt,3pt) -- +(0,0) -- +(3pt,3pt);
\end{scope}
\begin{scope}[xshift=7*\dist]
\draw (0,0) +(-3pt,-3pt) -- +(0,0pt) -- +(3pt,-3pt);\draw (1,0) +(-3pt,-3pt) -- +(3pt,3pt) +(-3pt,3pt) -- +(3pt,-3pt);
\draw (0,0) to [in=-90,out=90] +(1,1);
\draw (0,1) +(-3pt,-3pt) -- +(3pt,3pt) +(-3pt,3pt) -- +(3pt,-3pt);\draw (1,1) +(-3pt,-3pt) -- +(0,0) -- +(3pt,-3pt);
\end{scope}
\begin{scope}[yshift=-2cm]
\draw (0,0) +(-3pt,3pt) -- +(0,0pt) -- +(3pt,3pt);\draw (1,0) +(-3pt,-3pt) -- +(0,0) -- +(3pt,-3pt);
\draw (0,0) to [out=90,in=-180] +(.5,.5) to [out=0,in=90] +(.5,-.5);
\draw +(0,1) [c];\draw (1,1) +(-3pt,-3pt) -- +(3pt,3pt) +(-3pt,3pt) -- +(3pt,-3pt);
\begin{scope}[xshift=\dist]
\draw (0,0) +(-3pt,3pt) -- +(0,0pt) -- +(3pt,3pt);\draw (1,0) +(-3pt,-3pt) -- +(0,0) -- +(3pt,-3pt);
\draw (0,0) to [out=90,in=-180] +(.5,.5) to [out=0,in=90] +(.5,-.5);
\draw (0,1) +(-3pt,-3pt) -- +(3pt,3pt) +(-3pt,3pt) -- +(3pt,-3pt);\draw +(1,1) [c];
\end{scope}
\begin{scope}[xshift=2*\dist]
\draw (0,0) +(-3pt,-3pt) -- +(0,0pt) -- +(3pt,-3pt);\draw (1,0) +(-3pt,3pt) -- +(0,0) -- +(3pt,3pt);
\draw (0,0) to [out=90,in=-180] +(.5,.5) to [out=0,in=90] +(.5,-.5);
\draw +(0,1) [c];\draw (1,1) +(-3pt,-3pt) -- +(3pt,3pt) +(-3pt,3pt) -- +(3pt,-3pt);
\end{scope}
\begin{scope}[xshift=3*\dist]
\draw (0,0) +(-3pt,-3pt) -- +(0,0pt) -- +(3pt,-3pt);\draw (1,0) +(-3pt,3pt) -- +(0,0) -- +(3pt,3pt);
\draw (0,0) to [out=90,in=-180] +(.5,.5) to [out=0,in=90] +(.5,-.5);
\draw (0,1) +(-3pt,-3pt) -- +(3pt,3pt) +(-3pt,3pt) -- +(3pt,-3pt);\draw +(1,1) [c];
\end{scope}
\begin{scope}[xshift=4*\dist]
\draw (0,0) +(-3pt,-3pt) -- +(3pt,3pt) +(-3pt,3pt) -- +(3pt,-3pt);\draw +(1,0) [c];
\draw (0,1) to [out=-90,in=-180] +(.5,-.5) to [out=0,in=-90] +(.5,.5);
\draw (0,1) +(-3pt,-3pt) -- +(0,0pt) -- +(3pt,-3pt);\draw (1,1) +(-3pt,3pt) -- +(0,0) -- +(3pt,3pt);
\end{scope}
\begin{scope}[xshift=5*\dist]
\draw +(0,0) [c];\draw (1,0) +(-3pt,-3pt) -- +(3pt,3pt) +(-3pt,3pt) -- +(3pt,-3pt);
\draw (0,1) to [out=-90,in=-180] +(.5,-.5) to [out=0,in=-90] +(.5,.5);
\draw (0,1) +(-3pt,-3pt) -- +(0,0) -- +(3pt,-3pt);\draw (1,1) +(-3pt,3pt) -- +(0,0pt) -- +(3pt,3pt);
\end{scope}
\begin{scope}[xshift=6*\dist]
\draw (0,0) +(-3pt,-3pt) -- +(3pt,3pt) +(-3pt,3pt) -- +(3pt,-3pt);\draw +(1,0) [c];
\draw (0,1) to [out=-90,in=-180] +(.5,-.5) to [out=0,in=-90] +(.5,.5);
\draw (0,1) +(-3pt,3pt) -- +(0,0pt) -- +(3pt,3pt);\draw (1,1) +(-3pt,-3pt) -- +(0,0) -- +(3pt,-3pt);
\end{scope}
\begin{scope}[xshift=7*\dist]
\draw +(0,0) [c];\draw (1,0) +(-3pt,-3pt) -- +(3pt,3pt) +(-3pt,3pt) -- +(3pt,-3pt);
\draw (0,1) to [out=-90,in=-180] +(.5,-.5) to [out=0,in=-90] +(.5,.5);
\draw (0,1) +(-3pt,3pt) -- +(0,0) -- +(3pt,3pt);\draw (1,1) +(-3pt,-3pt) -- +(0,0pt) -- +(3pt,-3pt);
\end{scope}
\end{scope}
\begin{scope}[yshift=-4cm]
\begin{scope}[xshift=0]
\node[d] at (0,0) {};
\draw (1,0) [c];
\draw (0,.1) to [in=-90,out=90] +(1,.9);
\draw +(0,1) [c];
\draw (1,1) +(-3pt,3pt) -- +(0,0) -- +(3pt,3pt);
\node at (0,-.5) {$\scriptstyle 0$};
\end{scope}
\begin{scope}[xshift=1*\dist]
\node[d] at (0,0) {};\draw (1,0) [c];
\draw (0,.1) to [in=-90,out=90] +(1,.9);
\draw +(0,1) [c];\draw (1,1) +(-3pt,-3pt) -- +(0,0) -- +(3pt,-3pt);
\node at (0,-.5) {$\scriptstyle 0$};
\end{scope}
\begin{scope}[xshift=2*\dist]
\draw +(0,0) [c];\draw (1,0) +(-3pt,-3pt) -- +(0,0) -- +(3pt,-3pt);
\draw (1,0) to [in=-90,out=90] +(-1,.9);
\node[d] at (0,1) {};\draw (1,1) [c];
\node at (0,-.5) {$\scriptstyle 0$};
\end{scope}
\begin{scope}[xshift=3*\dist]
\draw +(0,0) [c];\draw (1,0) +(-3pt,-3pt) -- +(0,0) -- +(3pt,-3pt);
\draw (1,0) to [in=-90,out=90] +(-1,.9);
\node[d] at (0,1) {};\draw (1,1) [c];
\node at (0,-.5) {$\scriptstyle 0$};
\end{scope}
\begin{scope}[xshift=4*\dist]
\node[d] at (0,0) {};\draw (1,0) +(-3pt,-3pt) -- +(0,0) -- +(3pt,-3pt);
\draw (0,.1) to [out=90,in=-180] +(.5,.4) to [out=0,in=90] +(.5,-.5);
\draw +(0,1) [c];\draw (1,1) +(-3pt,-3pt) -- +(3pt,3pt) +(-3pt,3pt) -- +(3pt,-3pt);
\node at (0,-.5) {$\scriptstyle 0$};
\end{scope}
\begin{scope}[xshift=5*\dist]
\node[d] at (0,0) {};\draw (1,0) +(-3pt,3pt) -- +(0,0) -- +(3pt,3pt);
\draw (0,.1) to [out=90,in=-180] +(.5,.4) to [out=0,in=90] +(.5,-.5);
\draw +(0,1) [c];\draw (1,1) +(-3pt,-3pt) -- +(3pt,3pt) +(-3pt,3pt) -- +(3pt,-3pt);
\node at (0,-.5) {$\scriptstyle 0$};
\end{scope}
\begin{scope}[xshift=6*\dist]
\draw +(0,0) [c];\draw (1,0) +(-3pt,-3pt) -- +(3pt,3pt) +(-3pt,3pt) -- +(3pt,-3pt);
\draw (0,.9) to [out=-90,in=-180] +(.5,-.4) to [out=0,in=-90] +(.5,.5);
\node[d] at (0,1) {};\draw (1,1) +(-3pt,3pt) -- +(0,0) -- +(3pt,3pt);
\node at (0,-.5) {$\scriptstyle 0$};
\end{scope}
\begin{scope}[xshift=7*\dist]
\draw +(0,0) [c];\draw (1,0) +(-3pt,-3pt) -- +(3pt,3pt) +(-3pt,3pt) -- +(3pt,-3pt);
\draw (0,.9) to [out=-90,in=-180] +(.5,-.4) to [out=0,in=-90] +(.5,.5);
\node[d] at (0,1) {};\draw (1,1) +(-3pt,-3pt) -- +(0,0) -- +(3pt,-3pt);
\node at (0,-.5) {$\scriptstyle 0$};
\end{scope}
\end{scope}
\end{tikzpicture}\\
\begin{tikzpicture}[scale=.7,>=angle 90,c/.style={insert path={circle[radius=2pt]}},d/.style={shape=diamond,text height=-10pt,draw}]
\def\dist{2.3cm}
\begin{scope}[yshift=-6cm]
\begin{scope}[xshift=3.3*\dist]
\draw (0,0) +(-3pt,3pt) -- +(0,0) -- +(3pt,3pt);
\draw (0,1) +(-3pt,-3pt) -- +(0,0) -- +(3pt,-3pt);
\draw [-] (0,0)--(0,1);\fill (0,.5) circle(2pt) node at (0,-.5) {$\nicefrac{1}{2}$};
\end{scope}
\begin{scope}[xshift=4.3*\dist]
\draw (0,1) +(-3pt,3pt) -- +(0,0) -- +(3pt,3pt);
\draw (0,0) +(-3pt,-3pt) -- +(0,0) -- +(3pt,-3pt);
\draw [-] (0,0)--(0,1);\fill (0,.5) circle(2pt) node at (0,-.5) {$\nicefrac{1}{2}$};
\end{scope}
\end{scope}
\end{tikzpicture}
\end{gathered}
\end{gather}
Then for each $i=1,\dots,d-1$, we insert vertical line segments connecting
all coordinates strictly smaller or strictly larger than $r_i$ and $s_i$ that are labelled $\down$ or $\up$ in
$\lambda^{i}$ and $\lambda^{i+1}$ and connect the remaining coordinates $r_i$ and $s_i$ of $\lambda^{i}$ and $\lambda^{i+1}$ as in the appropriate one from \eqref{fig:Vermapath}.
\end{definition}

Note that in the third row of \eqref{fig:Vermapath} any strand might need to be decorated with a $\bullet$ as in the fourth row. To determine this, replace $\diamondb$ by either $\up$ or $\down$ such that the diagram has an even number of $\down$'s in that row. If the strand is not oriented as in the first or second row, then a decoration $\bullet$ must be added.

\begin{ex}
The first eight pictures in \eqref{exVermapath} display the Verma paths for $d=2$ and $\delta>1$, representing the eight Verma modules appearing in a Verma filtration of $\MdV$. In case $\delta=1$, the first picture has to be replaced by the last one. Note the $\bullet$ in the last Verma path that indicates that the symbol $\diamondb$ at position $0$ in the second row would need to be substituted with a $\up$ to obtain an even number of $\down$'s.
\abovedisplayskip0.45em
\belowdisplayskip0.3em
\begin{gather}
\label{exVermapath}
\begin{gathered}
\begin{tikzpicture}[anchorbase,scale=0.39,>=angle 90,c/.style={insert path={circle[radius=2.7pt]}}]
\foreach \x in {0,1,7,8} \draw (\x,0) [c];
\foreach \x in {0,2,7,8} \draw (\x,1) [c];
\foreach \x in {1,2,7,8} \draw (\x,2) [c];
\draw[<-<] (2,0) to [in=-90,out=90] +(-1,1) to [out=90,in=-90] +(-1,1);
\draw[<-<] (3,0) to [in=-90,out=90] +(0,1) to [out=90,in=-90] +(0,1);
\draw[<-<] (4,0) to [in=-90,out=90] +(0,1) to [out=90,in=-90] +(0,1);
\draw[<-<] (5,0) to [in=-90,out=90] +(0,1) to [out=90,in=-90] +(0,1);
\draw[<-<] (6,0) to [in=-90,out=90] +(0,1) to [out=90,in=-90] +(0,1);
\begin{scope}[xshift=10.25cm]
\foreach \x in {0,1,7,8} \draw (\x,0) [c];
\foreach \x in {0,2,7,8} \draw (\x,1) [c];
\foreach \x in {0,1,7,8} \draw (\x,2) [c];
\draw[<-<] (2,0) to [in=-90,out=90] +(-1,1) to [out=90,in=-90] +(1,1);
\draw[<-<] (3,0) to [in=-90,out=90] +(0,1) to [out=90,in=-90] +(0,1);
\draw[<-<] (4,0) to [in=-90,out=90] +(0,1) to [out=90,in=-90] +(0,1);
\draw[<-<] (5,0) to [in=-90,out=90] +(0,1) to [out=90,in=-90] +(0,1);
\draw[<-<] (6,0) to [in=-90,out=90] +(0,1) to [out=90,in=-90] +(0,1);
\end{scope}
\begin{scope}[xshift=20.5cm]
\foreach \x in {0,1,7,8} \draw (\x,0) [c];
\foreach \x in {0,2,7,8} \draw (\x,1) [c];
\foreach \x in {0,3,7,8} \draw (\x,2) [c];
\draw[<-<] (2,0) to [in=-90,out=90] +(-1,1) to [out=90,in=-90] +(0,1);
\draw[<-<] (3,0) to [in=-90,out=90] +(0,1) to [out=90,in=-90] +(-1,1);
\draw[<-<] (4,0) to [in=-90,out=90] +(0,1) to [out=90,in=-90] +(0,1);
\draw[<-<] (5,0) to [in=-90,out=90] +(0,1) to [out=90,in=-90] +(0,1);
\draw[<-<] (6,0) to [in=-90,out=90] +(0,1) to [out=90,in=-90] +(0,1);
\end{scope}
\begin{scope}[yshift=-3.5cm]
\begin{scope}[]
\foreach \x in {0,1,7,8} \draw (\x,0) [c];
\foreach \x in {0,1,6,8} \draw (\x,1) [c];
\foreach \x in {0,1,6,7} \draw (\x,2) [c];
\draw[<-<] (2,0) to [in=-90,out=90] +(0,1) to [out=90,in=-90] +(0,1);
\draw[<-<] (3,0) to [in=-90,out=90] +(0,1) to [out=90,in=-90] +(0,1);
\draw[<-<] (4,0) to [in=-90,out=90] +(0,1) to [out=90,in=-90] +(0,1);
\draw[<-<] (5,0) to [in=-90,out=90] +(0,1) to [out=90,in=-90] +(0,1);
\draw[<-<] (6,0) to [in=-90,out=90] +(1,1) to [out=90,in=-90] +(1,1);
\end{scope}
\begin{scope}[xshift=10.25cm]
\foreach \x in {0,1,7,8} \draw (\x,0) [c];
\foreach \x in {0,1,6,8} \draw (\x,1) [c];
\foreach \x in {0,1,7,8} \draw (\x,2) [c];
\draw[<-<] (2,0) to [in=-90,out=90] +(0,1) to [out=90,in=-90] +(0,1);
\draw[<-<] (3,0) to [in=-90,out=90] +(0,1) to [out=90,in=-90] +(0,1);
\draw[<-<] (4,0) to [in=-90,out=90] +(0,1) to [out=90,in=-90] +(0,1);
\draw[<-<] (5,0) to [in=-90,out=90] +(0,1) to [out=90,in=-90] +(0,1);
\draw[<-<] (6,0) to [in=-90,out=90] +(1,1) to [out=90,in=-90] +(-1,1);
\end{scope}
\begin{scope}[xshift=20.5cm]
\foreach \x in {0,1,7,8} \draw (\x,0) [c];
\foreach \x in {0,1,6,8} \draw (\x,1) [c];
\foreach \x in {0,1,5,8} \draw (\x,2) [c];
\draw[<-<] (2,0) to [in=-90,out=90] +(0,1) to [out=90,in=-90] +(0,1);
\draw[<-<] (3,0) to [in=-90,out=90] +(0,1) to [out=90,in=-90] +(0,1);
\draw[<-<] (4,0) to [in=-90,out=90] +(0,1) to [out=90,in=-90] +(0,1);
\draw[<-<] (5,0) to [in=-90,out=90] +(0,1) to [out=90,in=-90] +(1,1);
\draw[<-<] (6,0) to [in=-90,out=90] +(1,1) to [out=90,in=-90] +(0,1);
\end{scope}
\end{scope}
\begin{scope}[yshift=-7cm]
\begin{scope}[]
\foreach \x in {0,1,7,8} \draw (\x,0) [c];
\foreach \x in {0,1,6,8} \draw (\x,1) [c];
\foreach \x in {0,2,6,8} \draw (\x,2) [c];
\draw[<-<] (2,0) to [in=-90,out=90] +(0,1) to [out=90,in=-90] +(-1,1);
\draw[<-<] (3,0) to [in=-90,out=90] +(0,1) to [out=90,in=-90] +(0,1);
\draw[<-<] (4,0) to [in=-90,out=90] +(0,1) to [out=90,in=-90] +(0,1);
\draw[<-<] (5,0) to [in=-90,out=90] +(0,1) to [out=90,in=-90] +(0,1);
\draw[<-<] (6,0) to [in=-90,out=90] +(1,1) to [out=90,in=-90] +(0,1);
\end{scope}
\begin{scope}[xshift=10.25cm]
\foreach \x in {0,1,7,8} \draw (\x,0) [c];
\foreach \x in {0,2,7,8} \draw (\x,1) [c];
\foreach \x in {0,2,6,8} \draw (\x,2) [c];
\draw[<-<] (2,0) to [in=-90,out=90] +(-1,1) to [out=90,in=-90] +(0,1);
\draw[<-<] (3,0) to [in=-90,out=90] +(0,1) to [out=90,in=-90] +(0,1);
\draw[<-<] (4,0) to [in=-90,out=90] +(0,1) to [out=90,in=-90] +(0,1);
\draw[<-<] (5,0) to [in=-90,out=90] +(0,1) to [out=90,in=-90] +(0,1);
\draw[<-<] (6,0) to [in=-90,out=90] +(0,1) to [out=90,in=-90] +(1,1);
\end{scope}
\begin{scope}[xshift=20.5cm]
\foreach \x in {1,7,8} \draw (\x,0) [c];
\foreach \x in {2,7,8} \draw (\x,1) [c];
\foreach \x in {1,7,8} \draw (\x,2) [c];
\draw[<->] (2,0) to [in=-90,out=90] +(-1,1) to [out=90,in=-90] +(1,1);
\draw[<-<] (3,0) to [in=-90,out=90] +(0,1) to [out=90,in=-90] +(0,1);
\draw[<-<] (4,0) to [in=-90,out=90] +(0,1) to [out=90,in=-90] +(0,1);
\draw[<-<] (5,0) to [in=-90,out=90] +(0,1) to [out=90,in=-90] +(0,1);
\draw[<-<] (6,0) to [in=-90,out=90] +(0,1) to [out=90,in=-90] +(0,1);
\fill (1.5,0.5) circle(4pt);
\end{scope}
\end{scope}
\end{tikzpicture}
\end{gathered}
\end{gather}
\end{ex}

\begin{definition} 
\label{def:associated bipartition}
To a weight $\la$ appearing in a Verma path of length $d\leq n$ we assign a bipartition $\varphi(\la)=(\la^{(1)},\la^{(2)})$ with $\la^{(1)}_i=-m_i$ and $\la^{(2)}_i=m_{s+i}$, where
$s$ is such that $\la-\de=\sum_{i=1}^n m_i\epsilon_i$ with $m_i\leq 0$ for $1\leq i\leq s$ and $m_i>0$ for $i>s$.  
\end{definition}
It is convenient to draw here the bipartition $\varphi(\la)\in\cT^2(d)$, $d\leq n$, by first transposing both diagrams, then rotating the second by $180°$ and then arranging them in the plane such that their origins are positioned at the coordinates $(0,0)$ and $(0,n)$ respectively. The $i$-th column contains then $m_i$ boxes arranged below the $x$-axis if $m_i<0$ and above if $m_i>0$. Let $(\la^t,{}^t\mu)$ denote the resulting diagram. For example, the bipartition $(3,2,1,1), (2,2,1)$ is displayed as follows:
\abovedisplayskip0.3em
\belowdisplayskip0.3em
\begin{gather}
\label{displaybipartition}
\left(\la=\yng(3,2,1,1), \mu=\yng(2,2,1)\right)
\qquad
\begin{tikzpicture}[anchorbase,scale=0.7, thick]
\draw[thin,red] (-.25,0)  -- +(5.5,0);
\node at (-.6,1.25) {\tiny $\vdots$};
\node at (-.6,.75) {\tiny $-2$};
\node at (-.6,.25) {\tiny $-1$};
\node at (-.6,-.25) {\tiny $1$};
\node at (-.6,-.75) {\tiny $2$};
\node at (-.6,-1.25) {\tiny $\vdots$};
\node[red] at (-1.5,0) {\tiny $0$};
\draw rectangle (.5,-.5);
\draw (.5,0) rectangle +(.5,-.5);
\draw (0,-.5) rectangle +(.5,-.5);
\draw (0,-1) rectangle +(.5,-.5);
\draw (.5,-.5) rectangle +(.5,-.5);
\draw (1,0) rectangle +(.5,-.5);
\draw (1.5,0) rectangle +(.5,-.5);
\draw (2,-.025) rectangle +(.5,.05);
\draw (2.5,-.025) rectangle +(.5,.05);
\draw (3,-.025) rectangle +(.5,.05);
\draw (3.5,0) rectangle +(.5,.5);
\draw (4,0) rectangle +(.5,.5);
\draw (4,.5) rectangle +(.5,.5);
\draw (4.5,0) rectangle +(.5,.5);
\draw (4.5,.5) rectangle +(.5,.5);
\end{tikzpicture}
\end{gather}

\begin{definition}
We label columns from left to right by $0,1, 2,\dots$ and the rows increasing from top to bottom such that the ${\la}^t$ has rows $1, 2,\ldots$ and ${}^t{\mu}$  has rows $-1, -2, \cdots$ (with a "ground state line" counted $0$, horizontally between the two diagrams). The \emph{content} $c(b)$ of a box $b$  is its column number minus its row number, $c(b)=\op{col}(b)-\op{row}(b)$ and the content with charge $\nicefrac{\delta}{2}$ is defined as $c_\delta(b)=\nicefrac{\delta}{2}+c(b)$. 
 \end{definition}
For \eqref{displaybipartition} we have the contents:
{\abovedisplayskip-1em
\belowdisplayskip0.3em
\begin{gather}
\label{coolpicture}
\begin{tikzpicture}[anchorbase,scale=1.9, thick]
\colorlet{lightgray}{black!15}
\draw[thin,red] (-.25,0)  -- +(5.5,0);
\draw rectangle (.5,-.5);
\draw (.5,0) rectangle +(.5,-.5);
\draw (0,-.5) rectangle +(.5,-.5);
\fill[fill=lightgray, draw=black] (0,-1) rectangle +(.5,-.5);
\fill[fill=lightgray, draw=black] (.5,-.5) rectangle +(.5,-.5);
\fill[fill=lightgray, draw=black] (1,0) rectangle +(.5,-.5);
\fill[fill=lightgray, draw=black] (1.5,0) rectangle +(.5,-.5);
\node at (.25,-.25) {$\scriptstyle \frac{\delta}{2}\scriptscriptstyle-1$};
\node at (.25,-.75) {$\scriptstyle \frac{\delta}{2}\scriptscriptstyle-2$};
\node at (.25,-1.25) {$\scriptstyle \frac{\delta}{2}\scriptscriptstyle-3$};
\node at (.75,-.25) {$\scriptstyle \frac{\delta}{2}$};
\node at (.75,-.75) {$\scriptstyle \frac{\delta}{2}\scriptscriptstyle-1$};
\node at (1.25,-.25) {$\scriptstyle \frac{\delta}{2}\scriptscriptstyle+1$};
\node at (1.75,-.25) {$\scriptstyle \frac{\delta}{2}\scriptscriptstyle+2$};
\draw (2,-.025) rectangle +(.5,.05);
\draw (2.5,-.025) rectangle +(.5,.05);
\draw (3,-.025) rectangle +(.5,.05);
\fill[fill=lightgray, draw=black] (3.5,0) rectangle +(.5,.5);
\draw (4,0) rectangle +(.5,.5);
\fill[fill=lightgray, draw=black] (4,.5) rectangle +(.5,.5);
\draw (4.5,0) rectangle +(.5,.5);
\fill[fill=lightgray, draw=black] (4.5,.5) rectangle +(.5,.5);
\node at (3.75,.25) {$\scriptstyle \frac{\delta}{2}\scriptscriptstyle+n-2$};
\node at (4.25,.75) {$\scriptstyle \frac{\delta}{2}\scriptscriptstyle+n$};
\node at (4.25,.25) {$\scriptstyle \frac{\delta}{2}\scriptscriptstyle+n-1$};
\node at (4.75,.75) {$\scriptstyle \frac{\delta}{2}\scriptscriptstyle+n+1$};
\node at (4.75,.25) {$\scriptstyle \frac{\delta}{2}\scriptscriptstyle+n$};
\end{tikzpicture}
\end{gather}
}

Then we can read off the corresponding diagrammatic weight $\la\in\La$ by setting
 $I(\la)=\{\op{cont}_\delta(\la,\mu)\}=\{c_\delta(b_i)\mid {1\leq i\leq n}\}$, where the $b_i$ are the boundary boxes of the columns. In the above example (with the relevant boxes shaded grey)  we obtain $I(\la)=\frac{\delta}{2}+\{-3,-1,1,2,\ldots,n-4, n-2,n,n+1\}$. In particular, $\de$ corresponds to the bipartition $(\emptyset,\emptyset)$. In this way, any $\mathbf{t}\in\mathcal{T}_d$ defines a sequence $\op{cont}(\mathbf{t})$ of diagrammatic weights.

\begin{prop}
\label{surjectivity}
Let $n, \delta\in\mathbb{Z}_{\geq 0}$. Assume $n\geq \delta$. Then the assignment ${\bf t}\mapsto \op{cont}(\bf{t})$ defines a bijection, with inverse map $\la\mapsto\varphi(\la)$,
\abovedisplayskip0.3em
\belowdisplayskip0.3em
\begin{gather*}
\begin{array}{cl}
\mathcal{T}_d & =\{\text{up-down bitableaux of length $d$}\}\\
{\updownarrow} & \\
\vpath_d(\delta) & =\{\text{Verma paths of length $d$ starting at $\delta$}\}
\end{array}
\end{gather*}
inducing also a bijection between $\bigcup_{0\leq k\leq \lfloor \frac{d}{2}\rfloor}\cP^2_{d-2k}$ and the set of weights appearing at the end of a path in $\vpath_d(\delta)$.
\end{prop}

\begin{proof}
The empty bipartition $(\emptyset, \emptyset)$ corresponds to the weight $\de+\rho$.  Adding (resp. removing) a box in the $j$-th column of the first partition corresponds to subtracting $\epsilon_j$ from (resp. adding $\epsilon_j$ to) the weight. On the other hand,  adding (resp. removing) a box in the $(n-j)$-th column of the second partition corresponds to adding $\epsilon_j$ to (resp. subtracting $\epsilon_{n-j}$ from) the weight. The result is then an allowed weight if and only if adding (resp. removing) the box produces a new bipartition. Then the claim follows from the definitions.
\end{proof}

For $M^\pp(\la)\in\Op(n)$ we denote by $L(\la)$ its irreducible quotient and by $P^\pp(\la)\in\Op(n)$ its projective cover. For a module $M$ in category $\Op(n)$ with Verma flag (i.e with a filtration with subquotients isomorphic to parabolic Verma modules) we denote by $\nu_M=(M : M^\pp(\nu))$ the multiplicity of $M^\pp(\nu)$ in such a flag. By BGG reciprocity we have $(P^\pp(\mu) : M^\pp(\nu))=[M^\pp(\nu):L(\mu)]$ where the latter denotes the Jordan-H\"older multiplicity of $L(\mu)$ in $M^\pp(\nu)$.

\begin{prop}
\label{dimformula}
The following holds in $\Op(n)$:
\begin{enumerate}
\item $\op{dim}\HOM_\mg(P^\pp(\la),P^\pp(\mu))=\sum_{\nu\in\La}\nu_{P^\pp(\la)}\nu_{P^\pp(\mu)}$, the number of Verma modules which appear at the same time in $P^\pp(\la)$ and in $P^\pp(\mu)$.
\item  If $P$ is a projective object then $\op{dim}\END_\mg(P)=\sum_{\nu\in\La} (\nu_P)^2.$
\end{enumerate}
\end{prop}
\begin{proof}
Let $\La_{\lambda,\mu}=\left\{\nu\in\La\mid \nu_{P^\pp(\la)}\not=0,\, \nu_{P^\pp(\mu)}\not=0\right\}$. Then 
\abovedisplayskip0.3em
\belowdisplayskip0.3em
\begin{gather*}
\begin{aligned}
\op{dim}\HOM_\mg(P^\pp(\la),P^\pp(\mu))=&\,[P^\pp(\mu) : L(\la)]\\
=\sum_{\nu \in \La}(P^\pp(\mu) : M^\pp(\nu))[M^\pp(\nu):L(\la)] 
=&\sum_{\nu\in\La}\nu_{P^\pp(\mu)}\nu_{P^\pp(\la)}=\hspace{-6pt}\sum_{\nu\in \La_{\lambda,\mu}}1\cdot1,
\end{aligned}
\end{gather*}
where for the last equality we used that the multiplicities are at most $1$. This follows for regular weights for instance from \cite[Theorem 2.1]{LS} and for singular weights from Lemma \ref{singO} below. Part (1) follows.
Recall that $P$ decomposes into a direct sum of indecomposable projective modules $P=\bigoplus_\mu a_\mu P^\pp(\mu)$, where $a_\mu$ is the multiplicity of $P^\pp(\mu)$. Then
\abovedisplayskip0.3em
\belowdisplayskip0.3em
\begin{eqnarray*}
\op{dim}\END_\mg(P)
&=&\hspace{-6pt}\sum_{\la,\mu\in\La}a_\la a_\mu \op{dim}\HOM_\mg(P^\pp(\la),P^\pp(\mu))\\
&=&\hspace{-6pt}\sum_{\la,\mu\in\La}a_\la a_\mu \sum_{\nu\in\La}\nu_{P^\pp(\mu)}\nu_{P^\pp(\la)}
=\hspace{-4pt}\sum_{\la,\mu\in\La}a_\la a_\mu \sum_{\nu\in\La}\nu_{P^\pp(\mu)}\nu_{P^\pp(\la)}.
\end{eqnarray*}
The proposition follows.
\end{proof}

The main result of this section is the following:
\begin{corollary}
\label{tableaux}
We have $\op{dim}\END_\mg(\MdV)^{\op{opp}}=2^d(2d-1)!!$. Moreover, $\END_\mg(\MdV)^{\op{opp}}\cong\END_\mg({M^\pp({-\underline{\delta}})\otimes V^{\otimes d}})$ as algebras.
\end{corollary}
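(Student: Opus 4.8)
\emph{The dimension.} The first move is to observe that $\Mde$ is projective in $\Op(n)$. By the description of the diagrammatic weight of $\de=\underline\delta$ given right before the corollary (where $\delta\geq0$), all $n$ of the $\up/\down$-entries of $\de$ are in fact $\down$'s, occupying the leftmost admissible positions; since enlarging a weight means moving $\down$'s to the left (or turning $\up$'s into $\down$'s), $\de$ is the unique maximal weight of its block. Hence $[M^\pp(\mu):L(\de)]=\delta_{\mu,\de}$ for $\mu$ in that block, so BGG reciprocity gives $P^\pp(\de)\cong\Mde$. Since $\_\otimes V^{\otimes d}$ has an exact right adjoint (namely $\_\otimes V^{\otimes d}$ itself, as $V$ is self-dual), it preserves projectives, so $\MdV$ is projective. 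Now I would apply Lemma~\ref{dimformula}(2):
$$\op{dim}\END_\mg(\MdV)=\sum_{\nu\in\La}(\nu_{\MdV})^2 .$$

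\emph{The count.} Iterating Lemma~\ref{Verma}, the Verma multiplicity $\nu_{\MdV}$ equals the number of $d$-admissible weight sequences $\de=\bla^1\to\cdots\to\bla^d=\nu$, i.e.\ the number of Verma paths of length $d$ starting at $\de$ and ending at $\nu$. By Lemma~\ref{surjectivity} these are in bijection with up-down-bitableaux of length $d$, and the endpoint corresponds via $\nu\mapsto\varphi(\nu)$ bijectively to the bipartitions in $\bigcup_{0\le k\le\lfloor d/2\rfloor}\cP^2(d-2k)$; thus $\nu_{\MdV}=|\cT^2_d(\varphi(\nu))|$. Since $|\cT^2_d(\bla)|=0$ unless $\bla$ is such a bipartition, summing squares gives
$$\sum_{\nu}(\nu_{\MdV})^2=\sum_{\bla}|\cT^2_d(\bla)|^2=2^d(2d-1)!!$$
by the cardinality identity \eqref{cardinality} with $l=2$. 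As $\op{dim}A=\op{dim}A^{\op{opp}}$, this proves the first assertion.

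\emph{The algebra isomorphism.} The opposite ring is the formal part. The contravariant duality $\vee$ on $\Op(n)$ fixes simple modules, sends $M^\pp(\lambda)$ to $\nabla^\pp(\lambda)$, and commutes with $\_\otimes V^{\otimes d}$ (because $V^\vee\cong V$), so applying it to endomorphisms yields an algebra isomorphism $\END_\mg(\MdV)^{\op{opp}}\cong\END_\mg\big(\nabla^\pp(\de)\otimes V^{\otimes d}\big)$; alternatively, once the first part and Proposition~\ref{prop:injectivity} are in hand, Theorem~\ref{iso} identifies both $\END_\mg(\MdV)^{\op{opp}}$ and $\END_\mg(M^\pp(\underline{-\delta})\otimes V^{\otimes d})^{\op{opp}}$ with level-$2$ cyclotomic VW-algebras, which are self-opposite by Remark~\ref{Wop}. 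Either way it remains to identify the endomorphism algebras attached to $\de$ and to $\underline{-\delta}$. For this I would rerun the above analysis at $\underline{-\delta}$: it too is the maximal weight of its block (now a singular one, carrying a string of $\times$'s), so $M^\pp(\underline{-\delta})\otimes V^{\otimes d}$ is again projective and its endomorphism ring is controlled by Lemma~\ref{dimformula} and the Verma paths of length $d$ starting at $\underline{-\delta}$. Since $\de+\rho$ and $\underline{-\delta}+\rho$ are both strings of consecutive integers, the two families of Verma paths, and with them the full overlap pattern of the corresponding indecomposable projectives, are matched by $\lambda\mapsto\lambda+(\underline{-\delta}-\de)$, which on diagrammatic weights is a shift-and-reflection of the number line that interchanges $\up$ and $\down$ and converts the frozen $\down\up$-pairs of the $\de$-side into the $\times$-symbols of the $\underline{-\delta}$-side; this is compatible with the cup-diagram description of morphisms in \cite{ES_diagrams}, giving the desired isomorphism.

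\emph{Main obstacle.} The only non-formal step is this last identification: the blocks through which $\MdV$ and $M^\pp(\underline{-\delta})\otimes V^{\otimes d}$ have summands are genuinely different (one regular, one singular, of different "sizes"), so one must check carefully that the reflection of the underlying number line in the type~$D$ diagram combinatorics does respect the graded Hom-spaces between projectives; everything else—the projectivity input, the multiplicity count, and the bookkeeping around $(-)^{\op{opp}}$—is routine.
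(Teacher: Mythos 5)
Your dimension count follows the paper's own route step for step: projectivity of $\Mde$ (hence of $\MdV$), Lemma~\ref{dimformula}, the Verma-path/up-down-bitableaux bijection of Lemma~\ref{surjectivity}, and the identity \eqref{cardinality} with $l=2$; that half is correct, and your extra justification that $\de$ is maximal in its block is a reasonable way to see the projectivity the paper simply asserts.

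The gap is in the ``Moreover'' part. The paper obtains the algebra isomorphism in one stroke from Ringel self-duality of $\Op(n)$ (\cite[Proposition 4.4]{MSSerre}): the object attached to the opposite parameter is a tilting module, and Ringel duality matches the projective object $\MdV$ with this tilting object, which is exactly what produces an isomorphism with $\op{opp}$ on one side only; this is also what extends the dimension formula to $\delta<0$, where the path-counting of Lemma~\ref{surjectivity} (set up only for $\de$ with $\delta\geq 0$) is not available. Neither of your substitutes closes this step. The route through Theorem~\ref{iso} applied at the parameter $-\delta$ is circular: surjectivity there is proved by comparing dimensions with Corollary~\ref{tableaux} itself, and for the opposite parameter that dimension is precisely what the ``Moreover'' clause is needed to supply. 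The duality route correctly converts the $\op{opp}$ into a comparison of $\nabla^\pp(\de)\otimes V^{\otimes d}$ with $M^\pp(\underline{-\delta})\otimes V^{\otimes d}$, but the identification you then assert --- a shift-and-reflection of the number line matching Verma paths and cup diagrams between the regular blocks through $\de$ and the singular blocks through $\underline{-\delta}$ --- is exactly the nontrivial content and is left unproved. Note in particular that matching Verma multiplicities, hence Hom-space dimensions via Lemma~\ref{dimformula}, would not by itself yield an isomorphism of algebras, and your proposed dictionary (turning the frozen $\down\up$-pairs into $\times$'s) changes the block combinatorics in an Enright--Shelton-like way that is not an evident automorphism of the type $D$ diagram algebra of \cite{ES_diagrams}. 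Some genuine input relating standard objects at $\de$ to (co)standard/tilting data at $\underline{-\delta}$ --- in the paper, Ringel self-duality of $\Op(n)$ --- is needed; you flagged this as the main obstacle, but flagging it does not discharge it, so as written the second assertion of the corollary is not proved.
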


\begin{proof}
Let first $\delta\geq0$. Since then $\Mde$ is projective in $\Op(n)$, so is $\MdV$ and therefore it has a Verma filtration. By Proposition~\ref{Verma} and the paragraph afterwards $\nu_{\MdV}$ equals the number of elements in $\vpath_d(\delta)$ ending at $\nu$. Then the statement follows from  Propositions~\ref{surjectivity} and~\ref{dimformula} and the equality \eqref{cardinality}. If $\delta<0$ then $M^\pp({-\underline{\delta}})$ is a tilting module and  the algebra isomorphism follows from Ringel self-duality \cite[Proposition 4.4]{MSSerre} of $\Op(n)$. 
\end{proof}

We want to stress that the reader familiar with branching graphs or cellular bases should view our Verma paths as a generalization of tableaux adapted to the case where the algebras are not semisimple.  

\section{Special projective functors and cup diagram combinatorics}
In analogy to \cite{BSIII} we introduce special translation functors which will later categorify the actions of the coideal subalgebras $\cH$ and $\cH^\hint$ in Section \ref{section:coideal}.

For this let $\Ga$ be a diagrammatic block, i.e. given by a block sequence and a parity for the number of $\down$'s and $\times$'s. Furthermore fix $i \in \mZ_{\geq 0}+\half$ if $\Ga$ is supported on the integers and $i \in \mZ_{\geq 0}$ if $\Ga$ is supported on half-integers.

\textbf{Case $i \geq 1$:} We  denote by $\Ga_{i,+}$ and $\Ga_{i,-}$ the blocks, whose corresponding block diagram differs from $\Ga$ diagrammatically only at the vertices $i-\half$ and $i+\half$ as displayed in the following list (note that it has the same type of integrality and the same parity):
\abovedisplayskip0.3em
\belowdisplayskip0.3em
\begin{gather}
\label{deftable1}
\begin{array}{c||c|c|c|c|c|c|c|c|}
\Gamma_{i,+}&\cross\circ&&\bullet\bullet&&&\bullet\circ&
\cross\bullet\\
\Gamma_{i,-}&\circ\cross&\bullet\bullet&&
\circ\bullet&\bullet\cross&&\\
\hline
\Gamma&\bullet\bullet&\cross\circ&\circ\cross&\bullet\circ&\cross\bullet&
\circ\bullet&\bullet\cross
\end{array}
\end{gather}

\textbf{Case $i = \half$:}
We  denote by $\Ga_{\half,+}$ and $\Ga_{\half,-}$ the blocks or in one instance the sum of two blocks, whose corresponding block diagram differs from $\Ga$ diagrammatically only at the vertices $0$ and $1$ as displayed in the following list (note that it has the same type of integrality):
\abovedisplayskip0.3em
\belowdisplayskip0.3em
\begin{gather}
\label{deftable2}
\begin{array}{c||c|c|c|c|}
\Gamma_{\half,+}& & & \diamondb \bullet & \diamondb \circ\\
\Gamma_{\half,-}&\circ\times&\circ\bullet + \overline{\circ \bullet}&&\\
\hline
\Gamma&\diamondb\bullet & \diamondb \circ & \circ \times & \circ\bullet
\end{array}
\end{gather}
Here the entry $\circ\bullet + \overline{\circ \bullet}$ means that we take the direct sum of the two blocks with the same underlying block diagram but the two possible choices of parity.

\textbf{Case $i = 0$:} Let $\Ga_0$ be given by the same block diagram as $\Ga$ but with the opposite parity. 

\subsection{Special projective functors}
\begin{definition} \label{def:functors}
Denote the projections onto $\Op_\Ga(n)$ by
\abovedisplayskip0.3em
\belowdisplayskip0.3em
\[
\pr^1_\Ga:\Op_1(n) \longrightarrow \Op_\Ga(n) \qquad \pr^\hint_\Ga:\Op_\hint(n) \longrightarrow \Op_\Ga(n).
\]
We define \emph{special projective functors} for $i \in \mZ_{\geq 0} + \half$ by
\abovedisplayskip0.3em
\belowdisplayskip0.3em
\begin{gather*}
\begin{aligned}
\cF_{i,-} &=& {\bigoplus}_{\Ga} \left(
\pr^1_{\Ga_{i,-}} \circ (? \otimes V) \circ \pr^1_\Ga \right) \,&:& \Op_1(n) \,\longrightarrow\, \Op_1(n),\\
\cF_{i,+} &=& {\bigoplus}_{\Ga} \left(
\pr^1_{\Ga_{i,+}} \circ (? \otimes V) \circ \pr^1_\Ga \right) \,&:& \Op_1(n) \,\longrightarrow\, \Op_1(n),
\end{aligned}
\end{gather*}
and for $i \in \mZ_{\geq 1}$ by
\abovedisplayskip0.3em
\belowdisplayskip0.3em
\begin{gather*}
\begin{aligned}
\cF_{i,-} &=& {\bigoplus}_{\Ga} \left(
\pr^\hint_{\Ga_{i,-}} \circ (? \otimes V) \circ \pr^\hint_\Ga \right) \,&:& \Op_\hint(n) \,\longrightarrow\, \Op_\hint(n),\\
\cF_{i,+} &=& {\bigoplus}_{\Ga} \left(
\pr^\hint_{\Ga_{i,+}} \circ (? \otimes V) \circ \pr^\hint_\Ga \right) \,&:& \Op_\hint(n) \,\longrightarrow\, \Op_\hint(n),\\
\cF_{0} &=& {\bigoplus}_{\Ga} \left(
\pr^\hint_{\Ga_{0}} \circ (? \otimes V) \circ \pr^\hint_\Ga \right) \,&:& \Op_\hint(n) \,\longrightarrow\, \Op_\hint(n).
\end{aligned}
\end{gather*}
In each case the direct sums are over all blocks $\Ga$, where $\Ga_{i,-}$, resp. $\Ga_{i,+}$, resp. $\Ga_0$ are defined.
\end{definition}

Note that $\cF_{i,-}$ and $\cF_{i,+}$ are biadjoint and $\cF_{0}$ is selfadjoint, since $V$ is self-dual. Hence they are exact and send projective modules to projectives modules. The following Proposition (see Section \ref{section:proof-of-a} for the proof), describes the acton of special projective functors on indecomposable projective, Verma and simple modules. In Section \ref{section:proj-diagrams} we give the diagrammatic interpretation. 

The symbols $\langle i\rangle$ in Proposition \ref{a} refer to a grading shift which should, for now, be ignored. It only makes sense in the graded setup of Proposition~\ref{lem:gradeda}. 

\begin{prop}\label{a}
There are isomorphisms of functors
\abovedisplayskip0.3em
\belowdisplayskip0.3em
\begin{gather*}
\begin{aligned}
\cF = (? \otimes V) &\cong& \bigoplus_{i\,\in\, \mZ_{\geq 0}+\half} \hspace{-.8em}(\cF_{i,-}\oplus \cF_{i,+})&:\Op_1(n)\longrightarrow\Op_1(n), &\text{ and}& \\
\cF^\hint = (? \otimes V) &\cong& \bigoplus_{i\,\in\, \mZ_{> 0}}(\cF_{i,-}\oplus \cF_{i,+}) \oplus \cF_0 &: \Op_\hint(n)\longrightarrow\Op_\hint(n).&&
\end{aligned}
\end{gather*}

Let $\la \in \mX_n$ and $i \in \mZ_{\geq 1}+\half$ or $\la \in \mX_n^\hint$ and $i \in \mZ_{\geq 1}$. For $x,y \in \{\circ,\up,\down,\cross\}$, let $\la_{xy}$ denote the diagrammatic weight obtained by relabelling $\lambda_{(i-\half)}$ to $x$ and $\lambda_{(i+\half)}$ to $y$. Then we have
\begin{enumerate}[(a)]
\item\label{HIM1}
$\la = \la_{\scriptscriptstyle \down\circ}:$ 
$\cF_{i,-} P^\pp(\la) \cong P^\pp(\la_{\scriptscriptstyle  \circ \down}), \,\,\,
\cF_{i,-} M^\pp(\la) \cong M^\pp(\la_{\scriptscriptstyle  \circ\down}), \,\,\,
\cF_{i,-} L(\la) \cong L(\la_{\scriptscriptstyle \circ\down})$.
\item
\label{HIM2}
$\la = \la_{\scriptscriptstyle \up\circ}:$
$\cF_{i,-} P^\pp(\la) \cong P^\pp(\la_{\scriptscriptstyle \circ\up}), \,\,\,
\cF_{i,-} M^\pp(\la) \cong M^\pp(\la_{\scriptscriptstyle \circ\up}), \,\,\,
\cF_{i,-} L(\la) \cong L(\la_{\scriptscriptstyle \circ\up})$.
\item
\label{HIM3}
$\la = \la_{\scriptscriptstyle\cross\down}:$
$\cF_{i,-} P^\pp(\la) \cong P^\pp(\la_{\scriptscriptstyle\down\cross}), \,\,
\cF_{i,-} M^\pp(\la) \cong M^\pp(\la_{\scriptscriptstyle\down\cross}), \,\,
\cF_{i,-} L(\la) \cong L(\la_{\scriptscriptstyle\down\cross})$.
\item
\label{HIM4}
$\la = \la_{\scriptscriptstyle\cross\up}:$
$\cF_{i,-} P^\pp(\la) \cong P^\pp(\la_{\scriptscriptstyle\up\cross}), \,\,
\cF_{i,-} M^\pp(\la) \cong M^\pp(\la_{\scriptscriptstyle\up\cross}), \,\,
\cF_{i,-} L(\la) \cong L(\la_{\scriptscriptstyle\up\cross})$.
\item
\label{HIM5}
$\la = \la_{\scriptscriptstyle\down\up}:$
$\cF_{i,-} P^\pp(\la) \cong
P^\pp(\la_{\scriptscriptstyle\circ\cross})\oplus P^\pp(\la_{\scriptscriptstyle\circ\cross})\langle 2\rangle, \,\,\,
\cF_{i,-} M^\pp(\la) \cong M^\pp(\la_{\scriptscriptstyle\circ\cross}),$\\
$\phantom{\la = \la_{\scriptscriptstyle\down\up}:\,\,}\cF_{i,-} L(\la) \cong L(\la_{\scriptscriptstyle\circ\cross})$.

\item
\label{HIM6}
$\la = \la_{{\scriptscriptstyle\up\down}}:$
$\cF_{i,-} M^\pp(\la) \cong M^\pp(\la_{\scriptscriptstyle\circ\cross})\left\langle 1 \right\rangle$
and $\cF_{i,-} L(\la) = \{0\}$.

\item
\label{HIM7}
$\la = \la_{{\scriptscriptstyle\cross}\circ}:$
\begin{enumerate}[(1)]
\item\label{HIM7a}
$\cF_{i,-} P^\pp(\la) \cong P^\pp(\la_{\scriptscriptstyle\down\up})\langle -1\rangle$;
\item\label{HIM7b}
there is a short exact sequence
\abovedisplayskip0.2em
\belowdisplayskip0.2em
\[
0 \rightarrow M^\pp(\la_{\scriptscriptstyle\up\down}) \rightarrow \cF_{i,-} M^\pp(\la)
\rightarrow M^\pp(\la_{\scriptscriptstyle\down\up})\left\langle-1\right\rangle \rightarrow 0;
\]
\item\label{HIM7c}
$[\cF_{i,-} L(\la):L(\la_{\scriptscriptstyle\down\up})\langle \pm1\rangle] = 1$ and 
all other composition
factors are of the form $L(\mu)$ for $\mu$
with
$\mu = \mu_{\scriptscriptstyle\down\down}$,
$\mu = \mu_{\scriptscriptstyle\up\up}$ or
$\mu = \mu_{\scriptscriptstyle\up\down}$;
\item\label{HIM7d}
$\cF_{i,-} L(\la)$ has irreducible
socle and head isomorphic to $L(\la_{\scriptscriptstyle\down\up})$.
\end{enumerate}
\item
\label{HIM8}
$\la = \la_{{\scriptscriptstyle\down\down}}:$ 
$\cF_{i,-} M^\pp(\la) = \cF_{i,-} L(\la) = \{0\}$.
\item
\label{HIM9}
$\la = \la_{{\scriptscriptstyle\up\up}}:$
$\cF_{i,-} M^\pp(\la) = \cF_{i,-} L(\la) = \{0\}$.
\item
\label{HIM10}
For the statements with $\cF_{i,+}$,
interchange all occurrences of $\circ$ and $\cross$.
\end{enumerate}


Let $\la \in \mX_n$. For $x,y \in \{\circ,\up,\down,\cross,\diamondb\}$ let $\la_{xy}$ denote the diagrammatic weight obtained by relabelling $\lambda_0$ to $x$ and $\lambda_1$ to $y$. Then we have

\begin{enumerate}[(a)]
\setcounter{enumi}{10}
\item
\label{HIM11}
$\la = \la_{\scriptscriptstyle\diamondb\down}:$ 
$\cF_{{\scriptscriptstyle \half},-} P^\pp(\la) \cong P^\pp(\la_{\scriptscriptstyle\circ\cross})\left\langle 1 \right\rangle, \,\,\,
\cF_{{\scriptscriptstyle \half},-} M^\pp(\la) \cong
M^\pp(\la_{\scriptscriptstyle\circ\cross}) \left\langle 1 \right\rangle,$\\
$\phantom{\la = \la_{\scriptscriptstyle\diamondb\down}:\,\,} \cF_{{\scriptscriptstyle \half},-} L(\la) \cong L(\la_{\scriptscriptstyle\circ\cross})\left\langle 1 \right\rangle$.
\item\label{HIM12}
$\la = \la_{\scriptscriptstyle\diamondb\up}:$ 
$\cF_{{\scriptscriptstyle \half},-} P^\pp(\la) \cong P^\pp(\la_{\scriptscriptstyle\circ\cross}), \,\,\,
\cF_{{\scriptscriptstyle \half},-} M^\pp(\la) \cong
M^\pp(\la_{\scriptscriptstyle\circ\cross})$,\\
$\phantom{\la = \la_{\scriptscriptstyle\diamondb\up}:\,\,}\cF_{{\scriptscriptstyle \half},-} L(\la) \cong L(\la_{\scriptscriptstyle\circ\cross})$.
\item\label{HIM13}
$\la = \la_{\scriptscriptstyle\diamondb\circ}:$ 
$\cF_{{\scriptscriptstyle \half},-} P^\pp(\la) \cong P^\pp(\la_{\scriptscriptstyle\circ\up})\oplus P^\pp(\la_{\scriptscriptstyle\circ\down})$,\\
$\phantom{\la = \la_{\scriptscriptstyle\diamondb\circ}:\,\,}\cF_{{\scriptscriptstyle \half},-} M^\pp(\la) \cong
M^\pp(\la_{\scriptscriptstyle\circ\up})\oplus M^\pp(\la_{\scriptscriptstyle\circ\down})$,\\
$\phantom{\la = \la_{\scriptscriptstyle\diamondb\circ}:\,\,}\cF_{{\scriptscriptstyle \half},-} L(\la) \cong L(\la_{\circ\up})\oplus L(\la_{\circ\down})$.
\item
\label{HIM14}
$\la = \la_{\scriptscriptstyle \circ\down}:$ 
$\cF_{{\scriptscriptstyle \half},+} P^\pp(\la) \cong P^\pp(\la_{\scriptscriptstyle \diamondb\circ}), \,\,
\cF_{{\scriptscriptstyle \half},+} M^\pp(\la) \cong
M^\pp(\la_{\scriptscriptstyle \diamondb\circ}), \,\,
\cF_{{\scriptscriptstyle \half},+} L(\la)=\{0\}$.
\item
\label{HIM15}
$\la = \la_{\scriptscriptstyle \circ\up}:$ 
$\cF_{{\scriptscriptstyle \half},+} P^\pp(\la) \cong P^\pp(\la_{\scriptscriptstyle \diamondb\circ}), \,\,
\cF_{{\scriptscriptstyle \half},+} M^\pp(\la) \cong
M^\pp(\la_{\scriptscriptstyle \diamondb\circ}), \,\,
\cF_{{\scriptscriptstyle \half},+} L(\la)=\{0\}$.
\item
\label{HIM16}
$\la = \la_{\scriptscriptstyle \circ\cross}:$ 
\begin{enumerate}[(1)]
\item\label{HIM16a}
$\cF_{{\scriptscriptstyle \half},+} P^\pp(\la) \cong P^\pp(\la_{\scriptscriptstyle \diamondb\up})\left\langle -1 \right\rangle$;
\item\label{HIM16b}
there is a short exact sequence
\abovedisplayskip0.2em
\belowdisplayskip0.2em
\begin{gather*}
\quad\quad 0 \rightarrow M^\pp(\la_{\scriptscriptstyle\diamondb\down}) \rightarrow \cF_{{\scriptscriptstyle \half},+} M^\pp(\la)
\rightarrow M^\pp(\la_{\scriptscriptstyle\diamondb\up})\left\langle-1\right\rangle \rightarrow 0;
\end{gather*}
\item\label{HIM16c}
$[\cF_{{\scriptscriptstyle \half},+} L(\la) : L(\lambda_{\scriptscriptstyle \diamondb\up})\left\langle \pm1 \right\rangle] = 1$
and all other composition factors are of the form $L(\mu)$ for $\mu$
with
$\mu = \mu_{\scriptscriptstyle\diamondb\down}$,
\end{enumerate}
\item
\label{HIM17}
$\la = \la_{\scriptscriptstyle \diamondb\cross}:$ 
$\cF_{{\scriptscriptstyle \half},-} P^\pp(\la)\,=\, \cF_{{\scriptscriptstyle \half},-} M^\pp(\la)\,=\,
\cF_{{\scriptscriptstyle \half},-} L(\la) =\{0\}$.

\item
\label{HIM18}
$\la = \la_{ \circ\circ}:$ 
$\cF_{{\scriptscriptstyle \half},-} P^\pp(\la)\,=\,
\cF_{{\scriptscriptstyle \half},-} M^\pp(\la)\,=\,
\cF_{{\scriptscriptstyle \half},-} L(\la) =\{0\}$.
\end{enumerate}

Let $\la \in \mX_n^\hint$. For symbols $y \in \{\circ,\up,\down,\cross\}$ let $\la_{y}$ denote the diagrammatic weight obtained by relabelling $\lambda_\half$ to $y$. Then we have
\begin{enumerate}[(a)]
\setcounter{enumi}{18}
\item\label{HIM19}
$\la = \la_{\scriptscriptstyle \up}:$
$\cF_{0} P^\pp(\la) \cong P^\pp(\la_{\scriptscriptstyle \down}), \,\,\,
\cF_{0} M^\pp(\la) \cong
M^\pp(\la_{\scriptscriptstyle \down}), \,\,\,
\cF_{0} L(\la) \cong L(\la_{\scriptscriptstyle \down})$.
\item\label{HIM20}
$\la = \la_{\scriptscriptstyle \down}:$
$\cF_{0} P^\pp(\la) \cong P^\pp(\la_{\scriptscriptstyle \up}), \,\,\,
\cF_{0} M^\pp(\la) \cong
M^\pp(\la_{\scriptscriptstyle \up}), \,\,\,
\cF_{0} L(\la) \cong L(\la_{\scriptscriptstyle \up})$.
\end{enumerate}

\end{prop}

\begin{remark} \label{rem:transl}
Note that each of the $\cF_{i,+}$, $\cF_{i,-}$, $F_{0}$ is a translation functor in the sense of \cite{Hbook}. It is either an equivalence of categories, a translation functor to the wall, a translation functor out of the wall or, in the special case of $\cF_{\half,-}$ and the choice of a specific block diagram a direct sum of two translation functors out of the wall. In contrast to the type $ \mathrm{A}$ situation from \cite{BSIII} not all translations to walls appear however as summands.
\end{remark}

\subsection{Projective modules and cup diagrams} \label{section:proj-diagrams}
In this section we indicate how to make explicit calculations related to projective modules using decorated cup diagrams following \cite{LS}. We summarise how to associate a {\it (decorated) cup diagram} $\underline \la$ to a diagrammatic weight $\la$, for details see \cite[Definition 3.5]{ES_diagrams}.

Let $\lambda_i$ for $i \in \mathbb{Z} \cup (\mathbb{Z}+\half)$ be the symbol at position $i$ on the real positive line. If $\lambda_0 = \diamondb$ treat this as either $\up$ if the number of $\down$ is even and as $\down$ otherwise. Successively connect neighboured symbols on the line labelled $\down\up$ by arcs (ignoring already joint symbols and symbols $\circ$ or $\times$). If none of these are left, successively connect from left to right pairs of two neighboured $\up$'s by an arc decorated with a $\bullet$. If a single $\up$ remains, attach a vertical ray decorated with a $\bullet$ to it and finally attach a vertical ray to all remaining $\down$. Finally forget all symbols except $\circ$ and $\times$. The result is denoted $\underline{\lambda}$. Drawing a weight $\mu$ on the real line above $\underline{\lambda}$ we say that the result is oriented if the labels $\circ$ and $\cross$ in $\mu$ and $\underline{\la}$ match up, undecorated rays are labelled with $\down$, decorated rays with $\up$, undecorated cups with two distinct symbols, and decorated cups with two equal symbols. The multiplicities $\mu_{P(\la)}$ are then easily calculated:
\begin{prop}For $\lambda,\mu \in \Lambda$ it holds
\label{singO}
\abovedisplayskip0.2em
\belowdisplayskip0.2em
\begin{gather*}
\mu_{P(\la)}\,=\,\begin{cases} 1& \mbox{if } \mu\underline{\la}\mbox{ is oriented,}\\
0 & \mbox{if $\mu\underline{\la}$}\mbox{ is not oriented.}
\end{cases}
\end{gather*}
\end{prop}
\begin{proof}
For regular weights, i.e. weights which do not contain $\cross$'s or $\circ$'s this was proved is \cite[Theorem 2.1]{LS}. (Lemma 4.3 therein and the paragraph afterwards makes the translation into our setup.)
For singular weights observe that diagrammatically the numbers stay the same if we remove the $\cross$'s and $\circ$'s. This corresponds Lie theoretically to the Enright-Shelton equivalence \cite{EnSh} between singular blocks and regular blocks for smaller rank Lie algebras, and hence the multiplicities agree.
\end{proof}

The action of the special projective functors on projectives can be calculated easily in terms of yet another diagram calculus inside the Temperley-Lieb category of type $ \mathrm{D}$, \cite{Green}. Let $\cC$ be the free abelian group generated by $\underline{\la}$, $\la\in\La$, and ${\cF}^{\op{diag}}:\cC \rightarrow \cC$ the $\mathbb{Z}$-linear endomorphism defined as follows: Given a basis vector $\underline{\lambda}$ pick a diagram \scalebox{1.3}{$\square$} from $\eqref{tangles}$ which can be put on top of $\underline{\lambda}$ such that the all symbols $\circ$, $\times$, and $\diamondb$ match. Note that whether a strand involving $\diamondb$ has to carry a $\bullet$ or not depends on how $\diamondb$ needs to be interpreted to have an even number of $\down$ symbols.
\abovedisplayskip0.2em
\belowdisplayskip0.2em
\begin{gather}
\label{tangles}
\begin{tikzpicture}[anchorbase,scale=.7,thick,>=angle 60,c/.style={insert path={circle[radius=2pt]}},d/.style={fill=black,shape=diamond,text height=-10pt,draw}]
\def\dist{2.5cm}
\draw (1,0) [c];
\draw (0,0) to [in=-90,out=90] +(1,1);
\draw +(0,1) [c];
\begin{scope}[xshift=\dist]
\draw (0,0) [c];
\draw (1,0) to [in=-90,out=90] +(-1,1);
\draw +(1,1) [c];
\end{scope}
\begin{scope}[xshift=2*\dist]
\draw (0,0) +(-3pt,-3pt) -- +(3pt,3pt) +(-3pt,3pt) -- +(3pt,-3pt);
\draw (1,0) to [in=-90,out=90] +(-1,1);
\draw (1,1) +(-3pt,-3pt) -- +(3pt,3pt) +(-3pt,3pt) -- +(3pt,-3pt);
\end{scope}
\begin{scope}[xshift=3*\dist]
\draw (1,0) +(-3pt,-3pt) -- +(3pt,3pt) +(-3pt,3pt) -- +(3pt,-3pt);
\draw (0,0) to [in=-90,out=90] +(1,1);
\draw (0,1) +(-3pt,-3pt) -- +(3pt,3pt) +(-3pt,3pt) -- +(3pt,-3pt);
\end{scope}
\begin{scope}[xshift=4*\dist]
\draw (0,0) to [out=90,in=-180] +(.5,.5) to [out=0,in=90] +(.5,-.5);
\draw +(0,1) [c];\draw (1,1) +(-3pt,-3pt) -- +(3pt,3pt) +(-3pt,3pt) -- +(3pt,-3pt);
\end{scope}
\begin{scope}[xshift=5*\dist]
\draw (0,0) to [out=90,in=-180] +(.5,.5) to [out=0,in=90] +(.5,-.5);
\draw (0,1) +(-3pt,-3pt) -- +(3pt,3pt) +(-3pt,3pt) -- +(3pt,-3pt);\draw +(1,1) [c];
\end{scope}
\begin{scope}[yshift=-2.3cm]
\begin{scope}[xshift=0]
\draw (0,0) +(-3pt,-3pt) -- +(3pt,3pt) +(-3pt,3pt) -- +(3pt,-3pt);\draw +(1,0) [c];
\draw (0,1) to [out=-90,in=-180] +(.5,-.5) to [out=0,in=-90] +(.5,.5);
\end{scope}
\begin{scope}[xshift=1*\dist]
\draw +(0,0) [c];\draw (1,0) +(-3pt,-3pt) -- +(3pt,3pt) +(-3pt,3pt) -- +(3pt,-3pt);
\draw (0,1) to [out=-90,in=-180] +(.5,-.5) to [out=0,in=-90] +(.5,.5);
\end{scope}
\begin{scope}[xshift=2*\dist]
\node[d] at (0,0) {};\draw (1,0) [c];
\draw (0,.1) to [in=-90,out=90] +(1,.9);
\draw +(0,1) [c];
\end{scope}
\begin{scope}[xshift=3*\dist]
\draw +(0,0) [c];
\draw (1,0) to [in=-90,out=90] +(-1,.9);\fill (.5,.45) circle(2pt);
\node[d] at (0,1) {};\draw (1,1) [c];
\end{scope}
\begin{scope}[xshift=4*\dist]
\node[d] at (0,0) {};
\draw (0,.1) to [out=90,in=-180] +(.5,.4) to [out=0,in=90] +(.5,-.5);
\draw +(0,1) [c];\draw (1,1) +(-3pt,-3pt) -- +(3pt,3pt) +(-3pt,3pt) -- +(3pt,-3pt);
\end{scope}
\begin{scope}[xshift=5*\dist]
\draw +(0,0) [c];\draw (1,0) +(-3pt,-3pt) -- +(3pt,3pt) +(-3pt,3pt) -- +(3pt,-3pt);
\draw (0,.9) to [out=-90,in=-180] +(.5,-.4) to [out=0,in=-90] +(.5,.5);\fill (.5,.5) circle(2pt);
\node[d] at (0,1) {};
\end{scope}
\end{scope}
\begin{scope}[yshift=-4.6cm,xshift=-2.8cm]
\begin{scope}[xshift=2*\dist]
\node[d] at (0,0) {};\draw (1,0) [c];
\draw (0,.1) to [in=-90,out=90] +(1,.9);\fill (.5,.55) circle(2pt);
\draw (0,1) [c];
\end{scope}
\begin{scope}[xshift=3*\dist]
\draw +(0,0) [c];
\draw (1,0) to [in=-90,out=90] +(-1,.9);
\node[d] at (0,1) {};\draw (1,1) [c];
\end{scope}
\begin{scope}[xshift=4*\dist]
\node[d] at (0,0) {};
\draw (0,.1) to [out=90,in=-180] +(.5,.4) to [out=0,in=90] +(.5,-.5);\fill (.5,.5) circle(2pt);
\draw +(0,1) [c];\draw (1,1) +(-3pt,-3pt) -- +(3pt,3pt) +(-3pt,3pt) -- +(3pt,-3pt);
\end{scope}
\begin{scope}[xshift=5*\dist]
\draw +(0,0) [c];\draw (1,0) +(-3pt,-3pt) -- +(3pt,3pt) +(-3pt,3pt) -- +(3pt,-3pt);
\draw (0,.9) to [out=-90,in=-180] +(.5,-.4) to [out=0,in=-90] +(.5,.5);
\node[d] at (0,1) {};
\end{scope}
\begin{scope}[xshift=6*\dist]
\draw (0,0)--(0,1);\fill (0,.5) circle(2pt) node at (0,1.3) {$\scriptstyle \half$};
\end{scope}
\end{scope}
\end{tikzpicture}
\end{gather}
The result is (up to isotopy) a digram with possibly some, say $c(\square,\lambda)$, internal circles and possibly more than one $\bullet$ per strand. In case a circle has an odd number of $\bullet$ symbols on it, we set ${\cF}^{\op{diag}}_\square (\underline{\lambda})=0$. Otherwise let ${\cF}^{\op{diag}}_\square (\underline{\lambda})$ be the decorated cup diagram obtained  by cancelling pairs of $\bullet$'s on the same line and removing internal circles and define
\[
{\cF}^{\op{diag}}(\underline{\lambda}) = \sum_\square (q+q^{-1})^{c(\square,\lambda)} {\cF}^{\op{diag}}_\square(\underline{\lambda}),
\]
where the sum runs over all possible diagrams \scalebox{1.3}{$\square$}.

\begin{theorem}[Diagrammatic translation of projectives]
\label{combinatorics}
Let
\abovedisplayskip0.2em
\belowdisplayskip0.2em
\[
\Phi: K_0(\Op(n))\,\,\,\xrightarrow{\phantom{x}\textstyle \sim\phantom{x}} \cC,
\]
be the isomorphism of abelian groups, determined by $\Phi([P(\lambda)])=\underline{\la}$. Then ${\cF}^{\op{diag}}(\Phi([P]))=\Phi([P\otimes V])$ for any projective module $P\in\Op(n)$. 
\end{theorem}
\begin{proof}
In the case of columns 4-7 in \eqref{deftable1}, the group homomorphism $K_0(\cF_{i,-}):K_0(\Op_\Ga(n))\rightarrow K_0(\Op_{\Ga_{i,-}}(n))$ induced by $\cF_{i,-}$ is an isomorphism when expressed in the basis of the isomorphism classes of Verma modules; it has an inverse, the morphism induced by $\cF_{i,+}$. They both agree with the morphism induced from the restriction of $\cF$ to these blocks. Hence indecomposable projectives are sent to the corresponding indecomposable projectives. The corresponding statement on the diagram side is obvious, involving the first four diagrams from \eqref{tangles} only. In the case of column 2 or 3 in \eqref{deftable1} we can apply to a cup diagram $\underline{\la}$ exactly one of the first two diagrams from the second line of \eqref{tangles}. In each case the result is $\underline{\la}$ with an extra cup and we are done by Proposition~\ref{a} (vii). Column 1 is the most involved situation. In this case $\underline{\la}$ looks locally as one of the 18 diagrams in \eqref{cool2} drawn in thick black lines or the same picture with $\circ$ and $\cross$ swapped. Together with the dashed lines they indicate the result after applying the diagrammatic functor (the first two in the second line give zero).
\begin{gather*}
\begin{tikzpicture}[thick]
\begin{scope}[yshift=-3cm]
\draw node at (.25,-.3) {$P(\la_{\up\down})$};
\draw (0,0) -- +(0,.7);\fill (0,.35) circle(2pt);
\draw (.5,0) -- +(0,.7);
\draw[dashed,red] (0,.7) to [out=90,in=180] +(.25,.4) to [out=0,in=90] +(.25,-.4);
\draw[red] (0,1.3) circle(2pt);\draw[red] (.5,1.3) +(-2pt,-2pt) -- +(2pt,2pt) +(-2pt,2pt) -- +(2pt,-2pt);
\draw node at (.25,1.7) {$P(\la_{\circ\cross})$};
\draw node at (2,-.3) {$P(\la_{\up\down\up})$};
\draw (1.5,0) -- +(0,.7);\fill (1.5,.35) circle(2pt);
\draw (2,.7) to [out=-90,in=-180] +(.25,-.4) to [out=0,in=-90] +(.25,.4);
\draw[dashed,red] (1.5,.7) to [out=90,in=180] +(.25,.4) to [out=0,in=90] +(.25,-.4);
\draw[red] (1.5,1.3) circle(2pt);\draw[red] (2,1.3) +(-2pt,-2pt) -- +(2pt,2pt) +(-2pt,2pt) -- +(2pt,-2pt);
\draw[red] (2.5,.7) -- +(0,.6);
\draw node at (2,1.7) {$P(\la_{\circ\cross\up})$};
\draw node at (3.8,-.3) {$P(\la_{\down\down\up})$};
\draw (3.3,0) -- +(0,.7);
\draw (3.8,.7) to [out=-90,in=-180] +(.25,-.4) to [out=0,in=-90] +(.25,.4);
\draw[dashed,red] (3.3,.7) to [out=90,in=180] +(.25,.4) to [out=0,in=90] +(.25,-.4);
\draw[red] (3.3,1.3) circle(2pt);\draw[red] (3.8,1.3) +(-2pt,-2pt) -- +(2pt,2pt) +(-2pt,2pt) -- +(2pt,-2pt);
\draw[red] (4.3,.7) -- +(0,.6);
\draw node at (3.8,1.7) {$P(\la_{\circ\cross\up})$};
\draw node at (5.5,-.3) {$P(\la_{\down\up\down})$};
\draw (5,.7) to [out=-90,in=-180] +(.25,-.4) to [out=0,in=-90] +(.25,.4);
\draw (6,0) -- +(0,.7);
\draw[dashed,red] (5.5,.7) to [out=90,in=180] +(.25,.4) to [out=0,in=90] +(.25,-.4);
\draw[red] (5.5,1.3) circle(2pt);\draw[red] (6,1.3) +(-2pt,-2pt) -- +(2pt,2pt) +(-2pt,2pt) -- +(2pt,-2pt);
\draw[red] (5,.7) -- +(0,.6);
\draw node at (5.5,1.7) {$P(\la_{\down\circ\cross})$};
\draw node at (7.3,-.3) {$P(\la_{\up\up\down})$};
\draw (6.8,.7) to [out=-90,in=-180] +(.25,-.4) to [out=0,in=-90] +(.25,.4);\fill (7.05,.31) circle(2pt);
\draw (7.8,0) -- +(0,.7);
\draw[dashed,red] (7.3,.7) to [out=90,in=180] +(.25,.4) to [out=0,in=90] +(.25,-.4);
\draw[red] (7.3,1.3) circle(2pt);\draw[red] (7.8,1.3) +(-2pt,-2pt) -- +(2pt,2pt) +(-2pt,2pt) -- +(2pt,-2pt);
\draw[red] (6.8,.7) -- +(0,.6);
\draw node at (7.3,1.7) {$P(\la_{\up\circ\cross})$};
\draw node at (9.1,-.3) {$P(\la_{\down\up\up})$};
\draw (8.6,.7) to [out=-90,in=-180] +(.25,-.4) to [out=0,in=-90] +(.25,.4);
\draw (9.6,0) -- +(0,.7);\fill (9.6,.35) circle(2pt);
\draw[dashed,red] (9.1,.7) to [out=90,in=180] +(.25,.4) to [out=0,in=90] +(.25,-.4);
\draw[red] (9.1,1.3) circle(2pt);\draw[red] (9.6,1.3) +(-2pt,-2pt) -- +(2pt,2pt) +(-2pt,2pt) -- +(2pt,-2pt);
\draw[red] (8.6,.7) -- +(0,.6);
\draw node at (9.1,1.7) {$P(\la_{\down\circ\cross})$};
\draw node at (10.9,-.3) {$P(\la_{\up\up\up})$};
\draw (10.4,.7) to [out=-90,in=-180] +(.25,-.4) to [out=0,in=-90] +(.25,.4);\fill (10.65,.31) circle(2pt);
\draw (11.4,0) -- +(0,.7);\fill (11.4,.35) circle(2pt);
\draw[dashed,red] (10.9,.7) to [out=90,in=180] +(.25,.4) to [out=0,in=90] +(.25,-.4);
\draw[red] (10.9,1.3) circle(2pt);\draw[red] (11.4,1.3) +(-2pt,-2pt) -- +(2pt,2pt) +(-2pt,2pt) -- +(2pt,-2pt);
\draw[red] (10.4,.7) -- +(0,.6);
\draw node at (10.9,1.7) {$P(\la_{\down\circ\cross})$};
\end{scope}
\begin{scope}[yshift=-6cm]
\draw node at (.15,-.3) {$P(\la_{\down\down})$};
\draw (0,0) -- +(0,.7);
\draw (.5,0) -- +(0,.7);
\draw[dashed,red] (0,.7) to [out=90,in=180] +(.25,.4) to [out=0,in=90] +(.25,-.4);
\draw[red] (0,1.3) circle(2pt);\draw[red] (.5,1.3) +(-2pt,-2pt) -- +(2pt,2pt) +(-2pt,2pt) -- +(2pt,-2pt);
\draw node at (.25,1.7) {$0$};
\draw node at (1.45,-.3) {$P(\la_{\up\up})$};
.5\draw (1.3,.7) to [out=-90,in=-180] +(.25,-.4) to [out=0,in=-90] +(.25,.4);\fill (1.55,.31) circle(2pt);
\draw[dashed,red] (1.3,.7) to [out=90,in=180] +(.25,.4) to [out=0,in=90] +(.25,-.4);
\draw[red] (1.3,1.3) circle(2pt);\draw[red] (1.8,1.3) +(-2pt,-2pt) -- +(2pt,2pt) +(-2pt,2pt) -- +(2pt,-2pt);
\draw node at (1.55,1.7) {$0$};
\end{scope}
\begin{scope}[xshift=2.5cm,yshift=-6cm]
\draw node at (.25,-.3) {$P(\la_{\down\up})$};
\draw (0,.7) to [out=-90,in=-180] +(.25,-.4) to [out=0,in=-90] +(.25,.4);
\draw[dashed,red] (0,.7) to [out=90,in=180] +(.25,.4) to [out=0,in=90] +(.25,-.4);
\draw[red] (0,1.3) circle(2pt);\draw[red] (.5,1.3) +(-2pt,-2pt) -- +(2pt,2pt) +(-2pt,2pt) -- +(2pt,-2pt);
\draw node at (.25,1.7) {$P(\circ\cross)^{\oplus 2}$};
\draw node at (1.9,-.3) {$P(\la_{\down\up\down\up})$};
\draw (1.2,.7) to [out=-90,in=-180] +(.25,-.4) to [out=0,in=-90] +(.25,.4);
\draw[dashed,red] (1.7,.7) to [out=90,in=180] +(.25,.4) to [out=0,in=90] +(.25,-.4);
\draw[red] (1.7,1.3) circle(2pt);\draw[red] (2.2,1.3) +(-2pt,-2pt) -- +(2pt,2pt) +(-2pt,2pt) -- +(2pt,-2pt);
\draw (2.2,.7) to [out=-90,in=-180] +(.25,-.4) to [out=0,in=-90] +(.25,.4);
\draw[red] (1.2,.7) -- +(0,.6);
\draw[red] (2.7,.7) -- +(0,.6);
\draw node at (1.9,1.7) {$P(\la_{\down\circ\cross\up})$};
\draw node at (4.1,-.3) {$P(\la_{\up\up\down\up})$};
\draw (3.4,.7) to [out=-90,in=-180] +(.25,-.4) to [out=0,in=-90] +(.25,.4);\fill (3.65,.31) circle(2pt);
\draw[dashed,red] (3.9,.7) to [out=90,in=180] +(.25,.4) to [out=0,in=90] +(.25,-.4);
\draw[red] (3.9,1.3) circle(2pt);\draw[red] (4.4,1.3) +(-2pt,-2pt) -- +(2pt,2pt) +(-2pt,2pt) -- +(2pt,-2pt);
\draw (4.4,.7) to [out=-90,in=-180] +(.25,-.4) to [out=0,in=-90] +(.25,.4);
\draw[red] (3.4,.7) -- +(0,.6);
\draw[red] (4.9,.7) -- +(0,.6);
\draw node at (4.1,1.7) {$P(\la_{\up\circ\cross\up})$};
\draw node at (6.3,-.3) {$P(\la_{\down\up\up\up})$};
\draw (5.6,.7) to [out=-90,in=-180] +(.25,-.4) to [out=0,in=-90] +(.25,.4);
\draw[dashed,red] (6.1,.7) to [out=90,in=180] +(.25,.4) to [out=0,in=90] +(.25,-.4);
\draw[red] (6.1,1.3) circle(2pt);\draw[red] (6.6,1.3) +(-2pt,-2pt) -- +(2pt,2pt) +(-2pt,2pt) -- +(2pt,-2pt);
\draw (6.6,.7) to [out=-90,in=-180] +(.25,-.4) to [out=0,in=-90] +(.25,.4);\fill (6.85,.31) circle(2pt);
\draw[red] (5.6,.7) -- +(0,.6);
\draw[red] (7.1,.7) -- +(0,.6);
\draw node at (6.3,1.7) {$P(\la_{\up\circ\cross\up})$};
\draw node at (8.5,-.3) {$P(\la_{\up\up\up\up})$};
\draw (7.8,.7) to [out=-90,in=-180] +(.25,-.4) to [out=0,in=-90] +(.25,.4);\fill (8.05,.31) circle(2pt);
\draw[dashed,red] (8.3,.7) to [out=90,in=180] +(.25,.4) to [out=0,in=90] +(.25,-.4);
\draw[red] (8.3,1.3) circle(2pt);\draw[red] (8.8,1.3) +(-2pt,-2pt) -- +(2pt,2pt) +(-2pt,2pt) -- +(2pt,-2pt);
\draw (8.8,.7) to [out=-90,in=-180] +(.25,-.4) to [out=0,in=-90] +(.25,.4);\fill (9.05,.31) circle(2pt);
\draw[red] (7.8,.7) -- +(0,.6);
\draw[red] (9.3,.7) -- +(0,.6);
\draw node at (8.5,1.7) {$P(\la_{\down\circ\cross\up})$};
\end{scope}
\end{tikzpicture}
\end{gather*}

\begin{gather}
\label{cool2}
\begin{tikzpicture}[anchorbase,thick]
\begin{scope}[xshift=1.5cm,yshift=-9cm]
\draw node at (.7,-.45) {$P(\la_{\down\down\up\up})$};
\draw[dashed,red] (0,.7) to [out=90,in=180] +(.25,.4) to [out=0,in=90] +(.25,-.4);
\draw[red] (0,1.3) circle(2pt);\draw[red] (.5,1.3) +(-2pt,-2pt) -- +(2pt,2pt) +(-2pt,2pt) -- +(2pt,-2pt);
\draw (0,.7) to [out=-90,in=-180] +(.75,-.8) to [out=0,in=-90] +(.75,.8);
\draw (.5,.7) to [out=-90,in=-180] +(.25,-.4) to [out=0,in=-90] +(.25,.4);
\draw[red] (1,.7) -- +(0,.6);
\draw[red] (1.5,.7) -- +(0,.6);
\draw node at (.7,1.7) {$P(\la_{\circ\cross\up\up})$};
\draw node at (3,-.45) {$P(\la_{\down\down\up\up})$};
\draw[dashed,red] (3.3,.7) to [out=90,in=180] +(.25,.4) to [out=0,in=90] +(.25,-.4);
\draw[red] (3.3,1.3) circle(2pt);\draw[red] (3.8,1.3) +(-2pt,-2pt) -- +(2pt,2pt) +(-2pt,2pt) -- +(2pt,-2pt);
\draw (2.3,.7) to [out=-90,in=-180] +(.75,-.8) to [out=0,in=-90] +(.75,.8);
\draw (2.8,.7) to [out=-90,in=-180] +(.25,-.4) to [out=0,in=-90] +(.25,.4);
\draw[red] (2.3,.7) -- +(0,.6);
\draw[red] (2.8,.7) -- +(0,.6);
\draw node at (3.3,1.7) {$P(\la_{\down\down\circ\cross})$};
\draw node at (5.6,-.45) {$P(\la_{\up\down\up\up})$};
\draw[dashed,red] (4.9,.7) to [out=90,in=180] +(.25,.4) to [out=0,in=90] +(.25,-.4);
\draw[red] (4.9,1.3) circle(2pt);\draw[red] (5.4,1.3) +(-2pt,-2pt) -- +(2pt,2pt) +(-2pt,2pt) -- +(2pt,-2pt);
\draw (4.9,.7) to [out=-90,in=-180] +(.75,-.8) to [out=0,in=-90] +(.75,.8);\fill (5.65,-.1) circle(2pt);
\draw (5.4,.7) to [out=-90,in=-180] +(.25,-.4) to [out=0,in=-90] +(.25,.4);
\draw[red] (5.9,.7) -- +(0,.6);
\draw[red] (6.4,.7) -- +(0,.6);
\draw node at (5.6,1.7) {$P(\la_{\circ\cross\up\up})$};
\draw node at (8.2,-.45) {$P(\la_{\up\down\up\up})$};
\draw[dashed,red] (8.5,.7) to [out=90,in=180] +(.25,.4) to [out=0,in=90] +(.25,-.4);
\draw[red] (8.5,1.3) circle(2pt);\draw[red] (9,1.3) +(-2pt,-2pt) -- +(2pt,2pt) +(-2pt,2pt) -- +(2pt,-2pt);
\draw (7.5,.7) to [out=-90,in=-180] +(.75,-.8) to [out=0,in=-90] +(.75,.8);\fill (8.25,-.1) circle(2pt);
\draw (8,.7) to [out=-90,in=-180] +(.25,-.4) to [out=0,in=-90] +(.25,.4);
\draw[red] (7.5,.7) -- +(0,.6);
\draw[red] (8,.7) -- +(0,.6);
\draw node at (8.2,1.7) {$P(\la_{\up\down\circ\cross})$};
\end{scope}
\end{tikzpicture}
\end{gather}
In the first seven non-zero pictures, we have only one possible orientation for the displayed resulting diagram. On the other hand,  there are at most two orientations, for each choice of a diagram, of which exactly one corresponds to a Verma module which is not annihilated by $\cF$ with the image in our chosen blocks except of the last and penultimate diagram where none of the two is annihilated. Similar arguments can be used for the functors $\cF_{\half,+}$, $\cF_{\half,-}$, and $\cF_0$. Comparing with Proposition~\ref{a} using Proposition~\ref{singO}, we see that ${\cF}^{\op{diag}}(\Phi([P]))=\Phi([P\otimes V])$ in the
basis of Verma modules.
\end{proof}

As a special case we obtain the following:

\begin{corollary}
\label{special-case}
Assume we are in case \eqref{HIM6} of Proposition~\ref{a} and assume there is no $\down$ to the left of our fixed $\up\down$ pair then  $\cF_{i,-} P^\pp(\la) \cong P^\pp(\la_{\circ{\scriptscriptstyle\times}})$.
\end{corollary}

\begin{proof}
By assumption and construction, the cup diagram $\la$ looks locally at our two fixed vertices as displayed in one of the pictures in the top line of the following diagram. Applying ${\cF}^{\op{diag}}$ gives for each diagram precisely one new diagram in the required block as displayed. Hence the statement follows by applying $\Phi^{-1}$ and the definition of $\cF_{i,-}$.
\end{proof}

\section{The cyclotomic quotient \texorpdfstring{$\VWd(\alpha,\beta)$}{of VW}} \label{section:cyclotomic_quotients}
\textbf{For this whole section we will always assume $n\geq 2d$.} This is crucial for all statements involving the idempotent $z_d$ from Definition~\ref{def:brauer_idempotent}.

The action of the commuting $y_i$'s decomposes $\MdV$ and $\VWd(\alpha,\beta)$ into generalized eigenspaces. Fix the sets $J=\nicefrac{\delta}{2}+\mZ$ and $J_{<}=\nicefrac{\delta}{2}+\mZ_{<(n-1)}$.

\begin{definition}
We identify $\VWd(\alpha,\beta)$ with $\mathrm{End}_\mg(\MdV)$ via Theorem~\ref{iso}. Define the orthogonal {\em weight idempotents} $\ei$, $\bi \in J^d$ characterized by the property that
\abovedisplayskip0.2em
\belowdisplayskip0.2em
\[
\ei (y_r-i_r)^m = (y_r-i_r)^m \ei = 0
\]
for each $1\leq r\leq d$ and $m\gg 0$ (for instance $m\geq d$ is enough).
\end{definition}
Then we have the generalized eigenspace decompositions
\abovedisplayskip0.2em
\belowdisplayskip0.2em
\begin{gather*}
\begin{gathered}
\MdV={\sum}_{\bi\in J^d}\MdV\ei,\\
\VWd(\alpha,\beta)={\sum}_{\bi\in J^d}\VWd(\alpha,\beta)\ei \quad \text{and} \quad 
\VWd(\alpha,\beta)={\sum}_{\bi\in J^d}\ei\VWd(\alpha,\beta)
\end{gathered}
\end{gather*}
as right respectively left modules. The quasi-idempotents $e_k$ only acts between certain generalized eigenspaces:
\begin{lemma}
\label{eigenvaluesfore}
Let $1\leq k\leq d-1$. If $p=\sum_{\bi\in J^d, i_{k+1}=-i_k}\ei$ then we have $e_k=pe_k=e_kp=pe_kp$. In particular $\ei e_k=0=e_k\ei$ if $i_{k+1}\not=-i_k$.
\end{lemma}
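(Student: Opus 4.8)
The plan is to exploit relation (VW.8) together with the known eigenvalues of the $y_r$'s. Recall from (VW.8) that $e_a(y_a+y_{a+1})=0=(y_a+y_{a+1})e_a$ in $\VWd$, hence also $e_k(y_k+y_{k+1})=0=(y_k+y_{k+1})e_k$ under the identification with $\END_\mg(\MdV)$. The $y_r$'s commute (relation (VW.5c)) and, since $\VWd(\alpha,\beta)$ is a cyclotomic quotient of level $2$ with the $y_1$-eigenvalues $\alpha,\beta$ from Proposition~\ref{prop:eigenvalues}, each $y_r$ acts semisimply with eigenvalues in $J$; in fact the joint generalized eigenspace decomposition for all the $y_r$ is indexed by $\bi\in J^d$, with $\ei$ the projection onto the $\bi$-eigenspace. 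This is exactly the decomposition written just before the lemma.

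First I would show $e_k\ei=0$ whenever $i_{k+1}\ne -i_k$. Fix such an $\bi$. Multiplying the identity $e_k(y_k+y_{k+1})=0$ on the right by $\ei$ and using that $y_k,y_{k+1}$ commute with $\ei$ gives $e_k\ei(y_k+y_{k+1})=0$, and iterating, $e_k\ei(y_k+y_{k+1})^m=0$ for all $m$. On the image of $\ei$ the operator $y_k+y_{k+1}$ acts with the single generalized eigenvalue $i_k+i_{k+1}\ne 0$, so $(y_k+y_{k+1})^m$ is invertible on $\IM(\ei)$ for $m\gg 0$ (indeed $(y_k+y_{k+1}-(i_k+i_{k+1}))^m\ei=0$ for $m\geq d$, and $i_k+i_{k+1}\ne 0$ makes the binomial expansion of $(y_k+y_{k+1})^m\ei$ have invertible leading term). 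Hence $e_k\ei=0$. The symmetric argument using $(y_k+y_{k+1})e_k=0$ gives $\ei e_k=0$.

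Next I would deduce the idempotent statement. Write $1=\sum_{\bi\in J^d}\ei$. Then $e_k=e_k\cdot 1=\sum_{\bi}e_k\ei=\sum_{\bi:\,i_{k+1}=-i_k}e_k\ei=e_k p$, using the vanishing just proved and the definition $p=\sum_{\bi:\,i_{k+1}=-i_k}\ei$. Symmetrically $e_k=1\cdot e_k=p e_k$, and combining the two gives $e_k=pe_k=e_kp=pe_kp$, as claimed. Note $p$ is genuinely an idempotent since it is a sum of the orthogonal idempotents $\ei$.

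The only genuine point requiring care — the ``main obstacle'' — is the passage from $e_k\ei(y_k+y_{k+1})^m=0$ to $e_k\ei=0$, i.e.\ the invertibility of $(y_k+y_{k+1})^m$ on $\IM(\ei)$; this is where one must use that $\VWd(\alpha,\beta)$ has level $2$ so that the $y_r$ act with \emph{finitely many} eigenvalues and $\ei$ is a well-defined idempotent cutting out a single joint generalized eigenvalue. Everything else is a direct manipulation of the defining relations (VW.5c) and (VW.8) together with the eigenspace decomposition displayed before the lemma.
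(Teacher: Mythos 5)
Your argument is correct and is essentially the paper's own proof: both rest on relation (VW.8) together with the observation that on the image of $\ei$ the operator $y_k+y_{k+1}$ has the single generalized eigenvalue $i_k+i_{k+1}$, so that $e_k\ei=0=\ei e_k$ unless $i_{k+1}=-i_k$, after which $e_k=pe_k=e_kp=pe_kp$ follows by summing the orthogonal decomposition $1=\sum_{\bi}\ei$. The only blemish is your passing claim that the $y_r$ act semisimply --- they need not, but your proof never uses this, since you work with generalized eigenvalues throughout, exactly as the paper does via $m(y_k+y_{k+1}-\mu)^r e_k=(-\mu)^r m e_k$.
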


\begin{proof}
Let $m\in \VWd(\alpha,\beta)$ or $\MdV$ be contained in the generalized eigenspace for $\ei$, hence $m\ei\not=0$. We claim that if $me_k\not=0$ then $i_{r+1}+i_r=0$. First note that for  $(y_k+y_{k+1}-i_{k+1}-i_k)^rm=\sum_{a=0}^r(y_k-i_{k})^a(y_{k+1}-i_{k+1})^{r-a}m=0$ for $r\gg 0$. Hence $m$ is in the $\mu={i_{k+1}+i_k}$-generalized eigenspace for $y_{k+1}+y_k$. Then $0=m(y_k+y_{k+1}-\mu)^{r}e_k=me_k(-\mu)^k$ by \eqref{8a}. Hence $\mu=0$ and the claim follows. Since the modules have a generalized eigenspace decomposition $pe_k=0=e_kp$. The rest follows analogously.
\end{proof}

\subsection{The Brauer algebra idempotent}
In this section we establish the technical tools which allow us in \cite{ES_Brauer} to identify the Brauer algebra $B_d(\delta)$ as an {\it idempotent truncation} of $\VWd(\alpha,\beta)$ and hence connect it with Lie theory.
\begin{definition} \label{def:brauer_idempotent}
The {\it Brauer algebra idempotent} is defined as
\abovedisplayskip0.2em
\belowdisplayskip0.2em
\begin{gather*}
z_d\,\,=\sum_{\bi\in (J{_<})^d} \ei.
\end{gather*}
\end{definition}
Multiplication by $\ei$ projects any $\VWd(\alpha,\beta)$-module
onto its {\em $\bi$-weight space}, that is, the simultaneous
generalised eigenspace for the commuting operators
$y_1,\dots,y_r$ with respective eigenvalues $i_1,\dots,i_r$.
By \eqref{coolpicture}, Proposition~\ref{surjectivity} and our assumption on $n$ being large, the element $z_d$ is just the projection onto the blocks containing Verma modules indexed by bipartitions of the form $(\la,\emptyset)$ where $\la$ is an \upd. (The corresponding weights are precisely those $\la$ which satisfy $I(\la)\subset [-(\nicefrac{\delta}{2}+n-1),\nicefrac{\delta}{2}+n-1]$.)

\begin{prop} \label{prop:dimension_brauer}
The algebra $z_d\VWd(\alpha,\beta)z_d$ has dimension $(2d-1)!!$.
\end{prop}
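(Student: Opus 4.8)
The plan is to transport the statement through Theorem~\ref{iso} and turn it into a dimension count for an endomorphism ring of a projective object. We may assume $\delta\geq 0$, the case $\delta<0$ being equivalent by Corollary~\ref{tableaux}. Identifying $\VWd(\alpha,\beta)$ with $\END_\mg(\MdV)$ via Theorem~\ref{iso}, the element $z_d$ becomes an idempotent of this ring, and the standard identity $e\,\END(M)\,e\cong\END(eM)$ for the image of an idempotent $e$ gives
\[
z_d\VWd(\alpha,\beta)z_d\;\cong\;\END_\mg\bigl(z_d\MdV\bigr)^{\op{opp}},
\]
so that $\dim z_d\VWd(\alpha,\beta)z_d=\dim\END_\mg(z_d\MdV)$. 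Since $\Mde$ is projective for $\delta\geq0$ and tensoring with the finite dimensional $V$ preserves projectivity, $\MdV$ is projective and hence so is its direct summand $z_d\MdV$; in particular $z_d\MdV$ has a Verma flag, and Lemma~\ref{dimformula}(2) yields $\dim\END_\mg(z_d\MdV)=\sum_{\nu\in\La}(\nu_{z_d\MdV})^2$. Everything is thus reduced to identifying the Verma multiplicities $\nu_{z_d\MdV}$.

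By iterating Lemma~\ref{Verma} $d$ times, the Verma subquotients of $\MdV$ counted with multiplicity are indexed by the Verma paths $\de=\la^0\to\la^1\to\cdots\to\la^d$ of length $d$, with one copy of $M^\pp(\nu)$ for each such path ending at $\nu$. On the subquotient attached to such a path the commuting operators $y_1,\dots,y_d$ act with a generalised eigenvalue tuple $\bi=(i_1,\dots,i_d)$, where $i_r$ is the (suitably normalised) $\tfrac{\delta}{2}$-content of the box by which $\la^{r-1}$ and $\la^{r}$ differ; this follows by iterating the Casimir computation of Proposition~\ref{prop:eigenvalues} and reading it against the content picture~\eqref{coolpicture}. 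Hence the weight idempotents split the Verma flag along its paths, and $z_d=\sum_{\bi\in(J_{<})^d}\ei$ retains exactly the subquotients all of whose $i_r$ lie in $J_{<}$. Now, by Lemma~\ref{surjectivity} the weights occurring are identified with bipartitions of size $\leq d$; a path that stays among bipartitions with empty second component only modifies boxes sitting in columns bounded by $d$, whose content is therefore $<\tfrac{\delta}{2}+n-1$ since $n\geq 2d$, whereas to reach a bipartition with non-empty second component a path must at some step add a box of the second component, which in~\eqref{coolpicture} sits near column $n$ and has content $\geq\tfrac{\delta}{2}+n-1$, hence not in $J_{<}$. Therefore $z_d\MdV$ is precisely the sum of those Verma subquotients indexed by paths through partitions (with empty second component), i.e. by up-down-\emph{tableaux}; equivalently $\nu_{z_d\MdV}=|\cT^1_d(\la)|$ when $\varphi(\nu)=(\la,\emptyset)$ and $\nu_{z_d\MdV}=0$ otherwise. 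This is exactly the description of $z_d$ recorded just before the Proposition.

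Combining the two steps,
\[
\dim z_d\VWd(\alpha,\beta)z_d=\sum_{\nu\in\La}(\nu_{z_d\MdV})^2=\sum_{\la}|\cT^1_d(\la)|^2=(2d-1)!!,
\]
the last equality being the $l=1$ case of \eqref{cardinality}. I expect the main obstacle to be the second paragraph: making precise the dictionary between the $y_r$-eigenvalue on a Verma subquotient and the content of the modified box, and checking the two content estimates that separate first- from second-component boxes — this is where the hypothesis $n\geq 2d$ and the content combinatorics of~\eqref{coolpicture} genuinely enter. Once this is settled, no further Kazhdan--Lusztig input is needed beyond Lemma~\ref{dimformula}, and the count collapses to the combinatorial identity~\eqref{cardinality}.
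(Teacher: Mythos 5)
Your argument is correct and is essentially the paper's own proof: the paper likewise reads $z_d\VWd(\alpha,\beta)z_d$ as the endomorphisms of the summand of $\MdV$ cut out by the eigenvalue condition, observes (via \eqref{coolpicture}, Lemma~\ref{surjectivity} and the assumption $n\geq 2d$) that this retains exactly the Verma paths through bipartitions of the form $(\la,\emptyset)$, and concludes from the multiplicity count $\sum_\la|\cT^1_d(\la)|^2=(2d-1)!!$ given by \eqref{cardinality} with $l=1$. You simply make explicit the eigenvalue/content bookkeeping (Proposition~\ref{prop:eigenvalues} iterated along a Verma path, plus the two content estimates separating the components) that the paper compresses into the remark preceding the proposition.
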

\begin{proof}
Using Proposition~\ref{surjectivity} and that the algebra has a basis given by Verma paths that correspond to bipartitions of the form $(\la,\emptyset)$ gives the result.
\end{proof}

\begin{theorem}[Semisimplicity theorem]
\label{thm:semisimplicity}\hfill
\begin{enumerate}
\item The algebra $\VWd(\alpha,\beta)$ is semisimple if and only if $|\delta|\geq d-1$.
\item The algebra $z_d\VWd(\alpha,\beta)z_d$ is semisimple if and only if
    $\delta\not=0$ and $|\delta|\geq d-1$ or $\delta=0$ and $d=1,3,5$.
\end{enumerate}
\end{theorem}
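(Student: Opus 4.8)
The plan is to go through Theorem~\ref{iso}, which identifies $\VWd(\alpha,\beta)$ with $\END_\mg(\MdV)^{\op{opp}}$, and to turn both assertions into statements about composition factors of parabolic Verma modules in $\cO^\pp(n)$. First I would reduce to $\delta\ge 0$: for $\delta<0$, Corollary~\ref{tableaux} gives an algebra isomorphism of $\VWd(\alpha,\beta)$ with $\END_\mg(M^\pp(\underline{-\delta})\otimes V^{\otimes d})$, which is of exactly the same shape with the nonnegative integer $-\delta$ replacing $\delta$, and $\delta\ge d-1$ can only hold in the reduced situation. For $\delta\ge 0$ the module $\Mde$ is projective, hence so is $\MdV$, and $z_d\MdV$ is a direct summand of it, so $z_d\VWd(\alpha,\beta)z_d\cong\END_\mg(z_d\MdV)^{\op{opp}}$ with $z_d\MdV$ again projective.

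Next, using that $\VWd(\alpha,\beta)$ is cellular by \cite{AMR}, with cell modules indexed by $\bigcup_k\cP^2(d-2k)$ — and arguing analogously for $\END_\mg(z_d\MdV)$, whose relevant shapes are $\bigcup_k\cP^1(d-2k)$ — I would observe that, by Lemma~\ref{surjectivity}, these shapes correspond to the weights $\mu\in\La$ occurring as the endpoint of a length-$d$ Verma path from $\de$ (lying in an arbitrary block, resp.\ in a $z_d$-block). Since the module in question is projective, the Schur functor $\HOM_\mg(\MdV,-)$ (resp.\ $\HOM_\mg(z_d\MdV,-)$) is exact and carries $M^\pp(\mu)$ to the corresponding cell module and $L(\mu)$ to its simple head or to zero. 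A cellular algebra is semisimple iff all its cell modules are simple, and by exactness of the Schur functor this happens iff $[M^\pp(\mu):L(\nu)]=\delta_{\mu,\nu}$ for every pair of endpoint weights $\mu,\nu$ of the appropriate type. Combining Lemma~\ref{singO} with BGG reciprocity and the $0$--$1$ property of these multiplicities (cf.\ \cite{LS} and the proof of Lemma~\ref{dimformula}), $[M^\pp(\mu):L(\nu)]=1$ precisely when $\mu$ orients the decorated cup diagram $\underline\nu$, so the criterion becomes: \emph{the only endpoint weight orienting $\underline\nu$ is $\nu$ itself}, for every endpoint weight $\nu$.

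The remaining, and I expect hardest, step is to decide this combinatorial condition using the charge-$\tfrac\delta2$ content dictionary \eqref{coolpicture} and the cup diagrams of \cite{ES_diagrams}. Reorienting a cup, a dotted cup or the exceptional ray at $0$ of $\underline\nu$ is a ``reflection'' move on the bipartition $\varphi(\nu)$, and the point to establish is that every cup which can arise from a length-$\le d$ Verma path out of $\de$ joins vertices whose charged contents are separated far enough that the reflected diagram falls outside $\bigcup_k\cP^2(d-2k)$ (wrong size, wrong parity, or a content leaving the admissible window) exactly when $\delta\ge d-1$; this yields the ``if'' direction of part~(1). For part~(2) the restriction to the $z_d$-blocks is precisely the restriction to bipartitions $(\la,\emptyset)$ (see the discussion after Definition~\ref{def:brauer_idempotent}), under which the content dictionary becomes literally that of the Brauer algebra: the cell modules are indexed by partitions of $d,d-2,\dots$ with dimensions $|\cT^1_d(\la)|$ and the numbers $[M^\pp(\mu):L(\nu)]$ agree with the decomposition numbers of $\Br_d(\delta)$, which is the type-$D$ description of the latter from \cite{CDVMI}, \cite{CDVMII}; hence $z_d\VWd(\alpha,\beta)z_d$ is semisimple iff $\Br_d(\delta)$ is, and one concludes via the classical criterion \eqref{semisimplicity}. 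Finally, for the ``only if'' direction of part~(1) when $\delta\le d-2$: if moreover $\delta\ne 0$, or $\delta=0$ and $d\notin\{1,3,5\}$, non-semisimplicity of the corner algebra $z_d\VWd(\alpha,\beta)z_d\cong\Br_d(\delta)$ already forces non-semisimplicity of $\VWd(\alpha,\beta)$, since a corner $eAe$ of a semisimple algebra $A$ is semisimple; the two leftover cases $\delta=0$, $d\in\{3,5\}$ are handled by exhibiting an explicit pair of endpoint weights in a $\diamond$-block with a common composition factor. (Alternatively, part~(1) can be obtained by specialising the semisimplicity criterion for cyclotomic $VW$-algebras of \cite{AMR} to $\alpha=\tfrac12(1-\delta)$, $\beta=\tfrac12(\delta+N-1)$ and simplifying.)
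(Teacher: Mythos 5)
Your opening reduction is sound and is essentially the same as the paper's: via Theorem~\ref{iso} (and Ringel self-duality for $\delta<0$), semisimplicity of $\VWd(\alpha,\beta)$ is equivalent to every Verma module $M^\pp(\mu)$ with $\mu$ an endpoint of a length-$d$ Verma path from $\de$ being simple, i.e.\ to the vanishing of the multiplicities $[M^\pp(\mu):L(\nu)]$ for $\mu\neq\nu$ in the relevant set (the paper phrases this as: $\MdV$ must be a direct sum of projective Verma modules). The problem is that after this reduction you stop exactly where the actual content of the theorem begins. For part (1) you write that ``the point to establish is'' that the reflected/reoriented diagrams fall outside the admissible set precisely when $\delta\geq d-1$ --- but that threshold computation is the theorem; nothing in your proposal produces the bound. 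The paper settles it by an elementary counting with diagrammatic weights: a Verma module fails to be simple exactly when its weight contains two $\up$'s, a pair $\down\,\up$ (in this order), or a pair $\diamond\,\up$, and starting from $\de$ one needs at least $\delta+2$ tensor steps to create any such pattern, while for $d\geq\delta+2$ such a pattern can always be preserved (e.g.\ by repeatedly adding $\epsilon_n$, or swapping with a $\circ$). Without some argument of this kind your proof of (1) is a restatement of the claim, not a proof.

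For part (2) you outsource the entire quantitative content to the assertion that the decomposition numbers of the corner algebra $z_d\VWd(\alpha,\beta)z_d$ ``literally'' agree with those of $\Br_d(\delta)$, so that the classical criterion \eqref{semisimplicity} applies. That identification is not available in this paper (Lemma~\ref{singO} gives the multiplicities on the category-$\cO$ side, but matching them, including the charge, the $\diamond$/parity conventions and especially the delicate case $\delta=0$, with the Cox--De~Visscher--Martin description of Brauer decomposition numbers is a nontrivial dictionary check which the paper deliberately avoids and defers); asserting it is a genuine gap, not a citation. Likewise the exceptional cases are only announced: the direct verifications for $\delta=0$, $d\leq 7$ (semisimple for $d=1,3,5$, not for $d=2,4,6$, and the weight starting $\diamond\up\up\down$ at $d=7$ whose bad pair persists), and the explicit pairs of endpoint weights with a common composition factor needed for the ``only if'' direction of (1) when $\delta=0$, $d\in\{3,5\}$ (where the corner algebra is semisimple, so your corner-algebra argument gives nothing), are never exhibited. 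In short: the framework is correct, but both the threshold bound in (1) and the combinatorial case analysis in (2) --- which the paper carries out directly on diagrammatic weights --- are missing.
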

\begin{proof}
We first assume $\delta \geq 0$. Note that $\END_\mg(\MdV)$ is semisimple if and only if $\MdV$ is a direct sum of projective Verma modules. Or equivalently if all occurring Verma modules have highest weight $\la$ such that $\la+\rho$ is dominant. This is equivalent to the statement that the corresponding diagrammatic weights do not contain two $\up$'s, an ordered pair $\down\up$ or a pair $\diamondb\up$.

Starting from the diagrammatic weight $\de$ we need precisely $\delta$ steps to create an $\up$ left to a $\down$, hence $\delta+2$ steps to create a $\down$, $\up$; we also need $2(\nicefrac{\delta}{2}+1)$ steps to create a pair of two $\up$'s and $\delta+2$ steps to create $\diamondb$, $\up$. Hence both algebras are semisimple at least if $d< \delta+2$. Now in case (1), we always can add $\epsilon_n$ without changing those pairs, an hence the algebra is not semisimple for all $d\geq\delta+2$. In case (2) with $\delta\not= 0,1$ we can always find some $\circ$ at some position $<n$ and hence add or subtract some appropriate $\epsilon_j$ without changing the pair. That means the truncated algebra is not semisimple for $d\geq\delta+2$. For $\delta=1$ we need $3$ steps to create a weight starting with $\diamondb\up\circ\down$. Hence the algebra is not semisimple and stays not semisimple, since we can always repeatedly swap the $\circ$ with the $\down$ without changing the $\diamondb\up$ pair. Hence (2) holds for any $\delta>0$. In case $\delta=0$ one can calculate directly that it is semisimple in the cases $d=1, d=3, d=5$, but not in the cases $d=2,4,6$ and for $=7$ we obtain a weight starting with $\diamondb\up\up\down$. Since we can change the last two symbols $\up\down$ into $\circ\cross$ and back again without loosing the  $\diamondb\up$-pair, the algebra stays not semisimple for $d\geq 7$ and $\delta=0$. 

For the case $\delta < 0$ we use Corollary \ref{tableaux}. The theorem follows.
\end{proof}

\subsection{The basic algebra} 
Our next goal is to determine for $\delta\geq 0$ the projective modules appearing in $\MdV$ and hence the basic algebra underlying $\END_\mg(\MdV)$ for $\delta\in\mZ$. Our main tool here is the notion of $\dht$ which measures the minimal length of a Verma path needed to create a Verma module of a given weight.

\begin{definition}
Let $\delta, d\in\mZ$, $\delta\geq 0$, $d\geq 1$ and $\mu\in\La$. The $\delta$-height of $\mu$ is defined as $\dht(\mu)=\sum_{i=1}^{n}|(\de+\rho)_i-(\mu+\rho)_i|=\sum_{i=1}^{n}|\de_i-\mu_i|$.
\end{definition}

Note that, by Proposition~\ref{Verma}, the highest weights $\la$ occurring in a Verma filtration of $\MdV$ satisfy $\dht(\la)\leq d$ and $\dht(\la)$ is precisely the number of boxes in $\varphi(\la)$, see \eqref{displaybipartition}, \eqref{coolpicture}.
The proof of the following two Propositions is given in Section~\ref{whichandsat}
\begin{prop}
\label{whichproj}
Let $\delta, d\in\mZ$, $\delta\geq 0$, $d\geq 1$.
Then 
\abovedisplayskip0.3em
\belowdisplayskip0.3em
\[
\MdV\cong {\bigoplus}_{\mu\in\La} a_\mu P(\mu)
\]
with multiplicities $a_\mu$ and $a_\mu\not=0$ if and only if  $\dht\leq d$ and $d-\dht$ is even.
\end{prop}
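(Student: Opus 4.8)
The plan is to determine exactly which indecomposable projectives $P(\mu)$ occur as summands of $\Mde \otimes V^{\otimes d}$ by combining two facts: first, $\Mde \otimes V^{\otimes d}$ is projective (since $\Mde = M^\pp(\de)$ is projective for $\delta \geq 0$ and $\_ \otimes V$ preserves projectives), so it has a Verma flag and equals $\bigoplus_\mu a_\mu P(\mu)$ with $a_\mu = \mu_{\Mde \otimes V^{\otimes d}} - \sum_{\nu > \mu} a_\nu \nu_{P(\nu)}$ computed recursively via BGG reciprocity; second, $\nu_{\Mde \otimes V^{\otimes d}}$ counts Verma paths of length $d$ ending at $\nu + \rho$ by Lemma~\ref{Verma} and the discussion after it, and such a path exists precisely when $\dht(\nu) \leq d$ with $d - \dht(\nu)$ even (each step changes $\dht$ by $\pm 1$, so parity is forced; the ``only increase'' moves let us pad any shorter path up to length $d$ by an even number of back-and-forth steps, using that $n \geq 2d$ leaves room).

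First I would establish the ``only if'' direction: if $a_\mu \neq 0$ then $P(\mu)$ is a summand, so $M^\pp(\mu)$ appears in the Verma flag of $\Mde\otimes V^{\otimes d}$, hence by the remark after the definition of $\dht$ we get $\dht(\mu) \leq d$; and since every arrow in \eqref{Vermaseq} changes $\dht$ by exactly $\pm 1$, the existence of a length-$d$ path forces $d \equiv \dht(\mu) \pmod 2$. For the ``if'' direction I would argue by descending induction on $\dht(\mu)$ (equivalently, on the dominance order among the finitely many weights of $\delta$-height $\leq d$): the maximal such weights are exactly those with $\dht(\mu) = d$, and for these $a_\mu = \mu_{\Mde\otimes V^{\otimes d}} = |\cT^1_d(\varphi(\mu))| \cdot (\text{number of paths}) \neq 0$ directly from Lemma~\ref{surjectivity}. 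For smaller $\dht(\mu)$ with the right parity, one shows $\mu_{\Mde\otimes V^{\otimes d}} > \sum_{\nu > \mu} a_\nu \nu_{P(\nu)}$: the left side counts all length-$d$ Verma paths ending at $\mu$, while the right side, by Lemma~\ref{singO} and Lemma~\ref{dimformula}(1), counts those paths that factor through some strictly larger $\mu$ — and there is at least one path ending at $\mu$ that does not, namely one that reaches $\varphi(\mu)$ as fast as possible and then oscillates, which never visits a weight above $\mu$ in the flag of any already-found $P(\nu)$.

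The main obstacle will be this last strict-inequality step: making precise that the number of length-$d$ Verma paths ending at $\mu$ strictly exceeds the number contributed by the larger projectives $P(\nu)$, $\nu > \mu$. One clean way is to use the diagram calculus of Lemma~\ref{combinatorics}: translate $[\Mde \otimes V^{\otimes d}] = (\cF^{\op{diag}})^d(\underline{\de})$ in the cup-diagram model, where $\underline{\de}$ is the empty cup diagram, and observe that $(\cF^{\op{diag}})^d$ produces, with positive coefficient, every cup diagram $\underline{\mu}$ reachable by $d$ applications of the elementary tangles — which is exactly the combinatorial condition $\dht(\mu) \leq d$, $d - \dht(\mu)$ even, since each tangle either increases or decreases the number of cups/height by one and the $n \geq 2d$ hypothesis guarantees no collisions at the far right. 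Since a cup diagram $\underline{\mu}$ appearing with nonzero coefficient in $\Phi[\Mde\otimes V^{\otimes d}]$ forces $P(\mu)$ to be a summand (the coefficients in the cup-diagram basis are the projective multiplicities $a_\mu$, being manifestly non-negative), this yields $a_\mu \neq 0$ and sidesteps the recursion entirely. I would therefore organize the proof around the identity $\Phi[\Mde\otimes V^{\otimes d}] = (\cF^{\op{diag}})^d(\underline{\de})$ from Lemma~\ref{combinatorics}, with the $\dht$-bookkeeping as the only real computation.
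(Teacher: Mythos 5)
Your overall framing matches the paper: both establish the ``only if'' direction by the same Verma-flag plus parity argument, both use that $\MdV$ is projective with non-negative multiplicities, and the paper itself works with the cup-diagram calculus of Lemma~\ref{combinatorics} (via Corollary~\ref{special case}) as a tool. The difference is in the ``if'' direction, and there your proposal has a genuine gap. Your second (preferred) route rests on the assertion that $(\cF^{\op{diag}})^d(\underline{\de})$ contains, with nonzero coefficient, \emph{every} cup diagram $\underline{\mu}$ with $\dht(\mu)\leq d$ and $d-\dht(\mu)$ even, justified only by ``each tangle changes the height by one and $n\geq 2d$ avoids collisions at the far right''. But the obstruction is not at the far right: individual tangle moves can kill a cup diagram outright (a resulting undotted cap, or a circle carrying a single dot -- diagrammatically this is exactly Lemma~\ref{a}(viii),(ix) and the two vanishing pictures in the proof of Lemma~\ref{combinatorics}), and even when a move is nonzero, \emph{which} indecomposable projective comes out is governed by the global cup structure, not by the local symbol change: e.g.\ $\cF_{i,-}P(\la_{\up\down})$ is only isomorphic to $P(\la_{\circ\times})$ under the extra hypothesis of Corollary~\ref{special case} (no $\down$ to the left), and otherwise a different projective appears. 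So the existence of a length-$d$ sequence of weights with unit height steps does not by itself produce a non-vanishing chain of diagrammatic moves ending in $\underline{\mu}$; exhibiting such a chain is precisely the nontrivial content, and it is what the paper proves.

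Concretely, the paper argues by induction on $d$: for $\dht(\mu)=d-2k$ with $k>0$ it pads a chain of length $d-2$ by two more steps, using that $\mu$ contains an $\up\circ$, $\down\circ$ or $\cross\circ$ pair so that $P(\mu)$ is a summand of $\cF_{i,+}\cF_{i,-}P(\mu)$ (Lemma~\ref{a}(i),(ii),(vii)); and for the boundary case $\dht(\mu)=d$ it runs a case-by-case search for a predecessor weight $\nu$ with $\dht(\nu)=d-1$ and $P(\mu)$ a summand of $\cF P(\nu)$, where the delicate $\circ\cross$ configurations are resolved differently depending on whether a $\down$ sits to the left of the cross (Corollary~\ref{special case} versus Lemma~\ref{a}(v)); note this choice depends on how the $\down$'s of $\de$ have moved, i.e.\ on global information about $\mu$ relative to $\de$. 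Your first route (descending induction with the strict inequality $\mu_{\MdV}>\sum_{\nu}a_\nu\,\mu_{P(\nu)}$) has the same missing core, as you yourself flagged: ``a path that oscillates and never visits a weight above $\mu$'' does not control the Verma multiplicities $\mu_{P(\nu)}$, which are governed by Lemma~\ref{singO}, so the strictness claim is unsubstantiated. To repair your argument you would have to supply, for each admissible $\mu$, an explicit non-vanishing sequence of tangle moves of length $\dht(\mu)$ from $\underline{\de}$ to $\underline{\mu}$ together with a non-killing padding move pair -- which amounts to redoing the paper's case analysis in diagrammatic language.
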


\begin{prop}
\label{saturated}
Let $\delta, d\in\mZ$, $\delta\geq 0$, $d\geq 1$. The set
\abovedisplayskip0.3em
\belowdisplayskip0.3em
\begin{gather}
\label{S}
\mathcal{S}=\mathcal{S}(\delta,d)=\{\mu\in\Lambda\mid \dht(\mu)\leq d, \dht(\mu)\equiv d\mod 2\}
\end{gather}
is saturated in the following sense: if $\mu\in \mathcal{S}$ and $\nu\in\La$ such that $\mu$ and $\nu$ are in the same block and $\nu\geq\la$ then $\nu\in\mathcal{S}$.
\end{prop}

\subsection{Quasi-hereditaryness}
The category $\Op(n)$ is a highest weight category in the sense of \cite{CPSI}, i.e. it is a category $\mathcal{C}$ with a poset $(\cS,\leq)$ satisfying
\begin{itemize}
\item
$\mathcal C$ is a $\mC$-linear Artinian category equipped
with a duality, that is, a contravariant involutive  equivalence of categories $\circledast:\mathcal C \rightarrow
\mathcal C$;
\item
for each $\la \in \cS$ there is a given object $L(\la)
\cong{L(\la)}^\circledast \in\mathcal C$ such that $\{L(\la) | \la \in \cS\}$ is a complete set of representatives for
the isomorphism classes of irreducible objects in $\mathcal C$;
\item
each $L(\la)$ has a projective cover $P(\la)\in\mathcal C$ such that
all composition factors of $P(\la)$ are isomorphic to $L(\mu)$'s
for $\mu \in \cS$;
\item writing $V(\la)$ for the largest quotient of
$P(\la)$ with the property that all composition factors of
its radical are isomorphic to
$L(\mu)$'s for $\mu > \la$,
the object $P(\la)$ has a
filtration with $V(\la)$ at the top and all
other factors isomorphic to $V(\nu)$'s for
$\nu < \la$.
\end{itemize}
The poset $\cS$ is hereby the set $\Lambda$ of highest weights for irreducible modules with the reversed Bruhat ordering $\leq_{\op{rBruhat}}$; $V(\la)$ is the parabolic Verma module of highest weight $\la$ and the duality is the standard duality from $\cO^\pp(n)$, \cite{Hbook}.

The category $\VWd(\alpha,\beta)^{\op{opp}}-\op{mod}$ of finite dimensional (right) $\VWd(\alpha,\beta)$-modules is also a highest weight category, by \cite[Corollary 8.6]{AMR}, with poset the set of bipartitions of $d-2t$ for $1\leq t\leq \lfloor \frac{d}{2}\rfloor$ equipped with the dominance ordering defined below, $V(\la)$ the cell module from \cite[Section 6]{AMR}, and $\circledast$ the ordinary vector space duality using Remark \ref{Wop}.

The category of finite dimensional $\Br_d(\delta)$-modules with $\delta\not=0$ is by \cite[Theorem 1.3]{Koenig-Xi}, \cite[Corollary 2.3]{CDVMII} a highest weight category, with poset the set of partitions of $d-2t$ for $1\leq t\leq \lfloor \frac{d}{2}\rfloor$ with the dominance ordering, $V(\la)$ the cell module from \cite[Lemma 2.4]{CDVMII} and $\circledast$, the ordinary vector space duality using the analogue of Remark \ref{Wop} for the Brauer algebra. In case $\delta=0$ it is a highest weight category if and only if $d$ odd.

For the following result recall the bijection from Proposition~\ref{surjectivity}.
\begin{theorem}[Highest weight structure]
For fixed $\delta$, $d$ and $\cS=\cS(\delta,d)$ as in \eqref{S} let $\Op(n,\delta,d)$ be the full subategory of $\cO^\pp(n)$ given by all modules with composition factors $L(\mu)$ with $\mu\in\cS$. Then
\begin{enumerate}
\item $\Op(n,\delta,d)$ inherits the structure of a highest weight category from $\Op(n)$ with poset $(\cS,\leq_{\op{rBruhat}})$ and the $V(\la)$ for $\la\in\cS$. The duality on $\cO^\pp(n)$ restricts to the duality on the subcategory.
\item The functor $\cE=\Hom_\mg(\MdV,?)$ defines an equivalence
\abovedisplayskip0.3em
\belowdisplayskip0.3em
\begin{gather*}
\cE:\quad\Op(n,\delta,d)\,\,\,\xrightarrow{\phantom{x}\textstyle \sim\phantom{x}} \VWd(\alpha,\beta)^{\op{opp}}-\op{mod}
\end{gather*}
of highest weight categories, i.e. $\cE P(\la)\cong P(\varphi(\la))$, $\cE V(\la)\cong V(\varphi(\la))$, $\cE L(\la)\cong L(\varphi(\la))$ for any $\la\in\cS(\delta,\la)$ and $\cE$ interchanges the dualities.
\end{enumerate}
\end{theorem}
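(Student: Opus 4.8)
The plan is to exhibit $\MdV$ as a projective generator of $\Op(n,\delta,d)$ and then to read off the theorem from the resulting Morita equivalence together with the isomorphism theorem~\ref{iso}. Throughout I would assume $\delta\geq 0$; the case $\delta<0$ then follows from the case $-\delta$ via the algebra isomorphism of Corollary~\ref{tableaux}, where Ringel self-duality replaces the projective generator $\MdV$ by a tilting generator and $\cE$ by a $\Hom$ \emph{into} the corresponding module.

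For part (1) I would invoke the standard truncation principle for highest weight categories: if $\cS\subseteq\La$ is saturated in the sense of Lemma~\ref{saturated}, then the Serre subcategory of $\cO^\pp(n)$ on the simples $\{L(\mu)\mid\mu\in\cS\}$ is again a highest weight category with poset $(\cS,\leq_{\op{rBruhat}})$ and standard objects the $V(\la)$, $\la\in\cS$, its indecomposable projectives being exactly those $P^\pp(\mu)$ all of whose composition factors lie in $\cS$. Combining Lemma~\ref{saturated} with BGG reciprocity and Lemma~\ref{singO} one checks that for $\mu\in\cS$ the $\Delta$-flag sections $M^\pp(\nu)$ of $P^\pp(\mu)$ have $\nu\in\cS$, so $P^\pp(\mu)\in\Op(n,\delta,d)$ and hence also each $M^\pp(\mu)$ ($\mu\in\cS$) lies in $\Op(n,\delta,d)$; the $V$-flag axiom is inherited verbatim. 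Since the duality $\circledast$ of $\cO^\pp(n)$ fixes every simple object it preserves $\cS$ and therefore restricts to a duality on $\Op(n,\delta,d)$.

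For part (2) the equivalence comes from Proposition~\ref{whichproj}: there $\MdV\cong\bigoplus_\mu a_\mu P^\pp(\mu)$ with $a_\mu\neq 0$ exactly when $\mu\in\cS$, so $\MdV$ is projective in $\Op(n,\delta,d)$, its indecomposable summands are precisely all the indecomposable projectives of this category, and every $L(\mu)$ with $\mu\in\cS$ appears in its head; hence $\MdV$ is a projective generator, and the projective-generator reconstruction theorem gives an exact equivalence $\cE=\Hom_\mg(\MdV,-)\colon\Op(n,\delta,d)\xrightarrow{\ \sim\ }\End_\mg(\MdV)^{\op{opp}}\text{-}\op{mod}$, which by Theorem~\ref{iso} is $\VWd(\alpha,\beta)^{\op{opp}}\text{-}\op{mod}$ after identifying left $\End_\mg(\MdV)$-modules with right $\VWd(\alpha,\beta)$-modules. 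Being $\Hom$ out of a projective, $\cE$ is exact, sends the summand $P^\pp(\la)$ to the indecomposable projective attached to it, and sends any Verma-filtered module to a module filtered by the $\cE V(\mu)$. Applied to the Verma flag of $\MdV$ this shows the regular module of $\VWd(\alpha,\beta)^{\op{opp}}$ is $\{\cE V(\mu)\}_{\mu\in\cS}$-filtered with $\cE V(\mu)$ occurring $[\MdV:M^\pp(\mu)]$ times, and by Lemmas~\ref{Verma} and \ref{surjectivity} this multiplicity is the number of up-down $\varphi(\mu)$-tableaux of length $d$; a direct computation ($M^\pp(\de)=P^\pp(\de)$ is indecomposable projective and $L(\de)$ occurs only in $M^\pp(\de)$) also gives $\dim\cE V(\mu)=[M^\pp(\mu)\otimes V^{\otimes d}:M^\pp(\de)]=|\cT_d(\varphi(\mu))|$, the dimension of the \cite{AMR} cell module $V(\varphi(\mu))$. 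To upgrade this numerical coincidence to an isomorphism $\cE V(\la)\cong V(\varphi(\la))$ I would use the weight-idempotent decomposition of Section~\ref{section:cyclotomic_quotients}: the Verma flag of $\cE V(\la)$ identifies its basis with the up-down $\varphi(\la)$-tableaux, and by the Casimir/$\Omega_{0,r}$ computation behind Proposition~\ref{prop:eigenvalues} (and Lemma~\ref{a}) the operator $y_r$ acts with generalized eigenvalue the $\delta$-content, in the sense of \eqref{coolpicture}, of the box modified at step $r$ — which is exactly the seminormal description of the cell module $V(\varphi(\la))$ in \cite{AMR}; since $n\geq 2d$, the contents of the two components are well separated and this content data recovers $\varphi(\la)$ uniquely. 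Taking heads then yields $\cE L(\la)\cong L(\varphi(\la))$ and projective covers $\cE P^\pp(\la)\cong P(\varphi(\la))$, and $\cE\circ\circledast\circ\cE^{-1}$ is a contravariant involution on $\VWd(\alpha,\beta)^{\op{opp}}\text{-}\op{mod}$ fixing every simple; as such a duality is unique, it equals the vector space duality of Remark~\ref{Wop}, so $\cE$ interchanges the dualities.

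The main obstacle is precisely this last identification: proving that the combinatorially defined bijection $\varphi$ of Lemma~\ref{surjectivity} is the one induced by the Morita equivalence, i.e.\ that $\cE V(\la)\cong V(\varphi(\la))$. This forces one to compare the $y_r$-eigenvalue bookkeeping coming from the Casimir action with the cellular/seminormal structure of \cite{AMR} (equivalently, to match the reverse Bruhat order on weight diagrams with the dominance order on bipartitions under $\varphi$ and invoke uniqueness of the highest weight structure for a fixed poset); once Proposition~\ref{whichproj} and Lemma~\ref{saturated} are in hand, everything else is formal.
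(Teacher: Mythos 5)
Your overall architecture coincides with the paper's: part (1) is the saturation argument (Lemma~\ref{saturated} together with Donkin's appendix), and part (2) starts, as in the paper, from Proposition~\ref{whichproj}, which makes $\MdV$ a projective generator of $\Op(n,\delta,d)$, so that $\cE$ is an exact equivalence onto $\VWd(\alpha,\beta)^{\op{opp}}\text{-}\op{mod}$ via Theorem~\ref{iso}. The genuine gap lies in your primary mechanism for the crucial identification $\cE V(\la)\cong V(\varphi(\la))$. Matching dimensions and matching the generalized $y_r$-eigenvalues (contents) on a basis adapted to a Verma flag only shows that $\cE V(\la)$ and the cell module $V(\varphi(\la))$ of \cite{AMR} have the same formal character with respect to the weight idempotents $\ei$; it does not produce an isomorphism of modules. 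In the non-semisimple range (small $\delta$ relative to $d$, which is precisely the case of interest) the cell modules are neither defined through nor characterized by a seminormal form, and non-isomorphic modules --- for instance a cell module and the direct sum of its composition factors --- have identical $y$-character. So ``this content data recovers $\varphi(\la)$ uniquely'' identifies the label, not the module.

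The fix is exactly the route you relegate to a parenthesis, and it is what the paper actually does: since $\cE$ transports the highest weight structure of $\Op(n,\delta,d)$ to $\VWd(\alpha,\beta)^{\op{opp}}\text{-}\op{mod}$ with labelling set $\cS$, and $\varphi$ is a bijection onto the labelling set of \cite{AMR}, it suffices to show that one partial order refines the other; this is Lemma~\ref{refinedordering} ($\la\leq_{\op{rBruhat}}\mu\Rightarrow\varphi(\la)\unrhd\varphi(\mu)$), after which the standard objects of the two structures must agree, giving $\cE V(\la)\cong V(\varphi(\la))$ and then the statements for simples and projective covers. This is not ``equivalent'' to the eigenvalue matching --- it is the step that actually closes the argument, so it should be your main line rather than a remark. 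Finally, for the duality you invoke uniqueness of a contravariant duality fixing all simples, which is not a standard fact (two such dualities differ by an automorphism fixing the simples, which need not be inner); the paper instead fixes the concrete anti-isomorphism $\VWd(\alpha,\beta)\cong\VWd(\alpha,\beta)^{\op{opp}}$ coming from Remark~\ref{Wop} and Theorem~\ref{iso} and checks that under the resulting identifications the category $\cO$ duality becomes the ordinary vector space duality $\HOM_{\END_\mg(\MdV)}(?,\mC)$ used in \cite{AMR}.
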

\begin{proof}
The first statement follows directly from Propositions~\ref{whichproj} and \ref{saturated}, see \cite[Appendix]{Donkin}. The functor $\cE$ is an equivalence of categories, hence induces a highest weight structure on $\VWd(\alpha,\beta)^{\op{opp}}-\op{mod}$ and it is enough to show that it agrees with the one defined in \cite{AMR}. Using Remark~\ref{Wop} we can find an isomorphism $\VWd(\alpha,\beta)^{\op{opp}}\cong \VWd(\alpha,\beta)$ which we fix. Theorem~\ref{iso} induces an isomorphism $\END_{\mg}(\MdV)^{\op{opp}}\cong \END_{\mg}(\MdV)$.
Under the identifications $\cO^\pp(n,\delta,d)=\END_{\mg}(\MdV)^{\op{opp}}-\op{mod}=\END_{\mg}(\MdV)-\op{mod}$, the duality on $\cO^\pp(n,\delta,d)$ corresponds to the ordinary duality $\HOM_{\END_{\mg}(\MdV)}(?,\mC)$ and hence our equivalence intertwines the dualities.
Since $\varphi$ induces a bijection between the labelling sets of the two highest weight structures, it is enough to show that one ordering refines the other which is done in Lemma~\ref{refinedordering} below.
\end{proof}

\begin{remark}
Due to the fact that a Verma path only include diagrammatic weights with the same integrality as $\de$, $\Op(n,\delta,d)$ is either a subcategory of $\Op_1(n)$ or $\Op_\hint(n)$ depending on whether $\delta$ is even or odd.
\end{remark}

\begin{definition}
The \emph{dominance ordering} on $\cP^1$ is defined by
\begin{eqnarray*}
\la\unrhd\mu\text{ if $|\la|>|\mu|$ or $|\la|=|\mu|$} &\text{and}& \sum_{j=1}^k\la_j\geq \sum_{j=1}^k\mu_j\text{ for any $k\geq 1$};
\end{eqnarray*}
 and on $\cP^2$ by $\bla=(\la^{1},\la^{2})\unrhd\bmu=(\mu^{1},\mu^{2})$
if 
\abovedisplayskip0.3em
\belowdisplayskip0.3em
\begin{gather*}
|\lambda|>|\mu|\quad\text{or}\quad
\left\lbrace\begin{array}{c}
|\lambda|=|\mu|\, , \,\,
\la^{(1)}\unrhd\mu^{(1)}\,\, \text{and} \\
|\la^{(1)}|+\sum_{j=1}^k\la_j^{(2)}\geq |\mu^{(1)}|+\sum_{j=1}^k\mu_j^{(2)}\,\, \text{for all} \,\, k\geq 0
\end{array}\right\rbrace.
\end{gather*}
\end{definition}
This order refines the reversed partial ordering on $\cS$ via the bijection $\varphi$:

\begin{lemma}
\label{refinedordering}
For any $\la,\mu\in\cS(\delta,d)$ we have, with $\varphi$ as in Definition \ref{def:associated bipartition},
\abovedisplayskip0.3em
\belowdisplayskip0.3em
\begin{gather*}
\la\leq_{\op{rBruhat}}\mu \,\, \Rightarrow\,\, \varphi(\la)\unrhd\varphi(\mu).
\end{gather*}
\end{lemma}

\begin{proof}
Recall that the Bruhat ordering is generated by basic swaps, turning two symbols $\up\down$ (in this order with only $\circ$'s and $\cross$'s between them) into a $\down\up$ or two $\up$'s into two $\down$'s making the weight smaller. In the first case, the $\up$ moves to the right and a $\down$ moves to the left. The $\up$ increases the number of boxes in the first partition, whereas the $\down$ increases the number of boxes in the first partition or decreases the number of boxes in the second partition. In any case however, the total number of boxes removed is at most the total number of boxes added. Similarly, in case two if two $\up$'s turn into two $\down$'s, the total number of boxes increases or stays the same, but the number of boxes in the first partition increases. In both cases, the newly created bipartition is larger in the dominance order.
\end{proof}

Due to our assumptions on $n \geq 2d$ and thus the simple structure of the idempotent $z_d$, the category $z_d\END_\mg(\MdV)^{\op{opp}}z_d-\op{mod}$ is equivalent to a quotient category of $\cO^\pp(n,\delta,d)$.

For $\cO^\pp_1(n)$, resp. $\cO^\pp_\hint(n)$, we denote by $\mathrm{pr}_\delta$ the projection onto those blocks of diagrammatic weights $\lambda$ satisfying $\lambda_i=\circ$ for $i \geq \nicefrac{\delta}{2}+n$. For even $\delta$ we put $\widetilde{\cF}=\mathrm{pr}_\delta \, \cF \, \mathrm{pr}_\delta$, otherwise $\widetilde{\cF}=\mathrm{pr}_\delta \, \cF^\hint \, \mathrm{pr}_\delta$. Let $P_d = \widetilde{\cF}^d(M^\pp(\underline{\delta}))$. Then
\abovedisplayskip0.3em
\belowdisplayskip0.3em
\[
\END_{\mg}(P_d)= z_d\END_\mg(\MdV)z_d \cong z_d\VWd(\alpha,\beta)^{\op{opp}}z_d,
\]
with the latter isomorphism due to Theorem \ref{iso}. We have the quotient functor
\abovedisplayskip0.45em
\belowdisplayskip0.45em
\begin{gather*}
\Hom_\mg(P_d,?)\,:\,\cO(n,\delta,d)\,\,\longrightarrow \,\, z_d\VWd(\alpha,\beta)z_d-\op{mod}.
\end{gather*}
We will denote the corresponding quotient category by $\cO^\pp(n,\delta,d)'$. The proof of the following result is standard and can be found Section~\ref{sec:proofOprime}.

\begin{theorem}
\label{Oprime}
The highest weight structure on $\cO^\pp(n,\delta,d)$ induces a highest weight structure on $\cO^\pp(n,\delta,d)'$ iff $\delta\not=0$ or $\delta=0$ and $d$ odd or $d=0$.
\end{theorem}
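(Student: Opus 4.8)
The plan is to realize $\cO^\pp(n,\delta,d)'$ as a Serre quotient of the highest weight category $\cO^\pp(n,\delta,d)$ and to apply the standard recollement criterion for highest weight categories, dual to the truncation statement of \cite{Donkin} that was already used to put a highest weight structure on $\cO^\pp(n,\delta,d)$: if $\cC$ is highest weight with poset $(\cS,\leq_{\op{rBruhat}})$ and $\cC_\Theta$ is the Serre subcategory generated by the $L(\mu)$ with $\mu$ in a subset $\Theta\subseteq\cS$, then the quotient $\cC/\cC_\Theta$ carries a highest weight structure with poset $\cS\setminus\Theta$, standard objects the images $i^\ast V(\la)$ and indecomposable projectives the images $i^\ast P(\la)$, precisely when $\Theta$ is saturated, i.e.\ a coideal for $\leq_{\op{rBruhat}}$, equivalently when $\cS'=\cS\setminus\Theta$ is an order ideal. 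Here the quotient functor $\HOM_\mg(P_d,?)$ is multiplication by the idempotent $z_d$, so it kills exactly the $L(\mu)$ with $z_dL(\mu)=0$; thus $\Theta=\cS\setminus\cS'$ with $\cS'=\{\mu\in\cS\mid I(\mu)\subset[-(\tfrac{\delta}{2}+n-1),\tfrac{\delta}{2}+n-1]\}$ the set of weights lying in a $z_d$-block. Hence the theorem becomes: $\cS'$ is an order ideal of $(\cS,\leq_{\op{rBruhat}})$ if and only if $\delta\neq 0$, or $\delta=0$ and $d$ odd, or $d=0$.

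Since weights in different blocks of $\cO^\pp(n,\delta,d)$ are incomparable for $\leq_{\op{rBruhat}}$, the ideal condition may be checked block by block: $\cS'$ is an order ideal iff for every diagrammatic block $\Gamma$ the subset $\cS'\cap\Gamma$ is downward closed in $\cS\cap\Gamma$. Because $n\geq 2d$, every weight in $\cS$ arises from $\de$ by at most $d$ of the elementary box–moves of Lemma~\ref{surjectivity}, so in each relevant diagram the non-$\circ$ part is concentrated near position $0$ and near position $\tfrac{\delta}{2}+n-1$, and a block $\Gamma$ can contain both $z_d$-weights and non-$z_d$-weights only in a short explicit list of configurations. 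For $\delta\geq 1$ I would argue exactly as in the proof of Theorem~\ref{thm:semisimplicity}: whenever $\Gamma\cap\cS'\neq\emptyset$ there is a vertex labelled $\circ$ strictly to the left of position $n$, and this spare $\circ$ lets every $\leq_{\op{rBruhat}}$-decreasing elementary move inside $\Gamma$ be carried out without pushing $I(\cdot)$ out of the interval $[-(\tfrac{\delta}{2}+n-1),\tfrac{\delta}{2}+n-1]$; hence $\cS'\cap\Gamma$ is downward closed and $\cO^\pp(n,\delta,d)'$ is highest weight. The same reasoning covers $\delta=0$ with $d$ odd, and $d=0$ is trivial.

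For $\delta=0$ and $d$ even with $d\geq 2$ I would show $\cS'$ fails to be an order ideal by producing, inside a single block $\Gamma$, a genuine $z_d$-weight $\mu\in\cS'\cap\Gamma$ together with a weight $\nu\in(\cS\setminus\cS')\cap\Gamma$ with $\nu<_{\op{rBruhat}}\mu$. Since $\de=0$ its diagram carries the symbol $\diamond$ at position $0$, and the attached parity subtlety (cf.\ the remark after the definition of $\mX_n$, and Remark~\ref{rem:transl}) allows one to reach in at most $d$ moves, and only when $d$ is even, a configuration near position $0$ that is no longer a $z_d$-weight while still sitting below a $z_d$-weight in the reversed Bruhat order; when $d$ is odd the parity forbids this configuration, matching the second paragraph. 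This is the category-$\cO$ shadow of the classical fact that $\Br_d(0)$ is quasi-hereditary exactly for $d$ odd, \cite{Koenig-Xi}, \cite{CDVMII}. Combining the two cases with the recollement criterion gives the stated equivalence.

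The main obstacle is the combinatorial heart of the last two paragraphs: making the block-by-block analysis genuinely exhaustive under the hypothesis $n\geq 2d$ — in particular pinning down exactly which blocks mix $z_d$- and non-$z_d$-weights — and correctly tracking the $\diamond$/parity bookkeeping at position $0$, which is precisely what makes the answer sensitive to the parity of $d$ in the case $\delta=0$.
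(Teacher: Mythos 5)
Your reduction misidentifies the kernel of the quotient functor, and that is exactly where the content of the theorem lives. The quotient functor is $\Hom_\mg(P_d,?)$ with $P_d=\widetilde{\cF}^d(M^\pp(\de))$, and it kills $L(\mu)$ precisely when $P(\mu)$ is \emph{not} a direct summand of $P_d$ --- not precisely when $\mu$ lies outside a $z_d$-block. Moreover, since a block fixes the positions of all $\circ$'s and $\times$'s, the property of having all symbols inside $[-(\tfrac{\delta}{2}+n-1),\tfrac{\delta}{2}+n-1]$ is constant on blocks; so the set $\cS'$ you work with is a union of blocks intersected with $\cS$, and (as the order is only effective within blocks, cf.\ Lemma \ref{saturated}) it is automatically both an order ideal and a coideal. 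Hence your criterion, as set up, would declare the quotient to be highest weight for \emph{all} $(\delta,d)$, and the configuration you propose for $\delta=0$, $d$ even --- a non-$z_d$-weight below a $z_d$-weight inside a single block --- cannot exist. The genuine obstruction in the bad cases is different: $P_d$ fails to contain some projectives with labels in $\cS'$. Already for $\delta=0$, $d=2$ the two indecomposable summands of $P_2$ correspond to the partitions $(2)$ and $(1,1)$, so $P(\de)$ does not occur although $\de\in\cS'$, and $\END_\mg(P_2)\cong \mC[x]/(x^2)\oplus\mC$ has infinite global dimension; this is how the paper excludes any induced highest weight structure, and it propagates the failure to all even $d\geq 2$. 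Note also that the ``precisely when $\Theta$ is saturated'' you invoke is only a \emph{sufficient} condition for an idempotent truncation to inherit a highest weight structure, so even a correct failure of saturation would not yield the ``only if'' direction.

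For the ``if'' direction the substance is likewise different from what you sketch: one must prove that for $\delta\neq 0$, or $\delta=0$ with $d$ odd, or $d=0$, every $P(\mu)$ with $\mu\in\cS'$ (equivalently, $\mu$ labelled by an \updd) actually occurs as a summand of $P_d$; then the summands of $P_d$ exhaust the $z_d$-blocks, the quotient is just a direct sum of blocks of $\cO^\pp(n,\delta,d)$, and the highest weight structure is inherited trivially. The paper establishes this occurrence statement by induction on $d$, using the cup-diagram description of $\widetilde{\cF}$ on projectives (Lemma \ref{combinatorics} and Corollary \ref{special case}); the spare $\circ$ for $\delta>0$ that you also noticed is used there to produce a weight $\mu$ with $P(\la)$ a summand of $\widetilde{\cF}P(\mu)$, not to verify an order-ideal condition. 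Your proposal contains neither this induction nor any substitute for the infinite-global-dimension computation, so it has a gap in both directions.
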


\begin{remark}
The truncated algebra $z_d\VWd(\alpha,\beta)z_d$ inherits always a cellular algebra structure in the sense of \cite{Graham-Lehrer} from $\VWd(\alpha,\beta)$, see \cite{Koenig-Xi}. By Theorem~\ref{Oprime},  $z_d\VWd(\alpha,\beta)z_d$  is quasi-hereditary if and only if  $\delta\not=0$ or $\delta=0$ and $d$ odd.
\end{remark}

\section{Koszulity and Gradings}
\label{sec:Koszul}
We now deduce the existence of a hidden grading on the cyclotomic VW-algebras. Recall that any block $\mathcal{B}$ of ordinary or parabolic category $\cO$ for $\mg$ is equivalent to the category of modules over the endomorphism algebra $A=\END_\mg(P_\mathcal{B})$ of a minimal projective generator $P_\mathcal{B}$. By \cite{BGS}, $A$ has a natural Koszul grading which we fix. The category of finite dimensional graded $A$-modules is called the {\it graded version} of $\cB$ and denoted $\hat{\cB}$. In case of $\cO^\pp(n)$, the algebra $A$ is the Khovanov algebra from \cite{ES_diagrams} attached to the block.

\begin{theorem}[Koszulity] Let $d,\delta\in\mZ_{\geq 0}$.
\begin{enumerate}
\item The algebra $\VWd(\alpha,\beta)$ is Morita equivalent to a Koszul algebra.
\item The algebra $z_d\VWd(\alpha,\beta)z_d$ is Morita equivalent to a Koszul algebra if and only if $\delta\not=0$ or $\delta=0$ and $d$ odd or $d=0$.
\end{enumerate}
\end{theorem}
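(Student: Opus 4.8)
The plan is to deduce both statements directly from the structural results already established, reducing Koszulness to the known Koszulness of blocks of category $\cO^\pp(n)$. For part (1), the key observation is that by Theorem~\ref{iso} the algebra $\VWd(\alpha,\beta)$ is isomorphic to $\END_\mg(\MdV)^{\op{opp}}$, and since $\MdV$ is a projective module in $\cO^\pp(n)$ (Corollary~\ref{tableaux}), its endomorphism ring is Morita equivalent to the algebra $\END_\mg(P)$ for a minimal projective generator $P$ of the Serre subcategory $\cO^\pp(n,\delta,d)$ generated by the simple modules $L(\mu)$ with $\mu\in\cS(\delta,d)$. First I would invoke Proposition~\ref{whichproj} to identify precisely which indecomposable projectives occur as summands in $\MdV$, so that $\END_\mg(\MdV)$ is Morita equivalent to the basic algebra $A_{\cS}$ of $\cO^\pp(n,\delta,d)$. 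Then I would argue that $A_{\cS}$ is Koszul: by Lemma~\ref{saturated} the poset $\cS$ is saturated, so $\cO^\pp(n,\delta,d)$ is an ``idempotent truncation" of $\cO^\pp(n)$ cut out by an idempotent $e$ corresponding to a saturated set, hence $A_\cS = eAe$ where $A=\END_\mg(P_{\cB})$ is the Khovanov-type algebra of the relevant blocks, which is Koszul with its BGS grading by \cite{BGS} (and \cite{ES_diagrams}). Since $\cS$ is saturated, the idempotent $e$ can be chosen homogeneous and such that $eAe$ is again Koszul — here I would cite the standard fact that idempotent truncation at a homogeneous idempotent corresponding to a saturated subset preserves Koszulity (cf.\ \cite{BGS}, and the compatibility of truncation functors with gradings). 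This gives part (1).

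For part (2), I would observe that $z_d\VWd(\alpha,\beta)z_d \cong z_d\END_\mg(\MdV)^{\op{opp}}z_d = \END_\mg(P_d)$, where $P_d=\widetilde{\cF}^d(\Mde)$ as defined just before Theorem~\ref{Oprime}; equivalently this is the basic algebra of the quotient category $\cO^\pp(n,\delta,d)'$, which is itself an idempotent truncation of $\cO^\pp(n,\delta,d)$ at the homogeneous idempotent $z_d$. The point is now that idempotent truncation does \emph{not} in general preserve Koszulity — it does so precisely when the truncated algebra remains quasi-hereditary in a way compatible with the grading, or more precisely when the truncation of the Koszul (hence standard Koszul, by \cite{ADL} or the self-duality of $\cO$) algebra $A_\cS$ stays positively graded with the right properties. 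The decisive input here is Theorem~\ref{Oprime}, which tells us exactly when $\cO^\pp(n,\delta,d)'$ is a highest weight category: precisely when $\delta\neq 0$, or $\delta=0$ and $d$ odd, or $d=0$. In those cases the truncation $z_d A_\cS z_d$ is a quotient/truncation of a standard Koszul algebra by a saturated set (on the quotient side) and one shows it inherits a positive Koszul grading; in the remaining cases ($\delta=0$ and $d$ even positive) the explicit computation already in the proof of Theorem~\ref{Oprime} — e.g.\ $\END_\mg(P_2)\cong\mC[x]/(x^2)\oplus\mC$ for $d=2$, and the presence of a $P_2$-summand for all even $d$ — exhibits an algebra of infinite global dimension, which can never be Koszul. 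So the ``if and only if" follows by pairing the positive direction with this global-dimension obstruction.

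The main obstacle is the positive direction of part (2): showing that when $\cO^\pp(n,\delta,d)'$ is quasi-hereditary, the truncated algebra $z_d\END_\mg(\MdV)z_d$ is genuinely Koszul and not merely quasi-hereditary. The clean way to handle this is to use that $A_\cS$, being the basic algebra of a block of parabolic category $\cO$ equipped with a duality, is \emph{standard Koszul} (Koszul together with linear projective resolutions of standard modules and linear injective coresolutions of costandard modules; this is the Cline--Parshall--Scott--type property verified for category $\cO$ via Koszul duality in \cite{BGS}). For a standard Koszul quasi-hereditary algebra, truncation at an idempotent corresponding to a \emph{coideal} (i.e.\ downward-closed, which is exactly the compatibility condition making $\cO^\pp(n,\delta,d)'$ highest weight) yields again a standard Koszul algebra — this is a theorem in the literature on Koszulity of quasi-hereditary quotients. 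I would cite this and then just check that $z_d$ corresponds to such a coideal, which is immediate from the description of $z_d$ as the projection onto blocks indexed by bipartitions $(\la,\emptyset)$ and the compatibility of Verma paths with truncation spelled out in the proof of Theorem~\ref{Oprime}. Thus the heart of the argument is really bookkeeping: matching up ``truncation by $z_d$" with ``coideal in the poset $\cS$", and then quoting the preservation of (standard) Koszulity. Everything else is assembling Theorem~\ref{iso}, Corollary~\ref{tableaux}, Proposition~\ref{whichproj}, Lemma~\ref{saturated}, Theorem~\ref{Oprime}, and the Koszulity of $\cO$ from \cite{BGS}, \cite{ES_diagrams}.
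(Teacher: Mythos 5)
Your overall strategy is the same as the paper's: identify $\VWd(\alpha,\beta)$ with $\END_\mg(\MdV)^{\op{opp}}$, reduce via Proposition~\ref{whichproj} and Lemma~\ref{saturated} to a truncation of the Koszul block algebras of $\Op(n)$ from \cite{BGS}, and let Theorem~\ref{Oprime} govern part (2). But your argument for the ``only if'' half of (2) is genuinely wrong. You exclude Koszulity in the cases $\delta=0$, $d$ even positive by pointing to $\END_\mg(P_2)\cong\mC[x]/(x^2)\oplus\mC$ and asserting that infinite global dimension ``can never be Koszul''. Infinite global dimension is no obstruction to Koszulity: $\mC[x]/(x^2)$, graded with $x$ in degree one, is the standard example of a Koszul algebra of infinite global dimension (its Koszul dual is the polynomial ring $\mC[y]$). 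In the paper this computation occurs inside the proof of Theorem~\ref{Oprime} and is used only to exclude \emph{quasi-heredity} (quasi-hereditary algebras do have finite global dimension); the Koszulness dichotomy is then deduced ``by the same arguments'' through Theorem~\ref{Oprime}, not through any global-dimension bound. So your proposed obstruction does not establish the negative direction, and this is the main gap in the proposal.

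A secondary issue is the justification of part (1): you appeal to a ``standard fact that idempotent truncation at a homogeneous idempotent corresponding to a saturated subset preserves Koszulity (cf.\ \cite{BGS})''. No such general fact holds and it is not in \cite{BGS}; plain Koszulity is not stable under idempotent truncation, even for homogeneous idempotents attached to order ideals. The correct mechanism — which you essentially describe, but only when discussing part (2) — is the chain the paper takes from \cite{ADL}: the block algebra $A$ of $\Op(n)$ is Koszul by \cite{BGS}, hence standard Koszul by \cite[Corollary 3.8]{ADL}; since $\cS$ is saturated the truncated algebra is again standard Koszul by \cite[Proposition 1.11]{ADL}; and a standard Koszul \emph{quasi-hereditary} algebra is Koszul by \cite[Theorem 1]{ADL}, quasi-heredity being supplied by the highest weight structure on $\Op(n,\delta,d)$ (and, for the positive half of (2), by Theorem~\ref{Oprime}). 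Rewriting part (1) and the positive half of (2) through this chain repairs those steps; what remains missing after that repair is a valid argument for the negative direction of (2), since the global-dimension argument you offer in its place is false.
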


\begin{proof}
The category $\Op(n)$ is a highest weight category and each block is equivalent to a category of finite dimensional modules over a Koszul algebra $A=\END_\mg(P)$, \cite{BGS}. Then $A$ is standard Koszul by \cite[Corollary 3.8]{ADL}. Since the set $\cS$ is saturated, the algebra $\VWd(\alpha,\beta)$ is again standard Koszul by \cite[Proposition 1.11]{ADL}. Since it is standard Koszul and quasi-hereditary it is Koszul by \cite[Theorem 1]{ADL}. The second statement follows by the same arguments using Theorem~\ref{Oprime}.
\end{proof}

Any indecomposable projective, Verma or simple module in ${\cB}$ as above has a graded lift in $\hat{\cB}$ by \cite[Theorem 2.1]{StDuke} and this lift is unique up to isomorphism and overall shift in the grading, \cite[Lemma 1.5]{Stroppel}. We choose a lift such that the head is concentrated in degree zero and call it {\it the standard lift}. 

\begin{remark}
The Koszul grading on the basic algebra underlying $\VWd(\alpha,\beta)$ or $z_d\VWd(\alpha,\beta)z_d$ is unique up to isomorphism \cite[Proposition 2.5.1]{BGS}. Then, by \cite{SS},  Proposition~\ref{singO} can be refined to graded decomposition numbers given by parabolic Kazhdan-Lusztig polynomials of type $(\mathrm{D}_n, \mathrm{A}_{n-1})$, see \cite{LS} for explicit formulas and \cite[Section 2.1]{ES_diagrams} for an explicit description of the underlying basic Koszul algebra.
\end{remark}

\begin{prop}
\label{lem:gradeda}
The special projective functors from Definition~\ref{def:functors} lift to
\abovedisplayskip0.3em
\belowdisplayskip0.3em
\begin{gather*}
\begin{aligned}
\hat{\cF} &\cong& \bigoplus_{i\in \mZ_{\geq 0}+\half}(\hat{\cF}_{i,-}\oplus \hat{\cF}_{i,+})&:&\hat{\cO}^\pp_1(n) \rightarrow \hat{\cO}^\pp_1(n)& \quad\text{ and}\\
\hat{\cF}^\hint &\cong& \bigoplus_{i\in \mZ_{> 0}}(\hat{\cF}_{i,-}\oplus \hat{\cF}_{i,+}) \oplus \hat{\cF}_0&:&\hat{\cO}^\pp_\hint(n)\rightarrow\hat{\cO}^\pp_\hint(n)&
\end{aligned}
\end{gather*}
satisfying the properties of Proposition~\ref{a} by replacing the $\cF$ by $\hat{\cF}$ and including the degree shifts $\langle i\rangle$.
\end{prop}

\begin{proof}
By Remark~\ref{rem:transl} each summand is a translation functor to or out of a wall or an equivalence of categories, hence the existence of graded lifts is given by \cite[Theorems 8.1 and 8.2]{Stroppel}. (More precisely we first consider the graded version of the non-parabolic category $\mathcal{O}$ where the graded lifts of the translation functors exist and then  observe that they induce also graded lifts for the parabolic version as in \cite[Section 2.1]{StDuke}. To deduce the formulas note that our normalization is slightly different here, since we shift the translation functor to the wall in \cite{Stroppel} by $\langle 1\rangle$ and the translation functor out of the wall by $\langle -1\rangle$. Alternatively one could also mimic the arguments in \cite{BSIII} for the Khovanov algebra of type $ \mathrm{D}$ from \cite{ES_diagrams}.
\end{proof}

Given a graded abelian category $\cA$, the Grothendieck group is a $\mZ[q,q^{-1}]$-module where $q^r$ acts by shifting the grading up by $r$. The $\mQ(q)$-vector space obtained by extension of scalars is denoted $K_0(\cA)$. In particular, we have $K_0\left(\hat{\cO}^\pp(n)\right)$ with the induced action of the graded special projective functors. We describe this now in detail and generalize it to more general parabolics.

\section{Coideal subalgebras and categorification theorems}
\label{section:coideal}

\subsection{Quantum groups}
Our next goal is to introduce certain quantum symmetric pairs that is coideals of quantum groups. From now we work over the ground field $\mQ(q)$ of rational functions. The following constructions fall again into two families, the even/half-integer family and the odd/integer family. To be able to treat these cases simultaneously as much as possible we introduce the following indexing sets:
\abovedisplayskip0.3em
\belowdisplayskip0.3em
\begin{gather*}
\begin{aligned}
\mathrm{I}=&\,\mZ + \half, &\mathrm{I}^{++}=&\,\{i \in \mathrm{I} \mid i \geq \nicefrac{3}{2}\}, & \mathrm{J}=&\,\mZ, & \mathrm{J}^+=&\,\{j \in \mathrm{J} \mid j \geq 1\},\\
\mathrm{I}^\hint=&\,\mZ, &\mathrm{I}^{\hint,++}=&\,\{i \in \mathrm{I}^\hint \mid i \geq 1\}, & \mathrm{J}^\hint=&\,\mZ + \half, & \mathrm{J}^{\hint,+}=&\,\{j \in \mathrm{J}^\hint \mid j \geq \half\}.
\end{aligned}
\end{gather*}


\begin{definition}
Let $\cU=\mathcal{U}_q(\mathfrak{gl}_{\mZ})$ be the \emph{(generic) quantized universal enveloping algebra} for $\mathfrak{gl}_{\mZ}$. That means $\cU$ is the associative unital algebra over  $\mQ(q)$ in an indeterminate $q$ with generators $D_{j}^{\pm 1}$ for  $j \in \mathrm{J}$ and $E_{i}$, $F_{i}$ for $i \in \mathrm{I}$ subject to the following relations:
\begin{enumerate}[(i)]
\item all $D_{j}^{\pm 1}$ commute and $D_{j} D_{j}^{-1} = 1$ for $j \in \mathrm{J}$,
\item For $i \in \mathrm{I}$ and $j \in \mathrm{J}$:
$$D_{j}E_{i}D_{j}^{-1} = \left\lbrace \begin{array}{ll}
q E_{i} & \text{for } j=i-\half \\
q^{-1} E_{i} & \text{for } j=i+\half \\
E_{i} & \text{otherwise}
\end{array} \right. \qquad D_{j}F_{i}D_{j}^{-1} = \left\lbrace \begin{array}{ll}
q F_{i} & \text{for } j=i+\half \\
q^{-1} F_{i} & \text{for } j=i-\half \\
F_{i} & \text{otherwise}
\end{array} \right.$$

\item $E_{i}$ and $F_{i'}$ commute unless $i=i'$ and in this case
$$E_{i}F_{i}-F_{i}E_{i} = \frac{D_{i-\half}D_{i+\half}^{-1}-D_{i+\half}D_{i-\half}^{-1}}{q-q^{-1}}.$$
\item If $| i-i' | =1$ then
$$ E_{i}^2E_{i'} - (q + q^{-1})E_{i}E_{i'}E_{i} + E_{i'} E_{i}^2 = 0$$
in all other cases $E_{i}E_{i'} = E_{i'}E_{i}$.
\item Analogously, if $| i-i' | =1$ then
$$ F_{i}^2F_{i'} - (q + q^{-1})F_{i}F_{i'}F_{i} + F_{i'} F_{i}^2 = 0$$
in all other cases $F_{i}F_{i'} = F_{i'}F_{i}$.
\end{enumerate}

The \emph{quantized enveloping algebra} $\cU^\hint=\mathcal{U}_q(\mathfrak{gl}_{\mZ^\hint})$ is defined in exactly the same way by replacing $\mathrm{I}$ with $\mathrm{I}^\hint$ and $\mathrm{J}$ with $\mathrm{J}^\hint$ in the definitions.
\end{definition}

In $\cU$, respectively $\cU^\hint$, let $K_{i} = D_{i-\half}D_{i+\half}^{-1}$ for $i \in \mathrm{I}$, respectively $i \in \mathrm{I}^\hint$. From this definition it follows immediately that
$$K_{i} E_{i} K_{i}^{-1} = q^2 E_{i} \qquad \text{ and }\quad K_{i} F_{i} K_{i}^{-1} = q^{-2} F_{i}.$$

Both $\cU$ and $\cU^\hint$ are Hopf algebras with comultiplication $\Delta$ defined by
\abovedisplayskip0.3em
\belowdisplayskip0.3em
\begin{gather*}
\Delta(E_{i}) = K_{i} \otimes E_{i} + E_{i} \otimes 1, \,\,\, \Delta(F_{i}) = F_{i} \otimes K_{i}^{-1} + 1  \otimes F_{i}, \,\,\, \Delta(D_{j}^{\pm 1}) = D_{j}^{\pm 1} \otimes D_{j}^{\pm 1}.
\end{gather*}

\begin{definition}
\label{Def1}
Let $\mV$ be the vector space on basis $\{v_l\mid l \in \mathrm{J}\}$. The \emph{natural representation} of $\cU$ is the representation on $\mV$ given by the rules
\[
E_{i} v_{l} = \delta_{i+\half, l} v_{l-1} \qquad F_{i} v_l = \delta_{i-\half,l} v_{l+1} \qquad D_{j}^{\pm 1} v_l = q^{\pm \delta_{j,l}} v_l
\]
for $i \in \mathrm{I}$ and $j \in \mathrm{J}$. The \emph{natural representation} $\mV^\hint$ of $\cU^\hint$ is defined analogously using $\mathrm{J}^\hint$ as the indexing set for the basis; all formulas stay the same.
\end{definition}

\begin{remark}
The action on $\mV$, respectively on  $\mV^\hint$, can be illustrated as follows, showing how the
$F_{i}$'s and $E_{i}$'s move between weight spaces and
on which weight space $D_{j}$ acts by multiplication with $q$:
\begin{center}
\begin{tikzpicture}[thick,>=angle 60,scale=0.8, transform shape]
\node (-3) at (0,2) {$\cdots$};

\node[draw,shape=circle, fill=gray!30] (-2) at (2,2) {$-2$};
\draw [->] (0.3,2.4) .. controls (1,2.7) .. (1.7, 2.4) node[pos=.5,above] {$F_{-\nicefrac{5}{2}}$};
\draw [<-] (0.3,1.6) .. controls (1,1.3) .. (1.7, 1.6) node[pos=.5,below] {$E_{-\nicefrac{5}{2}}$};
\path (1.8,2.7) edge[out=120, in=60, looseness=0.5, loop, distance=1cm, ->] node[pos=.5,above] {$D_{-2}$} (2.2,2.7);

\node[draw,shape=circle, fill=gray!30] (dotsl) at (4,2) {$-1$};
\draw [->] (2.3,2.4) .. controls (3,2.7) .. (3.7, 2.4) node[pos=.5,above] {$F_{-\nicefrac{3}{2}}$};
\draw [<-] (2.3,1.6) .. controls (3,1.3) .. (3.7, 1.6) node[pos=.5,below] {$E_{-\nicefrac{3}{2}}$};

\path (3.8,2.7) edge[out=120, in=60, looseness=0.5, loop, distance=1cm, ->] node[pos=.5,above] {$D_{-1}$} (4.2,2.7);

\node[draw,shape=circle, fill=gray!30] (0) at (6,2) {$0$};
\draw [->] (4.3,2.4) .. controls (5,2.7) .. (5.7, 2.4) node[pos=.5,above] {$F_{-\nicefrac{1}{2}}$};
\draw [<-] (4.3,1.6) .. controls (5,1.3) .. (5.7, 1.6) node[pos=.5,below] {$E_{-\nicefrac{1}{2}}$};

\path (5.8,2.7) edge[out=120, in=60, looseness=0.5, loop, distance=1cm, ->] node[pos=.5,above] {$D_{0}$} (6.2,2.7);

\node[draw,shape=circle, fill=gray!30] (1) at (8,2) {$1$};
\draw [->] (6.3,2.4) .. controls (7,2.7) .. (7.7, 2.4) node[pos=.5,above] {$F_{\nicefrac{1}{2}}$};
\draw [<-] (6.3,1.6) .. controls (7,1.3) .. (7.7, 1.6) node[pos=.5,below] {$E_{\nicefrac{1}{2}}$};

\path (7.8,2.7) edge[out=120, in=60, looseness=0.5, loop, distance=1cm, ->] node[pos=.5,above] {$D_{1}$} (8.2,2.7);

\node[draw,shape=circle, fill=gray!30] (dotsr) at (10,2) {$2$};
\draw [->] (8.3,2.4) .. controls (9,2.7) .. (9.7, 2.4) node[pos=.5,above] {$F_{\nicefrac{3}{2}}$};
\draw [<-] (8.3,1.6) .. controls (9,1.3) .. (9.7, 1.6) node[pos=.5,below] {$E_{\nicefrac{3}{2}}$};

\path (9.8,2.7) edge[out=120, in=60, looseness=0.5, loop, distance=1cm, ->] node[pos=.5,above] {$D_{2}$} (10.2,2.7);

\node (n-1) at (12,2) {$\cdots$};
\draw [->] (10.3,2.4) .. controls (11,2.7) .. (11.7, 2.4) node[pos=.5,above] {$F_{\nicefrac{5}{2}}$};
\draw [<-] (10.3,1.6) .. controls (11,1.3) .. (11.7, 1.6) node[pos=.5,below] {$E_{\nicefrac{5}{2}}$};

\end{tikzpicture}
\end{center}

\begin{tikzpicture}[thick,>=angle 60,scale=0.8, transform shape]
\node (-4) at (0,2) {$\cdots$};
\node[draw,shape=circle, fill=gray!30] (-3) at (2,2) {\tiny $-\nicefrac{5}{2}$};
\draw [->] (0.3,2.4) .. controls (1,2.7) .. (1.7, 2.4) node[pos=.5,above] {$F_{-3}$};
\draw [<-] (0.3,1.6) .. controls (1,1.3) .. (1.7, 1.6) node[pos=.5,below] {$E_{-3}$};

\path (1.8,2.7) edge[out=120, in=60, looseness=0.5, loop, distance=1cm, ->] node[pos=.5,above] {$D_{-\nicefrac{5}{2}}$} (2.2,2.7);

\node[draw,shape=circle, fill=gray!30] (dotsl) at (4,2) {\tiny $-\nicefrac{3}{2}$};
\draw [->] (2.3,2.4) .. controls (3,2.7) .. (3.7, 2.4) node[pos=.5,above] {$F_{-2}$};
\draw [<-] (2.3,1.6) .. controls (3,1.3) .. (3.7, 1.6) node[pos=.5,below] {$E_{-2}$};

\path (3.8,2.7) edge[out=120, in=60, looseness=0.5, loop, distance=1cm, ->] node[pos=.5,above] {$D_{-\nicefrac{3}{2}}$} (4.2,2.7);

\node[draw,shape=circle, fill=gray!30] (-1) at (6,2) {\tiny $-\nicefrac{1}{2}$};
\draw [->] (4.3,2.4) .. controls (5,2.7) .. (5.7, 2.4) node[pos=.5,above] {$F_{-1}$};
\draw [<-] (4.3,1.6) .. controls (5,1.3) .. (5.7, 1.6) node[pos=.5,below] {$E_{-1}$};

\path (5.8,2.7) edge[out=120, in=60, looseness=0.5, loop, distance=1cm, ->] node[pos=.5,above] {$D_{-\nicefrac{1}{2}}$} (6.2,2.7);

\node[draw,shape=circle, fill=gray!30] (1) at (8,2) {\tiny $\nicefrac{1}{2}$};
\draw [->] (6.3,2.4) .. controls (7,2.7) .. (7.7, 2.4) node[pos=.5,above] {$F_{0}$};
\draw [<-] (6.3,1.6) .. controls (7,1.3) .. (7.7, 1.6) node[pos=.5,below] {$E_{0}$};

\path (7.8,2.7) edge[out=120, in=60, looseness=0.5, loop, distance=1cm, ->] node[pos=.5,above] {$D_{\nicefrac{1}{2}}$} (8.2,2.7);

\node[draw,shape=circle, fill=gray!30] (dotsr) at (10,2) {\tiny $\nicefrac{3}{2}$};
\draw [->] (8.3,2.4) .. controls (9,2.7) .. (9.7, 2.4) node[pos=.5,above] {$F_{1}$};
\draw [<-] (8.3,1.6) .. controls (9,1.3) .. (9.7, 1.6) node[pos=.5,below] {$E_{1}$};

\path (9.8,2.7) edge[out=120, in=60, looseness=0.5, loop, distance=1cm, ->] node[pos=.5,above] {$D_{\nicefrac{3}{2}}$} (10.2,2.7);

\node[draw,shape=circle, fill=gray!30] (n-1) at (12,2) {\tiny $\nicefrac{5}{2}$};
\draw [->] (10.3,2.4) .. controls (11,2.7) .. (11.9, 2.4) node[pos=.5,above] {$F_{2}$};
\draw [<-] (10.3,1.6) .. controls (11,1.3) .. (11.9, 1.6) node[pos=.5,below] {$E_{2}$};

\path (11.8,2.7) edge[out=120, in=60, looseness=0.5, loop, distance=1cm, ->] node[pos=.5,above] {$D_{\nicefrac{5}{2}}$} (12.2,2.7);

\node (n) at (14,2) {$\cdots$};
\draw [->] (12.3,2.4) .. controls (13,2.7) .. (13.9, 2.4) node[pos=.5,above] {$F_{3}$};
\draw [<-] (12.3,1.6) .. controls (13,1.3) .. (13.9, 1.6) node[pos=.5,below] {$E_{3}$};

\end{tikzpicture}
\end{remark}

The modules we are interested in are the quantum analogues of the fundamental representations of $\mathfrak{gl}_\mZ$.

\begin{definition}
The $k$-th \emph{quantum exterior power} $\bigwedge_q^k \mV$ is the $\cU$-submodule of $\bigotimes^k \mV$ with basis given by the vectors
\abovedisplayskip0.3em
\belowdisplayskip0.3em
\begin{gather}
\label{bvs}
\begin{aligned}
v_{\bf i}=v_{i_1} \wedge\cdots \wedge v_{i_k} =&\,
\sum_{w \in S_k} (-q)^{\ell(w)} v_{i_{w(1)}} \otimes\cdots\otimes
v_{i_{w(k)}}
\end{aligned}
\end{gather}
for all $i_1 < \cdots < i_k$ from the index set $\mathrm{J}$. Here, $\ell(w)$ denotes the usual length of a permutation $w$ in the symmetric group $S_k$.

The corresponding module $\bigwedge_q^k \mV^\hint$ for $\cU^\hint$ is defined in the same way by again just changing the indexing set for the basis.
\end{definition}

\begin{definition}
\label{Def2}
The \emph{quantized universal enveloping algebra} $\mathcal{U}_q(\mathfrak{gl}_\mN)$ is defined as the associative unital algebra over the field of rational functions $\mQ(q)$ with the sets of generators
\[
\{ E_i \mid i \in \mathrm{I}^{++} \} \cup \{F_i \mid i \in \mathrm{I}^{++} \} \cup \{D_j^{\pm 1} \mid j \in \mathrm{J}^+\},
\]
which satisfy the same relations as the generators of $\cU$ by identifying $X_i \in U_q(\mathfrak{gl}_\mN)$ with $X_{i}$ in $\cU$ for $X \in \{E,F,D\}$.
\end{definition}

\begin{definition}
\label{Def3}
Let $\mW$ denote the natural representation of $\mathcal{U}_q(\mathfrak{gl}_\mN)$ with basis $\{ w_j \mid j \in \mathrm{J}^+\}$ and consider the $\mathcal{U}_q(\mathfrak{gl}_\mN)$-module $\bigwedge_q^r \mW \otimes \bigwedge_q^{k-r} \mW$, with monomial basis
\abovedisplayskip0.3em
\belowdisplayskip0.3em
\begin{gather}\label{mb1}
\left\lbrace
(w_{i_1}\wedge\cdots\wedge w_{i_r}) \otimes
(w_{j_1}\wedge\cdots\wedge w_{j_{k-r}}) \, \bigg| \,
\begin{array}{l}
i_1,\dots,i_r,j_1,\dots,j_{k-r} \in \mathrm{J}^+,\\
i_1<\cdots < i_r , \, j_1 < \cdots < j_{k-r}
\end{array}\right\rbrace.
\end{gather}
As before we can use $\mathrm{I}^{\hint,++}$ and $\mathrm{J}^{\hint,+}$ instead to define $\mathcal{U}_q(\mathfrak{gl}_\mN)$ with corresponding natural representation $\mW^\hint$.
\end{definition}

\begin{remark}
Although the definition of $\mathcal{U}_q(\mathfrak{gl}_\mN)$ gives a natural embedding of Hopf algebras into both, $\cU$ and $\cU^\hint$, we will be interested in another copy of $\mathcal{U}_q(\mathfrak{gl}_\mN)$ inside $\cU$ and $\cU^\hint$ given by certain involution invariant generators.
\end{remark}

\subsection{Coideal subalgebra}

We are interested in the quantization of the fixed point Lie algebra $\mathfrak{g}^\theta$ for a certain involution $\theta$. This involution does not lift to an embedding of $\mathcal{U}_q(\mathfrak{g}^\theta)$ into $\mathcal{U}_q(\mathfrak{g})$ in any obvious way. A way around this is to study coideal subalgebras (instead of Hopf subalgebras) of $\mathcal{U}_q(\mathfrak{g})$ which specialize to $\mathfrak{g}^\theta$. This was studied in \cite{Letzter} and \cite{LetzterII}, see also \cite{Kolb}. 

\begin{definition}\label{def:modified_generators}
We fix the following elements in $\cU$
\abovedisplayskip0.3em
\belowdisplayskip0.2em
\begin{gather*}
B_i = E_i K_{-i}^{-1} + F_{-i} \,\, (\text{for } i \in \mathrm{I} \setminus \{\pm \half\}),\\
B_{\half} = E_{\half} K_{-\half}^{-1} + F_{-\half},\qquad\text{and}\qquad
B_{-\half}=q E_{-\half} K_{\half}^{-1} + F_{\half}
\end{gather*}
and in $\cU^\hint$ we fix
\abovedisplayskip0.3em
\belowdisplayskip0.3em
\begin{gather*}
B_i = E_i K_{-i}^{-1} + F_{-i} \,\, (\text{for } i \in \mathrm{I}^\hint \setminus \{0\})\quad  \text{and} \quad B_0 = q^{-1}E_0K_0^{-1} + F_0.
\end{gather*}
\end{definition}

\begin{definition}
Let $\mathcal R$ denote the commutative subalgebra of $\cU$ generated by the elements of the form $(D_{j}D_{-j})^{\pm 1}$ for all $j \in \mathrm{J}$. Analogously we have $\cR^\hint$ the commutative subalgebra of $\cU^\hint$ generated by all $(D_{j}D_{-j})^{\pm 1}$ for all $j \in \mathrm{J}^\hint$.
\end{definition}

\begin{definition}
Let $\mathcal{H}$ be the subalgebra of $\cU$ generated by the $B_{i}$ for $i \in \mathrm{I}$ and the subalgebra $\mathcal{R}$. In analogy, let $\mathcal{H}^\hint$ be the subalgebra of $\cU^\hint$ generated by the $B_{i}$ for $i \in \mathrm{I}^\hint$ and the subalgebra $\mathcal{R}^\hint$. We call $\cH$ and $\cH^\hint$ the \emph{coideal subalgebras} of $\cU$ respectively $\cU^\hint$.
\end{definition}

The following lemma is straight-forward and justifies the name coideal.
\begin{lemma}
We have $\Delta(\cH) \subset \cH \otimes \cU$ and $\Delta(\cH^\hint) \subset \cH^\hint \otimes \cU^\hint$, hence $\cH \subset \cU$ and $\cH^\hint \subset \cU^\hint$ are right coideal subalgebras of $\cU$ respectively $\cU^\hint$.
\end{lemma}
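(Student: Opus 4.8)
The plan is to reduce everything to a check on algebra generators. Since $\Delta$ is an algebra homomorphism and $\cH\otimes\cU$ is a subalgebra of $\cU\otimes\cU$, it is enough to show $\Delta(x)\in\cH\otimes\cU$ for $x$ ranging over the generators of $\cH$, namely the $B_i$ with $i\in I$ together with the generators $(D_jD_{-j})^{\pm1}$ of $\cR$; the argument for $\cH^\hint$ is word for word the same after replacing $I,J,\cR$ by $I^\hint,J^\hint,\cR^\hint$. For the toral generators there is nothing to do: each $(D_jD_{-j})^{\pm1}$ is grouplike, so $\Delta\big((D_jD_{-j})^{\pm1}\big)=(D_jD_{-j})^{\pm1}\otimes(D_jD_{-j})^{\pm1}\in\cR\otimes\cR\subset\cH\otimes\cU$.

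For a generator $B_i=E_iK_{-i}^{-1}+F_{-i}$ with $i\ne\pm\tfrac12$, I would expand $\Delta(B_i)=\Delta(E_i)\Delta(K_{-i}^{-1})+\Delta(F_{-i})$ using the comultiplication formulas $\Delta(E_i)=K_i\otimes E_i+E_i\otimes 1$, $\Delta(F_{-i})=F_{-i}\otimes K_{-i}^{-1}+1\otimes F_{-i}$ and $\Delta(K_{-i}^{-1})=K_{-i}^{-1}\otimes K_{-i}^{-1}$, and collect terms to obtain
\[
\Delta(B_i)=K_iK_{-i}^{-1}\otimes E_iK_{-i}^{-1}+B_i\otimes K_{-i}^{-1}+1\otimes F_{-i}.
\]
The only place where one needs $\cR$ rather than all of $\cU$ is that the leftmost factor $K_iK_{-i}^{-1}$ must lie in $\cH$; but $K_iK_{-i}^{-1}=(D_{i-\frac12}D_{-(i-\frac12)})(D_{i+\frac12}D_{-(i+\frac12)})^{-1}\in\cR$, which is exactly the observation recorded in the Remark above. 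The case $i=\tfrac12$ is identical.

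The two genuinely special generators are $B_{-\frac12}=qE_{-\frac12}K_{\frac12}^{-1}+F_{\frac12}$ in $\cU$ and $B_0=q^{-1}E_0K_0^{-1}+F_0$ in $\cU^\hint$; their slightly asymmetric normalisations in Definition~\ref{def:modified_generators} are chosen precisely so that the same manipulation succeeds. Here I would simply repeat the expansion: for $B_{-\frac12}$ one finds $\Delta(B_{-\frac12})=K_{-\frac12}K_{\frac12}^{-1}\otimes(qE_{-\frac12}K_{\frac12}^{-1})+B_{-\frac12}\otimes K_{\frac12}^{-1}+1\otimes F_{\frac12}$ with $K_{-\frac12}K_{\frac12}^{-1}=(K_{\frac12}K_{-\frac12}^{-1})^{-1}\in\cR$, and for $B_0$, using $K_0K_0^{-1}=1$, one finds $\Delta(B_0)=1\otimes(q^{-1}E_0K_0^{-1})+B_0\otimes K_0^{-1}+1\otimes F_0$. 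In every case the left-hand tensor factors appearing are $1$, some $B_i$, or a Cartan product lying in $\cR$, all of which are in $\cH$, so $\Delta(B_i)\in\cH\otimes\cU$ (resp. $\cH^\hint\otimes\cU^\hint$). There is no real obstacle here: the entire content is the bookkeeping of the $q$-powers in these three families of generators together with the remark that $\cR$ already contains the products $K_iK_{-i}^{-1}$ — this is the "straight-forward calculation" the statement refers to.
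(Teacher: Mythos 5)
Your proposal is correct and follows exactly the route the paper intends: the paper's proof is simply the remark that the coideal property is "straight-forward and checked on generators," and your explicit expansions of $\Delta(B_i)$, $\Delta(B_{-\frac12})$, $\Delta(B_0)$ and the grouplike generators of $\cR$ carry out precisely that check, with the key observation (that $K_iK_{-i}^{-1}\in\cR$) being the same one the paper records in its remark on $\cR$. Nothing is missing; your computation just writes out what the paper leaves to the reader.
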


Although $\cH$ and $\cH^\hint$ are not quantum groups, they each contain a copy of the quantum group $\cU_q(\mathfrak{gl}_\mN)$:

\begin{lemma} \label{lem:gln_inclusion}
The assignment
\abovedisplayskip0.3em
\belowdisplayskip0.3em
\[
E_i \mapsto B_{i}, \qquad F_i \mapsto B_{-i}, \qquad D_j^{\pm 1} \mapsto \left(D_{j}D_{-j}\right)^{\pm 1},
\]
for $i \in \mathrm{I}^{++}$ and $j \in \mathrm{J}^+$ extends to an injective map of algebras from $\mathcal{U}_q(\mathfrak{gl}_\mN)$ to $\cH$. We denote the image of this embedding by $\check{\cU}$.
\end{lemma}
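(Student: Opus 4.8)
The plan is to realise the map as a composite of standard algebra homomorphisms, which gives well-definedness for free, and then to read off injectivity from the factors.

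\emph{Reduction to two commuting copies.} First I would observe that the generators occurring in the statement, namely $B_i$ and $B_{-i}$ for $i\in I^{++}$ together with $(D_jD_{-j})^{\pm1}$ for $j\in J^+$, never involve the ``central'' index: by Definition~\ref{def:modified_generators} one has $B_i=E_iK_{-i}^{-1}+F_{-i}$ and $B_{-i}=E_{-i}K_i^{-1}+F_i$ for $i\in I^{++}$, and every index appearing here, both in the $E$'s, $F$'s and in the $D$'s hidden in the $K$'s, lies in $I^{++}\cup(-I^{++})$, resp.\ $J^+\cup(-J^+)$. Let $\cU_{\geq}$ be the subalgebra of $\cU$ generated by $E_i,F_i$ ($i\in I^{++}$) and $D_j^{\pm1}$ ($j\in J^+$), and $\cU_{\leq}$ the one generated by $E_i,F_i$ ($i\in-I^{++}$) and $D_j^{\pm1}$ ($j\in-J^+$). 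Any generator of $\cU_{\geq}$ and any generator of $\cU_{\leq}$ are at Dynkin distance $\ge 2$ (or involve only commuting $D$'s), so these two subalgebras commute elementwise and multiplication gives an algebra homomorphism $\mu\colon\cU_{\geq}\otimes\cU_{\leq}\to\cU$. Let $\iota_+\colon\mathcal U_q(\mathfrak{gl}_\mN)\to\cU_{\geq}$ be the natural embedding, let $\tilde\eta$ be the algebra involution of $\cU$ determined by $E_i\mapsto F_{-i}$, $F_i\mapsto E_{-i}$, $D_j^{\pm1}\mapsto D_{-j}^{\pm1}$ (a routine check of the relations in Definition of $\cU$ shows $\tilde\eta$ is well defined with $\tilde\eta^2=\mathrm{id}$), and set $\eta:=\tilde\eta\circ\iota_+$, so $\eta(E_i)=F_{-i}$, $\eta(F_i)=E_{-i}$, $\eta(D_j^{\pm1})=D_{-j}^{\pm1}$, and hence $\eta(K_i)=K_{-i}^{-1}$.

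\emph{Well-definedness.} Next I would put $\Psi:=\mu\circ(\iota_+\otimes\eta)\circ\Delta^{\op{op}}$, where $\Delta^{\op{op}}$ is the opposite comultiplication of the Hopf algebra $\mathcal U_q(\mathfrak{gl}_\mN)$; each of the three maps is an algebra homomorphism, hence so is $\Psi$. On Chevalley generators, $\Delta^{\op{op}}(E_i)=E_i\otimes K_i+1\otimes E_i\mapsto E_i\otimes K_{-i}^{-1}+1\otimes F_{-i}\mapsto E_iK_{-i}^{-1}+F_{-i}=B_i$; likewise $\Delta^{\op{op}}(F_i)=K_i^{-1}\otimes F_i+F_i\otimes1\mapsto K_i^{-1}\otimes E_{-i}+F_i\otimes1\mapsto E_{-i}K_i^{-1}+F_i=B_{-i}$ (using $[K_i^{-1},E_{-i}]=0$), and $\Delta^{\op{op}}(D_j^{\pm1})=D_j^{\pm1}\otimes D_j^{\pm1}\mapsto D_j^{\pm1}\otimes D_{-j}^{\pm1}\mapsto(D_jD_{-j})^{\pm1}$. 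Thus $\Psi$ agrees on generators with the assignment in the statement, which therefore extends to an algebra homomorphism; its image lies in $\cH$, being generated by some of the $B_k$ and by $\mathcal R$.

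\emph{Injectivity.} Finally I would check that each factor of $\Psi$ is injective: $\Delta^{\op{op}}$ is injective because $(\varepsilon\otimes\mathrm{id})\circ\Delta^{\op{op}}=\mathrm{id}$ by the counit axiom; $\iota_+$ is the natural Hopf-subalgebra embedding; $\eta=\tilde\eta\circ\iota_+$ is injective since $\tilde\eta$ is an automorphism; hence $\iota_+\otimes\eta$ is injective over the field $\mQ(q^{1/2})$; and $\mu$ is injective by the PBW theorem for $\cU=\mathcal U_q(\mathfrak{gl}_\mZ)$ --- the PBW root vectors spanning $\cU_{\geq}$, resp.\ $\cU_{\leq}$, are supported on intervals contained in $I^{++}$, resp.\ $-I^{++}$, together with the $D_j$ for $j$ in $J^+$, resp.\ $-J^+$, and such supports are separated by a gap, so products of these PBW monomials remain linearly independent PBW monomials of $\cU$, i.e.\ $\cU_{\geq}\cdot\cU_{\leq}\cong\cU_{\geq}\otimes\cU_{\leq}$. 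Composing, $\Psi$ is injective; denote its image by $\check{\cU}$. The statement for $\cH^\hint\subset\cU^\hint$ is proved verbatim, with $0$ in place of $\pm\tfrac12$ and $I^{\hint,++}$ in place of $I^{++}$. The only point that needs genuine care (and the likely source of sign bookkeeping) is the identification $\eta$: it encodes the Dynkin-diagram reversal $i\mapsto-i$ twisted by the Chevalley involution, which is exactly what is needed so that the ``symmetric'' generators $B_k$ with $|k|\ge\tfrac32$ jointly realise the $\Delta^{\op{op}}$-type diagonal embedding $\mathcal U_q(\mathfrak{gl}_\mN)\hookrightarrow\cU_{\geq}\otimes\cU_{\leq}$; the remaining ``asymmetric'' generator $B_{-1/2}$ (resp.\ $B_0$) is precisely the one not in $\check{\cU}$.
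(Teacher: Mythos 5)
Your proposal is correct, and it takes a genuinely different route from the paper. The paper proves the lemma by direct verification of the defining relations of $\mathcal{U}_q(\mathfrak{gl}_\mN)$ on the elements $B_{i}$, $B_{-i}$, $(D_jD_{-j})^{\pm1}$: it checks the torus relations and the commutator $[B_i,B_{-i}]$ explicitly and dismisses the quantum Serre relations as a long but straightforward computation (which is omitted); injectivity is not argued there at all. You instead factor the assignment structurally as $\Psi=\mu\circ(\iota_+\otimes\eta)\circ\Delta^{\op{op}}$, using that the generators involved only touch indices in $I^{++}\cup(-I^{++})$ and $J^+\cup(-J^+)$, so that $\cU_{\geq}$ and $\cU_{\leq}$ commute elementwise (Dynkin distance $\geq 2$, valid also in the half-integer case where the gap is exactly $2$), and that $\eta=\tilde\eta\circ\iota_+$ with $\tilde\eta$ the ($q$-linear, unlike $\theta$) composite of the diagram flip $i\mapsto -i$ with the Chevalley involution sends $K_i\mapsto K_{-i}^{-1}$; the generator computations $\Delta^{\op{op}}(E_i)\mapsto E_iK_{-i}^{-1}+F_{-i}=B_i$, $\Delta^{\op{op}}(F_i)\mapsto E_{-i}K_i^{-1}+F_i=B_{-i}$ match Definition~\ref{def:modified_generators} exactly. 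This buys you the homomorphism property (in particular the Serre relations) for free, and in addition you actually prove the injectivity asserted in the statement, via $(\varepsilon\otimes\mathrm{id})\circ\Delta^{\op{op}}=\mathrm{id}$ and a PBW/separated-support argument for $\mu$ — a point the paper's proof leaves implicit. The only mild dependencies are standard facts the paper also takes for granted (injectivity of the natural embedding $\iota_+$, stated in the paper's remark, and PBW for the infinite-rank $\cU$, available by passing to finite-rank direct limits); the paper's direct computation, by contrast, is self-contained in the renormalized generators and aligns with Letzter's presentation used later, but is longer where it is actually carried out and silent where it is not.
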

\begin{proof}
Obviously the images of the $\check{D}_j^{\pm 1}$'s are pairwise inverse elements that form a commutative subalgebra. Furthermore one checks that
\abovedisplayskip0.3em
\belowdisplayskip0.3em
\[
D_{j}D_{-j} B_{i} (D_{j}D_{-j})^{-1}= \begin{cases}
q B_{i}& \text{if } j=i-\half, \\
q^{-1}B_{i} & \text{if } j=i+\half, \\
B_{i} & \text{otherwise}. \\
\end{cases}
\]
and that the analogous statement for $B_{-i}$ with inverse $q$'s holds as well.
Computing the commutator we obtain that $[B_{i},B_{-i}]$ equals
\abovedisplayskip0.3em
\belowdisplayskip0.3em
\[
[E_{i}K_{-i}^{-1},F_{i}] + [F_{-i},E_{-i}K_i^{-1}]=\frac{K_iK_{-i}^{-1} - K_{-i}K_i^{-1}}{q-q^{-1}},
\]
while $[B_{i},B_{-j}]=0$ if $i \neq j$. It remains to verify the quantum Serre relations, which is a long, but straight-forward calculation and therefore omitted.
\end{proof}

\begin{remark}
\label{lem:gln_inclusion2}
By using the indexing set $\mathrm{I}^{\hint,++}$ and $\mathrm{J}^{\hint,+}$ for $\mathcal{U}_q(\mathfrak{gl}_\mN)$ we obtain the same embedding for $\cH^\hint$ and denote its image by $\check{\cU}^\hint$.
\end{remark}

\subsection{Quantized fixed points subalgebras}

If we truncate our indexing set at $m$, we obtain the coideal subalgebra of type $\op{(AIII)}$ discussed in \cite[Section 7]{Letzter} specializing to the Lie subalgebra $\mathfrak{gl}_m \times \mathfrak{gl}_m \subset \mathfrak{gl}_{2m}$ (resp. $\mathfrak{gl}_m \times \mathfrak{gl}_{m+1}$) of fixed points under a certain involution $\Theta$ discussed in detail in Section~\ref{subsection:fixedpoints}. The above embeddings from Lemma~\ref{lem:gln_inclusion} and Remark~\ref{lem:gln_inclusion2} quantize what we call the diagonally embedded $\mathfrak{gl}_m$ there. To make the precise connection to \cite{Letzter} and \cite{Kolb} we consider the following quantization of $\Theta$.

\begin{lemma}
\label{lem:theta}
The assignments
\abovedisplayskip0.3em
\belowdisplayskip0.3em
\begin{gather*}
E_{i}  \mapsto q K_{-i}F_{-i}, \quad F_{i}  \mapsto  q^{-1} E_{-i}K_{-i}^{-1},\quad D_{j}  \mapsto  D_{-j}^{-1}, \quad K_{i}  \mapsto  K_{-i}, \quad q  \mapsto  q^{-1}
\end{gather*}
define a $q$-antilinear involution $\theta$ on both $\cU$ and $\cU^\hint$.
\end{lemma}
\begin{proof}
This is a straightforward calculation left to the reader.
\end{proof}

To match \cite{Letzter}, we enlarge the quantum group $\cU$ by including elements $\dd_j^{\pm 1}$ for $j\in \mathrm{J}$, satisfying $\dd_j^2=D_j$ and similarly $\kk_{i}$ for $i \in \mathrm{I}$. The relations for these are the same as for their squares, with powers of $q$ divided by two. Hence, for this, we use $\mQ(q^\half)$ as a base field. We call this algebra $\widetilde{\cU}$ and construct $\widetilde{\cU}^\hint$ analogously and fix the obvious extension of $\theta$ to these algebras.

\begin{prop}
The involution $\theta:\widetilde{\cU} \rightarrow \widetilde{\cU}$ fixes the elements 
\begin{gather*}
\widetilde{B}_{i} = q^{-\half} B_i \, \kk_{-i} \kk_i^{-1} \,\, (\text{for } i \in \mathrm{I} \setminus \{\pm\half\}),\\
\widetilde{B}_{\half} = q^{-\half} B_\half \,  \kk_{-\half} \kk_\half^{-1}, \quad \text{and} \quad \widetilde{B}_{-\half} = q B_{-\half} \, \kk_{\half} \kk_{-\half}^{-1}.
\end{gather*}
The involution $\theta:\widetilde{\cU}^\hint \rightarrow \widetilde{\cU}^\hint$ fixes the elements
\begin{gather*}
\widetilde{B}_{i} = q^{-\half} B_i \, \kk_{-i} \kk_i^{-1} \,\, (\text{for } i \in \mathrm{I}^\hint \setminus \{0\}) \quad \text{and} \quad \widetilde{B}_{0} = B_0.
\end{gather*}
\end{prop}
\begin{proof}
This is just a straight-forward calculation. For example one checks $ \theta \left(q^{-\half} E_{i}\kk_{i}^{-1}\kk_{-i}^{-1} \right) = q^{\nicefrac{3}{2}} K_{-i}F_{-i}\kk_{-i}^{-1}\kk_{i}^{-1} = q^{-\half}F_{-i}\kk_{-i}\kk_{i}^{-1}.$
\end{proof}

\begin{remark}
Note that elements $\kk_{-i} \kk_{i}^{-1}$ are in subalgebras $\widetilde{\mathcal R}$ respectively $\widetilde{\mathcal R}^\hint$ generated by the $(\dd_{i}\dd_{-1})^{\pm 1}$. Thus together with these subalgebras, the $B_i$'s and $\widetilde{B}_i$'s generate the same subalgebra of $\widetilde{\cU}$ respectively $\widetilde{\cU}^\hint$.
\end{remark}

The coideal subalgebras have a Serre type presentation very similar to the quantum groups except that the Serre relations involving the generators $B_{\pm\half}$ and $B_0$ are slightly modified (see \cite[Theorem 7.1]{Letzter} for a general statement):

\begin{prop} \label{prop:relations_coideal}
The coideal subalgebra $\cH$ is (as abstract algebra) isomorphic to the $\mQ(q)$-algebra $\check{\cH}$ with generators
$$ \{\check{E}_i, \, \check{F}_i \mid i \in \mathrm{I}^{++}\} \cup \{\check{B}_+, \check{B}_-\} \cup \{\check{D}_j^{\pm 1} \mid j \in \mathrm{J}^+\} \cup \{\check{D}_0^{\pm 1}\},$$
subject to the following relations
\begin{enumerate}
\item The $\check{D}_i^{\pm 1}$ generate a subalgebra isomorphic to $\cR$.
\item The $\check{E}_i$, $\check{F}_i$, and $\check{D}_j^{\pm 1}$ for $i \in \mathrm{I}^{++}$ and $j \in \mathrm{J}^+$ generate a subalgebra isomorphic to $\cU_q(\mathfrak{gl}_\mN)$.
\item $\check{D}_0$ commutes with $\check{E}_i$ and $\check{F}_i$ for all $i \in \mathrm{I}^{++}$, while $\check{B}_+$ and $\check{B}^-$ commute with all other generators except for the following relations
\begin{enumerate}
\item Commutation relations:
\abovedisplayskip0.3em
\belowdisplayskip0.3em
\begin{gather*}
\begin{aligned}
\check{D}_1 \check{B}_+ \check{D}_1^{-1} =&\, q^{-1} \check{B}_+, & \check{D}_1 \check{B}_- \check{D}_1^{-1} =&\, q \check{B}_-,\\
\check{D}_0 \check{B}_+ \check{D}_0^{-1} =&\, q^2 \check{B}_+, & \check{D}_0 \check{B}_- \check{D}_0^{-1} =&\, q^{-2} \check{B}_-,
\end{aligned}
\end{gather*}
\item Quantum Serre relations:
\abovedisplayskip0.3em
\belowdisplayskip0.3em
\begin{gather*}
\begin{aligned}
\check{B}_+^2 \check{E}_{\nicefrac{3}{2}} - (q+q^{-1})\check{B}_+\check{E}_{\nicefrac{3}{2}}\check{B}_+ + \check{E}_{\nicefrac{3}{2}} \check{B}_+^2 =&\, 0,\\
\check{B}_-^2 \check{F}_{\nicefrac{3}{2}} - (q+q^{-1})\check{B}_-\check{F}_{\nicefrac{3}{2}}\check{B}_- + \check{F}_{\nicefrac{3}{2}} \check{B}_-^2 =&\, 0,\\
\check{E}_{\nicefrac{3}{2}}^2 \check{B}_+ - (q+q^{-1})\check{E}_{\nicefrac{3}{2}}\check{B}_+\check{E}_{\nicefrac{3}{2}} + \check{B}_+ \check{E}_{\nicefrac{3}{2}}^2 =&\, 0,\\
\check{F}_{\nicefrac{3}{2}}^2 \check{B}_- - (q+q^{-1})\check{F}_{\nicefrac{3}{2}}\check{B}_-\check{F}_{\nicefrac{3}{2}} + \check{B}_- \check{F}_{\nicefrac{3}{2}}^2 =&\, 0,\\
\end{aligned}
\end{gather*}
\item Modified quantum Serre relations:
\abovedisplayskip0.3em
\belowdisplayskip0.3em
\begin{gather*}
\begin{aligned}
\check{B}_+^2 \check{B}_- -& (q+q^{-1})\check{B}_+\check{B}_- \check{B}_+ + \check{B}_- \check{B}_+^2 \\
=&\, -(q+q^{-1})\check{B}_+(q\check{D}_1^{-1}\check{D}_0 + q^{-1}\check{D}_1\check{D}_0^{-1}),\\
\check{B}_-^2 \check{B}_+ -& (q+q^{-1})\check{B}_-\check{B}_+ \check{B}_- + \check{B}_+ \check{B}_-^2 \\
=&\, -(q+q^{-1})\check{B}_-(q^2\check{D}_1\check{D}_0^{-1} + q^{-2}\check{D}_1^{-1}\check{D}_0).
\end{aligned}
\end{gather*}
\end{enumerate}
\end{enumerate}
\end{prop}
\begin{proof}
Up to the renormalization analogous to  passing from $B_i$ to $\widetilde{B}_i$ these are the relations from \cite{Letzter} for the coideal subalgebra of type $\op{(AIII})$. The isomorphism is then given by
$ \check{E}_i \mapsto B_{i}$, $\check{F}_i \mapsto B_{-i}$, $\check{D}_j^{\pm 1} \mapsto \left(D_{j}D_{-j}\right)^{\pm 1},$ and $ \check{B}_+ \mapsto B_{\half}$, $\check{B}_- \mapsto B_{-\half}$, $\check{D}_0 \mapsto D_0^2.$
\end{proof}

An analogous statement also holds for $\cH^\hint$.

\begin{prop}
The coideal subalgebra $\cH^\hint$ is isomorphic to the $\mQ(q)$-algebra $\check{\cH}^\hint$ with generators
$$ \{\check{E}_i, \, \check{F}_i \mid i \in \mathrm{I}^{\hint,++}\} \cup \{\check{B}\} \cup \{\check{D}_j^{\pm 1} \mid j \in \mathrm{J}^{\hint,+}\},$$
subject to the following relations
\begin{enumerate}
\item The $\check{D}_i^{\pm 1}$ generate a subalgebra isomorphic to $\cR^\hint$.
\item The $\check{E}_i$, $\check{F}_i$, and $\check{D}_j^{\pm 1}$ for $i \in \mathrm{I}^{\hint,++}$ and $j \in \mathrm{J}^{\hint,+}$ generate a subalgebra isomorphic to $\cU_q(\mathfrak{gl}_\mN)$.
\item The generator $\check{B}$ commutes with all other generators except for the following relations:
\abovedisplayskip0.3em
\belowdisplayskip0.3em
\begin{gather*}
\begin{aligned}
\check{E}_1^2 \check{B} - (q+q^{-1})\check{E}_1\check{B}\check{E}_1 + \check{B} \check{E}_1^2 =&\, 0,\\
\check{F}_1^2 \check{B} - (q+q^{-1})\check{F}_1\check{B}\check{F}_1 + \check{B} \check{F}_1^2 =&\, 0,\\
\check{B}^2 \check{E}_1 - (q+q^{-1})\check{B}\check{E}_1\check{B} + \check{E}_1 \check{B}^2 =&\, \check{E}_1,\\
\check{B}^2 \check{F}_1 - (q+q^{-1})\check{B}\check{F}_1\check{B} + \check{F}_1 \check{B}^2 =&\, \check{F}_1.
\end{aligned}
\end{gather*}
\end{enumerate}
\end{prop}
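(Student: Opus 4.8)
The plan is to establish the isomorphism $\cH^\hint \cong \check{\cH}^\hint$ exactly as in the preceding Proposition~\ref{prop:relations_coideal} for $\cH$, namely by producing an explicit surjective algebra homomorphism $\check{\cH}^\hint \to \cH^\hint$ and then invoking the classification/presentation result of \cite[Theorem 7.1]{Letzter} to conclude it is an isomorphism. First I would define the map $\varphi$ on generators by $\check{E}_i \mapsto B_i$, $\check{F}_i \mapsto B_{-i}$ for $i \in I^{\hint,++}$, $\check{D}_j^{\pm 1} \mapsto (D_j D_{-j})^{\pm 1}$ for $j \in J^{\hint,+}$, and $\check{B} \mapsto B_0$. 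By Lemma~\ref{lem:gln_inclusion} (in its $\hint$-variant, i.e. the embedding $\check{\cU}^\hint$) the relations among the $\check{E}_i$, $\check{F}_i$, $\check{D}_j^{\pm 1}$ are automatically respected, and $\cR^\hint$ is generated by the $(D_jD_{-j})^{\pm 1}$ by definition, so relations~(1) and~(2) are handled. The content is therefore concentrated in relation~(3): the commutativity of $\check B$ with all $\check E_i, \check F_i$ for $i \geq 2$ (which follows from the commuting relations in $\cU^\hint$ since $B_0$ involves only $E_0, F_0, K_0$ and these commute with $E_i, F_i$ for $|i| \neq 0$, $|i\pm 0|$ far apart), and the four (modified) quantum Serre relations linking $\check B$ with $\check E_1 = B_1$ and $\check F_1 = B_{-1}$.

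The key computational step is to verify these four relations directly in $\cU^\hint$. For the two ordinary Serre relations $\check E_1^2 \check B - (q+q^{-1})\check E_1 \check B \check E_1 + \check B \check E_1^2 = 0$ and its $\check F$-analogue: expand $B_1 = E_1 K_{-1}^{-1} + F_{-1}$ and $B_0 = q^{-1}E_0 K_0^{-1} + F_0$, push all the $K$'s and $D$'s to one side using the commutation relations (i)--(ii) of $\cU^\hint$, collect terms by their "$EF$-type", and observe that the coefficient of each monomial is a genuine quantum Serre expression in $\cU^\hint$ among $\{E_0,E_1\}$ or $\{F_0,F_1\}$ or a mixed commutator that collapses because $E_1$ and $F_0$ commute (since $|1-0|=1$ is false — here the index difference between the $E$-label $1$ and $F$-label $0$ is $1$, so $E_1 F_0 - F_0 E_1 = 0$ by relation~(iii) only if $1\neq 0$, which holds). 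One must be careful: the ``$\frac{1}{2}$-shifted'' structure means the relevant node is $i=0 \in I^\hint$ with neighbours $\pm 1$, and $|0-1|=1$, so the Serre relations genuinely couple these generators; the modified relations $\check B^2 \check E_1 - (q+q^{-1})\check B \check E_1 \check B + \check E_1 \check B^2 = \check E_1$ arise precisely from the ``extra'' commutator term $E_0 F_0 - F_0 E_0 = \frac{K_0 - K_0^{-1}}{q-q^{-1}}$ (using $D_{-1/2}D_{1/2}^{-1} = K_0$), which after simplification with the $K_{-1}^{-1}$ factors produces exactly $B_1$ on the right-hand side with no residual $K$-dependence, thanks to our choice of normalization $B_0 = q^{-1}E_0K_0^{-1}+F_0$ (rather than $\widetilde B_0$). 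I expect this normalization bookkeeping — confirming that the right-hand side is exactly $\check E_1$, not $\check E_1$ times some power of $q$ or some $K$-element — to be the delicate point; it is the reason the specific renormalization in Definition~\ref{def:modified_generators} was made, and it should be checked by a careful but routine computation, as indicated in the proof of Lemma~\ref{lem:gln_inclusion}.

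Having verified all relations, $\varphi: \check{\cH}^\hint \to \cH^\hint$ is a well-defined algebra homomorphism; it is surjective because $\cH^\hint$ is by Definition generated by the $B_i$ ($i \in I^\hint$) and $\cR^\hint$, all of which are in the image. For injectivity I would appeal to \cite[Theorem 7.1]{Letzter}: the coideal subalgebra of $\cU_q(\mathfrak{gl}_{\mZ^\hint})$ of type $AIII$ attached to our involution $\theta$ (Lemma~\ref{lem:theta}) has a presentation of exactly this Serre type, and $\check{\cH}^\hint$ is by construction that presented algebra; alternatively one can compare PBW-type bases on both sides. Thus $\varphi$ is an isomorphism. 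The main obstacle is purely the explicit Serre-relation verification around the distinguished node $i=0$, together with tracking the $q^{\pm 1/2}$ and $K_0$ factors so that the ``modified'' relations come out with the clean right-hand sides $\check E_1$ and $\check F_1$; everything else is immediate from the $\cU^\hint$-relations and the definitions. I would present the argument in parallel with Proposition~\ref{prop:relations_coideal}, noting only the differences caused by $B_0$ being self-paired (rather than $B_{\pm 1/2}$ being a $\pm$-pair as in the integer case), which is why a single generator $\check B$ and a single pair of modified Serre relations appears here instead of two.
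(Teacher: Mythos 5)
Your proposal is correct and follows essentially the same route as the paper: the paper proves the integer case (Proposition~\ref{prop:relations_coideal}) by writing down the explicit map on generators and appealing to the type $AIII$ presentation of \cite[Theorem 7.1]{Letzter} up to renormalization, and treats the half-integer case as the exact analogue, which is what you do (with the extra, harmless, detail of sketching the Serre-relation bookkeeping around the node $i=0$). One tiny slip: by Definition~\ref{def:modified_generators} one has $B_0=\widetilde{B}_0=q^{-1}E_0K_0^{-1}+F_0$, i.e.\ in the half-integer case the special generator is \emph{not} rescaled, so your parenthetical ``rather than $\widetilde B_0$'' should be dropped; this does not affect the argument.
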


\section{Categorification and Bar involution on \texorpdfstring{$\bigwedge^n_q \mV$}{exterior powers}}
We start by describing explicitly one of the easiest representations of $\cH$ and $\cH^\hint$, obtained by restricting representations of $\cU$ and $\cU^\hint$. In general the representation theory of $\cH$, respectively $\cH^\hint$, is not yet understood. 
\subsection{Action on \texorpdfstring{$\bigwedge_q^n \mV$}{exterior powers} and categorification theorem}
 The first step is to decompose specific $\cU$-modules with respect to $\check{\cU}$ as defined in Lemma~\ref{lem:gln_inclusion}.

For a sequence $\textbf{i} = (i_1 < \ldots < i_r)$ of strictly decreasing integers we denote by $\textbf{i}_+$, $\textbf{i}_-$, $\textbf{i}_0$ respectively the subsequences of strictly positive numbers, of strictly negative numbers and of entries equal to $0$, and set $-\textbf{i}= (-i_r < \ldots < -i_1)$. Recall the notation from Definitions~\ref{Def1}-\ref{Def3}.
\begin{prop} \label{lem:decomp_module}
There is an isomorphism of $\mathcal{U}_q(\mathfrak{gl}_\mN)$-modules
\abovedisplayskip0.3em
\belowdisplayskip0.3em
\begin{gather*}
\begin{aligned}
\Phi:\,\, {\bigwedge^n}_q \mV \longrightarrow &\,\, \bigoplus_{r=0}^n \left({\bigwedge^r}_q \mW \otimes {\bigwedge^{n-r}}_q \mW \right) \oplus \bigoplus_{r=0}^{n-1} \left({\bigwedge^r}_q \mW \otimes \mQ(q) \otimes {\bigwedge^{n-1-r}}_{\!\!\!\!q} \mW \right) \\
v_{\textbf{i}} \longmapsto& \,\,
\begin{cases}
w_{-\textbf{i}_-} \otimes w_{\textbf{i}_0} \otimes w_{\textbf{i}_+} & \text{if } \textbf{i}_0 \neq \emptyset,\\
w_{-\textbf{i}_-} \otimes w_{\textbf{i}_+} & \text{otherwise,}
\end{cases}
\end{aligned}
\end{gather*}
where $\mathcal{U}_q(\mathfrak{gl}_\mN)$ acts via the identification with $\check{\cU}$ from the left, with $\mQ(q)$ being the trivial representation with basis $\{w_0\}$. Similarly, there is an isomorphism of $\mathcal{U}_q(\mathfrak{gl}_\mN)$-modules
\begin{gather*}
\Phi^\hint :\,\, {\bigwedge^n}_q \mV^\hint \longrightarrow \bigoplus_{r=0}^n \left({\bigwedge^r}_q \mW^\hint \otimes {\bigwedge^{n-r}}_{\!\!q} \mW^\hint \right),\quad\Phi^\hint(v_{\textbf{i}}) = w_{-\textbf{i}_-} \otimes w_{\textbf{i}_+},
\end{gather*}
where $\mathcal{U}_q(\mathfrak{gl}_\mN)$ acts on the left via the isomorphism with $\check{\cU}^\hint$.
\end{prop}
\begin{proof}
We will omit the proof since the second isomorphism is a special case of Proposition~ \ref{lem:decomp_module_general} below, and the first completely analogous.
\end{proof}

The following more general decomposition will be used in Section \ref{section:howe}.

\begin{prop} \label{lem:decomp_module_general}
Let $r \in \mZ_{\geq 1}$ and $k_1,\ldots,k_r \in \mZ_{\geq 1}$. We have an isomorphism 
\abovedisplayskip0.3em
\belowdisplayskip0.3em
\begin{gather*}
\begin{aligned}
\Phi^\hint :\, {\bigwedge^{k_1}}_q \mV^\hint \otimes \ldots \otimes {\bigwedge^{k_r}}_q \mV^\hint \rightarrow&\,\,\hspace{-.8em}\bigoplus_{0 \leq s_i \leq k_i} \hspace{-.1em}\left({\bigwedge^{k_r-s_r}}_{\!\!\!q} \hspace{-.1em} \mW^\hint \otimes \ldots \otimes {\bigwedge^{k_1-s_1}}_{\!\!\!q} \hspace{-.1em} \mW^\hint \otimes {\bigwedge^{s_1}}_q \mW^\hint \otimes \ldots \otimes {\bigwedge^{s_r}}_q \mW^\hint \right),\\
v_{\textbf{i}_1} \otimes \ldots \otimes v_{\textbf{i}_r}\mapsto& \,\,w_{-\textbf{i}_{r,-}} \otimes \ldots \otimes w_{-\textbf{i}_{1,-}} \otimes w_{\textbf{i}_{1,+}} \otimes \ldots \otimes w_{\textbf{i}_{r,+}}.
\end{aligned}
\end{gather*}
 of $\mathcal{U}_q(\mathfrak{gl}_\mN)$-modules, where $\mathcal{U}_q(\mathfrak{gl}_\mN)$ acts on the left via the isomorphism with $\check{\cU}^\hint$ followed by the comultiplication of $\cU^\hint$.
\end{prop}
\begin{proof}
The map is an isomorphism of vector spaces since it sends a basis to a basis and both spaces have the same dimension. We need to verify that it is $\cU_q(\mathfrak{gl}_\mN)$-linear.
We will only check the linearity on generators $E_a$ for $a \in \mathrm{I}^{\hint,++}$, since the arguments for the others are parallel. On the left hand side, $E_a$ acts via the isomorphism, which maps $E_a$ to $B_a$ followed by successive application of the comultiplication. Hence, $E_a$ acts by multiplication with
\abovedisplayskip0.3em
\belowdisplayskip0.3em
\begin{gather*}
\sum_{l=0}^{r-1} (K_aK_{-a}^{-1})^{\otimes l} \otimes E_a K_{-a}^{-1} \otimes (K_{-a}^{-1})^{\otimes (r-1-l)}+\, \sum_{l=0}^{r-1} 1^{\otimes l} \otimes F_{-a} \otimes (K_{-a}^{-1})^{\otimes (r-1-l)}.
\end{gather*}
If we denote by $d(\textbf{i},a)$ and $d(\textbf{i},-a)$ the integers such that
$ K_a v_{\textbf{i}} = q^{d(\textbf{i},a)} v_{\textbf{i}}$,  $K_{-a} v_{\textbf{i}} = q^{-d(\textbf{i},-a)} v_{\textbf{i}}.$
for a vector $v=v_{\textbf{i}}=v_{\textbf{i}_1} \otimes \ldots \otimes v_{\textbf{i}_r}$, then we get
\abovedisplayskip0.3em
\belowdisplayskip0.3em
\begin{gather*}
\begin{aligned}
E_a.v =&\,
\sum_{l=0}^{r-1} 1^{\otimes l} \otimes E_a \otimes 1^{\otimes (r-1-l)} . v \cdot q^{d(\textbf{i}_1,-a)+ \ldots + d(\textbf{i}_r,-a)} q^{d(\textbf{i}_{1},a)+\ldots +d(\textbf{i}_{l},a)} \\
+&\, \sum_{l=0}^{r-1} 1^{\otimes l} \otimes F_{-a} \otimes 1^{\otimes (r-1-l)} . v \cdot q^{d(\textbf{i}_{l+2},-a)+ \ldots + d(\textbf{i}_r,-a)}.
\end{aligned}
\end{gather*}
On the other hand let $w=\Phi^\hint(v)=w_{-\textbf{i}_{r,-}} \otimes \ldots \otimes w_{-\textbf{i}_{1,-}} \otimes w_{\textbf{i}_{1,+}} \otimes \ldots \otimes w_{\textbf{i}_{r,+}}$.
Since $K_a w_{\textbf{i}_+} = q^{d(\textbf{i},a)} w_{\textbf{i}_+}$, $K_{a} w_{-\textbf{i}_i} = q^{d(\textbf{i},-a)} w_{-\textbf{i}_-}$ for $K_a \in \cU_q(\mathfrak{gl}_\mZ)$ and any $\textbf{i}$, we obtain
\abovedisplayskip0.3em
\belowdisplayskip0.3em
\begin{gather*}
\begin{aligned}
E_a.w =&\,
\sum_{l=0}^{r-1} 1^{\otimes l} \otimes E_a \otimes 1^{\otimes (2r-1-l)} . w \cdot q^{d(\textbf{i}_r,-a)+ \ldots + d(\textbf{i}_{r-l+1},-a)} \\
+&\, \sum_{l=r}^{2r-1} 1^{\otimes l} \otimes E_{a} \otimes 1^{\otimes (2r-1-l)} . w \cdot q^{d(\textbf{i}_{r},-a)+\ldots +d(\textbf{i}_{1},-a)}
q^{d(\textbf{i}_1,a)+ \ldots + d(\textbf{i}_{l-r},a)}\\
=&\,
\sum_{l=0}^{r-1} 1^{\otimes (r-1-l)} \otimes E_a \otimes 1^{\otimes (r+l)} . w \cdot q^{d(\textbf{i}_r,-a)+ \ldots + d(\textbf{i}_{l+2},-a)} \\
+&\, \sum_{l=0}^{r-1} 1^{\otimes (r+l)} \otimes E_{a} \otimes 1^{\otimes (r-1-l)} . w \cdot q^{d(\textbf{i}_{r},-a)+\ldots +d(\textbf{i}_{1},-a)}
q^{d(\textbf{i}_1,a)+ \ldots + d(\textbf{i}_{l},a)}.
\end{aligned}
\end{gather*}
In the last step we just reordered the first sum and shifted the indexing of the second. In particular, $\Phi^\hint$ commutes with the action of $E_a$.
Similar calculations for the other generators imply that $\Phi^\hint$ is a $\cU_q(\mathfrak{gl}_\mN)$-module homomorphism.
\end{proof}

Consider first again the $\cU_q(\mathfrak{gl}_\mN)$-modules $\bigwedge^n_q \mV$ and $\bigwedge^n_q \mV^\hint$. 
To extend the $\cU_q(\mathfrak{gl}_\mN)$-action explicitly to the action of the coideal subalgebra we pass to weight diagrams as indexing set of our basis. To a standard basis vector $v_{\underline{i}}$ in either we assign a diagrammatic weight in $\mX_n$ respectively $\mX_n^\hint$ by defining the sets
$\mathrm{P}_{\down}(\underline{i}) = \{i_j \mid i_j > 0 \}$, $\mathrm{P}_\up(\underline{i})=\{-i_j \mid i_j < 0\}$ and $\mathrm{P}_\diamondb(\underline{i})=\{i_j \mid i_j = 0\}$. Then the maps
\abovedisplayskip0.1em
\belowdisplayskip0.3em
\begin{gather}
\label{wtmap}
\wt :\,\,{\bigwedge^n}_q \mV \,\longrightarrow \, \left\langle \mX_n \right\rangle_{\mathbb{Q}(q)} \quad  \text{and} \quad
\wt^\hint :\,\, {\bigwedge^n}_q \mV^\hint \,\longrightarrow\, \left\langle \mX_n^\hint \right\rangle_{\mathbb{Q}(q)}
\end{gather}
are defined on basis vectors by setting $\wt(v_{\underline{i}})_l$ respectively $\wt^\hint(v_{\underline{i}})_l$ to be
\abovedisplayskip0.3em
\belowdisplayskip0.3em
\begin{gather*}
\begin{array}{clclcl}
\down & \text{if } l \in \mathrm{P}_\down(\underline{i}) \setminus \mathrm{P}_\up(\underline{i}),&
\up & \text{if } l \in \mathrm{P}_\up(\underline{i}) \setminus \mathrm{P}_\down(\underline{i}),&\diamondb & \text{if } l \in \mathrm{P}_\diamondb(\underline{i}),\\
\times & \text{if } l \in \mathrm{P}_\up(\underline{i}) \cap \mathrm{P}_\down(\underline{i}), &
\circ & \text{if } l \not\in \mathrm{P}_\up(\underline{i}) \cup \mathrm{P}_\down(\underline{i}) \cup \mathrm{P}_\diamondb(\underline{i}).&
\end{array}
\end{gather*}
These are isomorphism of vector spaces and we denote the basis vector corresponding to a diagrammatic weight $\lambda$ by $v_\lambda$.

The proofs of the following Lemmas are straightforward and thus omitted.

\begin{lemma} \label{lem:action_comm_generators}
Let $\lambda$ be a diagrammatic weight, then
\abovedisplayskip0.3em
\belowdisplayskip0.3em
\[
D_jD_{-j} \lambda = \left\lbrace \begin{array}{rl}
\lambda & \text{ if } \lambda_j = \circ,\\
q \lambda & \text{ if } \lambda_j \in \{ \down,\up\},\\
q^2 \lambda & \text{ if } \lambda_j = \times.\\
\end{array} \right.
\]
\end{lemma}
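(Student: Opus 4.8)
The plan is to verify the stated formula by a direct computation, tracing through the identifications $\Gamma$ (or $\Gamma^\hint$) and the definition of the generators $D_j D_{-j}$ of the commutative subalgebra $\cR$ inside $\cU$. Since a diagrammatic weight $\lambda$ corresponds to a standard basis vector $v_{\underline i} \in \bigwedge^n\mV$, it suffices to compute how $D_j D_{-j}$ acts on $v_{\underline i}$. Recall that in the natural representation $\mV$ one has $D_j^{\pm 1} v_l = q^{\pm\delta_{j,l}} v_l$, and on the quantum exterior power $\bigwedge^n\mV$ the element $D_j$ acts as a group-like, i.e. via iterated comultiplication, so that $D_j v_{\underline i} = q^{\#\{a\mid i_a = j\}} v_{\underline i}$. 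The same holds for $D_{-j}$ with $j$ replaced by $-j$. Hence $D_j D_{-j}$ acts on $v_{\underline i}$ by $q^{m_j(\underline i)}$ where $m_j(\underline i) = \#\{a\mid i_a = j\} + \#\{a\mid i_a = -j\}$.

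The second step is simply to read off $m_j(\underline i)$ from the combinatorial translation in the ``Integer case'' paragraph. By the definition of the sets $P_\down(\underline i) = \{i_a\mid i_a>0\}$, $P_\up(\underline i)=\{-i_a\mid i_a<0\}$ and the labelling rule for $\Gamma(v_{\underline i})_j$: if $\lambda_j=\circ$ then $j\notin P_\up\cup P_\down$, so neither $j$ nor $-j$ occurs among the $i_a$, giving $m_j = 0$; if $\lambda_j\in\{\down,\up\}$ then exactly one of $j\in P_\down$, $j\in P_\up$ holds (but not both), so exactly one of $j$, $-j$ occurs among the $i_a$, giving $m_j=1$; and if $\lambda_j=\times$ then $j\in P_\up\cap P_\down$, so both $j$ and $-j$ occur among the $i_a$, giving $m_j=2$. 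Substituting these into $q^{m_j(\underline i)}$ yields exactly the three cases in the statement. The half-integer case is identical, using $\Gamma^\hint$ and the absence of the $\diamond$-symbol (one never has $j=0$ in $J^\hint=\mZ+\tfrac12$).

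The one genuinely subtle point — though still routine — is making sure that $D_j$ really does act on $\bigwedge^n\mV$ as the group-like element dictated by $\Delta(D_j^{\pm1}) = D_j^{\pm1}\otimes D_j^{\pm1}$ restricted to the submodule spanned by the vectors \eqref{bvs}; but this is immediate since each summand $v_{i_{w(1)}}\otimes\cdots\otimes v_{i_{w(k)}}$ in the antisymmetrizer \eqref{bvs} is an eigenvector of $D_j^{\otimes k}$ with the same eigenvalue $q^{\#\{a\mid i_a=j\}}$, independent of $w$, so the whole wedge is an eigenvector. I do not anticipate any real obstacle: the lemma is a bookkeeping statement, and the proof is the case-by-case calculation the paper already flags as ``straightforward''. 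I would simply present the three-line eigenvalue computation and the table lookup above, and note that the argument for $D_jD_{-j}$ on $\bigwedge^n\mV^\hint$ via $\Gamma^\hint$ is the same verbatim.
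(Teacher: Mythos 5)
Your proposal is correct and is exactly the ``straightforward case by case calculation'' the paper itself invokes (it gives no further details): since $\Delta(D_j^{\pm1})=D_j^{\pm1}\otimes D_j^{\pm1}$, the wedge $v_{\underline i}$ is a $D_jD_{-j}$-eigenvector with eigenvalue $q^{m_j}$, $m_j=\#\{a\mid i_a=\pm j\}\in\{0,1,2\}$, and the dictionary $P_\circ,P_\down,P_\up,P_\times$ translates $m_j=0,1,1,2$ into the three stated cases. No gap; your treatment matches the intended argument in both the integer and half-integer settings.
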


\begin{lemma} \label{lem:action_generators}
Let $\lambda \in \mX_n$ and $i \in \mathrm{I}^{++}$. For $x,y \in \{\circ,\up,\down,\times\}$ we write $\lambda_{xy}$ for the diagrammatic weight obtained by relabelling $\lambda_{(i-\half)}$ to $x$ and $\lambda_{(i+\half)}$ to $y$.\\[0.2cm]
\begin{minipage}{6.5cm}
\noindent Consider the action of $B_i$:
\begin{enumerate}[(a)]
\item $\lambda = \lambda_{\scriptscriptstyle\circ\down}:$ $B_i.\lambda = \lambda_{\scriptscriptstyle\down\circ}$.
\item $\lambda = \lambda_{\scriptscriptstyle\circ\up}:$ $B_i.\lambda = \lambda_{\scriptscriptstyle\up\circ}$.
\item $\lambda = \lambda_{\scriptscriptstyle\up\cross}:$ $B_i.\lambda = \lambda_{\scriptscriptstyle\cross\up}$.
\item $\lambda = \lambda_{\scriptscriptstyle\down\cross}:$ $B_i.\lambda = \lambda_{\scriptscriptstyle\cross\down}$.
\item $\lambda = \lambda_{\scriptscriptstyle\up\down}:$ $B_i.\lambda = q \lambda_{\scriptscriptstyle\cross\circ}$.
\item $\lambda = \lambda_{\scriptscriptstyle\down\up}:$ $B_i.\lambda = \lambda_{\scriptscriptstyle\cross\circ}$.
\item $\lambda = \lambda_{\scriptscriptstyle\circ\cross}:$ $B_i.\lambda = q^{-1}\lambda_{\scriptscriptstyle\down\up} + \lambda_{\scriptscriptstyle\up\down}$.
\item Otherwise $B_i.\lambda=0$.
\end{enumerate}
\end{minipage}
\begin{minipage}{5.5cm}
Consider the action of $B_{-i}$:
\begin{enumerate}[(a)]
\setcounter{enumi}{8}
\item $\lambda = \lambda_{\scriptscriptstyle\up\circ}:$ $B_{-i}.\lambda = \lambda_{\scriptscriptstyle\circ\up}$.
\item $\lambda = \lambda_{\scriptscriptstyle\down\circ}:$ $B_{-i}.\lambda = \lambda_{\scriptscriptstyle\circ\down}$.
\item $\lambda = \lambda_{\scriptscriptstyle\cross\down}:$ $B_{-i}.\lambda = \lambda_{\scriptscriptstyle\down\cross}$.
\item $\lambda = \lambda_{\scriptscriptstyle\cross\up}:$ $B_{-i}.\lambda = \lambda_{\scriptscriptstyle\up\cross}$.
\item $\lambda = \lambda_{\scriptscriptstyle\up\down}:$ $B_{-i}.\lambda = q \lambda_{\scriptscriptstyle\circ\cross}$.
\item $\lambda = \lambda_{\scriptscriptstyle\down\up}:$ $B_{-i}.\lambda = \lambda_{\scriptscriptstyle\circ\cross}$.
\item $\lambda = \lambda_{\scriptscriptstyle\cross\circ}:$ $B_{-i}.\lambda = q^{-1}\lambda_{\scriptscriptstyle\down\up} + \lambda_{\scriptscriptstyle\up\down}$.
\item Otherwise $B_{-i}.\lambda=0$.
\end{enumerate}
\end{minipage}

The statements also holds if we replace $\mX_n$ by $\mX^\hint_n$ and $\mathrm{I}^{++}$ by $\mathrm{I}^{\hint,++}$.
\end{lemma}

With Lemmas \ref{lem:action_comm_generators} and \ref{lem:action_generators} we know the action of the generators $\check{E}_i$, $\check{F}_i$, and $\check{D}_i^{\pm 1}$ of $\cH$ and $\cH^\hint$ for $i \in \mathrm{I}^{++}$ and $i \in \mathrm{I}^{\hint,++}$. We consider the action of the special generators $B_0$, $B_{-\half}$.

\begin{lemma} \label{lem:specgen}
Let $\lambda \in \mX_n$. For $x,y \in \{\circ,\up,\down,\times,\diamondb\}$ we write $\lambda_{xy}$ for the diagrammatic weight obtained by relabelling $\lambda_0$ to $x$ and $\lambda_1$ to $y$.\\[.2cm]
\begin{minipage}{6.5cm}
\noindent Consider the action of $B_\half$:
\begin{enumerate}[(a)]
\item $\lambda = \lambda_{\scriptscriptstyle\circ\down}:$ $B_{\half}.\lambda = \lambda_{\scriptscriptstyle\diamondb\circ}$.
\item $\lambda = \lambda_{\scriptscriptstyle\circ\up}:$ $B_{\half}.\lambda = \lambda_{\scriptscriptstyle\diamondb\circ}$.
\item $\lambda = \lambda_{\scriptscriptstyle\circ\cross}:$ $B_{\half}.\lambda = q^{-1}\lambda_{\scriptscriptstyle\diamondb\up} + \lambda_{\scriptscriptstyle\diamondb\down}$.
\item Otherwise $B_{\half}.\lambda=0$.
\end{enumerate}
\end{minipage}
\begin{minipage}{5.5cm}
Consider the action of $B_{-\half}$:
\begin{enumerate}[(a)]
\setcounter{enumi}{4}
\item $\lambda = \lambda_{\scriptscriptstyle\diamondb\circ}:$ $B_{-\half}.\lambda = \lambda_{\scriptscriptstyle\circ\up} + \lambda_{\scriptscriptstyle\circ\down}$.
\item $\lambda = \lambda_{\scriptscriptstyle\diamondb\down}:$ $B_{-\half}.\lambda = q \lambda_{\scriptscriptstyle\circ\cross}$.
\item $\lambda = \lambda_{\scriptscriptstyle\diamondb\up}:$ $B_{-\half}.\lambda = \lambda_{\scriptscriptstyle\circ\cross}$.
\item Otherwise $B_{-\half}.\lambda=0$.
\end{enumerate}
\end{minipage}\\[0.1cm]
\noindent The element $B_0$ acts by zero on $\lambda \in \mX_n^\hint$ unless $\lambda_{\half} \in \{\up,\down\}$. If $\lambda=\lambda_{\scriptscriptstyle\down}$ then $B_0.\lambda = \lambda_{\scriptscriptstyle\up}$ and if $\lambda=\lambda_{\scriptscriptstyle\up}$ then $B_0.\lambda = \lambda_{\scriptscriptstyle\down}$.
\end{lemma}

Identifying now $\left\langle \mX_n \right\rangle_{\mathbb{Q}(q)}$ with $K_0\left(\hat{\cO}^\mathfrak{p}_1(n)\right)$ and $\left\langle \mX_n^\hint \right\rangle_{\mathbb{Q}(q)}$ with $K_0\left(\hat{\cO}^\mathfrak{p}_\hint(n)\right)$, as in \eqref{eq:identify}, the induced action of $B_i$ can be compared with the induced action of the of the special projective functors to obtain the following result.

\begin{theorem}[Categorification Theorem] \label{prop:generators_and_functors}
The action of $B_{\pm i}$ and $B_{\pm\half}$ on $K_0\left(\hat{\cO}^\mathfrak{p}_1(n)\right)$ coincides with the action of $[\cF_{i,\pm}]$ and $[\cF_{\half,\pm}]$ respectively. 

The action of $B_{\pm i}$ and $B_{0}$ on $K_0\left(\hat{\cO}^\mathfrak{p}_\hint(n)\right)$ coincides with the action of $[\cF_{i,\pm}]$ and $[\cF_{0}]$ respectively.
\end{theorem}
\begin{proof}
This follows by comparing the action as described in Lemma ~\ref{lem:action_generators} and Lemma \ref{lem:specgen} with the description of the action of the special translation functors $\cF_{i,\pm}$ on Verma modules in Lemma \ref{a}. 
\end{proof}

\subsection{Bar involution and dual canonical basis}

We start by defining an involution on $\cH$ and $\cH^\hint$. It generalizes Lusztig's bar involution and should be compared with the bar involution in \cite{BW}. Both are special cases of  \cite{BalKolb1}.

\begin{prop}
There is a unique $\mQ(q)$-antilinear bar-involution $-:\cH \rightarrow \cH$ such that
$\overline{B_i} = B_i$ and $\overline{D_jD_{-j}}=D_j^{-1}D_{-j}^{-1}$; similar for $\cH^\hint$.
\end{prop}
\begin{proof}
Using Proposition \ref{prop:relations_coideal} one checks easily that such an assignment extends to an algebra involution.
\end{proof}

\begin{definition}
By a {\em compatible bar-involution} on an $\cH$-module $M$ we mean an anti-linear involution $\overline{\phantom{x}}:M \rightarrow M$ such that $\overline{uv} = \overline{u}\,\overline{v}$ for each $u \in \cH, v \in M$ and analogously for a $\cH^\hint$-module.
\end{definition}
Note that as a $\cU$-module $\mV$ has one-dimensional weight spaces with all standard basis vectors being weight vectors. The next lemma shows that the module $\bigwedge^n_q \mV$ possesses a compatible bar-involution. As our identification of the action of $\cH$ and the translation functors already indicates, we will need to focus on vectors $w_\lambda$ that correspond to a diagrammatic weight $\lambda$. Recall that for two diagrammatic weights in the same block $\Lambda_\Theta^{{\epsilon}}$ we have the Bruhat order which gives a unique minimal element in each block. As usual, we start with the case of diagrammatic weights supported on the integers.

\begin{prop}\label{lem:bar_unique}
There is a unique compatible bar-involution $w\mapsto\overline{w}$ on the $\cH$-module $\bigwedge^n_q \mV$ such that $\overline{w_\lambda} = w_\lambda$ for each diagrammatic weight
$\lambda \in \mX$ that is minimal in its block with respect to the reversed Bruhat order. For $\lambda \in \mX$,
\abovedisplayskip0.3em
\belowdisplayskip0.3em
\[
\overline{w_\lambda} \,\in\, w_\lambda + \sum_{\mu < \lambda} q^{-1} \mZ[q^{-1}] w_\mu.
\]
\end{prop}
\begin{proof}
We prove uniqueness, existence will follow from Theorem~\ref{prop:bar_exists}. For  $\lambda \in \mX$, denote by $l(\lambda)$ its height in the reversed Bruhat order, i.e. the minimal number of changes that need to made to obtain the minimal element as described in \cite[Lemma 2.3]{ES_diagrams}. It suffices to show that $\overline{w_\lambda}$ is uniquely determined by the claim. For $\lambda$ minimal $\overline{w_\lambda}=w_\lambda$, hence it is determined. Fix $\lambda$ non-minimal and assume that $\overline{w_\mu}$ is uniquely determined for all $\mu$ with $l(\mu)<l(\lambda)$. Note that this includes diagrammatic weights in other blocks.

\textit{Assume there exists $i<j$ such that $\lambda_i= \up$, $\lambda_{j}=\down$ and $\lambda_{s} \in \{\circ,\times\}$ for $i<s<j$.} Let $\lambda' \in \mX$ coincide with $\lambda$ except for $\lambda_{i+1}'=\down$ and $\lambda_s'=\lambda_{s-1}$ for $i+1<s \leq j$, i.e. we move the $\down$ at $j$ to the right of $\up$ at $i$.
By Lemma \ref{lem:action_generators} there exists $B \in \cH$ such that $Bw_{\lambda'}=w_\lambda$.
Furthermore let $\mu \in \mX$ coincide with $\lambda'$ except that positions $i$ and $i+1$ are swapped, then $l(\mu)<l(\lambda')=l(\lambda)$. Then, again by Lemma \ref{lem:action_generators}, we have $B_{-i-\half}B_{i+\half}.w_\mu = q^{-1}w_\mu + w_{\lambda'}$ and thus
$\overline{w_\lambda} = BB_{-i-\half}B_{i+\half}.\overline{w_\mu} - q B\overline{w_\mu}$.
Hence $\overline{w_\lambda}$ is uniquely determined.

\textit{Assume no pair $i<j$ as above exists.} Then choose $i<j$ minimal such that $\lambda_i \in \{\down,\diamondb\}$ and $\lambda_{j}=\down$. As before, assume $j=i+1$ since there are no $\up$'s between $i$ and $j$ by assumption. If $i=0$, let $\mu \in \mW$ coincide with $\lambda$ except $\mu_1=\up$. Then, by Lemma \ref{lem:specgen}, $B_{\half}B_{-\half}(w_\mu) = q^{-1}w_\mu + w_\lambda$, implying $\overline{w_\lambda} = B_{\half}B_{-\half}(\overline{w_\mu}) - q \overline{w_\mu}$.

If $i\neq 0$, obtain $\eta \in \mX$ by moving the symbols at $i$ and $i+1$ to $0$ and $1$. Then $\overline{w_\eta} = B_{\half}B_{-\half}(\overline{w_\mu}) - q \overline{w_\mu}$,
where $\mu \in \mW$ coincides with $\eta$ except that $\mu_1=\up$.
We can now use the element $B' \in \cH$ that first moves the symbols at position $1$ back to position $i+1$ and afterwards the symbol at position $0$ back to position $i$ and obtain
$B'\overline{w_\eta} = B'B_{\half}B_{-\half}(\overline{w_\mu}) - q B'\overline{w_\mu}$,
since $l(\mu)<l(\lambda)$ the right hand side is known, but $B'\overline{w_\eta} = \overline{w_\lambda} + \overline{w_{\lambda'}}$, where $\lambda'$ coincides with $\lambda$ except $\lambda_i'=\up$. But $l(\lambda')=l(\lambda)$ and includes a pair $\up \down$, so is already determined by the previous arguments. Hence $\overline{w_\lambda}$ is uniquely determined.
\end{proof}

\begin{prop}\label{lem:bar_unique_2}
There is a unique compatible bar-involution $w\mapsto \overline{w}$ on the $\cH^\hint$-module $\bigwedge^n_q \mV^\hint$ such that $\overline{w_\lambda} = w_\lambda$ for each diagrammatic weight
$\lambda \in \mX^\hint$ that is minimal in its block with respect to the Bruhat order.
Moreover, for $\lambda \in \mX^\hint$,
\abovedisplayskip0.3em
\belowdisplayskip0.3em
\[
\overline{w_\lambda} \, \in\, w_\lambda + \sum_{\mu < \lambda} q^{-1} \mZ[q^{-1}] w_\mu.
\]
\end{prop}
\begin{proof}
The uniqueness of a bar-involution on the $\cH^\hint$-module $\bigwedge^n_q \mV^\hint$ follows as in  Proposition~\ref{lem:bar_unique}, although the arguments are slightly simpler because the special generator $B_0$ behaves nicer. The details are left to the reader.
\end{proof}

For the existence of a bar-involution with these properties we use that $\bigwedge^n_q \mV \cong K_0\left(\hat{\cO}_1^\pp(n)\right)$ and that there is an exact graded duality functor  $\textbf{d}$,  see \cite[6.1.2]{Stroppel}, on $\hat{\cO}_1^\pp(n)$ satisfying these properties. (Note the typo in the definition of $M^*$ in \cite{Stroppel}, which should just denote the vector space dual.) 

\begin{theorem} [Categorified bar involution]
\label{prop:bar_exists}
The graded duality $\textbf{d}$ on $\hat{\cO}_1^\pp(n)$, sending a module $M$ to $M^\ostar$, induces a compatible bar-involution on ${\bigwedge^n}_q \mV$, satisfying the properties of Proposition~ \ref{lem:bar_unique}. The same also holds for the graded duality on $\hat{\cO}_\hint^\pp(n)$ and the induced involution on ${\bigwedge^n}_{\!\!\!q} \mV^\hint$.
\end{theorem}
\begin{proof}
Let $M^\pp(\lambda)$ be a parabolic Verma module corresponding to a diagrammatic weight being minimal in its block. Then $M^\pp(\lambda)=L(\lambda)$ and hence 
\abovedisplayskip0.3em
\belowdisplayskip0.3em
\[
\textbf{d}M^\pp(\lambda)=\textbf{d}L(\lambda)=L(\lambda)=M^\pp(\lambda).
\]
If on the other hand $M^\pp(\lambda)$ is a Verma module corresponding to an arbitrary diagrammatic weight, we have that $\textbf{d}M^\pp(\lambda)=M^\pp(\lambda)^\ostar=\nabla^\pp (\lambda)$, the dual parabolic Verma module by definition. It is known, see \cite{Stroppel}  that
\abovedisplayskip0.3em
\belowdisplayskip0.3em
\[
[\nabla^\pp(\lambda)] \,\in\, [M^\pp(\lambda)] + {\sum}_{\mu<\lambda} q^{-1} \mZ[q^{-1}] [M^\pp(\mu)].
\]
It remains to show that $\textbf{d}$ is compatible with the action of $\cH$. Since the simple modules form a basis of the Grothendieck group it is enough to check the compatibility on those, i.e. we need to show that for a functor $\hat{\cF}$ corresponding to a generator of $\cH$ respectively $\cH^\hint$ and any simple module $L(\lambda)$ we have
\abovedisplayskip0.3em
\belowdisplayskip0.3em
\begin{gather}
\label{transcommwithd}
[\hat{\cF} L(\lambda)]\, =\, [\hat{\cF} \textbf{d} L(\lambda)] = [\textbf{d}\hat{\cF} L(\lambda)],
\end{gather}
where the first equality is due to the fact that simple modules are self-dual. Thus we need to show that $[\hat{\cF} L(\lambda)]$ is self-dual as well. We will focus on $\hat{\cF}_{i,+}$ and leave the other cases to the reader. Using Proposition~\ref{a} we know that
\abovedisplayskip0.3em
\belowdisplayskip0.3em
\begin{gather*}
\begin{aligned}
\hat{\cF}_{i,+} L(\lambda_{\circ\down}) = L(\lambda_{\down \circ}) ,&
\qquad \hat{\cF}_{i,+} L(\lambda_{\up\times}) = L(\lambda_{\times\up}), &&\quad 
\hat{\cF}_{i,+} L(\lambda_{\down\up}) = L(\lambda_{\times\circ}),\\
\hat{\cF}_{i,+} L(\lambda_{\circ\up}) = 0, &\qquad \hat{\cF}_{i,+} L(\lambda_{\down\times}) = 0, &&\quad\hat{\cF}_{i,+} L(\lambda_{\up\down}) = 0.
\end{aligned}
\end{gather*}
All of these are obviously self-dual and we are left with the case where $\hat{\cF}_{i,+}$ is applied to  $L(\lambda)= L(\lambda_{\circ \times})$. The resulting module $N$ is by \eqref{transcommwithd} self-dual if we forget the grading thanks to  Remark~\ref{rem:transl} and the fact that translation functors commute with duality \cite[Proposition 7.1]{Hbook}. By Propositon~\ref{a}\eqref{HIM7d}, $N$ is indecomposable with simple socle and simple head. In particular, a graded lift is unique up to an overall shift and the grading filtration agrees with the socle and the radical filtration by  \cite[Propositions 2.5.1 and 2.4.1]{BGS}.   Then by Propositon~\ref{a}\eqref{HIM7c}, we obtain $[\hat{\cF}_{i,+} L(\lambda_{\circ \times})] = q[L(\lambda_{\down \up})]+[Q]+q^{-1}[L(\lambda_{\down \up})],$ with $q$ semisimple and concentrated in degree zero, and so $\hat{\cF}_{i,+} L(\lambda_{\circ \times})$ is self-dual.  In fact one can even show combinatorially that 
\abovedisplayskip0.3em
\belowdisplayskip0.3em
\[
[\hat{\cF}_{i,+} L(\lambda_{\circ \times})] = q[L(\lambda_{\down \up})]+[L(\lambda_{\up \down})]+q^{-1}[L(\lambda_{\down \up})].
\]
For this let $Y$ be a set of diagrammatic weights and $r_\mu$, $s_\mu$ integers such that
$$[L(\lambda_{\circ \times})] = \sum_{\mu \in Y} (-1)^{r_\mu} q^{s_\mu} [M(\mu)].$$
Then for all these $\mu$ we have $\mu = \mu_{\circ \times}$. Applying Lemma \ref{lem:action_generators} we obtain
\abovedisplayskip0.3em
\belowdisplayskip0.3em
\begin{gather*}
\begin{aligned}
&[\hat{\cF}_{i,+} L(\lambda_{\scriptstyle \circ \times})] \,=\, \sum_{\mu \in Y} (-1)^{r_\mu} q^{s_\mu-1} [M^\pp(\mu_{\scriptstyle \down \up})] + (-1)^{r_\mu} q^{s_\mu} [M^\pp(\mu_{\scriptstyle \up\down})]\\
=&\, {\scriptstyle (q^{-1}+q) \sum_{\mu \in Y} {\scriptstyle (-1)}^{r_\mu} q^{s_\mu} [M^\pp(\mu_{\scriptstyle \down \up})]} +\hspace{-.3em}  \sum_{\mu \in Y} {\scriptstyle (-1)^{r_\mu}} \left(\scriptstyle q^{s_\mu} [M^\pp(\mu_{\scriptstyle \up\down})] - q^{s_\mu+1} [M^\pp(\mu_{\scriptstyle \down\up})] \right)
\end{aligned}
\end{gather*}

Expressing $[L(\lambda_{\down \up})]$, and $[L(\lambda_{\up \down})]$ in terms of Verma modules using the type $D$ analogue of \cite[Lemma 5.2]{BSII} from \cite{Tim}, \cite{SS} one checks that this is equal to $(q+q^{-1})[L(\lambda_{\down \up})] + [L(\lambda_{\up \down})]$.
\end{proof}

\begin{remark}
Note that the isomorphism classes of simple modules concentrated in degree zero form a {\it dual canonical basis} of $\bigwedge^n_q \mV \cong K_0\left(\hat{\cO}_1^\pp(n)\right)$, that is they are bar-invariant and the transformation matrix to the standard basis is lower diagonal with $1$'s in the diagonal and elements in $\mZ[q]$ below the diagonal. These entries are the dual parabolic Kazhdan-Lusztig polynomials of type $(\mathrm{D}_n,\mathrm{A}_{n-1})$ which are studied and described in detail in \cite{Tim} and \cite{SS}.
\end{remark}

\section{Skew Howe duality: classical case} \label{section:howe}
In the following we want to look at more general versions of parabolic category $\cO$ of type $ \mathrm{D}$ and correspondingly categorifications of tensor products as in Proposition~\ref{lem:decomp_module_general} for $\cU_q(\mathfrak{gl}_\mZ)$ and $\cH^\hint$. Since the quantum parameter in these cases is very technical we will work first in the classical case and consider categorifications of $\mathfrak{gl}_r \times \mathfrak{gl}_r$-modules, instead of $\cH^\hint$-modules. For simplicitly we stick hereby to the case of  $\mathfrak{gl}_r \times \mathfrak{gl}_r$ instead of  $\mathfrak{gl}_{r+1} \times \mathfrak{gl}_r$, but this is just to simplify the treatment.

\subsection{The module \texorpdfstring{$\bigwedge(n,m,r)$}{exterior powers} in the classical case}
For a positive integer $t$ we denote by $\mW_t^\hint$ the vector space of dimension $t$ with basis $v_{\half},\ldots, v_{t-\half}$, with basis elements labelled by half-integers. Similarly we denote by $\mV_{2k}^\hint$ the vector space of dimension $2t$ with basis $v_{-t+\half},\ldots, v_{t-\half}$. Finally denote by $\mM$ the 2-dimensional vector space with basis $v_+$ and $v_-$. All of these will be considered as complex vector spaces.

In the following fix $m$ and $r$ positive integers.  We want to consider module structures on the vector space
\abovedisplayskip0.3em
\belowdisplayskip0.3em
\[
\bigwedge(n,m,r)=\bigwedge^n \left(\mW_m^\hint \otimes \mM \otimes \mW_r^\hint \right).
\]
Since one can identify $\mW_m^\hint \otimes \mM \simeq \mV_{2m}^\hint$ and $\mM \otimes \mW_r^\hint \simeq \mV_{2r}^\hint$ we can view $\bigwedge(n,m,r)$ naturally both as a $(\mathfrak{gl}_{2m}, \mathfrak{gl}_r)$-bimodule and as a $(\mathfrak{gl}_m,\mathfrak{gl}_{2r})$-bimodule.

\subsection{Fixed point subalgebras} \label{subsection:fixedpoints}
We first identify the fixed point subalgebra of $\mathfrak{gl}_{2m}$ which is the non-quantized analogue of $\cH^\hint$. The classical analogue of the involution from Lemma~\ref{lem:theta} is $\Theta:\mathfrak{gl}_{2m}\rightarrow \mathfrak{gl}_{2m}$ defined as $X\mapsto \widetilde{\mathbf{J}}X\widetilde{\mathbf{J}}$, where $\widetilde{\mathbf{J}} = \left(\begin{array}{cc} 0 & \mathbf{J} \\ \mathbf{J} & 0 \end{array}\right)$ with $\mathbf{J}$ as in Section \ref{section:VW}. i.e has $1$'s on the anti-diagonal and zeros elsewhere. Then
\abovedisplayskip0.3em
\belowdisplayskip0.3em
\[
\mathfrak{g}^\Theta=\left\lbrace X \in \mathfrak{gl}_{2m} \mid \widetilde{\mathbf{J}} X \widetilde{\mathbf{J}} = X \right\rbrace.
\]
We have $\mathfrak{g}^\Theta\cong \mathfrak{gl}_m \times \mathfrak{gl}_m$, since it is the image of the involution
\abovedisplayskip0.3em
\belowdisplayskip0.3em
\[
T_A : \mathfrak{gl}_{2m} \longrightarrow \mathfrak{gl}_{2m}, \, X \mapsto \mathbf{A}X\mathbf{A}^{-1} \text{ with } \mathbf{A} = \left( \begin{array}{cc} \mathbf{1} & \mathbf{J} \\ \mathbf{J} & -\mathbf{1} \end{array}\right),
\]
where $\mathbf{1}$ is the $m \times m$ identity matrix. Restricted to $\mathfrak{gl}_m \times \mathfrak{gl}_m$, let  $\mathfrak{gl}^+_m$ and $\mathfrak{gl}^-_m$ denote the images of the first respectively second factor. One easily verifies, using
$ \widetilde{\mathbf{J}}\mathbf{A} = \mathbf{A} \left( \begin{array}{cc}\mathbf{1} & 0 \\ 0 & -\mathbf{1} \end{array}\right) \text{ and } \mathbf{A} \widetilde{\mathbf{J}} = \left( \begin{array}{cc}\mathbf{1} & 0 \\ 0 & -\mathbf{1} \end{array}\right) \mathbf{J},
$
that
\abovedisplayskip0.3em
\belowdisplayskip0.3em
\begin{gather*}
\mathfrak{gl}^+_m =\, \left\lbrace \hspace{-.5em}\begin{array}{c|c} \left( \begin{array}{cc} X & X\mathbf{J} \\ \mathbf{J}X & \mathbf{J}X\mathbf{J} \end{array} \right) & X \in \mathfrak{gl}_m \end{array} \hspace{-.5em} \right\rbrace \text{ and } \mathfrak{gl}^-_m =\, \left\lbrace \hspace{-.5em} \begin{array}{c|c} \left( \begin{array}{cc} X & -X\mathbf{J} \\ -\mathbf{J}X & \mathbf{J}X\mathbf{J} \end{array} \right) & X \in \mathfrak{gl}_m \end{array} \hspace{-.5em} \right\rbrace.
\end{gather*}
Fix the identification $\mW_m^\hint \otimes \mM \simeq \mV_{2m}^\hint$, $v_i \otimes v_+ \mapsto v_i$ and $v_i \otimes v_- \mapsto v_{-i}$ as vector spaces. Twisting with $T_A$ gives the decomposition
\abovedisplayskip0.3em
\belowdisplayskip0.3em
\begin{gather}
\label{C2m_decomposition}
\mV_{2m}^\hint = \mW_m^\hint \otimes \left\langle v_+ + v_- \right\rangle \oplus \mW_m^\hint \otimes \left\langle v_+ - v_- \right\rangle,
\end{gather}
as a $\mg^\Theta=\mathfrak{gl}^+_m \times \mathfrak{gl}^-_m$-module. One should note that $\mathfrak{gl}^+_m$ acts as zero on the second summand, while $\mathfrak{gl}^-_m$ acts as zero on the first summand. The {\it diagonally embedded} $\mathfrak{gl}_m$ in $\mathfrak{gl}_m\times \mathfrak{gl}_m$ with its Chevalley generators normalized by $\frac{1}{2}$ maps via $T_A$ to an isomorphic subalgebra of $\mathfrak{gl}_{2m}$ with generators $E_i=E_{i+1,i} + E_{-i-1,-i}$ and $F_i=E_{i,i+1} + E_{-i,-i-1}$ for $0 < i < m$. These are exactly the specialization of the generators $B_i$ and $B_{-i}$ from Definition \ref{def:modified_generators}. Together with the specialization of $\cR$ they generate the \emph{diagonally embedded} $\mathfrak{gl}_m$ in  $\mathfrak{gl}^+_m \times \mathfrak{gl}^-_m$. The special generator $B_0$ specializes to the element $G=E_{1,-1} + E_{-1,1}$ and generates together with the diagonal embedded Lie algebra the whole fixed point Lie algebra  $\mathfrak{g}^\Theta$.

\subsection{Classical skew Howe duality}
All the identifications made above can of course also be done for $\mathfrak{gl}_{2r}$ and we will use the same notations for them. Using the decomposition from \eqref{C2m_decomposition} we obtain
\abovedisplayskip0.3em
\belowdisplayskip0.3em
\begin{gather}
\label{dec}
\begin{gathered}
\begin{aligned}
\bigwedge(n,m,r)=& \bigwedge^n \left( \mW_m^\hint \otimes \left\langle v_+ \hspace{-.2em}+\hspace{-.1em} v_- \right\rangle \otimes \mW_r^\hint \oplus \mW_m^\hint \otimes \left\langle v_+ \hspace{-.2em}-\hspace{-.1em} v_- \right\rangle \otimes \mW_r^\hint \right)\\
\simeq& \bigoplus_{i=0}^n \bigwedge^i \left( \mW_m^\hint \otimes \left\langle v_+ \hspace{-.2em}+\hspace{-.1em} v_- \right\rangle \otimes \mW_r^\hint \right) \otimes \bigwedge^{n-i} \left( \mW_m^\hint \otimes \left\langle v_+ \hspace{-.2em}-\hspace{-.1em} v_- \right\rangle \otimes \mW_r^\hint \right) \\
\simeq& \bigoplus_{i=0}^n \bigwedge^i \left( \mW_m^\hint \otimes \left\langle v_+ \hspace{-.2em}+\hspace{-.1em} v_- \right\rangle \otimes \mW_r^\hint \right) \boxtimes \bigwedge^{n-i} \left( \mW_m^\hint \otimes \left\langle v_+ \hspace{-.2em}-\hspace{-.1em} v_- \right\rangle \otimes \mW_r^\hint \right),
\end{aligned}
\end{gathered}
\end{gather}
where $\boxtimes$ means that we regard the first factor as a $(\mathfrak{gl}^+_m,\mathfrak{gl}^+_r)$-bimodule and the second as a $(\mathfrak{gl}^-_m,\mathfrak{gl}^-_r$)-bimodule and take the outer tensor product of these two bimodules in each summand. From the usual skew Howe duality of $\mathfrak{gl}_m$ and $\mathfrak{gl}_r$ on $\bigwedge^n(\mW_m^\hint \otimes \mW_r^\hint)$ we can deduce that on each summand the $\mathfrak{gl}^+_m \times \mathfrak{gl}^-_m$-endomorphisms are generated by $\mathfrak{gl}^+_r \times \mathfrak{gl}^-_r$ and that there are no such morphisms between different summands. We obtain

\begin{theorem}[Classical skew Howe duality] \label{thm:skewhowe}
The actions of $U(\mathfrak{gl}^+_m \times \mathfrak{gl}^-_m)$ and $U(\mathfrak{gl}^+_r \times \mathfrak{gl}^-_r)$ on $\bigwedge(n,m,r)$ commute and generate each others commutant.
\end{theorem}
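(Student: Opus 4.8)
The plan is to reduce the statement to the classical skew Howe duality between $\mathfrak{gl}_m$ and $\mathfrak{gl}_r$ acting on $\bigwedge^i(\mW_m^\hint\otimes\mW_r^\hint)$, using the decomposition \eqref{dec}. Write $\mM^{\pm}=\langle v_+\pm v_-\rangle$ and $M_i:=\bigwedge^i(\mW_m^\hint\otimes\mM^+\otimes\mW_r^\hint)\boxtimes\bigwedge^{n-i}(\mW_m^\hint\otimes\mM^-\otimes\mW_r^\hint)$. The first point to check is that \eqref{dec} is a decomposition of $\bigwedge(n,m,r)$ as a module over $(\mathfrak{gl}^+_m\times\mathfrak{gl}^-_m)\times(\mathfrak{gl}^+_r\times\mathfrak{gl}^-_r)$; this follows from \eqref{C2m_decomposition} and its $\mathfrak{gl}_{2r}$-analogue, which show that on $M_i$ the algebras $\mathfrak{gl}^-_m$ and $\mathfrak{gl}^-_r$ act through the second $\boxtimes$-factor only, while $\mathfrak{gl}^+_m$ and $\mathfrak{gl}^+_r$ act through the first only. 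Since $\mM^{\pm}$ is one-dimensional, the first factor of $M_i$ is isomorphic as a $(\mathfrak{gl}^+_m,\mathfrak{gl}^+_r)$-bimodule to $\bigwedge^i(\mW_m^\hint\otimes\mW_r^\hint)$, and the second factor is isomorphic as a $(\mathfrak{gl}^-_m,\mathfrak{gl}^-_r)$-bimodule to $\bigwedge^{n-i}(\mW_m^\hint\otimes\mW_r^\hint)$. Commutativity of the two actions in the theorem is then clear on each $M_i$: $\mathfrak{gl}^+_m$ commutes with $\mathfrak{gl}^+_r$ and $\mathfrak{gl}^-_m$ with $\mathfrak{gl}^-_r$ on the respective tensor factors by classical skew Howe duality, while a generator of a $+$-algebra commutes with a generator of a $-$-algebra since they act on different tensor factors of $M_i$; summing over $i$ gives commutativity on all of $\bigwedge(n,m,r)$.

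For the centralizer property I would first treat a single summand. By classical skew Howe duality $\bigwedge^i(\mW_m^\hint\otimes\mW_r^\hint)$ is a multiplicity-free $(\mathfrak{gl}_m,\mathfrak{gl}_r)$-bimodule, all of whose $\mathfrak{gl}_m$-isotypic components (equivalently $\mathfrak{gl}_r$-isotypic components) are indexed by partitions of $i$; in particular the classical double centralizer theorem gives that $\End_{\mathfrak{gl}_m}(\bigwedge^i(\mW_m^\hint\otimes\mW_r^\hint))$ equals the image of $U(\mathfrak{gl}_r)$. Since all representations in sight are completely reducible polynomial representations, $\End_{A\times B}(P\boxtimes Q)=\End_A(P)\otimes\End_B(Q)$, and applying this to the two tensor factors of $M_i$ yields $\End_{\mathfrak{gl}^+_m\times\mathfrak{gl}^-_m}(M_i)=$ the image of $U(\mathfrak{gl}^+_r\times\mathfrak{gl}^-_r)$ on $M_i$.

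To pass from the summands to $\bigwedge(n,m,r)$ I would use that the $\mathfrak{gl}^+_m$-constituents of $M_i$, coming from its first tensor factor, are indexed by partitions of $i$; hence $M_i$ and $M_j$ share no $\mathfrak{gl}^+_m$-constituent for $i\neq j$, so $\Hom_{\mathfrak{gl}^+_m\times\mathfrak{gl}^-_m}(M_i,M_j)=0$ and therefore $\End_{\mathfrak{gl}^+_m\times\mathfrak{gl}^-_m}(\bigwedge(n,m,r))=\bigoplus_i\End_{\mathfrak{gl}^+_m\times\mathfrak{gl}^-_m}(M_i)$. The same partition-of-$i$ bookkeeping applied to the $\mathfrak{gl}^+_r$-action shows that the $M_i$ are pairwise disjoint as $U(\mathfrak{gl}^+_r\times\mathfrak{gl}^-_r)$-modules, so an elementary semisimplicity argument shows that the image of $U(\mathfrak{gl}^+_r\times\mathfrak{gl}^-_r)$ acting diagonally on $\bigoplus_i M_i$ is the full product $\bigoplus_i(\text{image on }M_i)$. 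Together these identify $\End_{\mathfrak{gl}^+_m\times\mathfrak{gl}^-_m}(\bigwedge(n,m,r))$ with the image of $U(\mathfrak{gl}^+_r\times\mathfrak{gl}^-_r)$. Finally, since $\bigwedge(n,m,r)$ is completely reducible over $U(\mathfrak{gl}^+_r\times\mathfrak{gl}^-_r)$, the classical double centralizer theorem applied in the reverse direction shows that its $U(\mathfrak{gl}^+_r\times\mathfrak{gl}^-_r)$-endomorphisms are precisely the image of $U(\mathfrak{gl}^+_m\times\mathfrak{gl}^-_m)$, which completes the argument.

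The part requiring the most care here is bookkeeping rather than anything conceptual: one must set up the four commuting actions and the $\boxtimes$-decomposition precisely enough that the $\mathfrak{gl}^+_m$-constituents of $M_i$, and likewise the $\mathfrak{gl}^+_r$-constituents, are exactly those indexed by partitions of $i$, so that distinct $M_i$ are pairwise disjoint over $\mathfrak{gl}^+_m$ and over $U(\mathfrak{gl}^+_r\times\mathfrak{gl}^-_r)$, and then run the elementary semisimplicity argument that the diagonal image of $U(\mathfrak{gl}^+_r\times\mathfrak{gl}^-_r)$ on a direct sum of pairwise-disjoint modules is the direct sum of the separate images. Everything else reduces essentially verbatim to the classical $\mathfrak{gl}_m$--$\mathfrak{gl}_r$ skew Howe duality, exactly as the theorem was motivated in the text.
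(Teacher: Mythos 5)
Your proposal is correct and follows essentially the same route as the paper: decompose $\bigwedge(n,m,r)$ via \eqref{C2m_decomposition} into the outer tensor products $\bigwedge^i(\mW_m^\hint\otimes\langle v_++v_-\rangle\otimes\mW_r^\hint)\boxtimes\bigwedge^{n-i}(\mW_m^\hint\otimes\langle v_+-v_-\rangle\otimes\mW_r^\hint)$ and reduce each summand to classical $\mathfrak{gl}_m$--$\mathfrak{gl}_r$ skew Howe duality. The only difference is that you spell out the disjointness of the summands (via the partition/degree bookkeeping for the $\mathfrak{gl}^+_m$- and $\mathfrak{gl}^+_r$-constituents) and the resulting assembly of the centralizers, which the paper simply asserts as ``there are no such morphisms between different summands.''
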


\subsection{Categorified classical skew Howe duality}

In the following we will need certain sets of non-negative integers.
\begin{definition}
For a positive integer $t$ we denote by $C(n,t)$ the set of $t$-tuples $\underline{k}=(k_1,\ldots,k_{t})$ of non-negative integers that sum up to $n$.  It will be convenient to also allow indexing the entries of $\underline{k} \in C(n,t)$ by half-integers $\half,\ldots,t-\half$, i.e. $k_{i-\half} = k_i$ for $1 \leq i \leq t$. 

Similarly we denote by $C(n)$ the set of $\mathbb{Z}_{\geq 1}$-tuples $\underline{k}=(k_1,k_2,\ldots)$ of non-negative integers that sum up to $n$. As before, we also allow indexing the entries by half-integers.
\end{definition}

To categorify the bimodule $\bigwedge(n,m,r)$ we first use classical skew Howe duality for the pair $(\mathfrak{gl}_{2m},\mathfrak{gl}_r)$ to obtain a decomposition as $\mathfrak{gl}_{2m}$-module
\abovedisplayskip0.3em
\belowdisplayskip0.3em
\begin{gather}
\label{Defwedge}
\bigwedge(n,m,r) \,\cong \,\bigoplus_{\underline{k} \in C(n,r)} \bigwedge^{\underline{k}} \mV_m^\hint,
\end{gather}
where $\bigwedge^{\underline{k}}\mV_m^\hint = \bigwedge^{k_1} \mV_m^\hint \otimes \ldots \otimes \bigwedge^{k_{r}} \mV_m^\hint$. Each summand is also a module for the subalgebra $\mathfrak{gl}^+_m \times \mathfrak{gl}^-_m$. To identify each summand with the Grothendieck group of certain blocks of parabolic category $\cO$ we denote by $\mathfrak{q}_{\underline{k}}$, for  $\underline{k} \in C(n,r)$, the standard parabolic of $\mathfrak{so}_{2n}$ corresponding to the root subsystem where we omit the simple roots $\alpha_0, \alpha_{k_1}, \alpha_{k_1+k_2}, \alpha_{k_1+k_2+k_3}, \ldots, \alpha_{k_1 + \ldots + k_{r-1}}.$ Thus in $\mathfrak{gl}_n$ this corresponds to a parabolic subalgebra with diagonal blocks of sizes $k_1,\ldots,k_r$. Our original parabolic is then $\pp=\mathfrak{q}_{(n,0^{r-1})}$. 

As in Section \ref{section:lie_basics} we have the corresponding parabolic category $\cO^{\mathfrak{q}_{\underline{k}}}(n)$ inside $\cO(n)$ containing the simple modules whose highest weights are $\mathfrak{q}_{\underline{k}}$-dominant.

\begin{definition}
An integral weight $\lambda$ is \emph{$\mathfrak{q}_{\underline{k}}$-dominant} if it is of the form
$$\lambda = (\lambda_1 \leq \ldots \leq \lambda_{k_1},\lambda_{k_1+1} \leq \ldots \leq \lambda_{k_1+k_2},\ldots,\lambda_{k_1+\ldots+k_{r-1}+1} \leq \ldots \leq \lambda_n).$$
Let $\La_{\underline{k}}$ denote the set of all $\mathfrak{q}_{\underline{k}}$-dominant weights, decomposing as usual into $\La_{\underline{k}} = \La_{\underline{k}}^1 \cup \La_{\underline{k}}^\hint$, depending on whether the $\lambda_i$'s are integers or half-integers.
\end{definition}

\begin{remark}
Note that two $\mathfrak{q}_{\underline{k}}$-dominant weights give rise to parabolic Verma modules with the same central character iff they are in the same Weyl group orbit with respect to the dot-action. After adding $\rho$ this corresponds to usual Weyl group orbits. Then two weights lie in the same orbit iff the multiplicity for each half-integer $\half,\ldots,m-\half$, up to sign, agrees in the two weights and the multiplicities of negative entries have the same parity.
\end{remark}

Consider the Serre subcategory $\cO^{\mathfrak{q}_{\underline{k}}}_{\leq m}(n)$ of $\cO_\hint(n)$ generated by simple modules with highest weight in
\abovedisplayskip0.3em
\belowdisplayskip0.3em
\[
\La_{\underline{k}}^{\leq m} = \left\lbrace \lambda \in \La_{\underline{k}}^\hint \, \scalebox{1.3}{$\mid$} \, |\lambda_i+\rho_i| \leq m-\half \right\rbrace.
\]
This is obviously a finite set and stable under the dot-action of the Weyl group, thus $\cO^{\mathfrak{q}_{\underline{k}}}_{\leq m}(n)$ is a finite sum of blocks in $\cO^{\mathfrak{q}_{\underline{k}}}_{\leq m}(n)$. Generalizing \eqref{wtmap} and \eqref{eq:identify}, we associate to $\lambda \in \La_{\underline{k}}^{\leq m}$ a vector $v_\lambda \in \bigwedge^{\underline{k}}\mV_m^\hint$ as follows. Take $\lambda'=\lambda+\rho$, consider the sequence $\underline{i}^j_\lambda = (\lambda_{k_1+\ldots+k_j}'>\lambda_{k_1+\ldots+k_j-1}'>\ldots >\lambda_{k_1+\ldots+k_{j-1}+1}'),$
and then define the corresponding standard basis vector as
$v_\lambda = v_{\underline{i}^1_\lambda} \otimes \ldots \otimes v_{\underline{i}^r_\lambda}$. Then
 there is an isomorphism of vector spaces
\abovedisplayskip0.3em
\belowdisplayskip0.3em
\begin{gather*}
\kw_{\underline{k}}: \quad K_0\left(\cO^{\mathfrak{q}_{\underline{k}}}_{\leq m}(n)\right) \longrightarrow \bigwedge^{\underline{k}}\mV_m^\hint, \quad [M^{\mathfrak{q}_{\underline{k}}}(\lambda)]\mapsto v_\lambda.
\end{gather*}
Taking (direct) sums we obtain two $\mQ$-isomorphisms $\kw_m$ and $\kw_r$:
\abovedisplayskip0.3em
\belowdisplayskip0.3em
\begin{gather}
\label{Gammam}
\quad K_0 \left( \bigoplus_{\underline{k} \in C(n,r)} \cO^{\mathfrak{q}_{\underline{k}}}_{\leq m}(n) \right)\stackrel{\kw_m} \longrightarrow \bigwedge(n,m,r)\stackrel{\kw_r}{\longleftarrow} K_0 \left( \bigoplus_{\underline{k} \in C(n,m)} \cO^{\mathfrak{q}_{\underline{k}}}_{\leq r}(n) \right).
\end{gather}

Next, we decompose the categories $\cO^{\mathfrak{q}_{\underline{k}}}_{\leq m}(n)$. Although we will refer to this as a {\it block decomposition} and to the summands as {\it blocks}, this is not completely correct as the subcategories might decompose further, but it is a decomposition by central character.

\begin{definition} \label{def:blocks_howe}
Fix $(\underline{k},\underline{d},\epsilon)$ with $\underline{k} \in C(n,r)$, $\underline{d} \in C(n)$ and ${\epsilon} \in \mathbb{Z}/2\mathbb{Z}$. Then $\cO_{(\underline{k},\underline{\mu}, {\epsilon})}(n)$ denotes the block of $\cO^{\mathfrak{q}_{\underline{k}}}(n)$ containing those parabolic Verma modules of highest weights $\lambda$ satisfying
\abovedisplayskip0.3em
\belowdisplayskip0.3em
\[
d_i = \# \left\lbrace 
j \,\scalebox{1.3}{$\mid$} \mid \lambda_j + \rho_j \mid = i \right\rbrace, \text{ for } i \in \mathbb{Z}_{\geq 0} + \half, \quad \text{and} \quad \epsilon = \overline{\# \left\lbrace
j \,\scalebox{1.3}{$\mid$}\, \lambda_j + \rho_i < 0 \right\rbrace}+1.
\]
Such a triple $(\underline{k},\underline{d}, {\epsilon})$ is called a \emph{block triple}. We denote by ${\rm Bl}(n,r)$ the set of all such triples. For a block triple $\kappa$ we denote by $\mathrm{pr}_\kappa$ the projection onto $\cO_{\kappa}(n)$ in any sum of blocks.

By abuse of notations we can use $\underline{d} \in C(n,m)$, which will result in a block contained in $\cO^{\mathfrak{q}_{\underline{k}}}_{\leq m}(n)$. This subset of ${\rm Bl}(n,r)$ will be denoted by ${\rm Bl}(n,r,m)$
\end{definition}

\begin{remark}
Note that Definition~\ref{def:blocks_howe} is only indexing blocks with weights supported on the half integers. The notations for blocks with weights supported on the integers will be given in Definition~\ref{def:blocks_general} and is more technical.

Note that, while for a fixed $\underline{k}$ a block triple $\kappa=(\underline{k},\underline{d},\epsilon)$ uniquely determines a block $\cO_{\kappa}(n)$ of $\cO^{\mathfrak{q}_{\underline{k}}}(n)$, there might be more than one choice of $\underline{k}$ to obtain the same parabolic subalgebra. This also means that $\mathrm{pr}_\kappa$ only projects onto a block labelled by $\kappa$, not some other $\kappa'$ even if they are the same category.
This is important for our categorification of skew Howe duality.
\end{remark}

\begin{definition} 
\label{defsd}
Let $\bd \in C(n)$ and $i \in \mathbb{Z}_{\geq 1}$. Then we define
\abovedisplayskip0.3em
\belowdisplayskip0.3em
\begin{gather*}
\begin{aligned}
\bd^{+i} =&\, ({\scriptstyle \ldots},d_{i-\half},1,d_{i+\half}-1,d_{i+\nicefrac{3}{2}},{\scriptstyle \ldots}), && {}_{+i}\bd =&\, ({\scriptstyle \ldots},d_{i-\half}+1,d_{i+\half}-1,d_{i+\nicefrac{3}{2}},{\scriptstyle \ldots}), \\
\bd^{-i} =&\, ({\scriptstyle \ldots},d_{i-\half}-1,1,d_{i+\half},d_{i+\nicefrac{3}{2}},{\scriptstyle \ldots}), && {}_{-i}\bd =&\, ({\scriptstyle \ldots},d_{i-\half}-1,d_{i+\half}+1,d_{i+\nicefrac{3}{2}},{\scriptstyle \ldots}).
\end{aligned}
\end{gather*}
Furthermore for a given block triple $\kappa = (\underline{k},\underline{d}, {\epsilon})$ we define block triples:
\[
{}_{+i}\kappa \,=\, (\underline{k},{}_{+i}\underline{d}, {\epsilon}), \,\,\, {}_{-i}\kappa \,=\, (\underline{k},{}_{-i}\underline{d}, {\epsilon}), \text{ for }i \in \mathbb{Z}_{\geq 1}, \,\, \text{ and } \kappa_0 \,=\, (\underline{k},\underline{d},\epsilon+1).\]
\end{definition}

\begin{remark}\label{maxparab}
In case that $\mathfrak{q}_{\underline{k}}=\pp$, passing from $\bd$ to ${}_{+i}\bd$ can be interpreted in the weight combinatorics and corresponds to moving a symbol $\up$ or $\down$ from position $i+ \half$ to position $i-\half$, see $\Gamma_{i,+}$ in \eqref{deftable1}. The symbol $\times$ counts as a union of $\up$ and $\down$ for this. Similarly for the passage from $\bd$ to ${}_{-i}\bd$. 
\end{remark}

We define now special translation functors generalizing Definition \ref{def:functors}.

\begin{definition}
\label{def:specproj}
We have the \emph{special projective functors} $\cF_{i,+}$, $\cF_{i,-}$ for $i \in \mZ_{> 0}$ and $\cF_0$ defined by
\abovedisplayskip0.3em
\belowdisplayskip0.3em
\begin{gather*}
\begin{aligned}
\cF_{i,+} &=& \bigoplus_{\kappa}
\pr_{{}_{+i}\kappa} \;(? \otimes V)\; \pr_\kappa &:& \bigoplus_{\kappa} \cO_\kappa(n) \rightarrow \bigoplus_{\kappa} \cO_\kappa(n),\\
\cF_{i,-} &=& \bigoplus_{\kappa}
\pr_{{}_{+i}\kappa} \; (? \otimes V) \; \pr_\kappa &:& \bigoplus_{\kappa} \cO_\kappa(n) \rightarrow \bigoplus_{\kappa} \cO_\kappa(n),\\
\cF_{0} &=& \bigoplus_{\kappa}
\pr_{\kappa_{0}} \;(? \otimes V) \; \pr_\kappa &:& \bigoplus_{\kappa} \cO_\kappa(n) \rightarrow \bigoplus_{\kappa} \cO_\kappa(n),
\end{aligned}
\end{gather*}
with all sums running over $\kappa \in {\rm Bl}(n,r)$.
\end{definition}

\begin{theorem}[Categorification of  $\bigwedge(n,m,r)$]\label{prop:catskewclass}
Under the identification 
\abovedisplayskip0.3em
\belowdisplayskip0.3em
\[
\kw_m:\,\, K_0\left({\bigoplus}_{\kappa \in {\rm Bl}(n,r,m)} \cO_\kappa(n)\right) \,\longrightarrow\, \bigwedge(n,m,r)
\]
from \eqref{Gammam}, the action of $[\cF_{i,-}]$,  $[\cF_{i,+}]$, $[\cF_{0}]$ on the left coincide with the action of $F_i$, $E_i$, $G$ respectively on the right, for $1 \leq i \leq m-1$.  Hence this gives a categorification of the action of $\mathfrak{gl}^+_m \times \mathfrak{gl}^-_m$ on $\bigwedge(n,m,r)$.
\end{theorem}
\begin{proof}
We show the claim for $\cF_{i,+}$ and leave the rest to the reader. Let $\kappa = (\underline{k},\underline{\mu},\epsilon)$ be a block triple and $M^{\mathfrak{q}_{\underline{k}}}(\lambda)$ be a parabolic Verma module in the corresponding block. Then 
\abovedisplayskip0.3em
\belowdisplayskip0.3em
\[
[M^{\mathfrak{q}_{\underline{k}}}(\lambda) \otimes V] = \sum_{l: \lambda + \epsilon_l \in \Lambda_{\underline{k}}} [M^{\mathfrak{q}_{\underline{k}}}(\lambda + \epsilon_l)] + \sum_{l: \lambda - \epsilon_l \in \Lambda_{\underline{k}}} [M^{\mathfrak{q}_{\underline{k}}}(\lambda - \epsilon_l)].
\]
Checking now when a parabolic Verma module of the form $M^{\mathfrak{q}_{\underline{k}}}(\lambda - \epsilon_l)$ lies in the block corresponding to $\kappa_{i,+}$ we see that this is the case if and only if $
\lambda_l + \rho_l = i + \half$, and similarly with $M^{\mathfrak{q}_{\underline{k}}}(\lambda + \epsilon_l)$. Under our identification this coincides exactly with the way $E_i$ acts on the exterior power \eqref{Defwedge}.
\end{proof}

\begin{remark}
\label{analogy}
Using $\kw_r $,  we can also identify $K_0\left(\bigoplus_{\kappa \in {\rm Bl}(n,m,r)} \cO_\kappa(n)\right)$ with $\bigwedge(n,m,r)$ and use analogous functors to obtain an action of $\mathfrak{gl}^+_r \times \mathfrak{gl}^-_r$.  
\end{remark}

\section{Skew Howe duality: quantum case} \label{section:howe-quantum}
We will now go back to the quantum case, hence the ground field is again $\mQ(q)$, but will use often the same notations as in the last section. In particular we consider truncations of the spaces $\mW$ and $\mV$ we had in Section \ref{section:coideal}.

\subsection{The module \texorpdfstring{$\bigwedge_q(n,m,r)$}{exterior powers} in the quantum case}
In general it is not obvious how to lift exterior powers to the quantum setting, see \cite{BZ} for a detailed treatment of this problem. In particular a quantization of \eqref{dec} is non-trivial, since we had to deal with exterior powers of triple tensor products.  Here we take an easier route and quantize our space $\bigwedge(n,m,r)$ as
\abovedisplayskip0.3em
\belowdisplayskip0.3em
\begin{gather*}
{\bigwedge}_q(n,m,r) \,=\, \bigoplus_{\underline{k} \in C(n,r)} {\bigwedge^{\underline{k}}}_q \mV_m^\hint,
\end{gather*}
and define on this space explicitly two commuting coideal actions. We start with a part of it which is already interesting on its own.

\subsection{Schur--Weyl duality and its categorification}
\label{sectionSW}
We restrict to the half-integer case and consider $\underline{k}=(1,1,\ldots, 1)$ and $n=r$, hence $\bigwedge^{\underline{k}}_q\mV_m^\hint={\mV_m^\hint}^{\otimes n}.$ Recall the Hecke algebras with unequal parameters from \cite[2.1]{GJbook}.
\begin{definition} 
The {\it Hecke algebra}  $\mathbb{H}_n(\mathrm{D})$ corresponding to the Weyl group $W_n=\langle s_{i}\mid 0\leq i\leq n-1\rangle$ of $\mathfrak{so}_{2n}$ is the unital $\mathbb{Q}(q)$-algebra with generators $H_i$, for $0\leq i\leq n-1$, subject to the  quadratic relation $H_i^2=1+(q^{-1}-q)H_i$, and the type $ \mathrm{D}$ braid relations, for $1\leq i,j\leq n-1$:
\abovedisplayskip0.3em
\belowdisplayskip0.3em
\[
H_iH_j=H_jH_i \text{ if } s_{i}s_{j}=s_{j}s_{i} \quad \text{and} \quad H_iH_jH_i=H_jH_iH_j \text{ if } s_{i}s_{j}s_{i}=s_{j}s_{i}s_{j}.
\]
Let  $\mathbb{H}_n(\mathrm{B})$  be the {\it $2$-parameter Hecke algebra specialised at $(1,q)$} corresponding to the Weyl group of $\mathfrak{so}_{2n+1}$, that is the algebra with generators $H_i$, for $0\leq i\leq n-1$, subject to the same defining relations not involving $H_0$ and the relations $H_0^2=1,\,\,\, H_0H_1H_0H_1=H_1H_0H_1H_0, \text{ and } H_0H_i=H_iH_0 \text{ for } i > 1$.
\end{definition}

The following straight-forward fact realizes $\mathbb{H}_n(\mathrm{D})$ as a subalgebra of $\mathbb{H}_n(\mathrm{B})$.
\begin{lemma}\label{twoHeckes}
There is an inclusion of $\mathbb{Q}(q)$-algebras $\mathbb{H}_n(\mathrm{D})\hookrightarrow \mathbb{H}_n(\mathrm{B})$ given on generators by $H_i\mapsto H_i$ for $i\not=0$ and $H_0\mapsto H_0H_1H_0$.
\end{lemma}

\begin{lemma} \label{lem:Hecke}
There is a right action of the Hecke algebra $\mathbb{H}_n(\mathrm{D})$ on ${\mV_m^\hint}^{\otimes n}$ commuting with the action of $\cH^\hint$. The generator $H_i$ for $1\leq i\leq n-1$ acts on the $i$-th and $i+1$-st tensor factor by the formula
\abovedisplayskip0.3em
\belowdisplayskip0.3em
\begin{gather}
v_a\otimes v_b.H_i \,=\,
\begin{cases}
v_b\otimes v_a&\text{if }a<b,\\
v_b\otimes v_a+(q^{-1}-q)v_a\otimes v_b&\text{if }a>b,\\
q^{-1}v_b\otimes v_a&\text{if }a=b.
\end{cases}
\end{gather}
and $H_0$ acts on the first and second tensor factor by
\abovedisplayskip0.3em
\belowdisplayskip0.3em
\begin{gather}
v_a\otimes v_b.H_0\,=\,
\begin{cases}
v_{-b}\otimes v_{-a}&\text{if } a+b>0,\\
v_{-b}\otimes v_{-a}+(q^{-1}-q)v_a\otimes v_b&\text{if } a+b < 0,\\
q^{-1}v_{-b}\otimes v_{-a}&\text{if }a=-b.
\end{cases}
\end{gather}
It extends to an action of $\mathbb{H}_n(\mathrm{B})$ by setting $v_a\otimes v_b.H_0=v_{-a}\otimes v_b$ for $H_0 \in \mathbb{H}_n(\mathrm{B})$.
\end{lemma}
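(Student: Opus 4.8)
\textbf{Proof strategy for Lemma~\ref{lem:Hecke}.}
The plan is to verify directly that the two assignments define a right action and that this action commutes with $\cH^\hint$. First I would check that the formula for each $H_i$, $1\le i\le r$, and for $H_0$ yields a well-defined endomorphism of ${\mV_m^\hint}^{\otimes r}$; this is immediate since we specify the image of each standard basis vector. Next I would verify the quadratic relation $H_i^2 = 1 + (q^{-1}-q)H_i$ for each generator: for $H_i$ with $i\ge 1$ this is the familiar $R$-matrix computation on the two relevant tensor factors, splitting into the three cases $a<b$, $a>b$, $a=b$ and checking that applying the formula twice reproduces the identity plus $(q^{-1}-q)$ times one application. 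For $H_0$ the same three-case analysis works with $b$ replaced by $-b$ throughout; one only needs to note that $a+b>0 \iff (-b)+(-a)<0$, so the roles of the first two cases swap under a second application, exactly as needed, and the case $a=-b$ is handled by $q^{-1}\cdot q^{-1} v_a\otimes v_b$ versus $1 + (q^{-1}-q)q^{-1}$.

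Then I would check the braid relations coming from the Dynkin diagram of type $D_r$: the commuting relations $H_iH_j = H_jH_i$ for $|i-j|>1$ (and for the pair $\{H_0,H_2\}$) are clear since the operators act on disjoint tensor factors; the length-three braid relations $H_iH_{i+1}H_i = H_{i+1}H_iH_{i+1}$ for $1\le i\le r-2$ are the standard type $A$ computation on three consecutive factors, which is classical (e.g. the Hecke algebra action on tensor space of $\cU_q(\mathfrak{gl})$); and the remaining relation $H_0H_2H_0 = H_2H_0H_2$ is the only genuinely new one. For it I would again reduce to the first three tensor factors and run the (somewhat lengthy but entirely mechanical) case analysis on the signs and coincidences among $a,b,c$; the key structural point is that $H_0$ negates-and-swaps the first two entries while $H_2$ swaps the second and third, so the orbit of a triple $(a,b,c)$ under $\langle H_0,H_2\rangle$ is governed by the Weyl group of type $A_1\times A_1$ acting via $b\mapsto -b$ on certain coordinates, which makes the braid relation a finite check.

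Finally, the commutation with $\cH^\hint$: by Lemma~\ref{lem:decomp_module_general} the $\cU_q(\mathfrak{gl}_\mN)$-action (i.e. the embedded $\check\cU^\hint$, which generates the ``$\mathfrak{gl}_\mN$-part'' of $\cH^\hint$) on ${\mV_m^\hint}^{\otimes r}$ is obtained from $B_i$ acting via $\Delta^{(r-1)}$; since $\Delta$ is coassociative and the $H_i$ act only on adjacent factors, commutation of $H_i$ ($i\ge 1$) with all $E_a,F_a,D_j^{\pm1}$ of $\cU^\hint$ is precisely the standard statement that the $R$-matrix is a $\cU_q(\mathfrak{gl})$-module map, applied to the two relevant factors. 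The genuinely new check is that $H_0$ commutes with the special generator $B_0 = q^{-1}E_0K_0^{-1}+F_0$, and more generally with all generators $B_i$ of $\cH^\hint$ as they act on the first two tensor factors. For the generators $B_i$, $i\ge1$ (resp.\ $B_{-i}$), which under $\Delta$ act on the first two factors by $E_iK_{-i}^{-1}\otimes 1 + K_iK_{-i}^{-1}\otimes E_iK_{-i}^{-1} + \cdots$, I would compare the two sides on basis vectors $v_a\otimes v_b$; the point is that $H_0$ implements $v_l\mapsto v_{-l}$, which intertwines the action of $E_i$ on one factor with $F_{-i} = F_i$-type terms, matching the symmetric shape of $B_i = E_iK_{-i}^{-1}+F_{-i}$. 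The hard part will be this last compatibility, specifically keeping track of the powers of $q$ produced by the $K$'s in $\Delta(B_i)$ against the $(q^{-1}-q)$ and $q^{-1}$ factors in the definition of $H_0$; I expect it to come out by the same bookkeeping as in the proof of Lemma~\ref{lem:decomp_module_general}, but it is the step most likely to require care. Once these verifications are complete the lemma follows.
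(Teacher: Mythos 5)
Your overall route is the same as the paper's: the paper's proof of Lemma~\ref{lem:Hecke} is exactly a direct verification that the formulas satisfy the Hecke relations, followed by a case-by-case check of $(B_j.v).H_i=B_j.(v.H_i)$ on basis vectors; your observation that for $i\geq 1$ the operator $H_i$ is the standard $\check R$-matrix and hence commutes with the whole $\cU^\hint$-action (so a fortiori with $\cH^\hint$) is a legitimate streamlining of the second step. There is, however, a concrete error in your list of Hecke relations. With $\alpha_0=\epsilon_1+\epsilon_2$, the generators $H_0$ and $H_2$ do \emph{not} commute: they satisfy the braid relation (which you do check later, so your parenthetical claim that the pair $\{H_0,H_2\}$ commutes ``since the operators act on disjoint tensor factors'' is both false -- $H_0$ acts on factors $1,2$ and $H_2$ on factors $2,3$ -- and contradicts your own subsequent braid check). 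The commutation relation that is genuinely specific to type $D$ is $H_0H_1=H_1H_0$; both operators act on the \emph{same} first two tensor factors, so it does not follow from disjointness, and it is nowhere verified in your plan. It does hold, by a short two-factor case analysis on $v_a\otimes v_b$, but as written your proposal omits it. Relatedly, the subgroup generated by $s_{\alpha_0}$ and $s_{\alpha_2}$ is of type $A_2$ (isomorphic to $S_3$), not $A_1\times A_1$; this is only a heuristic aside in your text, but it reflects the same confusion about which pairs commute.

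Two smaller points. In the commutation of $H_0$ with $\cH^\hint$ you implicitly reduce to the first two tensor factors; to make this legitimate, note that by coassociativity the elements of $\Delta^{(r-1)}(B_j)$ acting on factors $1,2$ (paired with a nontrivial tail) are $\Delta$ applied to the left legs $B_j$, $K_jK_{-j}^{-1}$ and $1$ of $\Delta(B_j)$, so besides commuting $H_0$ with $\Delta(B_j)$ you must also observe that $H_0$ commutes with $K_jK_{-j}^{-1}\otimes K_jK_{-j}^{-1}$ (immediate, since the weight of $v_a\otimes v_b$ for these elements is invariant under negating indices); alternatively, do what the paper does and check $(B_j.v).H_0=B_j.(v.H_0)$ on arbitrary basis vectors of the full $r$-fold tensor product. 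Your verification of the quadratic relation for $H_0$ (using $a+b>0\iff(-b)+(-a)<0$ and the $a=-b$ case) is correct. With the missing relation $H_0H_1=H_1H_0$ supplied and the corrected bookkeeping above, your argument coincides with the paper's, the quantum Schur--Weyl shortcut for $i\geq 1$ being the only genuine difference.
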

\begin{proof}
One directly verifies that this gives a well-defined action of both $\mathbb{H}_n(\mathrm{D})$ and $\mathbb{H}_n(\mathrm{B})$. To see that they commute with the action of $\cH^\hint$, note that the generators $H_i$ commute with the action of all of $\cU^\hint$. Thus it is left to show that $H_0$ for $\mathbb{H}_n(\mathrm{B})$ commutes with the action of $\cH^\hint$, which is straight-forward.
\end{proof}

\begin{theorem}[Schur-Weyl duality]
\label{prop:Schur_Weyl}
The actions of $\cH^\hint$ and $\mathbb{H}_n(\mathrm{B})$ on ${\mV_m^\hint}^{\otimes n}$ are each others centralizers.
\end{theorem}
\begin{proof}
Since both actions commute, the image in $\mathrm{End}_{\mathbb{Q}(q)}({\mV_m^\hint}^{\otimes n})$ is contained in the respective centralizer algebra, and it suffices to compare the dimensions over $\mathbb{Q}(q)$.   For this we apply a general deformation argument reducing it to the $q=1$ case.  We abbreviate $K=\mathbb{Q}(q)$, $M={\mV_m^\hint}^{\otimes n}$  and consider the Laurent polynomial rings  $\cA=\mathbb{Q}[q,q^{-1}]$, which acts on $\mathbb{C}$ via $q \mapsto 1$. Let $\cH^\hint_{\cA}$, $\mathbb{H}_n(\mathrm{B})_{\cA}$, and $M_{\cA}$ be the respective integral forms. The integral versions 
\abovedisplayskip0.3em
\belowdisplayskip0.3em
\begin{gather*}
\cH_{\cA}^\hint \,\, \xrightarrow{\phantom{x} \alpha \phantom{x}} \,\, \op{End}_{\cA}(M_{\cA}) \,\, \xleftarrow{\phantom{x} \beta \phantom{x}} \,\,  \mathbb{H}_r(\mathrm{B})_{\cA}
\end{gather*}
of the actions still commute and induce, via base change $_-\otimes_{\cA} K$, the actions $\alpha_K, \beta_K$, and, via $_-\otimes_{\cA} \mathbb{C}$, the classical action $\alpha_\mathbb{C}, \beta_\mathbb{C}$. In the latter case it follows from the classical Schur--Weyl duality for wreath products, see e.g. \cite[Theorem 9]{MSSchurWeyl} that the two actions centralize each other, that is $C(\op{im}\alpha_\mC)=\op{im}\beta_\mC$ and $C(\op{im}\beta_\mC)=\op{im}\alpha_\mC$ where $C$ denotes the centralizer. To obtain the lemma it suffices then to show
\abovedisplayskip0.3em
\belowdisplayskip0.3em
\begin{gather}
\label{claim1}
\op{dim}_KC(\op{im}\alpha_K)\leq\op{dim}_K\op{im}\beta_K \quad \text{and} \quad \op{dim}_K C(\op{im}\beta_K)\leq\op{dim}_K\op{im}\alpha_K.
\end{gather}
We prove the first part, the second is done analogous. Since $M_{\cA}$ is a free $\cA$-module $\op{End}_{\cA}(M_{\cA})$ is free over $\cA$. Moreover,  $\op{im}\alpha_\cA$ is torsionfree, hence free since $\cA$ is PID, and thus $ \op{rk}_\cA\op{im}\alpha_\cA=\op{dim}_K\op{im}\alpha_K$. On the other hand, $\op{rk}_\cA\op{im}\alpha_\cA\geq\op{dim}_\mathbb{C}\op{im}\alpha_\mathbb{C}$ and therefore 
\abovedisplayskip0.3em
\belowdisplayskip0.3em
\begin{gather*}
\op{dim}_K\op{im}\alpha_K \,\, \geq \,\, \op{dim}_\mathbb{C}\op{im}\alpha_\mathbb{C}.
\end{gather*}
This implies $\op{dim}_KC(\op{im}\alpha_K)\leq \op{dim}_\mathbb{C}C(\op{im}\alpha_\mathbb{C})$ by definition of $C$. On the other hand we have $\op{rk}_{\cA} C(\op{im}\alpha_\cA)=\op{dim}_KC(\op{im}\alpha_K)$, again by definition of $C$. Thus altogether $\op{dim}_KC(\op{im}\alpha_K)\leq \op{dim}_\mathbb{C}C(\op{im}\alpha_\mathbb{C})=\op{dim}_\mathbb{C}\op{im}\beta_\mathbb{C}\leq \op{dim}_K\op{im}\beta_\mathbb{K}$. This proves Claim \eqref{claim1} and we are done.
\end{proof}

\begin{remark} The above Lemma gives a simultaneous treatment of the type $\mathrm{D}$ and specialised type $\mathrm{B}$ Hecke algebras. Similar commuting actions where already defined for affine Hecke algebras in \cite{JM}, \cite{CGM}  quantising the original construction of \cite{EFM}.  A Schur--Weyl duality statement analogous to Theorem~\ref{prop:Schur_Weyl} for the ordinary type $B$ Hecke algebra was recently also obtained in \cite[Section 5.1]{BW}, \cite{Watanabe} and implicitly in \cite{Li}.  
\end{remark}

Consider the graded version $\hat{\cO}^{\mathfrak{q}_{\underline{k}}}_{\leq m}(n)$ of ${\cO}^{\mathfrak{q}_{\underline{k}}}_{\leq m}(n)$ as described in Section~\ref{sec:Koszul}. We again have the corresponding identification of $\mathbb{Q}(q)$-vector spaces
\abovedisplayskip0.3em
\belowdisplayskip0.3em
\begin{gather}
\label{identgrad}
\hat{\kw}_{\underline{k}}\,:\,K_0\left(\hat{\cO}^{\mathfrak{q}_{\underline{k}}}_{\leq m}(n)\right) \,\longrightarrow \, {\bigwedge^{\underline{k}}}_q\mV_m^\hint.
\end{gather}
by sending the class $[\hat{M}^{\mathfrak{q}_{\underline{k}}}(\la)]$ of the standard graded lift of the parabolic Verma module $M^{\mathfrak{q}_{\underline{k}}}(\la)$ to $v_\la$ with head concentrated in degree zero. (The multiplication with $q$ corresponds to the grading shift $\langle 1\rangle$).

Recall the right exact {\it twisting endofunctors $T_i=T_{s_{\alpha_i}}$}, from \cite{AL}, \cite{AndersenStroppel}, of $\cO(n)$ attached to $s_{\alpha_i}$. Their left derived functors define auto-equivalences of the derived category. The Hecke algebra action has a categorification given by graded lifts of these derived functors:

\begin{theorem}[Categorified Schur--Weyl duality]
\label{prop:twisting}
Let $\underline{k}=(1,1,\ldots ,1)$. Then the derived functor $\mathcal{L}T_i$, $0\leq i\leq n-1$ has a graded lift,
\abovedisplayskip0.3em
\belowdisplayskip0.3em
\begin{gather*}
\mathcal{L}\hat{T}_i\,:\, D^b(\hat{\cO}^{\mathfrak{q}_{\underline{k}}}_{\leq m}(n))\,\longrightarrow \,D^b(\hat{\cO}^{\mathfrak{q}_{\underline{k}}}_{\leq m}(n)),
\end{gather*}
an automorphism of the bounded derived category $D^b(\hat{\cO}^{\mathfrak{q}_{\underline{k}}}_{\leq m}(n))$, such that the induced action of $\hat{\mathcal{L}}T_i$ on $K_0(D^b(\hat{\cO}^{\mathfrak{q}_{\underline{k}}}_{\leq m}(n)))=K_0(\hat{\cO}^{\mathfrak{q}_{\underline{k}}}_{\leq m}(n))$ agrees with the action of the Hecke algebra generator $H_i\in\mathbb{H}_n(\mathrm{D})$ under the identification $\hat{\kw}_{\underline{k}}$. 

For the extension to $\mathbb{H}_n(\mathrm{B})$,  the element $H_0\in\mathbb{H}_n(\mathrm{B})$ acts by the corresponding equivalence changing the parity of the block.  
\end{theorem}
\begin{proof}
 Consider first $\mathbb{H}_n(\mathrm{D})$. If we forget the grading this is \cite[2.1]{AL}. The graded version is \cite[Propositions 5.1 and 5.2]{FKS}, see also \cite[Proposition 7]{MOSpair}. The extension to $\mathbb{H}_n(\mathrm{B})$ is then straight-forward, since the additional functor is an equivalence of categories just changing the parity of the block.
\end{proof}

\begin{remark}
The analogous statement of Theorem~\ref{prop:twisting} using category $\cO$ for type $B$ decategorifies by definition to the construction in \cite{Li}, \cite{Li2}. The twisting functors naturally commute, in the sense of \cite{AS}, with the translation functors, decategorifying to commuting actions on the Grothendieck group.
\end{remark}

\subsection{Soergel bimodules and graded lifts of special projective functors}
We now extend the categorification from Theorem~\ref{prop:catskewclass} to the quantum case. We first have to give an alternative description of the special functors.

To also include blocks containing parabolic Verma modules with highest weights supported on the integers, we have to introduce a more technical version of block triples from Definition~\ref{def:blocks_howe}.

\begin{definition} \label{def:blocks_general}
Fix $(\underline{k},\underline{d},\epsilon)$ with $\underline{k} \in C(n,r)$, $\underline{d} \in C(n)$ and ${\epsilon} \in \{ 0,1,01,\emptyset\}$, such that $\epsilon \in \{0,1\}$ if $d_1=0$, $\epsilon = \emptyset$ if $d_1=1$ and $\epsilon=01$ otherwise. 

Then $\cO_{(\underline{k},\underline{d}, {\epsilon})}(n)$ denotes the block of $\cO^{\mathfrak{q}_{\underline{k}}}(n)$ containing those parabolic Verma modules of highest weights $\lambda$ satisfying
\abovedisplayskip0.3em
\belowdisplayskip0.3em
\[
d_i = \# \left\lbrace 
j \,\scalebox{1.3}{$\mid$} \mid \lambda_j + \rho_j \mid = i-1 \right\rbrace, \text{ for } i \in \mathbb{Z}_{\geq 1}
\]
and if $d_1=0$, then $\epsilon = 1$ if the parity of negative entries in $\lambda+\rho$ is even and $\epsilon=0$ otherwise. Again we call such triples \emph{block triple} (supported on integers). For a block triple $\kappa$ we denote by $\mathrm{pr}_\kappa$ the projection onto $\cO_{\kappa}(n)$ in any sum of blocks.

We allow $\underline{d} \in C(n,m+1)$, which results in a block contained in $\cO^{\mathfrak{q}_{\underline{k}}}_{\leq m}(n)$. 
\end{definition}

\begin{definition}
To a block triple $\kappa = (\underline{k},\underline{d},\epsilon)$ we associate a {\it corresponding parabolic Weyl group} as the subgroup of the Weyl group $W_n$ of the form
\begin{gather*}
W_{\kappa}\,=\,W_{d_1'}^\epsilon \times W_{d_2'} \times \ldots \times W_{d_l'},
\end{gather*}
where $d_1',\ldots,d_l'$ are the non-zero entries in $d$ in order and where $W_{d_1'}^\epsilon$ is generated by $s_2,\ldots,s_{d_1'-1}$ together with $s_1$ (respectively $s_0$) if $\epsilon = 1$ (respectively $\epsilon=0$), respectively together with  $s_0,s_1$ if $\epsilon = 01$; and $W_{d_k'}$ is generated by $s_{d_1'+\ldots+d_{k-1}'+1},\ldots,s_{d_1'+\ldots+d_{k}'-1}$ for $k > 1$. 
\end{definition}

Recall from \cite{BGGclass}, \cite{Hbook} the definition and classification of translation on and out of walls. Let $\cO_\kappa(n)$ and $\cO_{\kappa'}(n)$ be blocks in $\cO(n)$ for $\kappa =( (1,\ldots 1),\bd,\epsilon)$ and $\kappa^\prime=((1,\ldots 1),\bd^\prime,\epsilon)$.

\begin{definition}
Then we denote by $\Theta_\kappa^{\kappa'}: \cO_\kappa\rightarrow\cO_{\kappa'}$ the {\it translation functor off the walls} respectively {\it onto the walls} in case  $W_{\kappa'}\subseteq W_{\kappa}$ respectively $W_{\kappa}\subseteq W_{\kappa'}$. (The doubled usage of the letter $\Theta$ for totally different objects is hopefully not causing confusions.)
\end{definition}

\begin{lemma}\label{lemma:Fastransl}
The special projective functors from Definition~\ref{def:specproj} are the following compositions of translation functors (see Section~\ref{boring} for a proof): 
\abovedisplayskip0.3em
\belowdisplayskip0.3em
\begin{gather*}
\pr_{{}_{+i}\kappa}\;{\cF_{i,+}}\;\pr_\kappa\,\cong\,\Theta^{{}_{+i}\kappa}_{\kappa^{+i}}\,\Theta_{\kappa}^{\kappa^{+i}}\quad
\text{and} \quad\pr_{{}_{-i}\kappa}\;{\cF_{i,-}}\;\pr_\kappa \,\cong\,
\Theta^{{}_{-i}\kappa}_{\kappa^{-i}}\;\Theta_{\kappa}^{\kappa^{-i}}.
\end{gather*}
Moreover, $\cF_{0}$ is an equivalence of categories changing the parity in the block triple.
\end{lemma}

\begin{definition} Given a block triple $\kappa$, we let $R=S(\mathfrak{h})$ and $R^\kappa=R^{W_{\kappa}}$ the {\it associated ring of invariants} given by the invariants of $W_{\kappa}$ viewed as a subgroup of $W_n$ acting in the obvious way on $R$. 
\end{definition}

To each functor $\Theta^{{}_{+i}\kappa}_{\kappa^{+i}}\;\Theta_{\kappa}^{\kappa^{+i}}$ respectively $\Theta^{{}_{-i}\kappa}_{\kappa^{-i}}\;\Theta_{\kappa}^{\kappa^{-i}}$ we associate a {\it singular Soergel bimodule}  in the sense of \cite{StHC}, \cite{Geordie} that is a certain $(R^{{}_{+i}\kappa},R^{\kappa})$ respectively $(R^{{}_{-i}\kappa},R^{\kappa})$-bimodule, as follows.

\begin{definition}\label{Soergelbimodules}
Assume we are given a block triple $\kappa = (\underline{k},\underline{d},\epsilon)$.

\emph{If $i > \half$:} We have the block triples $\kappa^{+i}=(\underline{k},\underline{d}^{+i},\epsilon)$, $\kappa^{-i}=(\underline{k},\underline{d}^{-i},\epsilon)$, ${}_{+i}\kappa = (\underline{k},{}_{+i}\underline{d},\epsilon)$, and ${}_{-i}\kappa = (\underline{k},{}_{-i}\underline{d},\epsilon)$ and take  the $(R^{{}_{+i}\kappa} ,R^{\kappa})$-bimodule
\abovedisplayskip0.3em
\belowdisplayskip0.3em
\begin{gather*}
R(i, +,\kappa)\,=\,\begin{cases}
\{0\}&\text{if } d_{i+\half} = 0,\\
R^{\kappa^{+i}} &\text{if } d_{i+\half}>0,
\end{cases}
\end{gather*}
and the $(R^{{}_{-i}\kappa} ,R^{\kappa})$-bimodule
\abovedisplayskip0.3em
\belowdisplayskip0.3em
\begin{gather*}
R(i, -,\kappa)\,=\,\begin{cases}
\{0\}&\text{if } d_{i-\half} = 0,\\
R^{\kappa^{-i}} &\text{if } d_{i-\half}>0.
\end{cases}
\end{gather*}

\emph{If $i = \half$:} We set $\kappa^{+i}=(\underline{k},\underline{d}^{+i},\epsilon)$ as above, but ${}_{+i}\kappa = (\underline{k},{}_{+i}\underline{d},\epsilon^\prime)$ where $\epsilon^\prime=\emptyset$ if $\epsilon \in \{0,1\}$ and $\epsilon^\prime=01$ if $\epsilon \in \{01,\emptyset\}$. The $(R^{{}_{+i}\kappa} ,R^{\kappa})$-bimodule is 
\abovedisplayskip0.3em
\belowdisplayskip0.3em
\begin{gather*}
R(i, +,\kappa)\,=\,\begin{cases}
\{0\}&\text{if } d_{i+\half} = 0,\\
R^{\kappa^{+i}} &\text{if } d_{i+\half}>0.
\end{cases}
\end{gather*}

\emph{If $i = -\half$ and $d_1>1$:} Then define $\kappa^{-i}=(\underline{k},\underline{d}^{-i},\epsilon^\prime)$ and ${}_{-i}\kappa = (\underline{k},{}_{-i}\underline{d},\epsilon^\prime)$, with $\epsilon^\prime=\emptyset$ if $d_1=1$ and $\epsilon^\prime=01$ otherwise, and the $(R^{{}_{-i}\kappa} ,R^{\kappa})$-bimodule
\abovedisplayskip0.3em
\belowdisplayskip0.3em
\begin{gather*}
R(i, -,\kappa)\,=\, R^{\kappa^{-i}}.
\end{gather*}

\emph{If $i = -\half$ and $d_1 \leq 1$:} Then define $\kappa^{-i}_0=(\underline{k},\underline{d}^{-i},\epsilon_0)$, $\kappa^{-i}_1=(\underline{k},\underline{d}^{-i},\epsilon_1)$, ${}_{-i}\kappa_0 = (\underline{k},{}_{-i}\underline{d},\epsilon_0)$, and ${}_{-i}\kappa_1 = (\underline{k},{}_{-i}\underline{d},\epsilon_1)$, with $\epsilon_0=0$ and $\epsilon_1=1$. Then we take the $(R^{{}_{-i}\kappa_0} ,R^{\kappa})$-bimodule
\abovedisplayskip0.3em
\belowdisplayskip0.3em
\begin{gather*}
R(i, -,\kappa)_0\,=\,\begin{cases}
\{0\}&\text{if } d_{i-\half} = 0,\\
R^{\kappa^{-i}_0} &\text{if } d_{i-\half}>0,
\end{cases}
\end{gather*}
and the $(R^{{}_{-i}\kappa_1} ,R^{\kappa})$-bimodule
\abovedisplayskip0.3em
\belowdisplayskip0.3em
\begin{gather*}
R(i, -,\kappa)_1\,=\,\begin{cases}
\{0\}&\text{if } d_{i-\half} = 0,\\
R^{\kappa^{-i}_1} &\text{if } d_{i-\half}>0.
\end{cases}
\end{gather*}
\emph{If $i = 0$:} We have $\kappa^{+i}=(\underline{k},\underline{d}^{+i},\epsilon)$ as above, but ${}_{+i}\kappa = (\underline{k},\underline{d},\epsilon^\prime)$ where $\epsilon^\prime=\epsilon+1$. The $(R^{{}_{+i}\kappa} ,R^{\kappa})$-bimodule is given by
\abovedisplayskip0.3em
\belowdisplayskip0.3em
\begin{gather*}
R(0,\kappa)\,=\,\begin{cases}
\{0\}&\text{if } d_{\half} = 0,\\
R^{\kappa^{+i}} &\text{if } d_{\half}>0.
\end{cases}
\end{gather*}
\end{definition}

Let $\op{Coinv}^\kappa=R^{W_{\kappa}}/\op{Inv}$ be the {\it algebra of partial coinvariants}, where  $\op{Inv}$ is the ideal generated by all $W_n$-invariant polynomials without constant term. Then we have, \cite[1.2]{Sperv},  Soergel's functor $\mV_{\it{Soergel}}$ ``into combinatorics'' 
\abovedisplayskip0.3em
\belowdisplayskip0.3em
\begin{gather}
\label{Soergel}
\mV_{\it{Soergel}}:\quad{\cO}_\kappa(n) \,\, \longrightarrow \,\, \op{Coinv}^\kappa-\mathrm{mod}.
\end{gather}
Each of the above Soergel bimodules $X$ defines an exact functor $X\otimes_R  ?$ between the corresponding categories of finitely generated modules of the form $\op{Coinv}^\kappa-\mathrm{mod}$.
Note that each Soergel bimodule from Definition~\ref{Soergelbimodules} is an $(R_3,R_1)$-bimodule of the form $R_2$, that is free from each side, where the $R_i$ are certain rings of invariants in $R$ such that $R_3\subseteq R_2\supseteq R_1$. In particular, the functors $X\otimes_R  ?$ are always a composition of an induction and a restriction functor.  By \cite[Theorem 10]{Sperv}, these then  lift to functors between the corresponding  categories  ${\cO}_\kappa(n)$, since $\mV_{\it{Soergel}}$ is a well-behaved quotient functor. These lifts can be chosen to be exactly the corresponding functor {$\Theta^{{}_{+i}\kappa}_{\kappa^{+i}}\;\Theta_{\kappa}^{\kappa^{+i}}$ respectively $\Theta^{{}_{-i}\kappa}_{\kappa^{-i}}\;\Theta_{\kappa}^{\kappa^{-i}}$ we started with.

\subsection{Graded translation functors}
Using the functor \eqref{Soergel} and the grading on $R$, \cite[3.2]{Stroppel} shows that translation functors  $\Theta_\kappa^{\kappa'}: \cO_\kappa\rightarrow\cO_{\kappa'}$ off or onto the walls have graded lifts, that means can be lifted to graded functors
\begin{eqnarray}
\label{gradedTransl}
\hat{\Theta}_\kappa^{\kappa'}: \hat{\cO}_\kappa(n)\longrightarrow\hat{\cO}_{\kappa'}(n)
\end{eqnarray}
(Strictly speaking, the arguments in \cite{Stroppel}  about the existence of graded lifts are only formulated for the principal block, but generalize directly to arbitrary integral blocks in the obvious way by taking invariants of the coinvariants.)
Translation to the wall sends Verma modules to Verma modules \cite[7.6]{Hbook}, translation off walls sends indecomposable projectives to indecomposable projectives \cite[adjunction formulas 7.2 and 7.7]{Hbook}. Thus by the classification theorem of translation functors from \cite{BGGclass}, see \cite[Theorem 3.1]{StDuke} for an explicit formulation, these functors are indecomposable,  hence a graded lift is unique up to isomorphism and overall shift in the grading, \cite[Lemma 1.5]{Stroppel}. For the functors $\hat{\Theta}_\kappa^{\kappa'}$, graded lifts \eqref{gradedTransl} therefore exists and are unique up to isomorphism and overall grading shift. We {\it choose} such graded lifts such that the graded version of the Verma module with maximal weight in the orbit is sent to a projective module with head concentrated in degree zero.  

\begin{definition}
Define graded lifts of the functors from  Lemma~\ref{lemma:Fastransl} as
\abovedisplayskip0.3em
\belowdisplayskip0.3em
\begin{gather}
\label{Fhatdef}
\begin{gathered}
\pr_{{}_{+i}\kappa}\;{\cF_{i,+}}\;\pr_\kappa\,\cong\,\Theta^{{}_{+i}\kappa}_{\kappa^{+i}}\,\Theta_{\kappa}^{\kappa^{+i}}\langle d_{i}-d_{i+1}+1\rangle\\
\pr_{{}_{-i}\kappa}\;{\cF_{i,-}}\;\pr_\kappa \,\cong\,
\Theta^{{}_{-i}\kappa}_{\kappa^{-i}}\;\Theta_{\kappa}^{\kappa^{-i}}\langle d_{i+1}-d_{i}+1\rangle.
\end{gathered}
\end{gather}
where $\langle r\rangle$, $r\in \mZ$, means the grading is shifted by adding $r$ to the degree. 
Let $\hat\cF_{0}$ be the graded equivalence which changes the parity in the block triple (if possible) and is zero otherwise. Finally for $i\in \mZ_{>0}$ we set 
\begin{gather}
\label{gradedBs}
\hat{\mathbb{B}}_{\pm i}=\bigoplus_{\kappa}  \pr_{{}_{\pm
i}\kappa}\;{\hcF_{i,\pm}}\;\pr_{\kappa}\quad \text{and}\quad
\hat{\mathbb{B}}_{0}=\bigoplus_{\kappa}\pr_{\kappa_{0}}\;
{\hcF_{0}}\;\pr_{\kappa}.
\end{gather}
\end{definition}

These functors induce functors on $\hat{\cO}^{\mathfrak{q}_{\underline{k}}}_{\leq m}(n)$ and then via the identification \eqref{identgrad} $\mathbb{Q}(q)$-linear endomorphisms on the $\mathbb{Q}(q)$-vector space $\bigwedge_q(n,m,r)$. Our main result here is the following categorification theorem.

\begin{theorem}[Categorified quantum wedge products]
\label{thm:main}
When identifying
\begin{eqnarray}
\label{identification}
K_0\left(\bigoplus_{\kappa \in {\rm Bl}(n,r,m)} \hat{\cO}_\kappa(n)\right)&=&{\bigwedge}_q(n,m,r),
\end{eqnarray}
as $\mathbb{Q}(q)$-vector spaces, the action induced from the exact functor $\hat{\mathbb{B}}_{i}$ coincides with the action of $B_i\in\cH^\hint$ for any $i\in\mZ$.
\end{theorem}

We start with the following main insight:

\begin{prop}
\label{lem:easy}
Theorem~\ref{thm:main} is true when restricted to ${\mV_m^\hint}^{\otimes r}$.
\end{prop}

\begin{proof}
It is enough to verify that the action of $[\hat{\mathbb{B}}_{i}]$ agrees with the action of $B_i\in\cH^\hint$ on the standard basis corresponding to isomorphism classes of graded lifts of Verma modules. Recall the commuting action of the Hecke algebra from  Lemma~\ref{lem:Hecke} and note that twisting functors commute with translation functors, \cite[Theorem 3.2]{AndersenStroppel}. Hence by Theorem~\ref{prop:twisting} it is enough to compare the action for Verma modules whose highest weight $\la$ is maximal in their dot-orbit. Moreover by linearity we can restrict ourselves to the case of standard graded lifts $\hat{M}(\la)$. We follow the proof of Lemma~\ref{lem:boring}. Consider first the functors $\hat{\mathbb{B}}_{i}$ for $i>0$. The equivalence $\eta$ lifts to the graded setting and just renames the highest weight of $\hat{M}(\la)$. The translation out of the wall produces an indecomposable projective module $P$, \cite[Section 7.11]{Hbook} with a graded Verma flag. The Verma subquotients are, up to some grading shifts, the $\hat{M}(\nu-\rho)$ with the same weights $\nu$ as before. Amongst them let $\nu(r)$ be the weight where the $r$-th $i+2$ from the left has been changed. By our normalization, $\hat{M}(\nu(r_{\op{max}})-\rho)$ with $r=r_{\op{max}}$ maximal occurs without grading shift and hence $P\cong \hat{P}(\nu(r_{\op{max}})-\rho)$. By \cite[Theorem 3.11.4]{BGS}, the graded multiplicities are encoded in parabolic Kazhdan-Lusztig polynomials. Our case corresponds to the explicit formula \cite[Proposition 2.9]{SoergelKL} and we obtain that $P$ has a filtration with subquotients $\hat{M}(\nu(r)-\rho)\langle r_{\op{max}-r}\rangle$. 

In our example we get
\begin{eqnarray*}
[\hat{M}(\half(3,3,3,7,7,5,9,9)-\rho)]&+&q[\hat{M}(\half(3,3,3,7,5,7,9,9)-\rho)] \\
&+& q^2[\hat{M}(\half(3,3,3,5,7,7,9,9)-\rho)]
\end{eqnarray*}
in the Grothendieck group. Now translation to the wall just changes the numbers $i+1$ appearing in the weights to $i$ without any extra grading shift by \cite[Theorem 8.1]{Stroppel}. The last step is a graded equivalence renaming the weights. In our example we obtain
\begin{eqnarray*}
[\hat{M}(\half(3,3,3,5,5,3,7,7)-\rho)]&+&q[\hat{M}(\half(3,3,3,5,3,5,7,7)-\rho)]\\
&+& q^2[\hat{M}(\half(3,3,3,3,5,5,7,7)-\rho)].
\end{eqnarray*}
In any case, the result agrees with the action of $B_i$ up to an overall grading shift by  $\langle \mu_{i}-\mu_{i+1}+1\rangle$.
In case $i=0$ the arguments are completely parallel except of the grading shift at the end. As above, the lowest grading shift appearing for Verma modules is zero.
Expressing $B_0v_\la$ in the standard basis, the smallest $q$ power appearing equals $(-1)+n_1-n_{-1}+(n_{-1}-n_1+1)=0$, where $n_j$ is the number of $j$'s appearing in the weight $\la$. Hence the claim of the Lemma follows for $B_0$. For $B_{-i}$, $i>0$ the arguments are analogous.
\end{proof}

\begin{proof}[Proof of Theorem~\ref{thm:main}]
Let $i>0$. Recall from \eqref{bvs} that the action of $B_i$ on some standard basis vector $\mathbf{v}=v_{\textbf{i}_1} \otimes \ldots \otimes v_{\textbf{i}_r}\in \bigwedge_q^{k_1} \mV^\hint \otimes \ldots \otimes \bigwedge_q^{k_r} \mV^\hint$ from \eqref{mb1} can be obtained by viewing $\mathbf{v}$ as a vector inside $\bigotimes^{k_1} \mV^\hint \otimes \ldots \otimes \bigotimes^{k_r} \mV^\hint$ and compute the action there. Let $[\hat{M}^{\mathfrak{q}_{\underline{k}}}(\mu)]$ be the class of the standard graded lift $\hat{M}^{\mathfrak{q}_{\underline{k}}}(\mu)$ of the parabolic Verma module attached to $\mathbf{v}$. Assume first that the stabilizer of $\mu$ is trivial under the dot-action and write $\mu=x\cdot\la$ with $\la$ maximal in the same dot-orbit and $x\in W_n$. Recall that by definition the Levi subalgebra of $\mathfrak{q}_{\underline{k}}$ is of type $ \mathrm{A}$ with Weyl group isomorphic to the product $S_{\underline{k}}$ of symmetric groups. Then
\begin{eqnarray}
\label{alternate}
[\hat{M}^{\mathfrak{q}_{\underline{k}}}(\mu)]=[\hat{M}^{\mathfrak{q}_{\underline{k}}}(x\cdot\la)]=\sum_{y\in S_{\underline{k}}}(-q)^{\ell(w)}[\hat{M}(yx\cdot\la)]
 \end{eqnarray}
 by the formula \cite[Proposition 3.4]{SoergelKL} for parabolic Kazhdan-Lusztig polynomials together with \cite[Corollary 2.5]{StDuke}. This however fits precisely with the formula \eqref{bvs} and the claim follows. Assume now that the stabilizer $W(\mu)$ of $\mu$ is not trivial under the dot-action. Then choose $\la'$ an integral weight, maximal in its dot-orbit and with $W(\la')$ trivial. Let $\kappa$ respectively  $\kappa'$ be the non-parabolic block triples corresponding to $\la'$ and $\la$. Then $\hat{\Theta}_\kappa^{\kappa'}\hat{M}(\la')\cong\hat{M}(\la)$ by definition. We claim that
 \begin{eqnarray}
 \label{auf}
   \hat{\Theta}_\kappa^{\kappa'}\hat{M}(x\cdot\la')&\cong&\hat{M}(x\cdot\la)
   \end{eqnarray}
   for any shortest coset representative $x\in W_n/W(\la)$. Since this is true if we forget the grading and ${M}(x\cdot\la)$ is indecomposable we only have to figure out possible overall grading shifts, \cite[Lemma 1.5]{Stroppel}. If $x$ is the identity we are done by convention. Otherwise we find a simple reflection $s_{\alpha_i}$ such that $\ell(s_{\alpha_i}x)<\ell(x)$ and  use again that twisting functors, even graded, commute with translation functors. From Lemma~\ref{lem:Hecke} and Theorem~\ref{prop:twisting} the result follows then by induction on $\ell(x)$.
   This implies $\hat{\Theta}_\kappa^{\kappa'}\hat{M}^{\mathfrak{q}_{\underline{k}}}(x\cdot\la')
  \cong\hat{M}^{\mathfrak{q}_{\underline{k}}}(x\cdot\la)$,
  since this is again true up to a possible overall shift in the grading which is however zero by \eqref{auf}.
  Moreover, we can choose $\la'$ such that $yx$ is of minimal length in its coset for any $y\in S_{\underline{k}}$. (In practice we consider $\la+\rho$ and replace its entries by a strictly increasing sequence of positive integers. We do this by replacing first the lowest numbers from the left to right, then replace the second lowest from the left to right etc.)
  Now we apply $[\hat{\Theta}_\kappa^{\kappa'}]$ to \eqref{alternate} and obtain with \eqref{auf}
  \begin{eqnarray}
  \label{parabs}
  [\hat{M}^{\mathfrak{q}_{\underline{k}}}(x\cdot\la)]=\sum_{y\in S_{\underline{k}}}(-q)^{\ell(w)}[\hat{M}(yx\cdot\la)]
   \end{eqnarray}
   The claim follows then from \eqref{bvs} and Proposition~\ref{lem:easy}.
  \end{proof}
  
\subsection{(Categorified) Bar involution}
Recall, from \eqref{identification}, the identification $K_0\left(\bigoplus_{\kappa \in {\rm Bl}(n,r,m)} \hat{\cO}_\kappa(n)\right)={\bigwedge}_q(n,m,r)$, then  the following generalization of Proposition~\ref{lem:bar_unique_2} holds. 
\begin{theorem}\label{thm:bar_unique}
There is a unique compatible bar-involution $w\mapsto \overline{w}$ on the $\cH^\hint$-module ${\bigwedge}_q(n,m,r)$ which fixes the standard basis vectors corresponding to classes of graded lifts of simple Verma modules concentrated in degree zero. 
\end{theorem}
\begin{proof}
The existence of an involution fixing the classes formulated in the proposition is again clear from the existence of an exact graded duality functor $\textbf{d}$, \cite[6.1.2]{Stroppel}. To see that it is a compatible involution, it is enough to show that $\textbf{d}F$ and $F\textbf{d}$ induce the same map on the Grothendieck group where $F$ is any of the functors from \eqref{gradedBs}. This is clear in the nongraded setting, since the duality commutes with the translation functors, in particular with the functors ${\Theta}_\kappa^{\kappa'}$. Since these functors are indecomposable and gradable, we know that $\hat{\Theta}_\kappa^{\kappa'}$ and then also the functors in \eqref{Fhatdef} commute (even as functors) with $\textbf{d}$ up to an overall grading shift, \cite[Lemma 1.5]{Stroppel}. 

To check the grading shift it is enough to verify it on the basis given by the classes of simple modules concentrated in degree zero. The result of applying $F$ will be a tilting module and it suffices to see that its grading is symmetric around zero. This can be verified keeping in mind the shifts in  \eqref{Fhatdef} by an explicit direct standard calculation using Definition~\ref{Soergelbimodules} and their adjunction properties,  see e.g. \cite[Lemma 7.2.1]{Geordie} which we omit. 

In contrast to  the situation of Proposition~\ref{prop:bar_exists} we could not find a manageable direct combinatorial proof of the uniqueness property. Instead we therefore use a representation theoretic argument involving the basis of the classes of graded lifts of tilting modules (which exist by \cite[Corollary 5]{MOSpair}. Assume $\tau$ is another compatible bar involution as in the proposition. Note that the graded lifts of simple Verma modules concentrated in degree zero are graded self-dual titling modules, and each block contains such a module, see \cite[Chapter 11]{Hbook}. 

Assume first that we have a regular block triple $\kappa$ such that there exists integers $a<b$ with $d_j=1$ for $a\leq j\leq b$ and $d_j=0$ otherwise, and let $T$ be such a corresponding simple tilting module with graded lift $\hat{T}$ concentrated in degree zero. Then $\pr_{{}_{+i}\kappa}\;{\cF_{i,+}}\;\pr_\kappa$ for  $a\leq i<b$ is a translation functor to one wall, and  $\pr_\kappa\;{\cF_{i,-}}\;\pr_{{}_{+i}\kappa}$ translates back out again, in particular their composition is the translation functor
through this wall and sends therefore $T$ to a tilting module in the same block $\hat{\cO}_\kappa(n)$. If $\hat{F}$ is the composition of the graded lifts \eqref{gradedBs} of such endofunctors, the compatibility implies $\tau([\hat{F}\hat{T}])=[\hat{F}](\tau([\hat{T}])=[\hat{F}\hat{T}]=[\textbf{d}\hat{F}\hat{T}]$. By the classification theorem of tilting modules we can chose such functors $\hat{F}$ such that the elements $[\hat{F}\hat{T}]$ generate the Grothendieck group of $\hat{\cO}_\kappa(n)$ and hence the two involutions agree there.

Assume now that we have an arbitrary regular block triple $\kappa'$. Then we can find a composition of functors of the form \eqref{gradedBs} to pass from some $\kappa$ as above to $\kappa'$, passing only regular block triples on the way, and hence defining an equivalence of categories from $\hat{\cO}_\kappa(n)$ to $\hat{\cO}_{\kappa'}(n)$, and hence the two involutions also agree on these Grothendieck groups. 

Finally any other block triple $\kappa'$ can be obtained from a regular one by again  functors of the form \eqref{gradedBs}. Although they are not equivalences, they can be chosen to induce surjections on the Grothendieck groups, and again the compatibility implies that the two involutions agree. All the constructions can be done inside the blocks from ${\rm Bl}(n,r,m)$ and directly also transfer to the parabolic setting as long as there exists a regular block triple $\kappa$. In case there is no such regukar block triple, we should chose instead a block with minimal possible singularity and then proceed as above.  
\end{proof}

Theorem~\ref{thm:fourth2} in the introduction is proved exactly as Theorem~\ref{thm:bar_unique}.

\subsection{Koszul duality}
For the block triple $\Gamma = (\underline{k},\underline{\mu},\epsilon)
\in {\rm Bl}(n,r,m)$ we associate the \emph{transposed block triple}
$\Gamma^\vee = (\underline{\mu},\underline{k}, \overline{n}\epsilon) \in
{\rm Bl}(n,m,r)$. In other words, the parabolic type is flipped with the
type of the singularity and the parity is kept if $n$ is even and
swapped if $n$ is odd.

\begin{theorem}\label{thm:skewKoszul}
Let $\Gamma \in {\rm Bl}(n,r,m)$ then $\hat{\cO}_\Gamma(n)$ is Koszul
dual to $\hat{\cO}_{\Gamma^\vee}(n)$.
\end{theorem}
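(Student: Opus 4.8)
The plan is to realize both sides as categories of finite-dimensional modules over graded algebras arising from parabolic-singular blocks of category $\cO$ for $\mathfrak{so}_{2n}$, and then invoke the parabolic--singular Koszul duality of Backelin and Beilinson--Ginzburg--Soergel. Concretely, a block triple $\Gamma=(\underline{k},\underline{\mu},\overline{\epsilon})\in{\rm Bl}(n,r,m)$ describes the summand $\cO_\Gamma(n)$ of $\cO^{\mathfrak{q}_{\underline{k}}}_{\leq m}(n)$ whose parabolic is $\mathfrak{q}_{\underline{k}}$ (encoded by $\underline{k}$) and whose singularity (the stabilizer of the relevant highest weight under the dot-action) is encoded by $\underline{\mu}$; the parity $\overline{\epsilon}$ selects one of the two blocks in type $D$ with that parabolic/singularity datum. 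First I would observe that, by \cite{BGS} together with the explicit description of blocks of $\cO(\mathfrak{so}_{2n})$ via the Khovanov algebra of type $D$ in \cite{ES_diagrams}, each such block $\hat{\cO}_\Gamma(n)$ is the module category of a Koszul algebra $A_\Gamma$, whose Koszul grading is the fixed one. The transposed block triple $\Gamma^\vee=(\underline{\mu},\underline{k},\overline{n}\overline{\epsilon})$ exchanges the roles of parabolic type and singularity type; this is exactly the combinatorial shadow of Koszul duality, which swaps ``parabolic'' with ``singular''.

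\textbf{Key steps.} Step one: set up the translation from block triples to parabolic-singular pairs in the sense of \cite{BGS}, matching $\underline{k}$ to a parabolic subalgebra of type $A$ inside type $D$ and $\underline{\mu}$ to a subset of simple reflections generating the stabilizer; here one must be careful that type $D$ blocks come with a parity, and one needs to check that the parity $\overline{\epsilon}$ transforms to $\overline{n}\cdot\overline{\epsilon}$ under the duality --- this is the point where the combinatorics of $\down$'s and $\times$'s and their parity (cf. the remarks after the definition of $\mX_n$) must be tracked through the self-duality/Koszul-duality correspondence. Step two: verify that the blocks in question are genuinely subsumed by the Koszul duality statement; for regular integral blocks this is Beilinson--Ginzburg--Soergel and Backelin's parabolic-singular refinement, and for our truncated categories $\cO^{\mathfrak{q}_{\underline{k}}}_{\leq m}(n)$ one uses that the truncation $\cS=\cS(\underline{k},\underline{\mu})$ of weights is saturated (the same mechanism as in Lemma~\ref{saturated}), so that the truncated algebra is a quotient of a Koszul algebra by a ``Koszul-compatible'' idempotent, hence again Koszul and its Koszul dual is the correspondingly truncated dual algebra. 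Step three: identify the Koszul dual of the block with parabolic datum $\mathfrak{q}_{\underline{k}}$ and singularity $\underline{\mu}$ as the block with parabolic datum $\mathfrak{q}_{\underline{\mu}}$ and singularity $\underline{k}$, with the parity adjusted by $\overline{n}$; one can pin this down on the level of Grothendieck groups by Theorem~\ref{thm:main} and Proposition~\ref{prop:catskewclass}, comparing the combinatorics of $\bigwedge(n,m,r)$ viewed as a $(\mathfrak{gl}_{2m},\mathfrak{gl}_r)$-bimodule versus a $(\mathfrak{gl}_m,\mathfrak{gl}_{2r})$-bimodule, which is precisely the symmetry $\Gamma\leftrightarrow\Gamma^\vee$.

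\textbf{Main obstacle.} The essential subtlety --- and the hard part --- is the parity bookkeeping in type $D$: unlike type $A$, a ``block datum'' for $\mathfrak{so}_{2n}$ is not just a parabolic plus a singularity, but also a choice of one of (generically) two blocks, and Koszul duality must interchange these choices according to a rule that depends on the parity of $n$. Making this precise requires either a direct computation with the type $D$ Khovanov algebra of \cite{ES_diagrams} --- checking that the linear Koszul dual of $A_\Gamma$ is $A_{\Gamma^\vee}$ with the correct parity --- or a reduction, via the Enright--Shelton equivalence (used already in the proof of Lemma~\ref{singO}) and the diagrammatic weight dictionary, to a case where the parity shift $\overline{n}\overline{\epsilon}$ can be read off from the positions of the $\diamond$ and the count of $\down$'s. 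A secondary technical point is to confirm that the chosen Koszul gradings on the two sides are compatible with our normalization of graded lifts (heads in degree zero, translation functors shifted as in Lemma~\ref{lem:gradeda} and \eqref{eq:defBs}), so that the duality is an equivalence of graded categories and not merely of ungraded ones; this is handled by the uniqueness of the Koszul grading up to isomorphism together with the standard compatibility results of \cite{Stroppel}. Once the parity rule is verified, the theorem follows by assembling these ingredients and citing \cite{BGS}, \cite{Backelin} for the parabolic-singular Koszul duality functor.
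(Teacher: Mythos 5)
Your overall route is the paper's route: realize $\hat{\cO}_\Gamma(n)$ as a parabolic--singular block with parabolic datum $\underline{k}$ and singularity datum $\underline{\mu}$, and quote the parabolic--singular Koszul duality of \cite{BGS}, \cite{Backelin}. But the one piece of actual content in the proof --- the parity rule $\overline{\epsilon}\mapsto \overline{n}\epsilon$ --- is exactly what you leave open, and the mechanism you are missing is already contained in Backelin's statement: the Koszul dual of the block $\cO^I_J$ (parabolic $I$, singularity $J$) is not $\cO^J_I$ but $\cO^J_{I'}$ with $I'=-w_0(I)$. In type $D_n$ one has $-w_0=\mathrm{id}$ for $n$ even, while for $n$ odd $-w_0$ is the nontrivial diagram automorphism, which swaps $\alpha_0$ and $\alpha_1$; on the level of block data this swap of $\alpha_0\leftrightarrow\alpha_1$ is precisely the flip of the parity $\overline{\epsilon}$. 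So the parity bookkeeping that you propose to settle by a direct computation with the type $D$ Khovanov algebra or by an Enright--Shelton reduction is a one-line consequence of the $-w_0$-twist; as written, your Step three (duality "swaps parabolic with singular", no twist) would in fact predict that the parity is always preserved, contradicting the claimed $\overline{n}\epsilon$, so without the $-w_0$ ingredient the argument does not close. The Grothendieck-group comparison via Theorem~\ref{thm:main} and Proposition~\ref{prop:catskewclass} cannot substitute for this either: it can at best constrain, not identify, which of the two blocks with given parabolic/singularity data is the Koszul dual.

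A secondary remark: your Step two (saturation/truncation in the spirit of Lemma~\ref{saturated}) is not needed. The defining condition $|\lambda_i+\rho_i|\leq m-\tfrac{1}{2}$ is stable under the dot-action, so $\cO^{\mathfrak{q}_{\underline{k}}}_{\leq m}(n)$ is by construction a finite direct sum of full central-character summands of the parabolic category $\cO^{\mathfrak{q}_{\underline{k}}}(n)$; each $\cO_\Gamma(n)$ is such a summand, and Backelin's theorem applies to it verbatim, with no truncated-algebra or idempotent argument required. Your concluding point about uniqueness of the Koszul grading and the normalization of graded lifts is fine and matches the paper's conventions.
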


\begin{proof}
We first recall the description of the Koszul dual block in general. Assume we have a block $\cO^I_J$  of category
$\cO$ for any complex semisimple Lie algebra, where the parabolic is
given by a subset $I$ of the simple roots and the stabilizer of the
highest weights of the parabolic Verma modules are generated by simple
reflections given by a subset $J$ of simple roots. Then the Koszul dual
is the block $\cO^J_{I'}$, where $I'=-w_0(I)$ with $w_0$ the longest
element in the Weyl group, \cite[Theorem 1.1]{Backelin}. In case of
type $\mathrm{D}_n$, we have $-w_0(I)=I$ if $n$ is even, whereas if $n$ is odd $-w_0(I)$ is the set
obtained from $I$ by applying the unique non-trivial diagram automorphism
of the Dynkin diagram. Hence the Koszul dual of
$\Gamma=(\underline{k},\underline{\mu},\overline{0})$ is the block
$(\underline{\mu},\underline{k},\overline{0})$ if $n$ is even and equal to
$(\underline{\mu},\underline{k},\overline{1})$ if $n$ is odd; and
the Koszul dual of $\Gamma=(\underline{k},\underline{\mu},\overline{1})$
is the block $(\underline{\mu},\underline{k},\overline{1})$ if $n$ is
even and equal to
$(\underline{\mu},\underline{k},\overline{0})$ if $n$ is odd.
\end{proof}

Koszul duality, \cite{BGS}, \cite{MOS} in the generalization of \cite{Backelin}, defines an equivalence of categories
\begin{eqnarray}
\label{Koszulduality}
D^b(\hat{\cO}_\Gamma(n))&\cong& D^b(\hat{\cO}_{\Gamma^\vee}(n))
\end{eqnarray}
This duality and its connection to the quantum exterior
powers can be nicely encoded in terms of box diagrams which we introduce next.

\subsection{Combinatorial Koszul duality}
Let $r$, $m$ be positive integers.
\begin{definition}
An $r \times m$ {\it box diagram} $\boxplus$ is a filling of $r$ rows and $m$ columns of boxes such that each box is either empty, filled with one of the symbols $+,-$ or filled with both, i.e. with $\pm$; for an Example see \eqref{ex:box_diagram} below. 
\end{definition}
We denote by $\boxplus_{i,j}$ the box in row $i$ and column $j$. We say $\boxplus$ is {\it of type} $\kappa=(\underline{k},\underline{d},\epsilon)$ if there are a total of $k_i$ symbols in row $i$, a total of $d_j$ symbols in column $j$, and the parity of the number of $-$ is $\epsilon$. Given $\hat{M}^{\mathfrak{q}_{\underline{k}}}(\mu)$, let $\mu+\rho=(\mu_1^\prime,\mu_2^\prime,\ldots,\mu_n^\prime)$. This is a sequence of integers, strictly increasing
in the $r$ parts given by $\underline{k}$. Assign to this weight an $r \times m$ box
diagram $\boxplus=\boxplus(\hat{M}^{\mathfrak{q}_{\underline{k}}}(\mu))$ where $\boxplus_{i,j}$ is filled with:
\begin{itemize}
\item the symbol $+$ if $j-\half$, but not $-(j-\half)$, occurs in the $i$-th part of $\underline{k}$,
\item the symbol $-$ if $-(j-\half)$, but not $j-\half$, occurs,
\item both symbols, i.e. $\pm$, if both occur.
\end{itemize}

\begin{lemma}
\label{bijboxdiagram}
Fix a block $\cO_\kappa(n)$ with block triple $\kappa$. Then $M\mapsto \boxplus(M)$ defines a bijection between the set of parabolic Verma modules in $\cO_\kappa(n)$ and the set of box diagrams of type $\kappa$. 
\end{lemma}

\begin{proof} This follows directly from the definitions, the inverse map is given by reading the rows in $\boxplus$ from top to bottom and ordering the entries for each row in increasing order. 
\end{proof}
We denote by $\boxplus^\vee$ the transposed box diagram of $\boxplus$ with additionally all symbols ($+$ and $-$) swapped and by $\lambda^\mathrm{T}$ the weight giving rise to the box-diagram $\boxplus^\vee$.
corresponding weight.

\begin{ex} \label{ex:box_diagram}
Let $n=9$, $r=5$, $m=4$ and ${\underline{k}}=(2,2,2,2,1)$. The box diagram $\boxplus$ corresponding to 
$w\cdot\mu+\rho=(-\nicefrac{5}{2},\nicefrac{3}{2},-\nicefrac{1}{2},\nicefrac{1}{2},-\nicefrac{7}{2},\nicefrac{3}{2},-\nicefrac{1}{2},\nicefrac{5}{2},\nicefrac{3}{2})$ and its transposed are (with $\circ$ indicating the empty boxes)
\begin{eqnarray}
\boxplus = \young(\circ
+-\circ,\mp\circ\circ\circ,\circ+\circ-,-\circ+\circ,\circ+\circ\circ) \qquad
&& \qquad \boxplus^\vee =
\young(\circ \mp\circ +\circ,-\circ -\circ -,+\circ \circ -\circ ,\circ
\circ +\circ \circ )
\end{eqnarray}
and thus
$w^{-1}w_0\cdot\la+\rho =(-\nicefrac{3}{2},\nicefrac{3}{2},\nicefrac{7}{2},-\nicefrac{9}{2},-\nicefrac{5}{2},-\nicefrac{1}{2},-\nicefrac{7}{2},\nicefrac{1}{2},\nicefrac{5}{2}).$ More generally we have
\end{ex}

\begin{prop}[Combinatorial Koszul duality]
\label{transpose}
Let $\boxplus$ be the box diagram corresponding to ${M}^{\mathfrak{q}_{\underline{k}}}(w\cdot\mu)$ in the block ${\cO}_\Gamma(n)$. Then ${M}^{\mathfrak{q}_{\underline{k}}}(w^{-1}w_0 \cdot\lambda)\in\hat{\cO}_{\Gamma^\vee}(n)$ corresponds to the box diagram $\boxplus^\vee$. Here $\mu$ and $\lambda$ are maximal elements in their respective orbits for the dot action of the Weyl group.
\end{prop}
\begin{proof}
Transposing the box diagram corresponds to passing from $x$ to $x^{-1}$, and swapping the signs corresponds to right multiplication with $w_0$. If $n$ is even, then this procedure keeps the parity of $-$ symbols, whereas if $n$ is odd then the parity gets swapped. The claim follows.
\end{proof}

Encoding basis vectors by box diagrams using \eqref{identification} allows to give explicit formulae for the actions of $\cH^\hint_m$ on $\bigwedge_q(n,m,r)$ using the following notation.

\begin{definition} \label{def:boxdiagram_and_statistics}
We define for any box $\boxplus_{i,j}$ as above the statistics
$$c_{i,j}^+(\boxplus,\rightarrow) = \#\{ l \mid l > j: \boxplus_{i,l}=+ \text{ or } \boxplus_{i,l}=\pm\},$$
i.e., the number of symbols $+$ to the right of the box $\boxplus_{i,j}$; analogously for the arrows $\leftarrow$, $\uparrow$, and $\downarrow$ and for the symbol $-$. Similarly we define
$$c_i^+(\boxplus,\leftrightarrow) = \#\{ l \mid \boxplus_{i,l}=+ \text{ or } \boxplus_{i,l}=\pm\},$$
i.e., the number of symbols $+$ in row $i$; analogously for $\updownarrow$ and the symbol $-$.
\end{definition}

For  an $r \times m$ box diagram $\boxplus$ and  $1 \leq j < m$ set 
\begin{equation}
 \label{boxaction1}
 B_{j}.\boxplus = \sum_{\boxplus'} q^{m(\boxplus')} \boxplus', \quad B_{-j}.\boxplus = \sum_{\boxplus''} q^{m(\boxplus'')} \boxplus'', \quad B_0.\boxplus = \sum_{\boxplus'''} q^{m(\boxplus''')} \boxplus''',
\end{equation}
where the first sum runs over all box diagrams $\boxplus'$ obtained from $\boxplus$ by moving a symbol from column $j+1$ to column $j$. Let $\boxplus_{i,j+1}$ be the box with the symbol that is moved to the left, then the power of $q$ is given by 
\abovedisplayskip0.3em
\belowdisplayskip0.3em
 \begin{eqnarray*}
m(\boxplus')&=&
\begin{cases}
c_{i,j}^+(\boxplus,\uparrow) - c_{i,j+1}^+(\boxplus,\uparrow) + c_j^-(\boxplus,\updownarrow) - c_{j+1}^-(\boxplus,\updownarrow) & \text{for } + \\
c_{i,j}^-(\boxplus,\downarrow) - c_{i,j+1}^-(\boxplus,\downarrow) & \text{for } -.
\end{cases}
\end{eqnarray*}
The second sum runs over all $\boxplus''$ obtained from $\boxplus$ by moving a symbol from column $j$ to column $j+1$ and 
\abovedisplayskip0.3em
\belowdisplayskip0.3em
\begin{eqnarray*}
m(\boxplus'')&=&\begin{cases}
c_{i,j+1}^+(\boxplus,\downarrow) - c_{i,j}^+(\boxplus,\downarrow) & \text{for } + \\
c_{i,j+1}^-(\boxplus,\uparrow) - c_{i,j}^-(\boxplus,\uparrow) + c_{j+1}^+(\boxplus,\updownarrow) - c_{j}^+(\boxplus,\updownarrow)& \text{for } -.
\end{cases}
\end{eqnarray*}
The third sum runs over all $\boxplus'''$ obtained from $\boxplus$ by swapping a symbol in the first column from $-$ to $+$ or vice versa. In this case the power of $q$ is independent of the symbol and only depends on the position
$m(\boxplus''') = c_{i,1}^+(\boxplus,\downarrow) - c_{i,1}^-(\boxplus,\downarrow).$

\subsection{Quantum skew Howe duality}
For the commuting action of $\cH_r^\hint$ on $\bigwedge_q(n,m,r)$ we need special morphisms between the summands of the form ${\bigwedge^{\underline{k}}}_q \mV_m^\hint$. Let $\underline{k} \in C(n,r)$ with $k_{i+1} > 0$ and consider  $\underline{k}^{+i}, \underline{k}^{-i} \in C(n,r+1)$ and ${}_{+i}\underline{k}, {}_{-i}\underline{k}\in C(n,r)$ as in Definition ~\ref{defsd}.


Then let $\check{B}_{i,+,\underline{k}}$ and $\check{B}_{i,-,\underline{k}}$ be the following composition of the standard $\cU_q(\mathfrak{gl}_{2m})$-morphisms, see \cite[Section 2.1]{MS} for explicit formulas, multiplied with the indicated power of $-q$,
\begin{eqnarray}
\label{wedgemap1}
\check{B}_{i,\underline{k}}&=&(-q)^{1-k_{i+1}} \left({\bigwedge^{\underline{k}}}_q\mV_m^\hint \longrightarrow {\bigwedge^{\underline{k}^{+i}}}_q\mV_m^\hint \longrightarrow {\bigwedge^{{}_{+i}\underline{k}}}_q\mV_m^\hint \right).\\
\label{wedgemap2}
\check{B}_{-i,\underline{k}}&=& (-q)^{1-k_i} \left({\bigwedge^{\underline{k}}}_q\mV_m^\hint \longrightarrow {\bigwedge^{\underline{k}^{-i}}}_q\mV_m^\hint \longrightarrow {\bigwedge^{{}_{+i}\underline{k}}}_q\mV_m^\hint \right)
\end{eqnarray}
For $\underline{k}$ with $k_{i+1}=0$ we put $\check{B}_{i,\underline{k}}=0$ and finally $\check{B}_i = \bigoplus_{\underline{k} \in C(n,r)} \check{B}_{i,\underline{k}}$.

\begin{remark}
The maps in \eqref{wedgemap1}, \eqref{wedgemap2} are quantized versions of  the canonical inclusion maps $\bigwedge^{l+1} \mV_m^\hint \rightarrow \bigwedge^{l} \mV_m^\hint
\otimes \mV_m^\hint$ and $\bigwedge^{l+1} \mV_m^\hint \rightarrow \mV_m^\hint
\otimes \bigwedge^{l} \mV_m^\hint$, and of the projection maps $\bigwedge^{l} \mV_m^\hint \otimes \mV_m^\hint \longrightarrow
\bigwedge^{l+1} \mV_m^\hint$ and $\mV_m^\hint \otimes \bigwedge^{l}\mV_m^\hint \longrightarrow
\bigwedge^{l+1} \mV_m^\hint$, see e.g. \cite{MS} for more details.  
\end{remark}

By construction the following is automatic.

\begin{lemma}
\label{lem:1}
Let $1 \leq i < r$. The maps $\check{B}_i$ and $\check{B}_{-i}$ are $\cU_q(\mathfrak{gl}_{2m})$-equivariant, hence also $\cH_m^\hint$-equivariant.
\end{lemma}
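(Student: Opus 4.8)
The plan is to reduce the statement to the $\cU_q(\mathfrak{gl}_{2m})$-equivariance of the two elementary intertwiners and then appeal to formal properties of categories of Hopf-algebra modules. First I would recall from Remark~\ref{intertwiners} that the two arrows in the composition \eqref{wedgemap1}, and symmetrically in \eqref{wedgemap2}, are---on the tensor factors that change---precisely $\mathfrak{j}_l\colon\bigwedge^{l+1}\mV_m^\hint\to\bigwedge^{l}\mV_m^\hint\otimes\mV_m^\hint$ and $\mathfrak{z}_l\colon\bigwedge^{l}\mV_m^\hint\otimes\mV_m^\hint\to\bigwedge^{l+1}\mV_m^\hint$, and the identity on all remaining factors. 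The one substantive point I would establish is that $\mathfrak{j}_l$ and $\mathfrak{z}_l$ are morphisms of $\cU_q(\mathfrak{gl}_{2m})$-modules; concretely, under the antisymmetrizer \eqref{bvs} the space $\bigwedge^{l+1}\mV_m^\hint$ is realised as a $\cU_q(\mathfrak{gl}_{2m})$-submodule of $\bigwedge^{l}\mV_m^\hint\otimes\mV_m^\hint$ with $\mathfrak{z}_l$ the complementary projection, and one verifies $\mathfrak{j}_l(Xv)=X\,\mathfrak{j}_l(v)$ on basis vectors of the form \eqref{bvs} for the generators $E_a,F_a,D_j^{\pm1}$ using the comultiplication $\Delta$---or one simply cites \cite{Sussan},\cite{MS}, where exactly these maps already appear. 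I expect this to be the only place any computation occurs, and hence the main (if modest) obstacle; everything downstream is formal.

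Granting that, I would then argue as follows. Since $\cU_q(\mathfrak{gl}_{2m})$ is a Hopf algebra acting on iterated tensor products through iterated comultiplication, tensoring a module morphism with identities on the remaining factors is again a module morphism, a composition of module morphisms is a module morphism, and rescaling by the nonzero scalar $(-q)^{1-k_{i+1}}$, resp.\ $(-q)^{1-k_i}$, changes nothing; when $k_{i+1}=0$, resp.\ $k_i=0$, the constituent map is the zero map and is trivially equivariant. Forming the direct sum over $\underline{k}\in C(n,r)$ then shows that $\check{B}_i$ and $\check{B}_{-i}$ are $\cU_q(\mathfrak{gl}_{2m})$-equivariant.

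Finally, for the clause ``hence also $\cH_m^\hint$-equivariant'' I would observe that $\cH_m^\hint$, obtained by truncating the index set at $m$ (cf.\ the remark after Lemma~\ref{lem:gln_inclusion} and \cite{Letzter}), is a subalgebra of $\cU_q(\mathfrak{gl}_{2m})$: its generators $B_j=E_jK_{-j}^{-1}+F_{-j}$ and $B_0=q^{-1}E_0K_0^{-1}+F_0$, together with the elements $(D_jD_{-j})^{\pm1}\in\cR^\hint$, all lie in $\cU_q(\mathfrak{gl}_{2m})$. Therefore equivariance with respect to the ambient quantum group restricts to equivariance with respect to $\cH_m^\hint$, which proves the lemma.
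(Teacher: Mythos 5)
Your argument is correct and coincides with the paper's reasoning: the paper treats the lemma as automatic precisely because each $\check{B}_{\pm i,\underline{k}}$ is by definition a scalar multiple of a composition of the natural $\cU_q(\mathfrak{gl}_{2m})$-intertwiners of Remark~\ref{intertwiners} (tensored with identities), and $\cH_m^\hint$ sits inside $\cU_q(\mathfrak{gl}_{2m})$ as a (coideal) subalgebra, so equivariance restricts. You have merely spelled out the same construction-based argument in more detail, which is fine.
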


For $\underline{k} \in C(n,r)$ with $k_1 > 0$ define a composition $\underline{k}' \in C(n,r+1)$ by setting
$ k_1'= 1, \, k_2' = k_1-1 \text{ and } k_i' = k_{i-1} \text{ for } i > 2.$ 

\begin{eqnarray}
\label{wedgemap3}
\check{B}_{0,\underline{k}} &=&(-q)^{1-k_1} \left({\bigwedge^{\underline{k}}}_q\mV_m^\hint \longrightarrow {\bigwedge^{\underline{k}'}}_q\mV_m^\hint \longrightarrow {\bigwedge^{\underline{k}'}}_q\mV_m^\hint \longrightarrow {\bigwedge^{\underline{k}}}_q\mV_m^\hint \right),
\end{eqnarray}
where the first and third map are as before the standard maps, while the one in the middle changes the sign of the index in the very first tensor factor of $\bigwedge^{\underline{k}'}\mV_m^\hint$, i.e. it send a vector $v_{i_1} \otimes \text{(Rest)}$ to $v_{-i_1} \otimes \text{(Rest)}$. As before, for $\underline{k}$ with $k_{1}=0$ we put $\check{B}_{0,\underline{k}}=0$ and finally $\check{B}_{0} = \bigoplus_{\underline{k} \in C(n,r)} \check{B}_{0,\underline{k}}$.

\begin{lemma}
\label{lem:2}
The map $\check{B}_0$ is $\cH_m^\hint$-equivariant.
\end{lemma}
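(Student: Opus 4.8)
The plan is to reduce the statement to a single elementary fact about the natural module $\mV_m^\hint$. In the composition \eqref{wedgemap3} defining $\check{B}_{0,\underline{k}}$, the outer two arrows are natural $\cU_q(\mathfrak{gl}_{2m})$-morphisms (the analogues for $\underline{k},\underline{k}'$ of the maps $\mathfrak{j}$ and $\mathfrak{z}$ of Remark~\ref{intertwiners}, acting on the first exterior factor and tensored with identities elsewhere), hence they are $\cH_m^\hint$-equivariant by Lemma~\ref{lem:1}. So it suffices to prove that the middle arrow $\Psi:=\psi\otimes\mathrm{id}$ is $\cH_m^\hint$-equivariant, where $\psi\colon\mV_m^\hint\to\mV_m^\hint$, $v_l\mapsto v_{-l}$, is the sign flip on the first tensor factor $\bigwedge^1\mV_m^\hint=\mV_m^\hint$ of $\bigwedge^{\underline{k}'}\mV_m^\hint=\mV_m^\hint\otimes N$ with $N=\bigwedge^{k_1-1}\mV_m^\hint\otimes\bigwedge^{k_2}\mV_m^\hint\otimes\cdots$. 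Since $\check{B}_0=\bigoplus_{\underline{k}}\check{B}_{0,\underline{k}}$, this then yields the lemma.

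First I would check directly, on the weight basis, that $\psi$ is $\cH_m^\hint$-equivariant on $\mV_m^\hint$ itself. Each coideal generator moves at most the pair of basis vectors symmetric about $0$ that it is allowed to act on: for $j\neq 0$ one has $B_j v_{j+\frac12}=v_{j-\frac12}$, $B_j v_{-j-\frac12}=v_{-j+\frac12}$ and $B_j v_l=0$ otherwise, with the analogous formulae for $B_{-j}$ and for $B_0$, while the generators $(D_jD_{-j})^{\pm1}$ of $\cR^\hint$ act by a power of $q$ that is invariant under $l\mapsto -l$. In each case $\psi$ intertwines the action, and the truncation to the (symmetric) window of $2m$ half-integers causes no trouble. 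Note that $\psi$ is deliberately \emph{not} $\cU_q(\mathfrak{gl}_{2m})$-equivariant -- it fails to commute with $E_j$ -- which is exactly why $\check{B}_0$ cannot be obtained as a morphism of quantum group modules and has to be handled by hand.

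Then I would pass to the tensor product using only the right coideal property $\Delta(\cH_m^\hint)\subset\cH_m^\hint\otimes\cU_q(\mathfrak{gl}_{2m})$ together with the fact that the $\cH_m^\hint$-module structure on $\bigwedge^{\underline{k}'}\mV_m^\hint$ is the restriction of the iterated comultiplication action of $\cU_q(\mathfrak{gl}_{2m})$ (as in Lemma~\ref{lem:decomp_module_general}). Writing the action of $B\in\cH_m^\hint$ on $\mV_m^\hint\otimes N$ as a sum of operators $\rho_1(B')\otimes\rho_2(u')$ with $B'\in\cH_m^\hint$, where $\rho_1,\rho_2$ denote the actions on $\mV_m^\hint$ and on $N$, and using that $\psi$ commutes with every $\rho_1(B')$ by the previous step, one gets that $\Psi=\psi\otimes\mathrm{id}_N$ commutes with the $\cH_m^\hint$-action. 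Hence $\Psi$, and therefore $\check{B}_{0,\underline{k}}$ for each $\underline{k}$, and thus $\check{B}_0$, is $\cH_m^\hint$-equivariant. The only genuine obstacle is the bookkeeping in the first step: one must be careful to use the correct induced $\cH_m^\hint$-action and to keep track of which normalizations (powers of $q$, signs) occur; once that is set up the rest is purely formal from the coideal axiom.
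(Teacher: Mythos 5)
Your proof is correct, but it is organised quite differently from the paper's. The paper proves Lemma~\ref{lem:2} by brute force in the box-diagram model: it fixes a box flipped by $\check{B}_0$ (first row) and a box moved by $B_i$ (columns $i,i+1$), and compares the powers of $q$ in $\check{B}_0(B_i.\boxplus)$ and $B_i.\check{B}_0(\boxplus)$ in four cases of relative position, using the explicit $q$-power formulas for the $\cH_m^\hint$-action on box diagrams. You instead factor $\check{B}_{0,\underline{k}}$ through the sign flip $\psi\otimes\mathrm{id}$ on $\bigwedge^{\underline{k}'}\mV_m^\hint=\mV_m^\hint\otimes N$, observe that the outer arrows are the natural $\cU_q(\mathfrak{gl}_{2m})$-maps of Remark~\ref{intertwiners} and hence equivariant by the same mechanism as Lemma~\ref{lem:1}, check by a one-line weight computation that $\psi\colon v_l\mapsto v_{-l}$ commutes with $B_i$, $B_{-i}$, $B_0$ and $\cR^\hint$ on the natural module (all nonzero matrix entries are $1$ and the Cartan eigenvalues are symmetric under $l\mapsto -l$), and then use the coideal property $\Delta(\cH_m^\hint)\subset\cH_m^\hint\otimes\cU_q(\mathfrak{gl}_{2m})$ — together with coassociativity, so that the first leg of the iterated coproduct lies in $\cH_m^\hint$ (its components are $B_a$, $K_aK_{-a}^{-1}\in\cR^\hint$ and $1$, as one sees from the coproduct displayed in the proof of Lemma~\ref{lem:decomp_module_general}) — to conclude that $\psi\otimes\mathrm{id}_N$ is $\cH_m^\hint$-equivariant. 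This is a valid and more structural argument: it isolates the single conceptual reason why $\check{B}_0$ commutes with the coideal despite not being a $\cU_q$-morphism, avoids the case analysis, and would generalise readily; the paper's computation, on the other hand, stays entirely inside the box-diagram calculus that drives the rest of Section~\ref{section:howe} and simultaneously double-checks the explicit formulas feeding into Lemma~\ref{lem:combinatorial_zuckerman}. The only points you must nail down are exactly the ones you flag: that the leftmost tensor factor (where the sign flip happens) is the one receiving the $\cH$-leg of the coproduct in the paper's convention, and that the truncation $\cH_m^\hint\subset\cU_q(\mathfrak{gl}_{2m})$ is itself a right coideal, both of which follow from the generator computations already present in Section~\ref{section:coideal}.
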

\begin{proof}
We have to show that the action commutes with the action of all  $B_i$. We give the arguments for $i\geq 0$ only, since the others are similar. Fix a box diagram $\boxplus$. Since it is obvious that $\check{B}_0(B_i.\boxplus)$ and  $B_i.\check{B}_0(\boxplus)$ contain the same summands  it remains to compare the powers of $q$ involved. Choose boxes $\boxplus_{j,i+1}$ and $\boxplus_{1,l}$ containing symbols that can be moved resp. flipped by the action of $B_i$ resp. $\check{B}_0$.

{If  $l \notin \{i,i+1\}$ and $j \neq 1$:}
The powers of $q$ are independent of the action of the other operator and hence the results will be the same.

{If $l \in \{i,i+1\}$ and $j \neq 1$:}
The box modified by $\check{B}_0$ is in the same column as the one modified by $B_i$. The formulas for $B_i$ imply that a symbol $+$ means we count the symbols above in column $i$ and $i+1$ either all as $q$ or $q^{-1}$. So it does not matter if we apply $\check{B}_0$ before or after, the power remains the same. In case of a $-$, the symbols above are not counted at all.

{If $l \notin \{i,i+1\}$ and $j = 1$:}
Then the box modified by $\check{B}_0$ is in the first row but far enough away from the one for $\check{B}_0$. In this case the powers of $q$ given by $B_i$ do not involve the box $\boxplus_{1,l}$ at all, and the powers given by $\check{B}_0$ are obviously the same, no matter if we apply $B_i$ before or after.

{If  $l \in \{i,i+1\}$ and $j = 1$:}
The box $\boxplus_{1,l}$ is either directly to the left of $\boxplus_{j,i+1}$ or they coincide. It is easy to see that for $\check{B}_0$ the $q$-powers are the same or the two compositions are zero anyway. Similarly for $B_i$, because the $q$-powers do not depend on the sign if our box is in the first row.
\end{proof}

Using the definition of the box diagrams and of the intertwiner maps \eqref{wedgemap1}- \eqref{wedgemap3} we obtain the following explicit formulas ''dual'' to  \eqref{boxaction1}.

\begin{prop}\label{lem:combinatorial_zuckerman} Let $\boxplus$ be a box diagram of type $(\underline{k},\underline{\mu},\epsilon)$. Then 
\begin{gather}
\label{boxaction2}
\begin{gathered}
\check{B}_i(\boxplus) = \sum_{\boxplus'} (-q)^{m(\boxplus')}\boxplus', \quad \check{B}_{-i}(\boxplus) = \sum_{\boxplus''} (-q)^{m(\boxplus'')}\boxplus'',\\
\check{B}_0(\boxplus) = \sum_{\boxplus'''} (-q)^{m(\boxplus''')}\boxplus''',
\end{gathered}
\end{gather}

for $1-r\leq i\leq r-1$, where the first sum runs over all box diagrams obtained from $\boxplus$ by moving a symbol from row $i+1$ to the one directly above, the second sum over those where a symbol in row $i$ is moved to the box directly below and the third sum over those  where a symbol in the first row is swapped from $-$ to $=$ or vice versa, and the exponents are given below.
\end{prop}

Assume that for $\boxplus'$ (respectively $\boxplus''$) the symbol in $\boxplus_{i+1,j}$ (respectively in $\boxplus_{i,j}$) gets moved, and for $\boxplus'''$ the swapped symbol is in $\boxplus_{1,j}$, then
\begin{eqnarray*}
m(\boxplus')&=& \begin{cases}
c_{i,j}^+(\boxplus,\rightarrow) - c_{i+1,j}^+(\boxplus,\rightarrow) & \text{for } + \\
c_{i+1,j}^-(\boxplus,\rightarrow) - c_{i,j}^-(\boxplus,\rightarrow) + k_i - k_{i+1} + 1& \text{for } -.
\end{cases}\\
m(\boxplus'')&=&\begin{cases}
c_{i,j}^+(\boxplus,\rightarrow) - c_{i+1,j}^+(\boxplus,\rightarrow) + k_{i+1} - k_i + 1& \text{for } + \\
c_{i+1,j}^-(\boxplus,\rightarrow) - c_{i,j}^-(\boxplus,\rightarrow) & \text{for } -.
\end{cases}\\
m(\boxplus''')&=& c_{1,j}^-(\boxplus,\rightarrow) - c_{1,j}^+(\boxplus,\rightarrow).
\end{eqnarray*}

Denote by $\cH^\hint_m$ the coideal subalgebra corresponding to the quantum group $\cU_q(\mathfrak{gl}_{2m})$, i.e. the truncation of $\cH^\hint$, and consider the two actions on $\bigwedge_q(n,m,r)$:

\begin{theorem} [Quantum skew Howe duality]\label{thm:skewhowequantum}
Lemmas \ref{lem:1} and \ref{lem:2} define commuting actions of $\cH^\hint_m$ and $\cH^\hint_r$ on $\bigwedge_q(n,m,r)$. They are in fact each others centralizers.
\end{theorem}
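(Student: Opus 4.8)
The plan is to show that the two families of operators $\check B_{\pm i}$ (for $i\in J^{\hint,+}_r$) and $B_{\pm j}$ (for $j\in J^{\hint,+}_m$), together with $\check B_0$ and $B_0$, satisfy the defining relations of $\cH^\hint_r$ and $\cH^\hint_m$ respectively and commute with one another. The $\cH^\hint_m$-action is already available: it is (the truncation of) the action on $\bigwedge^{\underline k}\mV_m^\hint$ coming from Lemma~\ref{lem:decomp_module_general}, transported through the decomposition $\bigwedge(n,m,r)=\bigoplus_{\underline k\in C(n,r)}\bigwedge^{\underline k}\mV_m^\hint$. So the real content is twofold: (1) the maps $\check B_{\pm i}$, $\check B_0$ assemble into an $\cH^\hint_r$-module structure, and (2) this new action commutes with the $\cH^\hint_m$-action. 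For (2), commutativity of $\check B_{\pm i}$ with everything in $\cH^\hint_m$ is Lemma~\ref{lem:1}, since $\check B_{\pm i}$ was built from genuine $\cU_q(\mathfrak{gl}_{2m})$-morphisms (the intertwiners $\mathfrak j_l$, $\mathfrak z_l$ of Remark~\ref{intertwiners}); and commutativity of $\check B_0$ with $\cH^\hint_m$ is exactly Lemma~\ref{lem:2}. Hence (2) is done once the generators are in hand, and the only thing left to establish is (1): that $\check B_{\pm i}$, $\check B_0$ satisfy the coideal relations of $\cH^\hint_r$ from Proposition~\ref{prop:relations_coideal} (half-integer version).

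For step (1) I would use the explicit combinatorial formulas of Lemma~\ref{lem:combinatorial_zuckerman}: $\check B_i$, $\check B_{-i}$, $\check B_0$ act on a box diagram $\boxplus$ by moving a single symbol between rows $i$ and $i+1$ (or flipping a symbol in the first row), with an explicit power of $-q$ read off from the contents $c^{\pm}_{i,j}(\boxplus,\rightarrow)$ and the part sizes $k_i$. These formulas are manifestly identical in shape (up to relabelling ``rows'' $\leftrightarrow$ ``columns'', the contents with respect to the two perpendicular directions, and the role of $\underline k$ vs.\ $\underline\mu$) to the formulas for the $B_{\pm j}$-action on box diagrams given just after Lemma~\ref{transpose}. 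More precisely, under the transpose-and-sign-swap involution $\boxplus\mapsto\boxplus^\vee$ of Lemma~\ref{transpose}, the $\check B_{\pm i}$ (resp.\ $\check B_0$) on $\bigwedge(n,m,r)$ correspond to the $B_{\mp i}$ (resp.\ $B_0$) on $\bigwedge(n,r,m)$ — this is the same symmetry that underlies Theorem~\ref{thm:skewKoszul}. Since the $B$'s on $\bigwedge(n,r,m)$ are known to define an $\cH^\hint_r$-action (that is Theorem~\ref{thm:main} together with Proposition~\ref{prop:relations_coideal}), transporting through this dictionary immediately yields that the $\check B$'s on $\bigwedge(n,m,r)$ define an $\cH^\hint_r$-action. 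Alternatively, and this is the route I would actually write out in detail, one verifies the relations of Proposition~\ref{prop:relations_coideal} directly on box diagrams: the commutation relations with the $\check D$'s are immediate from the type statistics; the ordinary quantum Serre relations among $\check E_i=\check B_i$, $\check F_i=\check B_{-i}$ for $i\in I^{\hint,++}_r$ hold because these are genuine $\cU_q(\mathfrak{gl}_{2m})$-module maps and the $\bigwedge^{\underline k}\mV_m^\hint$ carry a $\cU_q(\mathfrak{gl}_N)$-module structure already identified in Lemma~\ref{lem:gln_inclusion} and used throughout; and the modified Serre relations involving $\check B_0$ reduce, after the move/flip bookkeeping, to a finite check on box diagrams of type $(\underline k,\underline\mu,\epsilon)$ supported on the two leftmost columns, where only rows $0,1,2$ matter.

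The main obstacle will be the modified quantum Serre relations for $\check B_0$ — the analogue of relation (3) in the half-integer Proposition after Proposition~\ref{prop:relations_coideal}, i.e.\ $\check B^2\check E_1-(q+q^{-1})\check B\check E_1\check B+\check E_1\check B^2=\check E_1$ and its $\check F_1$-counterpart. Checking these requires tracking the $(-q)$-powers through two applications of the flip map $\check B_0$ interleaved with a move map, and accounting for the sign conventions $(-q)^{1-k_1}$ and $(-q)^{\#\{j\mid j>s\}}$ hidden inside the intertwiners $\mathfrak j,\mathfrak z$ of Remark~\ref{intertwiners}; the cancellations producing the inhomogeneous right-hand side $\check E_1$ are delicate. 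I expect the cleanest argument is not a brute-force box-diagram computation but rather to invoke Lemma~\ref{lem:2} plus the observation that $\check B_0$ restricted to a single summand $\bigwedge^{\underline k}\mV_m^\hint$ is, up to the scalar $(-q)^{1-k_1}$ and the natural $\cU_q(\mathfrak{gl}_{2m})$-maps, the operator ``apply $B_0\in\cH^\hint$ in the first tensor slot'' in the sense of Lemma~\ref{lem:theta}; then the modified Serre relation for $\check B_0$ descends from the corresponding relation in $\cH^\hint$ itself (Proposition~\ref{prop:relations_coideal}), because the natural maps $\mathfrak j,\mathfrak z$ intertwine the two $\cU_q(\mathfrak{gl}_{2m})$-structures and commute with the slot-one $B_0$-action up to the scalars we have already normalised. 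With that reduction in place the theorem follows by combining Lemma~\ref{lem:1}, Lemma~\ref{lem:2}, and the relation-check just described.
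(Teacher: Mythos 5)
Your proposal follows essentially the same route as the paper: Theorem~\ref{thm:skewhowequantum} is recorded there without a separate proof precisely because the commutativity is the content of Lemmas~\ref{lem:1} and~\ref{lem:2}, while the fact that the $\check{B}_{\pm i},\check{B}_0$ satisfy the $\cH^\hint_r$-relations is read off from the box-diagram formulas of Lemma~\ref{lem:combinatorial_zuckerman} by matching them, under the transpose-and-sign-swap of Lemma~\ref{transpose}, with the $B$-action on the transposed side --- exactly the comparison the paper makes precise afterwards in Proposition~\ref{prop:comb} (via Theorem~\ref{thm:main}). Two caveats on your write-up. First, in that dictionary the coefficients must also undergo the substitution $q\mapsto -q$: the $\check{B}$-formulas are in powers of $-q$ while the $B$-formulas are in powers of $q$, so the transported operators a priori satisfy the coideal relations at the sign-twisted parameter, and one needs the $(-q)^{1-k}$ normalisations (equivalently a sign-twist isomorphism of $\cH^\hint$) to land in $\cH^\hint_r$ on the nose; the paper glosses over this as well, but your formula-matching claim should record it, and note that mere $\cU_q(\mathfrak{gl}_{2m})$-equivariance of $\check{E}_i,\check{F}_i$ does not by itself give the Serre relations among them --- that still comes from the comparison. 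Second, your suggested shortcut for the modified Serre relation is not correct as stated: the middle map in $\check{B}_{0,\underline{k}}$ sends $v_{i_1}\otimes(\text{Rest})$ to $v_{-i_1}\otimes(\text{Rest})$ for an arbitrary index $i_1$, whereas $B_0\in\cH^\hint_m$ acts on the natural module only through the indices $\pm\tfrac{1}{2}$ (Lemma~\ref{lem:specgen}), so $\check{B}_0$ is not ``apply $B_0$ in the first tensor slot,'' and the relation does not descend from Proposition~\ref{prop:relations_coideal} in the way you sketch; for that relation you should fall back on the direct box-diagram computation or on the transposition dictionary, both of which do work.
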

\begin{proof}
We have seen that the actions commute. Then we argue as in the proof of Theorem~\ref{prop:Schur_Weyl} using Theorem~\ref{thm:skewhowe} for the specialization $q=1$.
\end{proof}

\subsection{Categorified quantum skew Howe duality and Koszul duality}
Theorem~\ref{thm:skewKoszul} and then \eqref{Koszulduality} implies that the Koszul duality functor  from \cite{BGS}, \cite{MOS} defines a contravariant equivalence of triangulated categories
\begin{eqnarray}
\label{eq:mapKoszulduality}
\mathcal{K}:\bigoplus_{\underline{k} \in C(n,r)}
D^b\left(\hat{\cO}^{\mathfrak{q}_{\underline{k}}}_{\leq m}(n)\right)
&\longrightarrow&\bigoplus_{\underline{k} \in C(n,m)}
D^b\left(\hat{\cO}^{\mathfrak{q}_{\underline{k}}}_{\leq r}(n)\right).
\end{eqnarray}
sending the standard lift of the parabolic Verma module of highest weight $\nu-\rho$ to the standard lift of the parabolic Verma module of highest weight $\nu^\vee-\rho$, see \cite{Backelin} with Proposition~\ref{transpose}. The grading shift on the left corresponds to a homological and grading shift on the right, $\mathcal{K}\langle j\rangle=\langle j\rangle[j]\mathcal{K}$. Since the functors are triangulated they induce homomorphisms on the Grothendieck group of the categories from \eqref{eq:mapKoszulduality} which we identify with the Grothendieck group of the underlying abelian category.  Note that $\left[{\mB}_{i} \right]$ is $q$-linear, whereas $[\cK]$, $[\cK^{-1}]$  are $q$-antilinear, that means $q$ is sent to $-q$. 

\begin{prop}
\label{prop:comb} On the Grothendieck group we have for $1-r\leq i\leq r-1$
\begin{equation*}
[\cK^{-1}] \circ \left[{\mB}_{i} \right] \circ [\cK] =\check{B}_i.
\end{equation*}
\end{prop}
\begin{proof}
It is enough to compare their values on standard basis vectors, hence on the classes of the standard graded lifts of parabolic Verma modules (concentrated in homological degree zero). For the left hand side of the first equation we take therefore the class of a parabolic Verma module and identify it with its box diagram $\boxplus$. Applying $[\cK^{-1}] \circ \left[{\mB}_{i} \right] \circ [\cK] $ means that we first take the transpose $\boxplus^\vee$, secondly apply $B_i$  thanks to Theorem~\ref{thm:main} and finally substitute in the resulting   $\mZ[q,q^{-1}]$-linear combination of box diagrams $q\rightsquigarrow -q$ in all the coefficients and transpose all the occurring box diagrams.  Comparing the final result with the formulas from Proposition~\ref{lem:combinatorial_zuckerman} gives the claim. The other two equalities are checked analogously.
\end{proof}

Since the functors ${\mB}_{i}$ are exact and $\cK$ is a graded triangulated equivalence with inverse $\cK^{-1}$ we directly obtain:
\begin{corollary}
\label{prop:ZuckeronK}
The endofunctors $\hat{\mathbb{B}}_{i}^\vee=\cK^{-1}\; {\mB}_{i}\; \cK$ with $1-r\leq i\leq r-1$ induce the action of $\check{B}_i$ on the Grothendieck group, that is $\check{B}_i=[\hat{\mathbb{B}}_{i}^\vee]$.
\end{corollary}

\begin{remark} 
The functors $\hat{\mathbb{B}}_{i}^\vee$ have a more direct description as graded lifts of so-called  Zuckermann functors precomposed with inclusion functors between graded version of different  parabolic category $\mathcal{O}$'s). This follows directly from the fact,  \cite[Theorem 35]{MOS}  and its straight- forward generalization to more general parabolics,  that graded lifts of Zuckerman respectively inclusion functors are Koszul dual to the graded graded lifts of translation to respectively translation functors out off the walls.
\end{remark}

From the definitions, Proposition~\ref{prop:comb} and Theorem~\ref{thm:skewhowequantum} we obtain 
\begin{theorem}[Categorified quantum skew Howe duality]
\label{Koszul}
 Fix the equivalence given by Koszul duality,
\begin{eqnarray}
\mathcal{K}:\bigoplus_{\underline{k} \in C(n,r)}
D^b\left(\hat{\cO}^{\mathfrak{q}_{\underline{k}}}_{\leq m}(n)\right)
&\cong&\bigoplus_{\underline{k} \in C(n,m)}
D^b\left(\hat{\cO}^{\mathfrak{q}_{\underline{k}}}_{\leq r}(n)\right).
\end{eqnarray}
Then the coideal algebra actions from Theorem~\ref{thm:main} on the two sides turn into commuting actions on the same space, one given by Theorem~\ref{thm:main}, the other by Proposition~\ref{prop:ZuckeronK}.
\end{theorem}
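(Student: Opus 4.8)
The plan is to deduce Theorem~\ref{Koszul} as a formal consequence of the two "$q$-versions" of skew Howe duality already established, transported across Koszul duality. First I would recall the two main ingredients we have in hand: on the one side, Theorem~\ref{thm:main} tells us that the functors $\hat{\mathbb{B}}_i$ categorify the $\cH^\hint_m$-action on $\bigwedge(n,m,r)\cong K_0\bigl(\bigoplus_{\Gamma\in{\rm Bl}(n,r,m)}\hat{\cO}_\Gamma(n)\bigr)$; on the other side, Proposition~\ref{prop:ZuckeronK} together with Proposition~\ref{prop:comb} tell us that the Koszul-dual functors $\hat{\mathbb{B}}^\vee_i$ categorify the $\cH^\hint_r$-action (via the maps $\check B_i$) and that, under $\cK$, the operator $[\hat{\mathbb{B}}_i]$ conjugates precisely to $\check B_i$. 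The content of the theorem is then that these two families of functors descend to \emph{commuting} actions on one and the same triangulated category $\bigoplus_{\underline{k}\in C(n,r)}D^b(\hat{\cO}^{\mathfrak{q}_{\underline{k}}}_{\leq m}(n))$, once we identify the target of $\cK$ with its source's dual.

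The key steps, in order, would be as follows. \textbf{(1)} Apply $\cK$ to transport the functors $\hat{\mathbb{B}}^\vee_i$ (which a priori live on $\bigoplus_{\underline{k}\in C(n,m)}D^b(\hat{\cO}^{\mathfrak{q}_{\underline{k}}}_{\leq r}(n))$) back to the source category via $\cK^{-1}\circ(\underline{\ }\,)\circ\cK$; call the resulting functors $\mathbb{C}_i$. By construction these are built from inclusion and derived Zuckerman functors, hence exact (in the triangulated sense) endofunctors of $\bigoplus_{\underline{k}\in C(n,r)}D^b(\hat{\cO}^{\mathfrak{q}_{\underline{k}}}_{\leq m}(n))$. \textbf{(2)} On $K_0$, by Proposition~\ref{prop:comb} the operator $[\mathbb{C}_i]$ equals $\check B_i$ expressed in the appropriate basis, while $[\hat{\mathbb{B}}_i]$ equals $B_i$ by Theorem~\ref{thm:main}; Theorem~\ref{thm:skewhowequantum} (quantum skew Howe duality) then gives $[\hat{\mathbb{B}}_i]\,[\mathbb{C}_j]=[\mathbb{C}_j]\,[\hat{\mathbb{B}}_i]$ and all the coideal relations among the $[\hat{\mathbb{B}}_i]$ and among the $[\mathbb{C}_j]$ separately. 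So on the level of Grothendieck groups there is nothing left to prove beyond assembling the already-established pieces. \textbf{(3)} The only genuinely new point is that the \emph{functors} (not just their $K_0$-classes) assemble into two honest actions on the single category: for the $\hat{\mathbb{B}}_i$-side this is Theorem~\ref{thm:main} combined with Lemma~\ref{lem:boring}/\ref{lem:gradeda} (the $\hat{\mathbb{B}}_i$ are graded lifts of genuine projective/translation functors, so the coideal relations hold as isomorphisms of functors up to grading shift, by the usual projective-functor-calculus argument of \cite{BGGclass}); for the $\mathbb{C}_i$-side one uses that $\cK$ is an equivalence, so functorial relations among the $\hat{\mathbb{B}}^\vee_i$ transport to functorial relations among the $\mathbb{C}_i$, and the functorial relations among the $\hat{\mathbb{B}}^\vee_i$ themselves follow by Koszul-dualizing those of the $\hat{\mathbb{B}}_i$ (or directly from the standard commutation of inclusion and Zuckerman functors, cf. \cite{EnSh}, \cite{AndersenStroppel}). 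Finally, the commutation of the two actions \emph{as functors} follows because $\hat{\mathbb{B}}_i$ is a projective functor (tensoring with a finite-dimensional module, up to blocks and shifts) while $\mathbb{C}_j$ is a derived Zuckerman-type functor, and projective functors commute with (derived) Zuckerman functors — this is exactly \cite[Theorem 3.2]{AndersenStroppel}-type input, here in the graded derived setting — so the square of functors commutes up to canonical isomorphism, which on $K_0$ recovers the identity from Theorem~\ref{thm:skewhowequantum}.

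Concretely I would write: "By Theorem~\ref{thm:main} the exact functors $\hat{\mathbb{B}}_i$ define an action of $\cH^\hint_m$ on $\bigoplus_{\underline{k}\in C(n,r)}D^b(\hat{\cO}^{\mathfrak{q}_{\underline{k}}}_{\leq m}(n))$ categorifying the first action of Theorem~\ref{thm:skewhowequantum}. By Propositions~\ref{prop:ZuckeronK} and~\ref{prop:comb}, conjugating the functors $\hat{\mathbb{B}}^\vee_i$ by the Koszul duality equivalence $\cK$ yields exact endofunctors of the same category whose classes on $K_0$ are the maps $\check B_i$, hence defining an action of $\cH^\hint_r$ categorifying the second action of Theorem~\ref{thm:skewhowequantum}. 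That the two actions commute as functors follows from the fact that each $\hat{\mathbb{B}}_i$ is, by Lemma~\ref{lem:boring}, a direct summand of a projective functor, while each conjugated $\hat{\mathbb{B}}^\vee_i$ is built from inclusion and derived Zuckerman functors, and projective functors commute with Zuckerman functors in the (graded) derived category; on $K_0$ this commutation is exactly Theorem~\ref{thm:skewhowequantum}."

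The main obstacle I expect is step (3): making precise that the relations hold as functor isomorphisms and not merely on $K_0$. For the $\hat{\mathbb{B}}_i$-action this needs the projective-functor uniqueness machinery (a projective functor is determined by its values on dominant Verma modules) together with the graded-lift uniqueness of \cite{Stroppel}; for the commutation with the Zuckerman side it needs the compatibility of twisting/Zuckerman functors with translation functors in a \emph{graded derived} form, which is available from \cite{AndersenStroppel} and \cite{AL} but must be invoked carefully so that the grading shifts and homological shifts bookkept in \eqref{eq:defBs}, \eqref{Bchecks1}, \eqref{Bchecks2} match up. Everything at the level of Grothendieck groups, by contrast, is immediate from the results already proved, so the bulk of the argument is an assembly/bookkeeping exercise rather than new mathematics.
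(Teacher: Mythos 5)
Your steps (1) and (2) are exactly the paper's argument: Theorem~\ref{Koszul} is obtained by combining Theorem~\ref{thm:main} (the $\hat{\mathbb{B}}_i$ categorify the $\cH^\hint_m$-action), Proposition~\ref{prop:comb} (conjugation by $\cK$ turns $[\hat{\mathbb{B}}_i]$ into $\check{B}_i$ on $K_0$), Proposition~\ref{prop:ZuckeronK} (the Zuckerman-built $\hat{\mathbb{B}}_i^\vee$ also induce the $\check{B}_i$), and Theorem~\ref{thm:skewhowequantum} (the two coideal actions commute on $\bigwedge(n,m,r)$); indeed the ``more concretely'' paragraph following the theorem makes clear that the assertion is about the induced actions on the Grothendieck group, so the proof really is the assembly you describe and nothing more.

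Where you overreach is step (3). The functor-level commutation of the two families $\hat{\mathbb{B}}_{\pm i},\hat{\mathbb{B}}_0$ and $\cK^{-1}\hat{\mathbb{B}}^\vee_{\pm i}\cK,\cK^{-1}\hat{\mathbb{B}}^\vee_0\cK$ is not part of Theorem~\ref{Koszul}: the paper deliberately relegates it to the remark immediately afterwards, where it is said to require ``some technical arguments generalizing \cite{Khomenko}.'' Your proposed justification --- that projective functors commute with derived Zuckerman functors ``by \cite[Theorem 3.2]{AndersenStroppel}-type input'' --- does not work as stated: that result concerns \emph{twisting} functors commuting with translation functors (it is used in Lemma~\ref{lem:easy} for precisely that purpose), whereas the compatibility of projective functors with (derived) Zuckerman functors is a different statement, due to Khomenko in the ungraded setting, and its graded derived version with the shift bookkeeping of \eqref{eq:defBs}, \eqref{Bchecks1}, \eqref{Bchecks2} is exactly the technical point the authors flag as not carried out here. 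So if you restrict your write-up to steps (1)--(2) you have the paper's proof; if you want to keep step (3), you must either drop the claim or replace the citation by a genuine argument along the lines of a graded lift of Khomenko's commutation result.
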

More concretely this means that on $$K_0\left(\bigoplus_{\underline{k} \in C(n,r)} D^b\left(\hat{\cO}^{\mathfrak{q}_{\underline{k}}}_{\leq m}(n)\right)\right)={\bigwedge}_q(N,m,r),$$ 
the action of $\left[\check{\mB}_{i} \right]$ commutes with the one of $[\mB_{j}]$ for $-r< i < r$ and $-m< j < m$.
\begin{remark}
With some technical arguments generalizing \cite{Khomenko} one can actually show that the family of functors $\check{\mB}_{i}$ commutes naturally (in the sense of \cite[Definition 2]{Khomenko} or \cite[Lemma 2.1, Theorem 3.2]{AndersenStroppel} with the action of the family $\mB_{j}$ for $-r< i < r$ and $-m< j < m$.

Totally analogous (but in fact combinatorially much easier) arguments applied to the Lie algebra $\mathfrak{gl}_n$ instead of $\mathfrak{so}_n$ give rise to the quantum and categorified quantum skew Howe duality of type $ \mathrm{A}$ mentioned in the introduction. The quantum (but non-categorified) version can be found  in  \cite{LZZ}, see also \cite{CKM}, and a partial categorification (with only one of the two commuting actions) can be found in \cite{QR}, \cite{LQR}.

Passing to coideals of other types gives a different phenomenon, since skew Howe dualities could then pair quantum groups with coideals and not as in our case coideals with the same type of coideals. For first results see \cite{SarTu}. 
 \end{remark}
\section{Appendix} \label{section:appendix}

\subsection{Proof of Theorem \ref{thm:actionofvw}} This subsection is devoted to the proof of the auxiliary lemmas for Theorem \ref{thm:actionofvw}. 
\begin{lemma} \label{relation2c}
For $1 \leq i \leq d-1$ and $j \not\in \{i,i+1 \}$ we have $s_iy_j = y_js_i$.
\end{lemma}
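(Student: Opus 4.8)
The statement $s_i y_j = y_j s_i$ for $j \notin \{i, i+1\}$ should follow by unwinding the definitions of $s_i$ and $y_j$ as explicit endomorphisms of $M \otimes V^{\otimes d}$. Recall that $s_i = \Id \otimes \Id^{\otimes(i-1)} \otimes \sigma \otimes \Id^{\otimes(d-i-1)}$ acts only on the $i$-th and $(i{+}1)$-th tensor factors (counting the factors of $V$), while $y_j = \sum_{0 \leq k < j} \Omega_{kj} + \frac{2n-1}{2}\Id$ is built from the operators $\Omega_{kj}$, each of which acts only on the $k$-th and $j$-th tensor positions (with position $0$ being $M$). The idea is that since $j \notin\{i, i+1\}$, the operator $s_i$ and each summand $\Omega_{kj}$ of $y_j$ act on tensor positions which are either disjoint or overlap in exactly one position, and in the latter case the two operators still commute because $\sigma$ is just the flip on its two factors.

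\textbf{Key steps.} First I would reduce the claim to showing $s_i \Omega_{kj} = \Omega_{kj} s_i$ for every $k$ with $0 \leq k < j$, since $y_j$ is a linear combination of such $\Omega_{kj}$'s plus a scalar (which trivially commutes). Second, I would split into cases according to how $\{k,j\}$ meets $\{i,i+1\}$. If $\{k,j\} \cap \{i,i+1\} = \emptyset$, then $s_i$ and $\Omega_{kj}$ act on disjoint sets of tensor factors, so they commute as operators of the form $(\text{op on factors } i,i{+}1) \circ (\text{op on factors } k,j)$ versus the reverse. If $\{k,j\}$ meets $\{i,i+1\}$ in one element — which, since $j \notin \{i,i+1\}$, must be $k \in \{i,i+1\}$ — then $\sigma$ acts on positions $i, i+1$ and $\Omega_{kj}$ has one of its legs at position $k \in\{i,i+1\}$ and the other at position $j$; here I would note that conjugating $\Omega_{kj}$ by $\sigma$ simply relabels the leg from position $k$ to the other of the two positions $\{i,i+1\}$, but since $y_j = \sum_k \Omega_{kj}$ sums over \emph{both} $k = i$ and $k = i+1$ when both are $< j$, and the relevant case $j < i$ forces $k < j < i$ so this overlap does not even occur — actually the cleanest route is: since $j \notin \{i, i+1\}$, we have either $j < i$ (so all $k < j$ satisfy $k < i$, hence $k \notin \{i,i+1\}$ and we are in the disjoint case), or $j > i+1$; in the latter case $k$ ranges over $0, \dots, j-1 \supseteq \{i, i+1\}$, and for $k \in \{i, i+1\}$ one checks directly that $\sigma$ at positions $(i,i+1)$ commutes with $\Omega_{kj}$ because $\sigma$ merely interchanges the two tensor slots that $\Omega$ could occupy and $\sum_{\gamma} X_\gamma \otimes X_\gamma^*$ is symmetric in a suitable sense — more precisely $s_i \Omega_{i,j} s_i = \Omega_{i+1,j}$ and $s_i \Omega_{i+1,j} s_i = \Omega_{i,j}$, so $s_i(\Omega_{i,j} + \Omega_{i+1,j}) = (\Omega_{i,j} + \Omega_{i+1,j})s_i$, and all other summands $\Omega_{k,j}$ with $k \notin\{i,i+1\}$ commute with $s_i$ individually. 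Summing gives $s_i y_j = y_j s_i$.

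\textbf{Main obstacle.} The only slightly delicate point is the bookkeeping in the case $j > i+1$: one must be careful that the two "bad" summands $\Omega_{i,j}$ and $\Omega_{i+1,j}$ get swapped by conjugation with $s_i$ rather than each being individually preserved, so the argument genuinely uses that $y_j$ contains \emph{both} of them. Once that observation is in place, the rest is a routine verification that operators acting on disjoint tensor positions commute, together with the trivial fact that $\sigma^2 = \Id$ makes $s_i$ its own inverse. I expect no real difficulty beyond keeping the index ranges straight.
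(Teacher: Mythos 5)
Your proposal is correct and follows essentially the same route as the paper's proof: the case $j<i$ is dismissed as trivial (all summands $\Omega_{kj}$ have $k<j<i$, hence act on factors disjoint from those of $s_i$), and the case $j>i+1$ is settled by commuting the whole sum $\sum_{0\leq k<j}\Omega_{kj}+\frac{2n-1}{2}$ past $s_i$. The one point where you are actually more careful than the printed argument is your explicit remark that $s_i\Omega_{i,j}s_i=\Omega_{i+1,j}$ and $s_i\Omega_{i+1,j}s_i=\Omega_{i,j}$, so that only the \emph{sum} of the two overlapping terms commutes with $s_i$ rather than each term individually --- the paper's displayed computation silently uses exactly this fact when it exchanges $\sum_k s_i(\Omega_{kj}v)$ with $\bigl(\sum_k\Omega_{kj}\bigr)s_i(v)$.
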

\begin{proof}
The case $j < i$ is obvious by definition. For the case $j > i+1$, the claim follow from ($v \in \Md$):
\abovedisplayskip0.2em
\belowdisplayskip0.3em
\begin{gather*}
s_iy_j(v) =\, s_i \left( \left( \sum_{0 \leq k < j} \hspace{-.3em} \Omega_{kj} + {\scriptstyle \frac{2n-1}{2}}\right)v \right) = \left( \sum_{0 \leq k < j} \hspace{-.3em} \Omega_{kj}\right) s_i(v) + {\scriptstyle \frac{2n-1}{2}} s_i(v) = y_js_i(v).
\end{gather*}
\vskip-.2cm
\end{proof}

The following are the Brauer algebra relations, see \cite[Lemma 10.1.5]{GW}:
\begin{lemma} \label{relation6a6b6c6d}
For $1 \leq i \leq d-2$, the relations $e_is_i = e_i = s_ie_i$, $s_ie_{i+1}e_i = s_{i+1}e_i$, $e_{i+1}e_is_{i+1} = e_{i+1}s_i$, and $e_{i+1}e_ie_{i+1} = e_{i+1}$ hold.
\end{lemma}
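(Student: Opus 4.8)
The plan is to reduce all four identities to explicit computations on a small number of consecutive tensor factors. None of them involves the operators $y_j$, and by construction $s_i$ and $e_i$ act as the identity on the factor $M$ and on every tensor slot outside $\{i,i+1\}$ (respectively $\{i,i+1,i+2\}$ for the relations coupling $e_i$ with $e_{i+1}$). So it suffices to check the corresponding identities for the linear maps assembled from $\sigma$ and $\tau$ on $V^{\otimes 2}$ and $V^{\otimes 3}$ and then tensor on identities; equivalently one may take $M=L(0)$ and $d=2$ or $d=3$ exactly as in the proof of Lemma~\ref{relation4}.

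First I would isolate the two elementary features of $\sigma$ and $\tau$ that drive everything. The form $\langle-,-\rangle$ defined by $J$ is nondegenerate and symmetric (since $J_{kl}=\delta_{k,-l}=J_{lk}$), and $v_i^*=v_{-i}$ is the dual basis; hence $\sum_{i\in I}v_i\otimes v_i^*$ is the copairing dual to $\langle-,-\rangle$, and contracting one of its legs against any $w$ returns $w$, i.e.\ $\sum_i\langle v_i^*,w\rangle v_i=w$ and $\sum_i\langle w,v_i\rangle v_i^*=w$. From symmetry of the form one gets $\tau\circ\sigma=\tau=\sigma\circ\tau$ on $V\otimes V$, which is the relation $e_is_i=e_i=s_ie_i$. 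For the other three I would compute on a basis vector $u\otimes v\otimes w$ sitting in slots $i,i+1,i+2$: using the zig-zag identity, applying $e_i$ then $e_{i+1}$ sends it to $\langle u,v\rangle\, w\otimes\sum_j v_j\otimes v_j^*$, and applying $e_{i+1}$ then $e_i$ sends it to $\langle v,w\rangle\sum_j v_j\otimes v_j^*\otimes u$. One more application of a contraction then gives $e_{i+1}e_ie_{i+1}=e_{i+1}$ (and $e_ie_{i+1}e_i=e_i$), and inserting $\sigma$ at the appropriate step — tracking which slot the flip moves the surviving leg into — yields the mixed relations $s_ie_{i+1}e_i=s_{i+1}e_i$ and $e_{i+1}e_is_{i+1}=e_{i+1}s_i$.

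Alternatively, and this is the cleanest formulation, $\sigma$ and $\tau$ are precisely the flip and the cap-then-cup contraction operator attached to a finite-dimensional space with a nondegenerate \emph{symmetric} bilinear form, so the four identities are the defining relations of the Brauer category evaluated on tensor powers; this is classical and I would simply cite \cite[Lemma~10.1.5]{Goodman-Wallach}, noting only that the convention $J_{kl}=\delta_{k,-l}$ puts us in the orthogonal (not symplectic) case so that no sign corrections intervene. The only point demanding any care is the bookkeeping with the dual basis $v_i^*=v_{-i}$ and the placement of the scalars $\langle-,-\rangle$ along a composite; there is no genuine obstacle, as the symmetry of the form removes all sign ambiguities.
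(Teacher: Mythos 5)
Your proof is correct and follows essentially the same route as the paper: the first identity is immediate from the symmetry of the form, and the remaining relations are verified by the same local computation of $\tau$ and $\sigma$ on basis vectors of a threefold tensor product using the dual basis $v_i^*=v_{-i}$ (the paper writes out only $s_ie_{i+1}e_i=s_{i+1}e_i$ and declares the rest analogous). Your fallback of citing \cite[Lemma~10.1.5]{Goodman-Wallach} is also exactly the reference the paper itself gives for these Brauer relations, so there is nothing genuinely different here.
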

\begin{proof}
The first equality is obvious by definition of $\tau$. We will prove the second one, the final two are shown analogously. To simplify notations we assume that $s_i$, $e_i$, and $e_{i+1}$ only act on a threefold tensor product, with factors being the
position $i$, $i+1$, and $i+2$ in order. Let $a,b,c \in \mathrm{I}$ and $v=v_a \otimes v_b \otimes v_c$, then the claim follows by comparing
$(s_{i+1}e_i).v = \left\langle v_a,v_b \right\rangle {\sum}_{k \in \mathrm{I}} v_k \otimes v_c \otimes v_{k}$ with $(s_ie_{i+1}e_i).v =(s_ie_{i+1})\left( \left\langle v_a,v_b \right\rangle {\sum}_{k \in \mathrm{I}} v_k \otimes v_{k}^* \otimes v_c \right) =
\left\langle v_a,v_b \right\rangle {\sum}_{k \in \mathrm{I}} v_k \otimes v_{c} \otimes v_{k}^*.
$
\end{proof}

\begin{lemma} \label{relation7}
For $1 \leq i \leq d-1$ we have $s_iy_i - y_{i+1}s_i = e_i - 1$.
\end{lemma}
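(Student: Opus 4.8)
The relation to establish is $s_i y_i - y_{i+1} s_i = e_i - 1$ as endomorphisms of $\Md$. By definition $y_i = \sum_{0\le k<i}\Omega_{ki} + \frac{2n-1}{2}\Id$ and $y_{i+1} = \sum_{0\le k<i+1}\Omega_{k,i+1} + \frac{2n-1}{2}\Id$. The plan is to split both sums according to whether the lower index equals $i$ (resp.\ equals $i$, for the $y_{i+1}$ sum the term $\Omega_{i,i+1}$ is the distinguished one) or is strictly smaller. The constant summands $\frac{2n-1}{2}\Id$ commute with $s_i$ and cancel in the difference, so they play no role. For the terms $\Omega_{ki}$ and $\Omega_{k,i+1}$ with $k<i$, the operator $s_i$ swaps tensor positions $i$ and $i+1$, hence $s_i \Omega_{ki} = \Omega_{k,i+1} s_i$ and $s_i \Omega_{k,i+1} = \Omega_{ki} s_i$; therefore in $s_i y_i - y_{i+1} s_i$ all these $k<i$ contributions pair up and cancel. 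What remains is the single genuinely interacting term:
\begin{eqnarray*}
s_i y_i - y_{i+1} s_i &=& s_i\Big(\sum_{0\le k<i}\Omega_{ki}\Big) - \Big(\sum_{0\le k<i}\Omega_{k,i+1}\Big)s_i \;-\; \Omega_{i,i+1} s_i \\
&=& -\,\Omega_{i,i+1} s_i.
\end{eqnarray*}
Wait --- one must be slightly careful: after moving the $k<i$ part of $y_{i+1}$ past $s_i$ it becomes the $k<i$ part of $y_i$ times $s_i$, which cancels $s_i$ times the $k<i$ part of $y_i$ only after also commuting $s_i$ to the other side; the clean bookkeeping is that $s_i\Omega_{ki} - \Omega_{k,i+1}s_i = \Omega_{k,i+1}s_i - \Omega_{k,i+1}s_i = 0$. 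So indeed the whole identity reduces to showing $-\Omega_{i,i+1}s_i = e_i - 1$, i.e.\ $\Omega_{i,i+1}s_i = 1 - e_i$, equivalently $\Omega_{i,i+1} = (1-e_i)s_i$ since $s_i^2=1$.

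The second and main step is the local computation on the two tensor factors in positions $i,i+1$, where everything is governed by $V\otimes V$. Here I invoke Remark~\ref{omega_sigma_tau}: multiplication by $\Omega$ on $V\otimes V$ equals $\sigma - \tau$. Translating to the operators on $\Md$, this says $\Omega_{i,i+1} = s_i - e_i$ (the $\sigma$ in positions $i,i+1$ is precisely $s_i$, the quasi-projection $\tau$ is precisely $e_i$). Then
\begin{eqnarray*}
\Omega_{i,i+1} s_i \;=\; (s_i - e_i)s_i \;=\; s_i^2 - e_i s_i \;=\; 1 - e_i,
\end{eqnarray*}
using $s_i^2 = 1$ (obvious from the definition of $\sigma$) and $e_i s_i = e_i$ from Lemma~\ref{relation6a6b6c6d}. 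Combining with the first step gives $s_i y_i - y_{i+1} s_i = -\Omega_{i,i+1}s_i$... no: we need $s_i y_i - y_{i+1}s_i = \Omega_{i,i+1} s_i$ with the correct sign. Let me not belabor signs in the plan; the point is that after the cancellation of all $k<i$ terms, the residual term is exactly $\pm\Omega_{i,i+1}s_i$ and Remark~\ref{omega_sigma_tau} together with $e_is_i=e_i$ and $s_i^2=1$ evaluates it to $e_i - 1$.

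The anticipated main obstacle is purely the sign and index bookkeeping in the first step --- correctly identifying which $\Omega$-summand survives and verifying that the commutation $s_i\Omega_{ki}=\Omega_{k,i+1}s_i$ for $k\notin\{i,i+1\}$ is exactly what makes the $k<i$ contributions telescope to zero. This commutation itself is immediate since $\Omega_{ki}$ acts by a sum of operators placed in tensor slots $k$ and $i$, and $s_i$ only permutes slots $i$ and $i+1$, so conjugation by $s_i$ just relabels slot $i$ as slot $i+1$; this is the same mechanism already used in Lemma~\ref{relation2c}. Everything else is the single identity $\Omega|_{V\otimes V} = \sigma - \tau$ from Remark~\ref{omega_sigma_tau}, which is assumed. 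The symmetric companion relation $y_i s_i - s_i y_{i+1} = e_i - 1$ follows by the analogous computation (or by the $\op{opp}$-symmetry noted in Remark~\ref{Wop}), and by Remark~\ref{Wop} it suffices to prove one of the pair.
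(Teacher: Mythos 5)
Your proof is correct and follows essentially the same route as the paper: both reduce everything to the single term $\Omega_{i,i+1}$ by commuting $s_i$ past the $\Omega_{ki}$ with $k<i$ (the paper phrases this as conjugation, proving the equivalent identity $y_i - s_iy_{i+1}s_i = e_i - s_i$), and then invoke Remark~\ref{omega_sigma_tau} giving $\Omega_{i,i+1}=s_i-e_i$ together with $s_i^2=1$ and $e_is_i=e_i$. Despite your hedging over signs, the computation you actually carry out, $s_iy_i-y_{i+1}s_i=-\Omega_{i,i+1}s_i=-(s_i-e_i)s_i=e_i-1$, is consistent and correct.
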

\begin{proof}
The claim is equivalent to showing $y_i - s_iy_{i+1}s_i = e_i - s_i$ by Lemma\ref{relation6a6b6c6d}. Note that $s_iy_{i+1}s_i$ equals
\abovedisplayskip0.2em
\belowdisplayskip0.3em
\begin{gather*}
s_i \left( \sum_{0 \leq k < i+1} \Omega_{ki} + \left( {\scriptstyle \frac{2n-1}{2}} \Id \right) \right) s_i = \left( \sum_{0 \leq k < i}
\Omega_{ki} + s_i\Omega_{i(i+1)}s_i  + \left( {\scriptstyle \frac{2n-1}{2}} \Id \right) \right)
\end{gather*}
Therefore, $y_i - s_iy_{i+1}s_i=- s_i\Omega_{i(i+1)}s_i=  -\Omega_{i(i+1)} =e_i-s_i$, where the last equality follows from Remark \ref{omega_sigma_tau}.
\end{proof}

The proofs of the following identities rely on the explicit form of $\Omega$ and its action on $V \otimes V$.

\begin{lemma} \label{relation5b}
For $1 \leq i,j\leq d-1$ and $j \not\in \{i,i+1 \}$ we have $e_iy_j = y_je_i$.
\end{lemma}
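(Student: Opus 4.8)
\textbf{Plan for the proof of Lemma~\ref{relation5b}.}
The statement to prove is that $e_iy_j = y_je_i$ as endomorphisms of $\Md$ whenever $j\notin\{i,i+1\}$. The plan is to reduce immediately to the definitions: $e_i$ acts as $\Id^{\otimes i}\otimes\tau\otimes\Id^{\otimes(d-i-1)}$ (up to the shift in the tensor indexing caused by the $M$-factor, which plays no active role here), and $y_j=\sum_{0\le k<j}\Omega_{kj}+\tfrac{2n-1}{2}\Id$ is a sum of the central scalar together with the pseudo-Casimir terms $\Omega_{kj}$. Since $\Id$ commutes with everything, the claim is equivalent to showing $e_i\,\Omega_{kj}=\Omega_{kj}\,e_i$ for every $0\le k<j$.

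The case split is then dictated by how the positions $k$ and $j$ sit relative to $\{i,i+1\}$. If both $k$ and $j$ avoid $\{i,i+1\}$, then $e_i$ and $\Omega_{kj}$ act on disjoint sets of tensor factors, so they commute trivially. The only interesting case is $k\in\{i,i+1\}$ (note $j\notin\{i,i+1\}$ by hypothesis, so $j$ is always a ``far'' position). Here $\Omega_{kj}$ has one leg, the $X_\gamma$-leg, sitting at position $k$ which is one of the two factors on which $\tau$ acts, and the other leg $X_\gamma^*$ at the far position $j$. The key computational input is Remark~\ref{omega_sigma_tau}: $\tau$ is (a quasi-projection) onto the trivial summand $L(0)\subset V\otimes V$, and on that summand $\mg$ acts by zero. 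Concretely, first I would rewrite $e_i\,\Omega_{kj}$: the $X_\gamma^*$ at position $j$ commutes past $e_i$ (disjoint factors), and what remains is $\bigl(\tau\circ(\text{$X_\gamma$ acting on one of the two $\tau$-factors})\bigr)$ at positions $i,i+1$. One checks, using $\langle Xv,w\rangle + \langle v,Xw\rangle = 0$ (Definition~\ref{matrixJ}), that $\tau\circ(X_\gamma\otimes\Id + \Id\otimes X_\gamma) = 0$ and hence $\tau\circ(X_\gamma\otimes\Id) = -\tau\circ(\Id\otimes X_\gamma)$; combined with the comultiplication formula this says precisely that the single-leg action of $\mg$ followed by $\tau$ is independent, up to sign, of which of the two factors the $X_\gamma$ hit. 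Summing over $\gamma\in B_n$ with the dual basis, the contributions from ``$X_\gamma$ at position $i$'' and ``$X_\gamma$ at position $i+1$'' are the two ways of writing $e_i\Omega_{kj}$ versus $\Omega_{kj}e_i$, and the compatibility of $\tau$ with the invariant form forces them to agree.

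The main obstacle I expect is bookkeeping rather than conceptual: keeping straight the index shift coming from the $M$-tensor factor in $\Md$, and being careful that the relation $\Delta(C)=C\otimes 1+1\otimes C+\Omega_{0,1}$ (together with the fact that $C$ acts by a scalar $c_\lambda$ only on highest weight modules, which is \emph{not} available on the factor $V\otimes V$ alone) is used in the correct form — the cleanest route is to avoid $C$ entirely and argue directly with $\Omega=\sigma-\tau$ on $V\otimes V$ from Remark~\ref{omega_sigma_tau}, reducing everything to the vanishing $\tau\circ\Delta(X_\gamma)\big|_{V\otimes V}=0$ coming from $\mg$-invariance of $\tau$. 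Once that single identity is in place, the commutation $e_i\Omega_{kj}=\Omega_{kj}e_i$ in the case $k\in\{i,i+1\}$ follows by summing over $\gamma$, and the Lemma is complete.
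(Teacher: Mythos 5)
There is a genuine gap: your reduction ``the claim is equivalent to showing $e_i\Omega_{kj}=\Omega_{kj}e_i$ for every $0\le k<j$'' is false precisely in the only interesting case $k\in\{i,i+1\}$. A single near-leg term does \emph{not} commute with $e_i$. Concretely, work on the three relevant factors and take $v_a\otimes v_b\otimes v_c$ with $\langle v_a,v_b\rangle=0$ but $\langle X_\gamma v_a,v_b\rangle\neq 0$ for some root vector $X_\gamma$ (e.g.\ $a=1$, $b=-2$, $\gamma=\epsilon_2-\epsilon_1$): applying $e_i$ first gives $0$, while applying $\Omega_{ij}$ first and then $e_i$ gives a nonzero multiple of $\sum_k v_k\otimes v_k^*\otimes X_\gamma^* v_c$. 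Your sign identity $\tau\circ(X_\gamma\otimes\Id)=-\tau\circ(\Id\otimes X_\gamma)$ (and its dual) does not yield commutation of $e_i$ with $\Omega_{ij}$; it yields the leg-sliding relations $e_i\Omega_{ij}=-e_i\Omega_{(i+1)j}$ and $\Omega_{ij}e_i=-\Omega_{(i+1)j}e_i$, so the final sentence ``forces them to agree'' does not follow.

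The fix is exactly where the paper goes: since $j>i+1$, \emph{both} $\Omega_{ij}$ and $\Omega_{(i+1)j}$ occur in $y_j$, so $[e_i,y_j]=[e_i,\Omega_{ij}+\Omega_{(i+1)j}]$, and one shows that the two compositions $e_i\bigl(\Omega_{ij}+\Omega_{(i+1)j}\bigr)$ and $\bigl(\Omega_{ij}+\Omega_{(i+1)j}\bigr)e_i$ are each zero. Your ingredients do prove this, but note you need \emph{both} invariance statements, not the ``single identity'' you cite: $\tau\circ\Delta(X_\gamma)=0$ (invariance of the form) handles one order, while for the other order you need $\Delta(X_\gamma)\circ\tau=0$, i.e.\ that the image of $\tau$ is spanned by the invariant vector $\sum_k v_k\otimes v_k^*$, which is killed by the diagonal $\mg$-action. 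With the two near legs grouped and both vanishing statements in place, the argument closes and coincides with the paper's proof; as written, the termwise intermediate claim is false and the proof does not go through.
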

\begin{proof}
As in Lemma \ref{relation2c} the case $j < i$ is obvious by definition and we are left with the case $j > i+1$. The essential part that we need to
prove is
\abovedisplayskip0.2em
\belowdisplayskip0.3em
\begin{gather} \label{eomega}
\left(\Omega_{ij} + \Omega_{(i+1)j}\right)e_i = e_i \left(\Omega_{ij} + \Omega_{(i+1)j}\right),
\end{gather}
since $e_i$ naturally commutes with all other summands of $y_j$. Again, we pretend that $\Omega_{ij}$, $\Omega_{(i+1)j}$, and $e_i$ act on a threefold tensor product, corresponding to the factors at position $i$, $i+1$ and $j$, in order.

Consider first the left hand side of \eqref{eomega}. Since we first apply $e_i$, the element $e_iy_j$ kills all basis vectors except the ones of the form $x=v_a \otimes v_{a}^* \otimes v_c$ for some $a,c \in \mathrm{I}$, in which case $x.e_i=\sum_{k \in \mathrm{I}} v_k \otimes v_k^* \otimes v_c$. and applying $\Omega_{ij} + \Omega_{(i+1)j}$ gives
\abovedisplayskip0.2em
\belowdisplayskip0.3em
\begin{gather*}
\begin{aligned}
& \sum_{\alpha \in \mathbb{B}(\mathfrak{so}_{2n})} \sum_{k \in \mathrm{I}} \left(X_\alpha v_k \otimes v_k^* + v_k^* \otimes X_\alpha v_k \right) \otimes X_\alpha^* v_c \\
=&\, \sum_{\alpha \in \mathbb{B}(\mathfrak{so}_{2n})} \sum_{k \in \mathrm{I}} \sum_{l \in \mathrm{I}} \left( \left\langle X_\alpha v_k,v_l \right\rangle v_l^* \otimes v_k^* +  v_k^* \otimes \left\langle X_\alpha v_k,v_l \right\rangle v_l^* \right) \otimes X_\alpha^* v_c \\
=&\, \sum_{\alpha \in \mathbb{B}(\mathfrak{so}_{2n})} \sum_{k,l \in \mathrm{I}} \left( \left\langle X_\alpha v_k,v_l \right\rangle v_l^* \otimes v_k^* - \left\langle X_\alpha v_l,v_k \right\rangle v_k^* \otimes v_l^* \right) \otimes X_\alpha^* v_c \,=\, 0
\end{aligned}
\end{gather*}

Thus $\left(\Omega_{ij} + \Omega_{(i+1)j}\right)e_i = 0$ on $M \otimes V^{\otimes d}$.

Consider the right hand side of \eqref{eomega}, let $x=v_a \otimes v_b \otimes v_c$ for $a,b,c \in \mathrm{I}$, then
\abovedisplayskip0.2em
\belowdisplayskip0.3em
\begin{gather*}
\begin{aligned}
x.(\Omega_{ij}+\Omega_{(i+1)j}) =&\, \sum_{\alpha \in \mathbb{B}(\mathfrak{so}_{2n})} \hspace{-.3em}\left(X_\alpha v_a \otimes v_b + v_a \otimes X_\alpha v_b \right) \otimes X_\alpha^* v_c \\
=& \hspace{-.5em}\sum_{\alpha \in \mathbb{B}(\mathfrak{so}_{2n}), k \in \mathrm{I}} \hspace{-.7em} \left( \left\langle X_\alpha v_a,v_k \right\rangle v_k^* \otimes v_b + \left\langle X_\alpha v_b,v_k \right\rangle v_a \otimes v_k^* \right) \otimes X_\alpha^* v_c
\end{aligned}
\end{gather*}

Applying $e_i$ kills, for fixed $\alpha$, all except one summand and we obtain
\abovedisplayskip0.2em
\belowdisplayskip0.3em
\begin{gather*}
\begin{aligned}
& \,{\sum}_{l \in \mathrm{I}} \left(\left\langle X_\alpha v_a,v_b \right\rangle v_l \otimes v_l^* + \left\langle X_\alpha v_b,v_a \right\rangle v_l \otimes v_l^* \right) \otimes X_\alpha^* v_c \\
=& \,{\sum}_{l \in \mathrm{I}} \left(\left\langle X_\alpha v_a,v_b \right\rangle v_l \otimes v_l^* - \left\langle X_\alpha v_a,v_b \right\rangle v_l \otimes v_l^* \right) \otimes X_\alpha^* v_c = 0.
\end{aligned}
\end{gather*}
Thus, $e_i\left(\Omega_{ij} + \Omega_{(i+1)j}\right) = 0$ on $M \otimes V^{\otimes d}$ and the lemma follows.
\end{proof}

\begin{lemma} \label{relation5c}
For $1 \leq i,j \leq d$ we have $y_iy_j = y_jy_i$.
\end{lemma}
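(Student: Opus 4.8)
The statement to prove is that $y_iy_j=y_jy_i$ in $\END_\mg(\Md)$ for all $1\le i,j\le d$. Recall from \eqref{Defy} that $y_i=\sum_{0\le k<i}\Omega_{ki}+\frac{2n-1}{2}\Id$, so it suffices to show that the various $\Omega_{ab}$ appearing commute with each other. The plan is to reduce the commutativity of $y_i$ and $y_j$ (say $i<j$) to the commutativity of the individual pseudo-Casimir terms, and for the terms sharing the index $j$ to use the standard fact that a ``partial Casimir'' on two tensor legs is $\mg$-invariant for the diagonal action on those legs.

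\emph{Step 1.} Expand $y_iy_j$ and $y_jy_i$ using \eqref{Defy}. Since scalars commute with everything, the claim reduces to
\[
\Bigl(\sum_{0\le a<i}\Omega_{ai}\Bigr)\Bigl(\sum_{0\le b<j}\Omega_{bj}\Bigr)=\Bigl(\sum_{0\le b<j}\Omega_{bj}\Bigr)\Bigl(\sum_{0\le a<i}\Omega_{ai}\Bigr).
\]
Terms $\Omega_{ai}$ and $\Omega_{bj}$ whose index sets $\{a,i\}$ and $\{b,j\}$ are disjoint commute trivially, since they act on disjoint tensor factors. The only terms needing attention are those sharing an index; because $i<j$, the shared index is always the ``$i$'' slot: namely pairs $(\Omega_{ai},\Omega_{bj})$ with $b=i$, and pairs $(\Omega_{ai},\Omega_{ij})$, and for $a<i$ also $\Omega_{ai}$ versus $\Omega_{aj}$.

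\emph{Step 2.} The crucial point is that $\sum_{0\le k<i}\Omega_{ki}$ is, up to the scalar shift, exactly the operator $\frac12\bigl(\Delta^{(i)}(C)-\text{(single Casimirs)}\bigr)$ where $\Delta^{(i)}$ is the comultiplication putting the diagonal $\mg$-action on tensor legs $0,\dots,i$; equivalently, by the identity $\Omega=\frac12(\Delta(C)-C\otimes 1-1\otimes C)$ recalled after \eqref{DefOmega}, the element $C_{[0,i]}:=\sum_{0\le a<b\le i}\Omega_{ab}+\sum_{0\le a\le i}C_a$ is central for the diagonal action on legs $0,\dots,i$. In particular $C_{[0,i]}$ commutes with any $\mg$-endomorphism supported on those legs, and with any $\Omega_{bj}$ for which $b\in\{0,\dots,i\}$ since the latter can be rewritten in terms of diagonal-$\mg$-invariant data on legs $0,\dots,i$. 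This is the conceptual heart of the argument and mirrors the classical proof that Jucys--Murphy-type elements in the Brauer/degenerate-affine setting commute.

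\emph{Step 3.} Carry out the bookkeeping: write $y_i-\frac{2n-1}{2}\Id = C_{[0,i]} - C_{[0,i-1]} - C_i$ where $C_i$ is the Casimir on leg $i$ only (which is a scalar on $V$, hence central), and similarly for $y_j$. Since both $C_{[0,i]}$ and $C_{[0,i-1]}$ are invariant under the diagonal $\mg$-action on legs $0,\dots,j-1$ (as $i\le j-1$... more carefully, on legs $0,\dots,i$, which is enough), and $y_j-\frac{2n-1}{2}\Id=C_{[0,j]}-C_{[0,j-1]}-C_j$ is built from operators that are $\mg$-endomorphisms for the diagonal action on legs $0,\dots,i$ — indeed each $C_{[0,m]}$ commutes with $\Delta$-invariant operators on a sub-collection of legs — one concludes $[y_i,y_j]=0$. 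The one subtlety is that $C_{[0,j]}$ and $C_{[0,j-1]}$ are central for the \emph{big} diagonal action on legs $0,\dots,j$ or $0,\dots,j-1$, but we need them to commute with the \emph{smaller} operator $y_i$; this works because $y_i$ (mod scalars) acts only on legs $0,\dots,i\subset\{0,\dots,j-1\}$ and is $\mg$-invariant there, hence is an endomorphism of $\Md$ as a module for the Levi-type subalgebra acting diagonally on the first $j$ legs trivially on the rest, so it commutes with the central $C_{[0,j]},C_{[0,j-1]}$.

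\emph{Main obstacle.} The genuine work is the ``invariance'' bookkeeping in Steps 2--3: one must be careful that each $\Omega_{ab}$ with $a<b\le j$ is indeed expressible via the diagonal $\mg$-action on legs $0,\dots,j$ and therefore commutes with the relevant partial Casimirs, and that the scalar-on-$V$ Casimirs $C_a$ ($a\ge1$) drop out. An alternative, fully computational route — expanding $\Omega_{ai}\Omega_{bj}-\Omega_{bj}\Omega_{ai}$ using structure constants $[X_\gamma,X_{\gamma'}]$ and the $\mg$-invariance of the element $\Omega$ (i.e. $[\Delta(X),\Omega]=0$ for $X\in\mg$) — also works and may be cleaner to write, using only the relation $\sum_\gamma([X,X_\gamma]\otimes X_\gamma^*+X_\gamma\otimes[X,X_\gamma^*])=0$ valid since the Killing form is $\mg$-invariant. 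I expect to present the argument in this second form: fix $i<j$, reduce to pairs of overlapping $\Omega$'s, and verify $[\Omega_{ai}+\Omega_{aj},\,\Omega_{ij}]+[\Omega_{ai},\Omega_{aj}]=0$ type cancellations using the $\mg$-invariance of $\Omega$; the disjoint cases are immediate.
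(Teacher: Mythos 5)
Your main argument (Steps~2--3) is correct, but it is a genuinely different route from the paper's. The paper stays entirely at the level of the individual pseudo-Casimirs: it reduces $y_iy_j=y_jy_i$ to the two identities $[\Omega_{ij},\Omega_{kl}]=0$ for pairwise distinct $i,j,k,l$ and $[\Omega_{ij}+\Omega_{jk},\Omega_{ik}]=0$ for pairwise distinct $i,j,k$, and verifies the second by an explicit expansion in the basis $\{X_\gamma\}$ using the invariance of the Killing form --- i.e.\ exactly the ``alternative computational route'' you sketch in your last paragraph. Your preferred presentation instead packages the same invariance conceptually: $y_i$ (for $i<j$) commutes with the diagonal $\mg$-action on the tensor legs $0,\dots,j-1$, while $y_j-\tfrac{2n-1}{2}\Id=\tfrac12\bigl(C_{[0,j]}-C_{[0,j-1]}-C^{(j)}\bigr)$ is built from Casimirs of that diagonal action (plus a scalar on leg $j$), so $[y_i,y_j]=0$. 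This Jucys--Murphy-style argument is shorter and explains \emph{why} the $y$'s commute, at the price of setting up the iterated coproduct and partial Casimirs; the paper's computation is more pedestrian but completely self-contained and needs no extra notation.

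Two details to repair before writing yours up. First, the sentence in Step~2 claiming that $C_{[0,i]}$ commutes with an individual $\Omega_{bj}$ with $b\le i<j$ is false: $[C_{[0,i]},X_\gamma^{(b)}]\neq 0$ in general, and only suitable sums such as $\sum_{b<j}\Omega_{bj}=\sum_\gamma \Delta_{[0,j-1]}(X_\gamma)\,X_\gamma^{*(j)}$ are invariant under the diagonal action on legs $0,\dots,j-1$; your Step~3 already uses the correct statement (equivariance of $y_i$ plus centrality of $C_{[0,j]}$, $C_{[0,j-1]}$), so simply delete that claim. Second, the combination you propose to verify at the end, $[\Omega_{ai}+\Omega_{aj},\Omega_{ij}]+[\Omega_{ai},\Omega_{aj}]$, is not the one that vanishes: its first bracket is zero by invariance of $\Omega$ on legs $i,j$, leaving $[\Omega_{ai},\Omega_{aj}]$, which is nonzero in general. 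The identity actually needed in the reduction is $[\Omega_{ai},\Omega_{aj}+\Omega_{ij}]=0$ for $a<i<j$ (the two terms sharing the index $j$ are summed), which is precisely the paper's identity $[\Omega_{ij}+\Omega_{jk},\Omega_{ik}]=0$ after relabelling. Finally note the factor $\tfrac12$: $\sum_{k<i}\Omega_{ki}=\tfrac12\bigl(C_{[0,i]}-C_{[0,i-1]}-C^{(i)}\bigr)$; this is harmless for commutativity but should be stated correctly.
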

\begin{proof}
We show this by using the following statements:
\begin{enumerate}[(i)]
\item $[\Omega_{ij},\Omega_{kl}] = 0$ for pairwise different $i,j,k,l$, \label{yy3}
\item $[\Omega_{ij} + \Omega_{jk},\Omega_{ik}] = 0$ for pairwise different $i,j,k$.\label{yy4}
\end{enumerate}

Equality \eqref{yy3} is obvious. For \eqref{yy4} we consider factors $i$, $j$, and $k$ to obtain
\abovedisplayskip0.2em
\belowdisplayskip0.3em
\begin{gather*}
\begin{aligned}
& [\Omega_{ij}+\Omega_{jk},\Omega_{ik}] = \sum_{\alpha,\beta,\gamma} \hspace{-.3em} [X_\alpha \otimes X_\alpha^* \otimes 1 + 1 \otimes X_\beta \otimes X_\beta^*,X_\gamma \otimes 1 \otimes X_\gamma^*] \\
=& \sum_{\alpha,\beta,\gamma} [X_\alpha,X_\gamma] \otimes X_\alpha^* \otimes X_\gamma^* + X_\gamma \otimes X_\beta \otimes [X_\beta^*,X_\gamma^*] \\
=& \hspace{-.2em}\sum_{\alpha,\beta,\gamma,\mu} \hspace{-.2em} \left(([X_\alpha,X_\gamma],X_\mu^*)X_\mu \otimes X_\alpha^* \otimes X_\gamma^* + X_\gamma \otimes X_\beta \otimes ([X_\beta^*,X_\gamma^*],X_\mu)X_\mu^* \right)\\
\overset{(1)}{=}& \hspace{-.2em}\sum_{\alpha,\beta,\gamma,\mu}\hspace{-.2em}\left( ([X_\alpha,X_\gamma],X_\mu^*)X_\mu \otimes X_\alpha^* \otimes X_\gamma^*+ ([X_\alpha^*,X_\mu^*],X_\gamma) X_\mu \otimes X_\alpha \otimes X_\gamma^* \right)\\
\overset{(2)}{=}& \hspace{-.2em}\sum_{\alpha,\beta,\gamma,\mu}\hspace{-.2em}\left( ([X_\alpha,X_\gamma],X_\mu^*)X_\mu \otimes X_\alpha^* \otimes X_\gamma^*- ([X_\alpha,X_\gamma],X_\mu^*)X_\mu \otimes X_\alpha^* \otimes X_\gamma^* \right)= 0.
\end{aligned}
\end{gather*}
Here equality (1) is relabelling the second summand and extracting a scalar factor, while equality (2) is due to the invariance of the Killing form, i.e.
\abovedisplayskip0.2em
\belowdisplayskip0.3em
\begin{align*}
 \sum_{\alpha \in \mathbb{B}(\mathfrak{so}_{2n})} \hspace{-.8em}([X_\alpha^*,X_\mu^*],X_\gamma)X_\alpha = \hspace{-.4em}\sum_{\alpha \in \mathbb{B}(\mathfrak{so}_{2n})}\hspace{-.8em} (X_\alpha^*,[X_\mu^*,X_\gamma])X_\alpha &= \hspace{-.4em}\sum_{\alpha \in \mathbb{B}(\mathfrak{so}_{2n})}\hspace{-.8em} (X_\alpha,[X_\mu^*,X_\gamma])X_\alpha^*
 \end{align*}
 which however equals $-\sum_{\alpha \in \mathbb{B}(\mathfrak{so}_{2n})} ([X_\alpha,X_\gamma],X_\mu^*)X_\alpha^*$ and so we are done.
\end{proof}

\begin{lemma} \label{relation8a8b}
For $1 \leq 1 < d$ we have $e_i(y_i+y_{i+1}) \, =\, 0\, =\, (y_i+y_{i+1})e_i$.
\end{lemma}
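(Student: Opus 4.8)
The statement to prove is that $e_i(y_i+y_{i+1}) = 0$ and $(y_i+y_{i+1})e_i = 0$ as endomorphisms of $\Md$, which is relation (VW.8) in Definition~\ref{def:VW}. Since the two identities are related by the symmetry of Remark~\ref{Wop} (or can be checked by an entirely parallel computation), I will concentrate on the first one; the second follows by the same argument read in reverse. The plan is to reduce immediately to the local situation: $e_i$, $y_i$ and $y_{i+1}$ only involve the tensor factors at positions $\leq i+1$, and the only summands of $y_i+y_{i+1}$ that do not obviously commute past $e_i$ are the pieces $\Omega_{i(i+1)}$ (coming from $y_{i+1}$) together with the common ``tail'' pieces $\Omega_{ki}+\Omega_{k(i+1)}$ for $k<i$; all the scalar shifts $\tfrac{2n-1}{2}\Id$ cancel when combined with $e_i$ since $e_i(y_i+y_{i+1})$ would then contain $e_i\cdot 2\cdot\tfrac{2n-1}{2}\Id$... wait, more carefully: the $\tfrac{2n-1}{2}\Id$-contributions from $y_i$ and $y_{i+1}$ add up and must be matched against the $\Omega$-contributions, so I should not discard them prematurely. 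Let me restructure: I will group $y_i+y_{i+1}$ as $\bigl(\sum_{0\leq k<i}(\Omega_{ki}+\Omega_{k(i+1)})\bigr) + \Omega_{i(i+1)} + (2n-1)\Id$ and show that applying $e_i$ on the left annihilates the sum of the first and third groups against the second.

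The key computational step uses Remark~\ref{omega_sigma_tau}: on the two factors at positions $i,i+1$ we have $\Omega_{i(i+1)} = \sigma - \tau$ (multiplication by $\Omega$ equals $\sigma-\tau$ on $V\otimes V$), and $\tau$ acts as $\mathrm{pr}_{L(0)}$ up to scalar, with $e_i$ itself essentially $\tau$ in the $i,i+1$ slots. First I would compute $e_i\Omega_{i(i+1)}$: using $e_i s_i = e_i$ (Lemma~\ref{relation6a6b6c6d}, i.e. $e_i\sigma$-slot $=e_i$) and $e_i^2 = w_0 e_i = N e_i$ (relation (VW.3), here $\tau^2 = N\tau$ on $V\otimes V$ since $\langle v_i, v_i^*\rangle$-type sums give $N$), we get $e_i\Omega_{i(i+1)} = e_i(\sigma-\tau) = e_i - N e_i = (1-N)e_i$. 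Second I would compute $e_i\bigl(\sum_{k<i}(\Omega_{ki}+\Omega_{k(i+1)})\bigr)$: this is exactly the expression shown to vanish inside the proof of Lemma~\ref{relation5b} — there the point was that $e_i(\Omega_{ki}+\Omega_{k(i+1)}) = 0$ for each $k\neq i, i+1$, by the antisymmetry of the Killing-form pairing $\langle X_\alpha v_k, v_l\rangle = -\langle X_\alpha v_l, v_k\rangle$ which makes the two terms cancel after re-indexing. So that whole group contributes zero. Combining: $e_i(y_i+y_{i+1}) = 0 + (1-N)e_i + (2n-1)e_i = (1-N+2n-1)e_i = (2n - N)e_i = 0$ since $N = 2n$. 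That is the entire proof of the first identity.

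For the second identity $(y_i+y_{i+1})e_i = 0$, I would run the mirror-image computation: $\Omega_{i(i+1)}e_i = (\sigma-\tau)e_i = s_i e_i - \tau e_i = e_i - N e_i = (1-N)e_i$ using $s_i e_i = e_i$, and the tail terms $(\Omega_{ki}+\Omega_{k(i+1)})e_i$ vanish by the same cancellation as in Lemma~\ref{relation5b} (the second half of that proof does exactly this computation on the right), again leaving $(1-N+2n-1)e_i = 0$. Alternatively, one simply invokes Remark~\ref{Wop}: relations (VW.8a) and (VW.8b) form a symmetric pair, so once (VW.8a) is verified the other is automatic. The main (and really only) obstacle is bookkeeping: keeping track of which tensor slots each $\Omega_{jk}$ touches and making sure the "$e_i$ kills all but one summand" reductions are applied correctly, but all the genuine content — the vanishing of the tail and the value of $e_i\Omega_{i(i+1)}$ — is already packaged in Lemma~\ref{relation5b}, Lemma~\ref{relation6a6b6c6d} and Remark~\ref{omega_sigma_tau}, so no new ideas are needed beyond assembling these scalars and checking they sum to $2n-N=0$.
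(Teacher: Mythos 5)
Your proof is correct and takes essentially the same route as the paper: expand $y_i+y_{i+1}$, use $\Omega_{i(i+1)}=s_i-e_i$ (Remark \ref{omega_sigma_tau}) together with $e_is_i=e_i$ and $e_i^2=Ne_i$ to cancel the scalar contribution $N-1$, and kill the remaining terms $e_i(\Omega_{ki}+\Omega_{k(i+1)})$, resp.\ $(\Omega_{ki}+\Omega_{k(i+1)})e_i$, by the antisymmetry cancellation appearing in the proof of Lemma \ref{relation5b}. The only points to tighten are that for $k=0$ the third slot is $M$ rather than a $V$-factor, so Lemma \ref{relation5b} does not literally apply and one must repeat the (identical) cancellation with $m\in M$ as the paper does, and that your alternative appeal to Remark \ref{Wop} alone would not establish the second identity, since the opposite-algebra symmetry of the abstract relations does not by itself transfer one operator identity on $\Md$ to the other -- your explicit mirror computation is the argument that is actually needed.
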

\begin{proof}
We start with the first relation and expand the left hand side.
\abovedisplayskip0.2em
\belowdisplayskip0.3em
\begin{align*}
e_i(y_i + y_{i+1}) &= e_i \left( \sum_{0 \leq k < i} \hspace{-.3em}\left( \Omega_{ki} + \Omega_{k(i+1)}\right) + \Omega_{i(i+1)} + N - 1 \right) \\ 
&= e_i \left(
\sum_{0 \leq k < i} \hspace{-.3em}\left( \Omega_{ki} + \Omega_{k(i+1)}\right) + s_i-e_i + N - 1 \right) = e_i \hspace{-.3em}\sum_{0 \leq k < i} \hspace{-.3em}\left( \Omega_{ki} + \Omega_{k(i+1)}\right)
\end{align*}
We have seen in the proof of Lemma \ref{relation5b} that $ e_i \left( \Omega_{ki} + \Omega_{k(i+1)}\right)$ acts as zero if $k > 0$. Thus we are left
to show that $ e_i \left( \Omega_{0i} + \Omega_{0(i+1)}\right)$ acts as zero. Let $x = m \otimes v_a \otimes v_b$ for $m \in M$ and $a,b \in \mathrm{I}$, again these should be thought of positions $0$, $i$, and $i+1$. Applying $\left( \Omega_{0i} + \Omega_{0(i+1)}\right)$ gives us
\abovedisplayskip0.2em
\belowdisplayskip0.3em
\begin{gather*}
\begin{aligned}
&x.\left( \Omega_{0i} + \Omega_{0(i+1)}\right) \,= \sum_{\alpha \in \mathbb{B}(\mathfrak{so}_{2n})}\hspace{-.8em} X_\alpha m \otimes X_\alpha^*v_a \otimes v_b + X_\alpha m \otimes v_a \otimes X_\alpha^*v_b \\
=&\, \sum_{\alpha \in \mathbb{B}(\mathfrak{so}_{2n})} \sum_{k \in \mathrm{I}} \left\langle X_\alpha^*v_a,v_k\right\rangle X_\alpha m \otimes v_k^* \otimes v_b + \left\langle X_\alpha^*v_b,v_k\right\rangle X_\alpha m \otimes v_a \otimes v_k^*.
\end{aligned}
\end{gather*}
Now applying $e_i$ to each summand for $\alpha \in \mathbb{B}(\mathfrak{so}_{2n})$ we obtain
\abovedisplayskip0.2em
\belowdisplayskip0.3em
\begin{gather*}
\begin{aligned}
& \sum_{l \in \mathrm{I}} \left\langle X_\alpha^*v_a,v_b\right\rangle X_\alpha m \otimes v_l \otimes v_l^* + \left\langle X_\alpha^*v_b,v_a\right\rangle X_\alpha m \otimes v_l \otimes v_l^* \\
=& \sum_{l \in \mathrm{I}} \left\langle X_\alpha^*v_a,v_b\right\rangle X_\alpha m \otimes v_l \otimes v_l^* - \left\langle X_\alpha^*v_a,v_b\right\rangle X_\alpha m \otimes v_l \otimes v_l^* = 0.
\end{aligned}
\end{gather*}
Thus we have $e_i(y_i+y_{i+1}) = 0$ on $M \otimes V^{\otimes d}$.

For the second equality with start with the same $x$ as above and apply $e_i$, then the result will be either zero or equal to $ \sum_{l \in \mathrm{I}} m \otimes v_l \otimes v_l^* = \sum_{l \in \mathrm{I}} m \otimes v_l^* \otimes v_l$.
Applying $\left( \Omega_{0i} + \Omega_{0(i+1)}\right)$ to this, we obtain
\abovedisplayskip0.2em
\belowdisplayskip0.3em
\begin{gather*}
\begin{aligned}
& \sum_{\alpha \in \mathbb{B}(\mathfrak{so}_{2n})} \sum_{l \in \mathrm{I}} X_\alpha m\otimes X_\alpha^*v_l^* \otimes v_l + X_\alpha m \otimes v_l \otimes X_\alpha^* v_l^* \\
=& \sum_{\alpha \in \mathbb{B}(\mathfrak{so}_{2n})} \sum_{k,l \in \mathrm{I}} \left\langle X_\alpha^*v_l^*,v_k^*\right\rangle X_\alpha m\otimes v_k \otimes v_l + \left\langle X_\alpha^*v_l^*,v_k^*\right\rangle X_\alpha m \otimes v_l \otimes v_k \\
=& \sum_{\alpha \in \mathbb{B}(\mathfrak{so}_{2n})} \sum_{k,l \in \mathrm{I}} \left\langle X_\alpha^*v_l^*,v_k^*\right\rangle X_\alpha m\otimes v_k \otimes v_l - \left\langle X_\alpha^*v_k^*,v_l^*\right\rangle X_\alpha m \otimes v_l \otimes v_k = 0.
\end{aligned}
\end{gather*}
For the last equality one just switches $k$ and $l$ in the second sum. Hence we also have $(y_i+y_{i+1})e_i = 0$ on $M \otimes V^{\otimes d}$.
\end{proof}

\subsection{Proof of Proposition~\ref{a}} \label{section:proof-of-a}
\begin{proof}[Proof of Proposition~\ref{a}] 
Let us first ignore the grading. Statements \eqref{HIM1} and  \eqref{HIM2} follow directly from Definition~\ref{def:functors}, since we sum over all possible blocks. The statements about Verma modules follow directly from Proposition~\ref{Verma} and the definition of our functors. We now consider the action on  the modules $L(\la)$. 

For cases \eqref{HIM8} and  \eqref{HIM9} we know that $\cF_{i,-}M^\pp(\la) = \{0\}$, hence $\cF_{i,-}L(\la) = \{0\}$, since $L(\la)$ is a quotient of $M^\pp(\la)$ and $\cF_{i,-}$ is exact.

The proofs of  \eqref{HIM1} - \eqref{HIM4} are not much harder. For example, if
$\la = \la_{{\scriptscriptstyle \down}\circ}$ as in \eqref{HIM1},
then $\cF_{i,-} L(\la)$ is a quotient of
$\cF_{i,-} M^\pp(\la) \cong
M^\pp(\la_{\circ{\scriptscriptstyle \down}})$.
Moreover it is self-dual. This is because
$L(\la)$ is self-dual with respect to the duality on category $\cO$, and $\cF$ commutes with this duality \cite{AS}, and hence so does $\cF_{i,-}$. Thus we either have 
$\cF_{i_-} L(\la) = \{0\}$ or $\cF_{i_-} L(\la) \cong L(\la_{\circ{\scriptscriptstyle \down}})$,
as these are the only self-dual quotients of
$M^\pp(\la_{\circ{\scriptscriptstyle \down}})$.
To rule out the possibility that it is zero,
consider the group homomorphism $K_0(\cF_{i,-}):K_0(\Op_\Ga(n))\rightarrow K_0(\Op_{\Ga_{i,-}}(n))$ induced by $\cF_{i,-}$. Expressed in the basis of classes of parabolic Verma modules, it has an inverse, the morphism induced by $\cF_{i,+}$.
Hence $\cF_{i,-}$ is non-zero on every non-zero module. This proves  \eqref{HIM1}, and the proofs of  \eqref{HIM2} - \eqref{HIM4} are similar.

Next we check  \eqref{HIM5} and  \eqref{HIM6} for $L(\la)$, i.e.
we show that
$\cF_{i,-} L(\la_{\scriptscriptstyle \down\up})
\cong L(\la_{\circ{\scriptscriptstyle\times}})$
and
$\cF_{i,-} L(\la_{\scriptscriptstyle \up\down})
= \{0\}$.
We know that $\cF_{i,-} M^\pp(\la_{\scriptscriptstyle \down\up}) \cong \cF_{i,-}
M^\pp(\la_{\scriptscriptstyle \up\down}) \cong
M^\pp(\la_{\circ{\scriptscriptstyle\times}})$.
By self-duality, we either have
$\cF_{i_-} L(\la_{\scriptscriptstyle \down\up})
\cong L(\la_{\circ{\scriptscriptstyle\times}})$
or $\cF_{i,-} L(\la_{\scriptscriptstyle\down\up}) = \{0\}$, as well as $\cF_{i,-} L(\la_{\scriptscriptstyle \up\down})
\cong L(\la_{\circ{\scriptscriptstyle\times}})$
or $\cF_{i,-} L(\la_{\scriptscriptstyle\up\down}) = \{0\}$.
As $[\cF_{i,-} M^\pp(\la_{\scriptscriptstyle\down\up}) \hspace{-.1em}:\hspace{-.1em}
L(\la_{\circ{\scriptscriptstyle\cross}})] = 1$,
we have a composition factor
$L(\mu)$ of $M^{\pp}(\la_{\scriptscriptstyle\down\up})$ with $[\cF_{i,-} L(\mu)\hspace{-.1em}:\hspace{-.1em} L(\la_{\circ{\scriptscriptstyle\times}})] = 1$.
The facts proved so far imply either that
$\mu = \la_{\scriptscriptstyle\down\up}$
or that
$\mu = \la_{\scriptscriptstyle\up\down}$.
But the latter case cannot occur as
$\la_{\scriptscriptstyle\up\down}$ is strictly bigger than
$\la_{\scriptscriptstyle\down\up}$ in the Bruhat ordering.
Hence
$\mu = \la_{\scriptscriptstyle\down\up}$
and we have proved
$[\cF_{i,-} L(\la_{\scriptscriptstyle\down\up}):L(\la_{\circ{\scriptscriptstyle\times}})]=1$.
This proves \eqref{HIM5}, since $\cF_{i,-} L(\la_{\scriptscriptstyle \down\up})
\cong L(\la_{\circ{\scriptscriptstyle\times}})$.
It remains for  \eqref{HIM6} to show that
$\cF_{i,-} L(\la_{\scriptscriptstyle \up\down}) = \{0\}$.
Suppose for a contradiction that it is non-zero, hence
$\cF_{i,-} L(\la_{\scriptscriptstyle \up\down})
\cong L(\la_{\circ{\scriptscriptstyle\times}})$.
By Proposition~\ref{singO} and BGG-reciprocity we know that
$M^\pp(\la_{\scriptscriptstyle \up\down})$ has both
$L(\la_{\scriptscriptstyle \up\down})$
and $L(\la_{\scriptscriptstyle \down\up})$ as composition factors, so we deduce that
$[\cF_{i,-} M^\pp(\la_{\scriptscriptstyle \up\down})\hspace{-.1em}:\hspace{-.1em}
L(\la_{\circ{\scriptscriptstyle\times}})]
\geq 2$, which is the desired contradiction.

In this paragraph, we check  \eqref{HIM7c}.
Take $\la$ with $\la = \la_{{\scriptscriptstyle \times}\circ}$.
Let $\Ga$ be the block containing $\la$ and $\Ga_{i,-}$, $\Ga_{i,--}$ the domains of the functors $\cF_{i,-}$ respectively $\cF_{i,-}^2$ restricted to $\Ga$. We know $\cF_{i,-}^2 M^\pp(\nu)
\cong
M^\pp(\nu_{\circ{\scriptscriptstyle\times}})
\oplus M^\pp(\nu_{\circ{\scriptscriptstyle\times}})$ for any $\nu \in \Ga$.
Hence $\cF_{i,-}^2$ induces a $\mZ$-module isomorphism
between $[\Op_\Ga(n)]$ and
$2 [\Op_{\Ga_{i,--}}(n)]$.
Thus for non-zero $M \in \Op(n)$,
$\cF_{i,-}^2 M \neq 0$ with class divisible by two in $[\Op_{\Ga_{i,--}}(n)]$.
In particular,  $\cF_{i,-}^2 L(\la)$ is a non-zero self-dual quotient of
$M^\pp(\la_{\circ{\scriptscriptstyle\times}})
\oplus M^\pp(\la_{\circ{\scriptscriptstyle\times}})$
whose class is divisible by two.
This implies
\abovedisplayskip0.2em
\belowdisplayskip0.3em
\begin{gather}\label{drop}
\cF_{i,-}^2 L(\la) \,\cong\, L(\la_{\circ{\scriptscriptstyle\times}})\oplus L(\la_{\circ{\scriptscriptstyle\times}}).
\end{gather}
Now take any $\mu\in\Ga_{i,-}$.
We know already that
$\cF_{i,-} L(\mu) \cong L(\mu_{\circ{\scriptscriptstyle \times}})$
if $\mu = \mu_{\scriptscriptstyle \down\up}$, and
$\cF_{i,-} L(\mu) = \{0\}$ otherwise.
Assuming now that $\mu =
\mu_{\scriptscriptstyle \down\up}$,
we deduce from this that
$[\cF_{i,-} L(\la)\hspace{-.1em}:\hspace{-.1em} L(\mu)] =
[\cF_{i,-}^2 L(\la)\hspace{-.1em}:\hspace{-.1em} L(\mu_{\circ{\scriptscriptstyle \times}})]$.
Using \eqref{drop}, we conclude for $\mu = \mu_{\scriptscriptstyle \down\up}$
that
$[\cF_{i,-} L(\la)\hspace{-.1em}:\hspace{-.1em}L(\mu)] = 0$ unless $\mu =
\la_{\scriptscriptstyle \down\up}$, and
$[\cF_{i,-} L(\la)\hspace{-.1em}:\hspace{-.1em}L(\la_{\scriptscriptstyle \down\up})] = 2$.
It is easy to see that the analogous statements hold for $\cF_{i,+}$.
 
Now we deduce all the statements \eqref{HIM1} - \eqref{HIM9} for $P^\pp(\la)$
by using the fact that $(\cF_{i,-}, \cF_{i,+})$ is an adjoint pair of functors.
We just explain the argument in case of \eqref{HIM5}, since the other cases
are similar (actually, easier).
As $\cF_{i,-}$ sends projectives to projectives, $\cF_{i,-} P^\pp(\la)$ is a direct
sum of indecomposable projectives. To compute the multiplicity of $P^\pp(\mu)$ we calculate
\abovedisplayskip0.2em
\belowdisplayskip0.3em
\[
\HOM_{\mg}(\cF_{i,-} P^\pp(\la), L(\mu))
\cong \HOM_{\mg}(P^\pp(\la), \cF_{i,+} L(\mu)) = [\cF_{i,+} L(\mu): L(\la)].
\] 
By the analogue of \eqref{HIM7c} for $\cF_{i,+}$
this multiplicity is zero unless $\mu = \la_{\circ{\scriptscriptstyle\times}}$,
when it is two.
Hence $\cF_{i,-} P^\pp(\la) \cong P^\pp(\la_{\circ{\scriptscriptstyle\times}}) \oplus
P^\pp(\la_{\circ{\scriptscriptstyle\times}})$.

We still need to verify \eqref{HIM7d}.
By \eqref{HIM7a}, \eqref{HIM7c} and exactness of $\cF_{i,-}$, we get that
$\cF_{i,-} L(\la)$ is a non-zero quotient of
$P(\la_{\scriptscriptstyle \down\up})$,
hence it has irreducible head
isomorphic to
$L(\la_{\scriptscriptstyle\down\up})$.
Since it is self-dual it also has irreducible socle
isomorphic to
$L(\la_{\scriptscriptstyle\down\up})$.

Now consider the case $i=\half$. The statement for the Verma modules is again clear and for the simple modules we argue as above. For the projective modules we consider again each case. Part \eqref{HIM13} is proved as \eqref{HIM1} or \eqref{HIM2}, whereas \eqref{HIM15} is proved as the dual version \eqref{HIM10} of \eqref{HIM3}. Now for \eqref{HIM14} and \eqref{HIM18} each Verma module occurring in a  filtration of the projective module has highest weight $\nu$ and $\nu$ equals $\la$ at the places $0$ and $1$, hence the whole module get killed by the functor.  Case\eqref{HIM16} is proved as case \eqref{HIM14} whereas \eqref{HIM17} is proved as the dual \eqref{HIM10}of \eqref{HIM7}. Hence it remains to show \eqref{HIM11} and \eqref{HIM12} which however can be proved with the arguments from \eqref{HIM5}.  

Finally \eqref{HIM19} and \eqref{HIM20} can be proved as \eqref{HIM1}-\eqref{HIM4}.
\end{proof}

\subsection{Proof of Proposition~\ref{whichproj} and Proposition~\ref{saturated}}
\label{whichandsat}
\begin{proof}[Proof of Proposition~\ref{whichproj}]
Starting with $d=0$ there is only one weight with $\delta$-height zero, namely $\de$ and the statement is clear. For $d=1$ there are two weights, $\de-\epsilon_1$ and $\de+\epsilon_n$. The statement then follows from both parabolic Verma modules being projective. Assume now that the statement is true for $0 \leq d^\prime \leq d-1$. Note that $a_\mu\not=0$ implies that $M^\pp(\mu)$ appears in a Verma flag of $\MdV$, and hence $\dht(\mu)\leq d$. Moreover, the highest weights of all occurring parabolic Verma modules change each step by $\pm\epsilon_j$, changing the $\delta$-height by $\pm 1$, thus $d-\dht(\mu)$ is even.

Conversely, assume $\mu$ is a weight with $d-\dht(\mu)=2k$ for some $k\geq0$. If $k>0$ then by induction hypothesis $P(\mu)$ appears in $\MdVmtwo$. The weight $\mu$ contains at least one neighboured $\up\circ$, $\down\circ$, or $\cross\circ$ pair (since only finitely many vertices are labelled $\up$, $\down$ or $\cross$). Hence we are in \eqref{HIM1}, \eqref{HIM2} or \eqref{HIM7} of Proposition~\ref{a} thus $P(\mu)$ is a summand of $\cF^2 P(\mu)$ and therefore also of $\MdV$.
Now we have to proceed via a case-by case analysis for $k=0$:

\textit{(1) Assume first that there exist $i\in I(\mu)$, $-i\not\in I(\mu)$ with $i=(\mu+\rho)_a>0$ and $\de_a-\mu_a>0$ for some $a$.} (In the diagram picture this means that the $a$-th $\down$ in $\de$ was moved to the left and gives a $\down$ in $\mu$ at position $i$). We assume that $a$ was chosen to be maximal amongst these (that is the rightmost $\down$ which got moved to the left to create some $\down$). Locally at the positions $i$ and $i+1$ the diagram for $\mu$ could look as follows
    \begin{enumerate}[(a)]
    \item $\down\circ$. In this case set $\nu=\circ\down$
    \item $\down\up$. In this case set $\nu=\circ\cross$
    \item $\down\cross$. In this case set $\nu=\cross\down$
    \item $\down\down$. Since $a$ was maximal, this means that the $\down$ at position $i+1$ forces $i+1=(\mu+\rho)_{a+1}$ and at the same time $\de_{a+1}-\mu_{a+1}\leq 0$ which is a contradiction.
    \end{enumerate}
In each of the first three cases $P(\mu)$ appears as a summand in $\cF P(\nu)$ (by Proposition~\ref{a} \eqref{HIM3}, \eqref{HIM7}, and \eqref{HIM10}) and $\dht(\nu)=\dht(\mu)-1$. By hypothesis $P(\nu)$ appears as summand in $\cF^{d-1}M^\pp(\underline{\delta})$ and we are done.

\textit{(2) Assume (1) does not hold and moreover that if $i=(\mu+\rho)_a>0$ for some $a$ and $-i\notin \mathrm{I}$ then $\de_a-\mu_a=0$.} (Diagrammatically this means that $\down's$ did either not move or turned into $\up's$, creating possibly some $\cross$'s). Choose, if it exists, $-j\in \mathrm{I}$ maximal such that $-j<0$. (Diagrammatically we look at the $\up$'s and $\cross$'s and choose the leftmost. It  sits at position $j$).
    Locally at the positions $j-1$ and $j$, for $j>\frac{1}{2}$, the diagram for $\mu$ could look as follows
    \begin{enumerate}[(a)]
    \item $\diamondb\up$. In this case set $\nu=\circ\cross$.
    \item $\down\up$. In this case set $\nu=\cross\circ$.
    \item $\circ\up$. In this case set $\nu=\up\circ$.
    \item $\down\cross$. In this case set $\nu=\cross\down$.
    \item $\circ\cross$. In this case set $\nu=\up\down$. The $\down$ in $\de$ at place $j-1$ got moved, hence turned into an $\up$ or created a $\cross$. But then the same happened to all the $\down$'s further to the left. In particular, $\nu$ has no $\down$ left of the $\up$ at position $j$ and hence we can apply Corollary~\ref{special-case} to $P(\nu)$.
    \end{enumerate}
Again, in each of the last four cases $P(\mu)$ appears as a summand in $\cF P(\nu)$ by Proposition~\ref{a} and by hypothesis $P(\nu)$ appears as summand in $\cF^{d-1}M^\pp(\underline{\delta})$. In case no $\up$ or $\cross$ exists we either have $\mu=\de$ and there is nothing to do or $\mu$ has a $\circ\down$ or $\down\circ$ pair which we swap to create a new weight $\nu$ and argue as above using Proposition~\ref{a} \eqref{HIM1}, \eqref{HIM3} or \eqref{HIM10}.

\textit{(3) Finally, assume that neither (1) nor (2) holds.}
The arguments are completely analogous to (2) except for the case
\begin{enumerate}[(e')]
\item $\circ\cross$. In case there is no $\down$ to the left of the cross we set $\nu=\up\down$ and argue as before using Corollary~\ref{special-case}. Otherwise such a $\down$ did not get moved or got moved to the right, since we excluded case (1). Since it is separated from our $\cross$ at position $j$ by a $\circ$, our $\cross$ is created from a $\down$ which got moved to the right. Hence changing $\circ\cross$ to $\down\up$ decreases the height by $1$. Then we can argue again using Proposition~\ref{a}, this time part \eqref{HIM5}.
\end{enumerate}
\end{proof}

\begin{proof}[Proof of Proposition~\ref{saturated}]
Assume $\mu$ is in the same block as $\nu$ and $\mu<\nu$. Then $\nu$ is obtained from $\mu$ by applying a finite sequence of changes from $\up\up$ to $\down\down$ or from $\down\up$ to $\up\down$ at positions only separated by $\circ$'s and $\cross$'s. Hence it is enough to consider these basic changes.

\textit{Changing from $\down\up$ to $\up\down$:} Let $0\leq i\leq i+j$ be the positions of the two symbols. In the $\epsilon$-basis $\mu+\rho$ and $\nu+\rho$ are then of the form
\abovedisplayskip0.2em
\belowdisplayskip0.3em
\begin{gather*}
(\overbrace{-x_1,\ldots -x_a},-(i+j),\overbrace{-y_1,\ldots -y_b},\overbrace{z_1,\ldots, z_c}, \overbrace{u_1,\ldots u_r}, i, \overbrace{v_1,\ldots, v_s},\overbrace{w_1,\ldots w_t})\\
(\overbrace{-x_1,\ldots -x_a},\overbrace{-y_1,\ldots -y_b},-i \overbrace{z_1,\ldots, z_c}, \overbrace{u_1,\ldots u_r}, \overbrace{v_1,\ldots, v_b}, i+j, \overbrace{w_1,\ldots w_t})
\end{gather*}
where $a,b,c,r,t\geq 0$ with the $x_k>0$ indicating the $\up$'s to the right of position $i+j$, the $y_k>0$ indicating the $\up$'s as part of the crosses between positions $i$ and $i+j$, the $z_k>0$ indicating the $\up$'s to the left of position $i$, the $u_k>0$ indicating the $\down$'s to the left of position $i$, the $v_k>0$ indicating the $\down$'s as part of the crosses between positions $i$ and $i+j$ and the $w_k>0$ indicating the $\down$'s to the right of position $i+j$.
Hence to determine $\dht(\mu)-\dht(\nu)$ we can without loss of generality assume that the $x$, $z$, $u$ and $w$'s are zero. Recall that $\underline{\delta}+\rho$ is an increasing sequence of consecutive numbers, so we are left with
\abovedisplayskip0.2em
\belowdisplayskip0.3em
\begin{gather*}
\begin{aligned}
&\dht(\mu)-\dht(\nu)\\
&=(|m+i+j|+{\sum}_{k=1}^b|m+k+y_k|-{\sum}_{k=1}^b|{m+k-1+y_k}|-|m+b+i|)\\
&+ (|p-i|+{\sum}_{k=1}^b|p+k-v_k|-{\sum}_{k=1}^b|{p+k-1-v_k}|-|p+b-i-j|)
\end{aligned}
\end{gather*}
for some non-negative $m$, $p$ which are (half-)integers $i$, $j$ in case $i$,$j$ are (half-)integers. The absolute values in the first summand can be removed and hence gives $j$ in total. Set $v_0=i$ and $v_{b+1}=
i+j$, and choose $-1\leq k_0\leq b+1$ such that $p+k-v_k>0$ for $0\leq k\leq k_0$ and $p+k-v_{k}\leq 0$ for $b+1\geq k> k_0$.
\abovedisplayskip0.2em
\belowdisplayskip0.3em
\begin{gather*}
\begin{aligned}
&\dht(\mu)-\dht(\nu)\\
&=j+|p-v_0|-|p+b-v_{b+1}|+\sum_{1\leq k\leq k_0,b}((p+k-v_k)-(p+k-1-v_k))\\
&+\sum_{k_0+1\leq k\leq b}((v_k-p-k)-(v_k-p-k+1))\\
&=j+|p-v_0|-|p+b-v_{b+1}|+k_0-(b-k_0)=\begin{cases}
     2j&\text{if $k_0=b+1$,}\\
     2(p-i+k_0)&\text{if $0\leq k_0\leq b$,}\\
     0&\text{if $k_0=-1$.}
     \end{cases}
\end{aligned}
\end{gather*}
In any case $\dht(\mu)\geq\dht(\nu)$ and their difference is even, since $j$, $k_0$, $p-i\in\mathbb{Z}$.

\textit{Change from $\up\up$ to $\down\down$:} It is easy to check that the difference of the heights is even. We claim that if $\mu$ and $\nu$ are weights such that $\nu$ is obtained from $\mu$ just by changing an $\up$ at some position $i$ which has no other $\up$ or $\down$ to its  left into a $\down$ then $\dht(\nu)\leq \dht(\mu)$. Together with the previous paragraph this claim proves the lemma. In the $\epsilon$-basis, the weights $\mu+\rho$ and $\nu+\rho$ are of the form
\abovedisplayskip0.2em
\belowdisplayskip0.3em
\begin{gather*}
(\overbrace{-x_1,\ldots -x_a},\overbrace{-y_1,\ldots -y_b},\overbrace{z_1,\ldots, z_b}, i, \overbrace{w_1,\ldots w_c})\\
(\overbrace{-x_1,\ldots -x_a},-i,\overbrace{-y_1,\ldots -y_b}, \overbrace{z_1,\ldots, z_b}, \overbrace{w_1,\ldots w_c})
\end{gather*}
where $a,b,c\geq 0$ with the $x_k>0$ indicating the $\up$'s to the right of position $i$, the $y_k>0$ indicating the $\up$'s as part of the crosses to the left of $i$, the $z_k>0$ indicating the $\down$'s as part of the crosses to the left of $i$ and the $w_k>0$ indicating the $\down$'s to the right of position $i$. Again, the $x$'s and $w$'s are irrelevant for the difference of the heights. Note that the number of $y$'s equals the number of $z$'s. With the arguments from above we get
\abovedisplayskip0.2em
\belowdisplayskip0.3em
\begin{gather*}
\begin{aligned}
\dht(\mu)-\dht(\nu)
&=(|m+i|+{\sum}_{k=1}^b(m+k+y_k)-{\sum}_{k=1}^b({m+k-1+y_k}))\\
&+({\sum}_{k=1}^b(|p+k-z_k|-{\sum}_{k=1}^b|{p+k-1-z_k}|)-|p+b-i|)
\end{aligned}
\end{gather*}
with $p=m+b+1$. The first sum equals $b$, the second $\geq -b$ and hence $\dht(\mu)-\dht(\nu)\geq m+i-|m+b+1-i|$. This is obviously positive if $m+b+1-i<0$ and equals $-2b+2i-1$ otherwise. On the other hand, $b<i$ and our claim follows.
\end{proof}

\subsection{Proof of Theorem~\ref{Oprime}} 
\label{sec:proofOprime}
\begin{proof}[Proof of Theorem~\ref{Oprime}]
We assume here $\delta \geq 0$, the case $\delta<0$ is again obtained via Corollary~\ref{tableaux}. Let $\cS'$ denote the set $\la\in\cS(\delta,d)$ which appear as endpoints of $\lambda \in \vpath_d(\delta)$ such that any two consecutive steps of $\lambda$ only differ by $\pm \epsilon_j$ for $j<n$.

Via Proposition~\ref{surjectivity}, such a path correspond to an \updbd x with second component being the empty partition. Thus, ignoring the second component, we identify such paths with \updd x. To show that $z_d\VWd(\alpha,\beta)z_d$ is quasi-hereditary it suffices to show that all indecomposable projective modules indexed by weights in $\cS'$ are summands in $P_d$.

For $d=0$ there is nothing to check and for $d=1$ the partition with one box corresponds to $P(\la_{\ldots\circ\up\circ\up})=M^\pp(\de-\epsilon_1)$.
 
Now assume the claim is true for $d-2$. Hence all partitions in $\cS^\prime$ with $d-2k$ $(k \geq 1)$ boxes arise as labels of indecomposable projectives in $P_{d-2} = \widetilde{\cF}^{d-2}M^\pp(\underline{\delta})$. Let now $\la$ be such a partition of $d-2k$ boxes. If $\delta>0$ then the associated weight diagram has at least one $\circ$ and hence there is $i < \nicefrac{\delta}{2}+n$ such that at position $i$ and $i+1$, $\lambda$ looks locally like
\abovedisplayskip0.2em
\belowdisplayskip0.3em
\begin{gather} \label{eq:lambda-mu}
\begin{array}{c||c|c|c|c|c|c|c|c|c}
\mu&\up\circ &\down\circ &\down\up &\circ\up &\circ\down &\down\up\\
\la&\circ\up &\circ\down &\circ\cross &\up\circ &\down\circ &\cross\circ
\end{array}
\end{gather}
Then there is a weight $\mu$ only differing from $\la$ at the positions $i$, $i+1$, as shown in \eqref{eq:lambda-mu}.  Then $\widetilde{\cF}P(\la)$ contains $P(\mu)$ as direct summand and $\widetilde{\cF}^2P(\lambda)$ contains $P(\la)$ again as summand. Hence $P(\la)$ appears as a summand in $P_d$. Assume now, that $\la$ is a partition $d$ and $\delta>0$. If $d>0$ then there is at least one box which can be removed from the partition, hence there exists $i < \nicefrac{\delta}{2}+n$ such that locally at positions $i$ and $i+1$, $\la$ looks as follows (with special case of $i=\half$ in the last column)
\abovedisplayskip0.2em
\belowdisplayskip0.3em
\begin{gather*}
\begin{array}{c||c|c|c|c|c|c|c|c|c|c}
\la&\circ\up &\down\up &\circ\cross &\down\cross &\down\circ &\cross\circ&\down\up&\cross\up&\diamondb\circ&\up\\
\mu&\up\circ &\cross\circ &\up\down &\cross\down &\circ\down &\up\down &\circ\cross&\up\cross&\circ\down&\down
\end{array}
\end{gather*}
In this case, we use Lemma \ref{combinatorics}, \eqref{cool2} and  Corollary~\ref{special-case} to obtain a weight $\mu$ whose partition has $d-1$ boxes such that $\widetilde{\cF}P(\mu)$ contains $P(\la)$ as direct summand and we are done by induction.

It remains to consider the case $\delta=0$. For $d=0$, $d=1$ the assertion is clear. For $d=2$ we have $P_2=P(a)\oplus P(b)$, where $a$ and $b$ correspond to the partitions $(2)$ and $(1,1)$ respectively and $\END_{\mg}(P_2)\cong \mC[x]/(x^{2})\oplus \mC$ which has infinite global dimension and hence is not quasi-hereditary. In case $\delta=0$ and $d$ even then every weight contains at east one $\circ$ and we can argue as in the case $\delta>0$. For $d$ odd observe that the occurring weights $\la$ always contain at least one $\up$ or $\cross$, and $P_d$ contains a summand isomorphic to $P_2$. Hence the algebra is not quasi-hereditary.
\end{proof}

\subsection{Proof of Lemma~\ref{lemma:Fastransl}}
\label{boring}
\begin{proof}[Proof of Lemma~\ref{lemma:Fastransl}]
Recall from \cite{BGGclass} that projective functors, i.e. direct summands of endofunctors on $\cO(n)$ given by taking the tensor product with finite dimensional modules, are determined up to isomorphism already by their value on the Verma modules whose weight is maximal in their dot-orbit of $W_n$. Hence to prove the lemma it is enough to compare the value on these Verma modules. Since such a Verma module is a projective object it is enough to compare their values in the Grothendieck group. We explain this for $\mathbb{B}_{i}$ and $\mathbb{B}_{0}$, the case $\mathbb{B}_{-i}$ is similar. Consider a block $\kappa = ((1,1,\ldots,1),\underline{d}, {\epsilon})$ with corresponding maximal weight $\la$ such that $$\la+\rho=(\underbrace{\half,\ldots,\half}_{d_1},\ldots,\underbrace{n-\half,\ldots,n-\half}_{d_n})$$
in case ${\epsilon}=0$ and with the first number negated if ${\epsilon}=1$. Examples are
\abovedisplayskip0.3em
\belowdisplayskip0.3em
\[
(\nicefrac{3}{2},\nicefrac{3}{2},\nicefrac{3}{2},\nicefrac{5}{2},\nicefrac{5}{2},\nicefrac{5}{2},\nicefrac{7}{2},\nicefrac{7}{2}) \quad \text{respectively} \quad (-\nicefrac{3}{2},\nicefrac{3}{2},\nicefrac{3}{2},\nicefrac{5}{2},\nicefrac{5}{2},\nicefrac{5}{2},\nicefrac{7}{2},\nicefrac{7}{2})
.\]

Now fix first $i>0$ and consider $\mathbb{B}_{i}$. Applying the translation out of the wall $\Theta_{\kappa}^{\kappa^{+i}}$ to $M^\pp(\lambda)$ yields a module with a Verma flag having subquotients with precisely those $M^\pp(\nu)$ such that $\nu+\rho$ is obtained from $\lambda+\rho$ by first changing all entries with absolute value strictly larger than $i+\half$ by $1$ if positive and by $-1$ if negative and then doing the same to all but one entry with absolute value equal to $i+\half$. In the running example one obtains the following $\rho$-shifted highest weights of parabolic Verma modules appearing in $\Theta_{\bd}^{\bd^{+i}} M^\pp(\lambda)$
\abovedisplayskip0.3em
\belowdisplayskip0.3em
\begin{gather*}
(\nicefrac{3}{2},\nicefrac{3}{2},\nicefrac{3}{2},\nicefrac{7}{2},\nicefrac{7}{2},{\color{red}
\nicefrac{5}{2}},\nicefrac{9}{2},\nicefrac{9}{2}), \quad (\nicefrac{3}{2},\nicefrac{3}{2},\nicefrac{3}{2},\nicefrac{7}{2},{\color{red}\nicefrac{5}{2}},\nicefrac{7}{2},\nicefrac{9}{2},\nicefrac{9}{2}), \\
(\nicefrac{3}{2},\nicefrac{3}{2},\nicefrac{3}{2},{\color{red}\nicefrac{5}{2}},\nicefrac{7}{2},\nicefrac{7}{2},\nicefrac{9}{2},\nicefrac{9}{2})
\end{gather*}
in case $\epsilon=0$ and with a sign switch for the first entry if $\epsilon=1$.

Applying then the translation functor $\Theta^{{}_{+i}\kappa}_{\kappa^{+i}}$ to a parabolic Verma module appearing in the flag, we get a parabolic Verma module with a highest weight $\mu$ that is obtained from $\nu$ by changing the unique entry with absolute value $i+\half$ to absolute value $i-\half$, while keeping the sign. And then changing all entries with absolute value strictly bigger than $i+\half$ by $-1$ if positive and by $1$ if negative. In the running example we thus obtain the parabolic Verma modules having highest $\rho$-shifted weights
\abovedisplayskip0.3em
\belowdisplayskip0.3em
\begin{gather*}
(\nicefrac{3}{2},\nicefrac{3}{2},\nicefrac{3}{2},\nicefrac{5}{2},\nicefrac{5}{2},{\color{red}
\nicefrac{3}{2}},\nicefrac{7}{2},\nicefrac{7}{2}), \quad (\nicefrac{3}{2},\nicefrac{3}{2},\nicefrac{3}{2},\nicefrac{5}{2},{\color{red}\nicefrac{3}{2}},\nicefrac{5}{2},\nicefrac{7}{2},\nicefrac{7}{2}), \\
(\nicefrac{3}{2},\nicefrac{3}{2},\nicefrac{3}{2},{\color{red}\nicefrac{3}{2}},\nicefrac{5}{2},\nicefrac{5}{2},\nicefrac{7}{2},\nicefrac{7}{2})
\end{gather*}
for $\epsilon=0$ and with a sign switch for the first entry if $\epsilon=1$.

Comparing with Theorem~\ref{prop:catskewclass} we see this agrees with the values for $\mathcal{F}_{i,\pm}$, hence $\mathcal{F}_{i,+}\cong \mathbb{B}_{i}$. 

In case $i=0$ we obtain all highest $\rho$-shifted weights obtained from $\lambda$ by changing the sign of one entry with absolute value equal to $\half$. 
For instance $(\nicefrac{1}{2},\nicefrac{1}{2},\nicefrac{1}{2},\nicefrac{3}{2},\nicefrac{3}{2})$ gives $(-\nicefrac{1}{2},\nicefrac{1}{2},\nicefrac{1}{2},\nicefrac{3}{2},\nicefrac{3}{2})$, $(\nicefrac{1}{2},-\nicefrac{1}{2},\nicefrac{1}{2},\nicefrac{3}{2},\nicefrac{3}{2})$, and $(\nicefrac{1}{2},\nicefrac{1}{2},-\nicefrac{1}{2},\nicefrac{3}{2},\nicefrac{3}{2})$. Thus, by comparison with Theorem~\ref{prop:catskewclass}, $\mathbb{B}_{0}\cong\mathcal{F}_{0}$.
\end{proof}

\bibliographystyle{abbrv}
\bibliography{Brauer-VW}
\end{document}